\newtheorem{thm}{Theorem}[section]
\newtheorem{prop}[thm]{Proposition}
\newtheorem{lemma}[thm]{Lemma}
\newtheorem{cor}[thm]{Corollary}
\theoremstyle{definition}
\newtheorem{definition}[thm]{Definition}
\newtheorem{example}[thm]{Example}
\theoremstyle{remark}
\newtheorem{remark}[thm]{Remark}
\numberwithin{equation}{subsection}
\def\subsection{\@startsection{subsection}{1}%
	\z@{.5\linespacing\@plus.7\linespacing}{-.5em}%
	{\normalfont\itshape}}
\newcommand{\R}{\mathbb{R}}  
\newcommand{\C}{\ensuremath{\mathbb{C}}}
\newcommand{\Z}{\ensuremath{\mathbb{Z}}}
\newcommand{\D}{\ensuremath{\mathbb{D}}}
\newcommand{\A}{\ensuremath{\mathbb{A}}}
\newcommand{\F}{\ensuremath{\mathbb{F}}}
\newcommand{\Fpbar}{\ensuremath{\overline{\mathbb{F}}_p}}
\newcommand{\Q}{\ensuremath{\mathbb{Q}}}
\newcommand{\xbar}{\ensuremath{{\overline{x}}}}
\newcommand{\Ok}{\ensuremath{\mathcal{O}}}
\newcommand{\B}{\ensuremath{\mathcal{B}}}
\newcommand{\G}{\ensuremath{\mathcal{G}}}
\newcommand{\pdiv}{\ensuremath{\mathscr{G}}}
\newcommand{\Spec}{\ensuremath{\mbox{Spec}}}
\newcommand{\sa}{\ensuremath{s_{\alpha,0}}}
\newcommand{\set}{\ensuremath{s_{\alpha,\acute{e}t}}}
\newcommand{\s}{\ensuremath{\tilde{s}}}
\newcommand{\et}{\ensuremath{{\acute{e}t}}}
\newcommand{\Hom}{\ensuremath{\mbox{Hom}}}
\newcommand{\loc}{\ensuremath{M^{loc}_{\G}}}
\newcommand{\Mod}{\ensuremath{\mbox{Mod}_{\mathfrak{S}}^\varphi}}
\newcommand{\hW}{\ensuremath{\widehat{W}}}
\newcommand{\rmK}{\ensuremath{\mathrm{K}}}
\newcommand{\Adm}{\ensuremath{\text{Adm}}}
\newlength{\olen}
\newlength{\ulen}
\newlength{\xlen}
\newcommand{\xra}[2][]{%
	\ifbool{@display}%
	{\settowidth{\olen}{$\overset{#2}{\longrightarrow}$}%
		\settowidth{\ulen}{$\underset{#1}{\longrightarrow}$}%
		\settowidth{\xlen}{$\xrightarrow[#1]{#2}$}%
		\ifdimgreater{\olen}{\xlen}%
		{\underset{#1}{\overset{#2}{\longrightarrow}}}%
		{\ifdimgreater{\ulen}{\xlen}%
			{\underset{#1}{\overset{#2}{\longrightarrow}}}
			{\xrightarrow[#1]{#2}}}}%
	{\xrightarrow[#1]{#2}}
}
\newcommand{\xyra}[2][]{%
	\settowidth{\xlen}{$\xrightarrow[#1]{#2}$}%
	\ifbool{@display}%
	{\settowidth{\olen}{$\overset{#2}{\longrightarrow}$}%
		\settowidth{\ulen}{$\underset{#1}{\longrightarrow}$}%
		\ifdimgreater{\olen}{\xlen}%
		{\mathrel{\xymatrix@M=.12ex@C=3.2ex{\ar[r]^-{#2}_-{#1} &}}}%
		{\ifdimgreater{\ulen}{\xlen}%
			{\mathrel{\xymatrix@M=.12ex@C=3.2ex{\ar[r]^-{#2}_-{#1} &}}}
			{\mathrel{\xymatrix@M=.12ex@C=\the\xlen{\ar[r]^-{#2}_-{#1} &}}}}}%
	{\mathrel{\xymatrix@M=.12ex@C=\the\xlen{\ar[r]^-{#2}_-{#1} &}}}%
}
\newcommand{\xla}[2][]{%
	\ifbool{@display}%
	{\settowidth{\olen}{$\overset{#2}{\longleftarrow}$}%
		\settowidth{\ulen}{$\underset{#1}{\longleftarrow}$}%
		\settowidth{\xlen}{$\xleftarrow[#1]{#2}$}%
		\ifdimgreater{\olen}{\xlen}%
		{\underset{#1}{\overset{#2}{\longleftarrow}}}%
		{\ifdimgreater{\ulen}{\xlen}%
			{\underset{#1}{\overset{#2}{\longleftarrow}}}
			{\xleftarrow[#1]{#2}}}}%
	{\xleftarrow[#1]{#2}}
}
\begin{document}
\title{Mod-$p$ isogeny classes on Shimura varieties with parahoric level structure}
\author{Rong Zhou}

\begin{abstract}
	We study the special fiber  of the integral models for Shimura varieties of Hodge type with parahoric level structure constructed by Kisin and Pappas in \cite{KP}. We show that when the group is residually split, the points in the mod $p$ isogeny classes have the form predicted by the Langlands-Rapoport conjecture in \cite{LR}. 
	
	We also verify most of the He-Rapoport axioms for these integral models without the residually split assumption. This allows us to prove that all Newton strata are non-empty for these models.
\end{abstract}
\address{School of Mathematics, Institute for Advanced Study, Princeton, NJ 08540}
\email{rzhou@ias.edu}
\maketitle







\tableofcontents





\section{Introduction}An essential part of Langlands' philosophy is that the Hasse-Weil zeta function of an algebraic variety should be a product of automorphic $L$-functions. In \cite{La1}, \cite{La2}, \cite{La3}, Langlands outlined a program to verify this for the case of Shimura varieties, for which an essential ingredient was to obtain a description of the mod-$p$ points of a suitable integral model for the Shimura variety. 
Such a conjectural description first appeared in \cite{La1}, and was later refined by \cite{LR}, \cite{Ko1} and \cite{Ra}. To explain it, we first introduce some notations.

 Let $(G,X)$ be a Shimura datum and $\rmK_p\subset G(\Q_p)$ and $\rmK^p\subset G(\A_f^p)$ compact open subgroups where $\A_f^p$ are the finite adeles with trivial component at $p$. We assume $\rmK_p$ is a parahoric subgroup of $G(\Q_p)$. For $\rmK^p$ sufficiently small we have the Shimura variety $Sh_{\rmK_p\rmK^p}(G,X)$ which is an algebraic variety over a number field $E$, known as the reflex field. We will mostly be considering Shimura varieties of Hodge-type in which case $Sh_{\rmK_p\rmK^p}(G,X)$ can be thought of as a moduli space of abelian varieties equipped with some cycles in its Betti cohomology. Let $p$ be a prime and $v|p$ a prime of $E$, then conjecturally there should exist an integral $\mathscr{S}_{\rmK_p\rmK^p}(G,X)/\Ok_{E_{(v)}}$ for $Sh_{\rmK_p\rmK^p}(G,X)$ satisfying certain good properties. When the group is unramified and $\mathrm{K}_p$ is hyperspecial there is a characterization of such an integral model, however for general parahorics such a characterization is not known. However as long as the integral model has good local properties (more precisely, one desires that its nearby cycles are amenable to computation) and one can obtain some global information about the $\F_q$-rational points, then this is already enough for many applications such as the  computation of the (semisimple) local factor of the Hasse-Weil zeta function of the Shimura variety. 
 
 We consider also the inverse limit of integral models $$\mathscr{S}_{\rmK_p}(G,X):=\lim_{\leftarrow \mathrm{K}^p}\mathscr{S}_{\rmK_p\rmK^p}(G,X)$$
 
  Then conjecturally there should be a bijection (see \cite{LR}, \cite{Ra}, \cite{Ha1}):
 
$$\mathscr{S}_{\rmK_p}(G,X)(\Fpbar)\cong \coprod_{\phi}S(\phi)$$ where $$S(\phi)=\lim_{\leftarrow \rmK^p}I_\phi(\Q)\backslash X_p(\phi)\times X^p(\phi)/\mathrm{K}^p$$

When $\mathscr{S}_{\rmK_p\rmK^p}(G,X)$ arises as a moduli space of abelian varieties, this represents the decomposition of the special fiber into disjoint isogeny classes parametrised by $\phi$. The individual isogeny class $S(\phi)$ breaks up into a prime to $p$ part $X^p(\phi)$ and $p$-power part $X_p(\phi)$, and the $I_\phi(\Q)$ is the group of self-isogenies of  any member of the isogeny class $S(\phi)$. For general $G$, the objects appearing are approriate group theoretic analogues of the objects described. 

The bijection should satisfy compatibility conditions with respect to certain group actions on either side. For example, on $S(\phi)$ one can define an operator $\Phi$, and this should correspond under the above bijection to the action of Frobenius on $\mathscr{S}_{\rmK_p}(G,X)(\Fpbar)$. Using this, one obtains a completely group theoretic description of the $\F_q$ points of the Shimura variety.

The first major result in this direction was obtained by Kottwitz \cite{Ko1} who gave a description of the $\Fpbar$ points for PEL-type Shimura varieties (more precisely the moduli spaces he considered are actually a union of Shimura varieties, but for the application to computing the zeta function, this was not an issue). In this case,  the integral models of Shimura varieties are moduli spaces of abelian varieties with extra structure, so one ends up counting such abelian varieties. Then after constructing good integral models for Shimura varieties of abelian type in \cite{Ki2}, Kisin proved the conjecture for these integral models. In that case the integral models are no longer moduli spaces and many new ideas were needed. In both these works, the authors worked with hyperspecial level structure at $p$, in particular this meant the Shimura varieties had good reduction, i.e. the integral models $\mathscr{S}_{\rmK_p\rmK^p}(G,X)$ were smooth over $\mathcal{O}_{E_{(v)}}$. In constrast, when considering arbitrary parahoric level structure, the integral models will not in general be smooth and this presents many new difficulties in proving such a result. However, if one is to get a complete description of the zeta function of the Shimura variety, then knowledge of the places of bad reduction are stil needed. Moreover, understanding the cohomology of these spaces at places of bad reduction has many other important applications, such as the local Langlands correspondences, see \cite{HT}.

We assume now that $p>2$. Let $(G,X)$ be a Shimura datum of Hodge type such that $G_{\Q_p}$ is tamely ramified,  $p\nmid |\pi_1(G_{der})|$ and $\rmK_p$ is a connected parahoric\footnote{A connected parahoric is one which is equal to the Bruhat-Tits stablizer scheme} (we will refer to these assumptions as (*)). With these assumptions Kisin and Pappas have constructed good integral models $\mathscr{S}_{\rmK_p}(G,X)$ for the Shimura varieties associated to the above data. These integral models satisfy the correct local properties, in the sense that there exists a local model diagram as in \cite[\S 6]{Ha}. The main result of this paper is then the  following.

\begin{thm}
 Let $(G,X)$ be  Shimura datum of Hodge type as above. We assume $G_{\Q_p}$ is residually split at $p$. Then the isogeny classes in $\mathscr{S}_{\rmK_p}(G,X)(\Fpbar)$ have the form $$\lim_{\leftarrow \rmK^p}I_\phi(\Q)\backslash X_p(\phi)\times X^p(\phi)/\mathrm{K}^p$$

ii) Each isogeny class contains a point $x$ which lifts to a special point in $Sh_{\mathrm{K}_P}(G,X)$.

\end{thm}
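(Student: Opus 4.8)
The proof will follow the template of Kisin's treatment of the hyperspecial (good reduction) case, adapted to parahoric level, taking as basic input the Kisin--Pappas integral models of \cite{KP} and their local model diagram (as in \cite[\S 6]{Ha}); the residually split hypothesis is what supplies the extra control over the local geometry that the parahoric case requires. First I would fix a point $x \in \mathscr{S}_{\rmK_p}(G,X)(\Fpbar)$ and analyse its isogeny class. The point gives an abelian variety $A_x/\Fpbar$ together with Frobenius-invariant crystalline tensors $\sa$ on its (contravariant) Dieudonn\'e module and \'etale tensors $\set$ away from $p$, the $\sa$ cutting out a copy of the parahoric group scheme $\G$ inside $\mathrm{GL}$ of the Dieudonn\'e lattice. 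Relative to this identification the Frobenius of the isocrystal is $b\sigma$ for a well-defined class $[b]$ in the Kottwitz set $B(G_{\Q_p})$, and the Hodge filtration forces $[b]\in B(G,\{\mu\})$. The set of $\G$-stable lattice chains in the isocrystal that are fixed by the $\sa$ and satisfy the local model condition is then identified with the $\Fpbar$-points of the affine Deligne--Lusztig variety $X_{\{\mu\}}(b)$ at level $\rmK_p$, which is to be $X_p(\phi)$, while the $\set$ make the set of prime-to-$p$ level structures into a $G(\A_f^p)$-torsor, which is $X^p(\phi)$. One then defines a map from $X_p(\phi)\times X^p(\phi)$ to the isogeny class of $x$ by modifying $A_x$ along the quasi-isogeny and level structure attached to a given point.

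The heart of part (i) is to show that this map is well defined with values in $\mathscr{S}_{\rmK_p}(G,X)(\Fpbar)$ and descends to a bijection between $\varinjlim_{\rmK^p} I_\phi(\Q)\backslash X_p(\phi)\times X^p(\phi)/\rmK^p$ and the isogeny class, where $I_\phi$ is the $\Q$-group of self-quasi-isogenies of $A_x$ preserving all the tensors. For the prime-to-$p$ factor this is formal. The hard part, and what genuinely goes beyond the hyperspecial case, is the $p$-adic direction: given a $\G$-lattice chain $\Lambda_\bullet$ as above, Dieudonn\'e theory and Serre--Tate produce an abelian variety $A'$ with a quasi-isogeny $A_x\to A'$, and transporting the tensors and level structure makes $A'$ a point of the generic fibre over some finite extension; the problem is to show that $A'$ spreads out to an $\Fpbar$-point of $\mathscr{S}_{\rmK_p\rmK^p}(G,X)$ inducing the prescribed reduction, rather than merely to a point of the ambient Siegel model. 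For this I would use the Kisin--Pappas characterisation of the points of the integral model in terms of $(\G,\mu)$-adapted liftings of the $p$-divisible group, together with the normality and flatness of the model. Because the local model is not smooth here one cannot argue through an \'etale-local chart, and must instead work with the adapted deformation rings and the combinatorics of $\Adm(\mu)$; it is precisely at this step --- pushing an abstract isogeny of $p$-divisible groups with tensors back to an honest point of the parahoric integral model --- that I expect the main obstacle to lie, and it is here (and in guaranteeing that $X_{\{\mu\}}(b)$ is nonempty exactly when $[b]\in B(G,\{\mu\})$) that residual splitness, together with the He--Rapoport axioms to be verified along the way, is used.

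For the group $I_\phi$: by Tate's theorem on homomorphisms of abelian varieties over finite fields, applied with the tensors after descent to some $\F_q$, $I_\phi$ is an inner form of a subgroup of $G$, with $I_\phi(\Q_p)$ identified with the $\sigma$-centraliser $J_b(\Q_p)$ and $I_\phi(\A_f^p)$ with a subgroup of $G(\A_f^p)$, and $I_\phi$ acts on $X_p(\phi)\times X^p(\phi)$ through these identifications. Using that a quasi-isogeny compatible with all tensors and level structures is determined by its action on Tate and Dieudonn\'e modules, one checks that two of the constructed points lie in one isogeny class if and only if they differ by the $I_\phi(\Q)$-action, which yields injectivity of the map above; matching up the various adelic components uses connectedness of $G$ and the standing hypothesis $p\nmid|\pi_1(G_{\mathrm{der}})|$. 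Together with the surjectivity from the previous step this proves (i).

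Finally, for (ii) I would produce in each isogeny class a point with sufficiently many complex multiplications. Following \cite{LR} and Kisin's argument in the hyperspecial case, the aim is to exhibit within the isogeny class an abelian variety arising from a special Shimura sub-datum $(T,\{h\})$ with $T\subset G$ a maximal torus: after replacing $x$ by a point of its isogeny class with a convenient Newton point, one constructs a maximal $\Q$-torus $T\subset G$ which contains a maximal $\R$-split torus defined by some $h\in X$, whose base change to $\Q_p$ contains a maximal torus through which $b$ factors and which is compatible with $\rmK_p$, and which satisfies the appropriate conditions at the remaining finite places. Residual splitness is what makes the $p$-adic torus transferable to such a global $T$ (via a Kottwitz-type existence argument for tori over number fields). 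The associated CM abelian variety lies in the isogeny class of $x$ by construction and is the reduction of the corresponding special point, which gives (ii); and since the isogeny classes sweep out all of $B(G,\{\mu\})$, non-emptiness of every Newton stratum follows.
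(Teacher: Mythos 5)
Your setup (tensors, $b\in B(G,\{\mu\})$, $X_p(\phi)$ as a union of affine Deligne--Lusztig varieties, $I_\phi$, the formal prime-to-$p$ part) matches the paper, but at the two places you yourself flag as ``the heart'' the proposal stops at a plan and misses the ideas the paper actually needs. For part (i) the paper argues in two steps on connected components of $X(\sigma(\{\mu\}),b)$: (a) if the map to $\mathscr{S}_{\mathrm{K}_p}(G,X)(\overline{\mathbb{F}}_p)$ is defined at one point of a component it is defined on the whole component --- since the adapted deformation rings are not smooth, Grothendieck--Messing is unavailable and the paper deforms the display over $A_{\mathcal{G}}[[t]]$ using Zink's theory of Dieudonn\'e displays; your ``work with the adapted deformation rings and the combinatorics of $\mathrm{Adm}(\{\mu\})$'' names no mechanism for this. (b) Every component must contain a liftable point: lifting isogenies coming from $G(\mathbb{Q}_p)$ only reaches components indexed by $\pi_1(G)_I^{\sigma}$, which suffices for basic $b$ but not in general; the paper needs the He--Zhou bound expressing $\pi_0(X(\{\mu\},b))$ through $\sigma$-straight elements $w$ and the Levi subgroups $M_{\nu_w}$, then constructs $(\mathcal{M},\lambda_w)$-adapted liftings to run the lifting argument inside each Levi, and conjugates straight elements by simple reflections and length-zero elements (Steps 1--4 of the proof of Proposition \ref{prop6.1}). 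This is precisely where residual splitness enters ($\sigma$ acts trivially on the Iwahori--Weyl group, so the $\pi_0$ bound applies); by contrast, non-emptiness of $X(\{\mu\},b)$ for $[b]\in B(G,\{\mu\})$, which you attribute to residual splitness, is He's theorem with no such hypothesis. You also work directly at level $\mathrm{K}_p$, whereas the paper proves the statement at Iwahori level and deduces the general parahoric case from the change-of-parahoric morphisms and the surjectivity $X(\{\mu\},b)\twoheadrightarrow X(\{\mu\},b)_{K'}$ (Section 7); some reduction of this kind has to appear.

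For part (ii) your plan is to transfer a torus globally by a Kottwitz-type existence argument, but the genuine obstruction in the ramified case is local at $p$ and is not addressed: for a ramified group the image of $\mu$ in $X_*(T)_I$ need not be minuscule for the \'echelonnage root system, so the hyperspecial argument producing a CM filtration does not carry over verbatim. The paper's key new input is the generalized Langlands--Rapoport lemma (Lemma \ref{LR}): a cocharacter $\mu_{T_p}$ of a maximal torus $T_p\subset I_p$ that is $G$-conjugate to $\mu$ and has Galois average $\nu_\delta$, whose proof requires lifting translation elements $t_{\underline{\lambda}}\in \mathrm{Adm}(\{\mu\})_K$ of $X_*(T)_I$ to genuine cocharacters conjugate to $\mu$ via Stembridge's lemma (Lemma \ref{cochar}); one then builds the lift at $p$ with filtration given by $\mu_{T_p}$, moves within the isogeny class using part (i), and only afterwards identifies the torus inside $G$ using the equality of ranks of $I$ and $G$ coming from the Tate-type results. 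Without these ingredients the proposal does not yet constitute a proof of either part.
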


Let us first explain what we mean by an isogeny class. We assume for simplicity that $\rmK_p=\G(\Z_p)$, where $\G$ is an Iwahori group scheme for the rest of the introduction. It follows from the construction of the integral models, that to  each $x\in\mathscr{S}_{\rmK_p}(G,X)(\Fpbar)$, one can associate an abelian variety $\mathcal{A}_x$ with $G$ structure. This means $\mathcal{A}_x$ is equipped with certain tensors in its \'etale and crystalline cohomologies, whose stabilizer subgroups are related to the group $G$. This leads to a natural notion of the isogeny class of $x$, which breaks up into a prime to $p$ part and $p$-power part. We then obtain a decomposition of the special fiber into disjoint isogeny classes as in the conjecture, and to prove the conjecture in full one needs therefore a description of points in an individual isogeny class, and then also an enumeration of the set of all isogeny classes. In this paper we focus on the first problem. The key ingredient needed for the enumeration of the set of all isogeny classes is part ii) of the above theorem, this allows one to relate the set of isogeny classes to some data on the generic fiber where one has a good description of the points. Note that part ii) of the Theorem has been announced in \cite{KMS}, here we provide a different proof more along the lines of \cite[\S 2]{Ki3}. To go from the above theorem to the conjecture in full requires some technical computations involving Galois cohomology, which the author intends to return to in a future work. The above then can really be thought of as the arithmetic heart of the conjecture of \cite{LR}.

Let us now give some details about the theorem and its proof.  The general strategy follows that of \cite{Ki3}, however there are many obstructions to adapting the proof over directly for the case of general parahorics. As was mentioned above, each isogeny class decomposes into a $p$-power part and a prime-to-$p$ part; describing the $p$-power part is the most difficult part of the problem.

To an $x$ as above we can associate an $X_p(\phi)$ which is a union of affine Deligne-Lusztig varieties (see \S5.2 for the precise definition).  By the construction of these integral models, one has a map $$\mathscr{S}_{\rmK_p}(G,X)\rightarrow \mathscr{S}_{\rmK_p'}(GSp(V),S^\pm)\otimes\Ok_{E_{(v)}}$$ where $\mathscr{S}_{\rmK_p'}(GSp(V),S^\pm)$ is an integral model for the Siegel Shimura variety $Sh_{\rmK_p'}(GSp(V),S^\pm)$, defined as a moduli space for abelian varieties with polarization and level structure. 
Using Dieudonn\'e theory it is possible to define a natural map $$\tilde{i}_x:X_p(\phi)\rightarrow \mathscr{S}_{\rmK_p'}(GSp,S^\pm)(\Fpbar)$$ one would like to show this lifts to a well-defined map $$i_x:X_p(\phi)\rightarrow \mathscr{S}_{\rmK_p}(G,X)(\Fpbar)$$ satisfying good properties, the image of the map will then be the $p$-power part of the isogeny class for $d$. This is carried out in two steps. One can show that $X_p(\phi)$ has a geometric structure as a closed subscheme of the Witt vector affine flag variety of \cite{Zhu} and \cite{BS}. In particular there is a notion of connected components for the $X_p(\phi).$

1) Show that if $i_x$ is defined at a point of $X_p(\phi)$ then it is defined on the whole connected component containing the point.

2) Show that  every connected component of $X_p(\phi)$ contains a point at which $i_x$ is well-defined by lifting isogenies to characteristic 0.

For part 1), one uses an argument involving deformations of $p$-divisible groups. The analogous argument in \cite{Ki3} uses Grothendieck-Messing theory, in our context this is not possible since the test rings one needs to deform to are no longer smooth. Hence we use a new argument using Zink's theory of displays.

To carry out  2), an essential part is to get a description of (or at least a bound on) the connected components of $X_p(\phi)$. Such a bound is obtained in \cite{HZ}. The bound obtained there is somewhat more complicated than the description for the case of hyperspecial level.  This necessitates an improvement in the argument for lifting isogenies to characteristic 0. The main new innovation here is that one can move about through different Levi-subgroups of $G$ using characteristic $0$ isogenies.

Note that the bound obtained in \cite{HZ}, is only good enough to carry out the argument for groups which are residually split at $p$. However part 1) of the argument goes through in the more general setting. This already allows us to deduce the following interesting corollary. 

By construction of $\mathscr{S}_{\rmK_p}(G,X)$, we have a well-defined map $$\delta:\mathscr{S}_{\rmK_p}(G,X)(\Fpbar)\rightarrow B(G,\mu)$$ called the Newton stratification. Here $\mu$ is the inverse of the Hodge cocharacter and $B(G,\mu)\subset B(G)$ consists of the set of  neutral acceptable $\sigma$-conjugacy classes as in \cite{RV}, it is the group theoretic analogue of the set of isomorphism classes of isocrystals satisfying Mazur's inequality. In the case $G=GSp$, the integral model is a moduli space for polarized abelian varieties and this map sends an abelian variety to the isomorphism class of the associated isocrystal. The following result can then be thought of  as a generalization of the classical Manin's problem, which asks whether a $p$-divisible with Newton slopes 0,1 symmetric about $\frac{1}{2}$ arises from an abelian variety up to isogeny.

\begin{thm}
Let $(G,X)$ and $\mathrm{K}_p$ satisfy the assumptions $(*)$. Then $\delta$ is surjective. 
\end{thm}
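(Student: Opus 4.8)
The plan is to deduce Theorem 1.2 (surjectivity of the Newton stratification $\delta$) from the partial results that hold without the residually split hypothesis, namely step 1) of the main argument: if the lifting map $i_x$ is defined at one point of a connected component of $X_p(\phi)$, then it is defined on the whole component. Combined with the existence (already in the literature, e.g. from the He-Rapoport axioms verified in the body of the paper) of at least one point in $\mathscr{S}_{\rmK_p}(G,X)(\Fpbar)$ in each suitable isogeny class, this should let us realize every $[b]\in B(G,\mu)$ as the Newton point of some mod-$p$ point.

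\medskip

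First I would recall the group-theoretic nonemptiness of affine Deligne-Lusztig varieties: for every $[b]\in B(G,\mu)$ the set $X_\mu(b)$ (in the Iwahori or parahoric version) is nonempty — this is the theorem of Gashi / He, or can be extracted from the Kottwitz-Rapoport and He-Rapoport framework, and it is precisely the group-side incarnation of Mazur's inequality. So it suffices to show that each such $X_\mu(b)$ contains a point that actually comes from the special fiber $\mathscr{S}_{\rmK_p}(G,X)(\Fpbar)$ under the uniformization map $i_x$. Next I would invoke the already-established fact (this is where the He-Rapoport axioms verified in the paper enter) that $\mathscr{S}_{\rmK_p}(G,X)(\Fpbar)$ is nonempty and, more to the point, that one has enough points to start the process: one fixes a single point $x_0$ of the Shimura variety — for instance one lifting to a special point, whose existence is part ii) of Theorem 1.1 but which in the relevant form is available here — lying in a basic or otherwise controlled isogeny class, and then uses the combinatorics of $B(G,\mu)$ together with the change-of-group / change-of-Levi techniques to reach an arbitrary $[b]$.

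\medskip

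The key step is then part 1) applied to $X_p(\phi)$: since by \cite{HZ} (or even just by the weaker connectedness statements needed here) every connected component of the relevant affine Deligne-Lusztig variety is reached, and since step 1) propagates the well-definedness of $i_x$ across a whole connected component, it is enough to exhibit one good point per connected component; and a good point in each component is produced exactly by step 2) in the form needed for \emph{this} weaker statement, which does not require the full residually-split bound on $\pi_0$. Concretely: pick $x$ with $\delta(x)$ already known; the isogeny class of $x$ has $X_p(\phi)$ meeting every Newton stratum indexed by $B(G,\mu)$ via the natural stratification of the affine Deligne-Lusztig variety by $\sigma$-conjugacy classes, and the closure relations together with step 1) let the map $i_x$ spread out to points with arbitrary Newton cocharacter in $B(G,\mu)$, whose images in $\mathscr{S}_{\rmK_p}(G,X)(\Fpbar)$ then have the prescribed $\delta$-value by compatibility of $\delta$ with Dieudonné theory.

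\medskip

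The main obstacle I expect is precisely the interface between the geometric structure on $X_p(\phi)$ (as a closed subscheme of the Witt vector affine flag variety of \cite{Zhu}, \cite{BS}) and the statement ``$i_x$ defined at one point $\Rightarrow$ defined on the component'': one must know that the $i_x$-locus is open and closed, which rests on the display-theoretic deformation argument replacing Grothendieck-Messing (the test rings being non-smooth), and one must have enough control of $\pi_0(X_\mu(b))$ to know the good points produced by lifting isogenies actually hit every component — here the weaker, non-residually-split information suffices, but checking this carefully, and checking that the nonemptiness of $X_\mu(b)$ for \emph{all} $[b]\in B(G,\mu)$ is genuinely available in the parahoric generality at hand (rather than only for hyperspecial level), is the delicate point. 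Once these are in place, surjectivity of $\delta$ follows formally.
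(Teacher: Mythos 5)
There is a genuine gap, and it sits at the heart of your argument: you propose to fix a single point $x$ (equivalently a single isogeny class $\phi$) and to reach every $[b]\in B(G,\mu)$ by "spreading out" the map $i_x$ over $X_p(\phi)$, claiming that $X_p(\phi)$ "meets every Newton stratum indexed by $B(G,\mu)$ via the natural stratification of the affine Deligne--Lusztig variety by $\sigma$-conjugacy classes." This cannot work. The set $X_p(\phi)=X(\{\mu\},b)$ is attached to one fixed $\sigma$-conjugacy class $[b]$: every point of it parametrizes a lattice in the fixed isocrystal $(\D(\pdiv_{\overline{x}})[1/p],b\sigma)$, and quasi-isogenies do not change the isocrystal up to isomorphism. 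Hence $\delta$ is \emph{constant} on an isogeny class, and the whole image of $i_{\overline{x}}:X(\sigma(\{\mu\}),b)\times G(\A_f^p)\rightarrow\mathscr{S}_{\rmK_p}(G,X)(k)$ lies in the single Newton stratum $\delta^{-1}([b])$. So no amount of connected-component propagation (Proposition \ref{prop8}), closure relations inside $X(\{\mu\},b)$, or He's nonemptiness of $X(\{\mu\},b)$ for all $[b]\in B(G,\{\mu\})$ lets you change the Newton point from within one isogeny class; you would need a different point of the special fiber, not isogenous to $x$, for each $[b]$, and producing such points is exactly the problem to be solved. (Your appeal to special-point lifts is also not available here: part ii) of Theorem 1.1 is proved in \S 9 only under the residually split hypothesis.)

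The mechanism the paper uses to pass between Newton strata is the Kottwitz--Rapoport stratification $\lambda$, not motion inside an isogeny class. After reducing to Iwahori level via the change-of-parahoric maps of Theorem \ref{thm3} (which are surjective and compatible with $\delta$ and $\lambda$), one proves Axiom 5: the minimal KR stratum $\lambda_{\mathrm{I}_p}^{-1}(\tau_{\{\mu\}})$ is nonempty. This uses the nonemptiness of the \emph{basic} Newton stratum from \cite{KMS}, the fact that Proposition \ref{prop6.1} holds for basic $b$ without any residually split assumption, and Proposition \ref{KR} identifying $\lambda(i_{\overline{x}}(g))$ with the KR position of $g$. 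Openness in the local model diagram then makes $\lambda_{\mathrm{I}_p}$ surjective onto $\Adm(\{\mu\})$, and finally the purely group-theoretic result of \cite{HR} (via \cite{He3}) that for each $[b]\in B(G,\mu)$ there is a $\sigma$-straight $w\in\Adm(\{\mu\})$ with $\lambda^{-1}(w)\subset\delta^{-1}([b])$ yields surjectivity of $\delta$. In short: the bridge from the one stratum you can populate (the basic one) to all of $B(G,\mu)$ is $\lambda$ together with He's $\sigma$-straight-element theorem, and this ingredient is entirely missing from your proposal.
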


This is proved by verifying some of the He-Rapoport axioms for integral models of Shimura varieties in \cite{HR}. In work in progress of \cite{KMS}, the authors have shown surjectivity of this map for groups which are quasi-split at $p$ using a different method. There is an obstruction to their technique working for non quasi-split groups. In contrast, our proof works for non quasi-split groups. For this it is essential to be able to work Iwahori level. The key part is to prove non-emptiness of the minimal Kottwitz-Rapoport stratum at Iwahori level, this shows the surjectivity at Iwahori level. This allows one to deduce the surjectivity statement for all parahoric levels by using suitable comparision maps between models with different levels. However, one major input to the proof is the non-emptiness of the basic locus, which is proved in \cite{KMS}.

Let us give a brief outline of the paper. In section 2 we recall some preliminaries on Bruhat-Tits buildings and Iwahori Weyl groups associated to a $p$-adic group. In section 3 we recall the construction of local models of Shimura varietes in \cite{PZ} and prove certain results about their embeddings into Grassmannians. In section 4 we recall the construction in \cite{KP} of the universal $p$-divisible group over the completion of an $\Fpbar$-point of the Shimura variety. We construct in Proposition \ref{adapted} a specific lifting which will be needed in the lifting isogenies argument. Section 5 is the technical heart of the paper. We recall the bound on the connected components of affine Deligne-Lusztig varieties obtained in \cite{HZ} and show that for the basic case, enough isogenies lift to characteristic 0. In section 6 we put the results together to deduce the existence of the required map from $X_p(\phi)$ into the integral model when the level $\rmK_p$ is Iwahori, this is Proposition \ref{prop6.1}. In section 7 we deduce the existence of good maps between Shimura varieties of different level which allows us to use the case of Iwahori level to deduce the result for other parahorics. This also verifies one of the He-Rapoport axioms for these integral models. The rest of the axioms are verified in section 8 which allows us to deduce the non-emptiness of Newton strata. Finally in section 9 we prove part ii) of the main theorem.

{\it Acknowledgements:}  It is a pleasure to thank my advisor Mark Kisin for suggesting this problem to me and for his constant encouragement. I would also like to thank
George Boxer, Xuhua He, Erick
Knight, Chao Li, Tom Lovering, Anand Patel, Ananth Shankar, Xinwen Zhu and Yihang Zhu for useful discussions.

\section{Preliminaries}Let $p>2$ be a prime. Let $F$ be a $p$-adic field  with ring of integers $\Ok_F$ and residue field $\F_q$. Let $L$ be the completion of the maximal unramified extension $F^{ur}$ of $F$ and $\Ok_L$ its ring of integers. Fix an algebraic closure $\overline{F}$  of $F$ and let $\Gamma:=Gal(\overline{F}/F)$. We also write $I$ for the absolute Galois group of $L$ which can be identified with the inertia subgroup $Gal(\overline{F}/F^{ur})$ of $\Gamma$. We let $\sigma\in Gal(F^{ur}/F)$ denote the Frobenius automorphism which extends by continuity to an automorphism of $L$.

Let $G$ be a connected reductive group over $F$. We assume $G$ splits over a tamely ramified extension of $F$.   Let $\mathcal{B}(G,F)$ be the (extended) Bruhat-Tits building of $G(F)$. For any $x\in\mathcal{B}(G,F)$, there is a smooth affine group scheme $\G_x$ over $\Ok_F$ such that $\G_x(\Ok_F)$ can be identified with the stabililzer of $x$ in $G(F)$. The connected component $\G_x^\circ$ of $\G_x$ is the parahoric group scheme associated to $x$. We can also consider the corresponding objects over $L$. Then for $x\in\mathcal{B}(G,L)$, we have $\G^\circ_x(\mathcal{O}_L)=\G_x(\Ok_L)\cap\ker \kappa_G$ where $$\kappa_G:G(L)\rightarrow \pi_1(G)_{I}$$ is the Kottwitz homomorphism, cf. \cite[Prop. 3 and Remarks 4 and 11]{HaRa}. Thus if $x\in\B(G,F)$, $\G_x^\circ(\Ok_F)=\G_x(\Ok_F)\cap\ker\kappa_G$. We say a parahoric subgroup $\G_x^\circ$ is connected if $\G_x^\circ=\G_x$. When $G$ is unramified or semi-simple and simply connected, every parahoric is connected.

Let $S\subset G$ be a maximal $L$-split torus defined over $F$ and $T$ its centralizer. Since $G$ is quasi-split over $L$ by Steinberg's theorem, $T$ is a maximal torus of $G$. Let $\mathfrak{a}$ denote a $\sigma$-invariant alcove in the apartment $V$ associated to $S$.
The relative Weyl group $W_0$ and the Iwahori Weyl group are defined as $$W_0=N(L)/T(L)\ \ \ W=N(L)/\mathcal{T}_0(\mathcal{O}_L)$$
where $N$ is the normalizer of $T$ and $\mathcal{T}_0$ is the connected  Neron model for $T$. These are related by an exact sequence 
$$0\rightarrow X_*(T)_I\rightarrow W\rightarrow W_0\rightarrow 0$$
For an element $\lambda\in X_*(T)_I$ we write $t_\lambda$ for the corresponding element in $W$, such elements will be called translation elements.

Let $\mathbb{S}$ denote the simple reflections in the walls of $\mathfrak{a}$. We let $W_a$ denote the affine Weyl group, it is the subgroup of $W$ generated by the reflections in $\mathbb{S}$. $W_a$ has the structure of a coxeter group and  hence a notion of length and Bruhat order which extends to $W$ in the natural way. The Iwahori Weyl group and affine Weyl group are related via the following exact sequence.

$$0\rightarrow W_a\rightarrow W\rightarrow \pi_1(G)_I\rightarrow 0$$
The choice of $\mathfrak{a}$ induces a splitting of this exact sequence and $\pi_1(G)_I$ can be identified with the subgroup $\Omega\subset W$ consisting of length $0$ elements.

Now let $\{\mu\}$ be a geometric conjugacy class of homomorphisms of $\mathbb{G}_m$ into $G$. Let $\underline{\mu}$ denote the image in $X_*(T)_I$ of a dominant (with respect to some choice of Borel defined over $L$) representative of $\mu\in X_*(T)$ of $\{\mu\}.$ The $\mu$-admissible set (cf. \cite[\S 3]{Ra})is defined to be $$\Adm(\{\mu\})=\{w\in W|w\leq t_{x(\underline{\mu})} \text{ for some }x\in W_0\}$$
Note the admissible set has a unique minimal element denoted  $\tau_{\{\mu\}}$; it is the unique element of $\Adm(\{\mu\})\cap\Omega$.

Now let $K\subset \mathbb{S}$ be a $\sigma$-stable subset and let $W_K$ denote the subgroup of $W$ generated by $K$. If $W_K$ is finite, then the fixed points of $K$ determines a parahoric subgroup $\G$ which is defined over $\Ok_F$. We then set
$\Adm_{K}(\{\mu\})$ to be the image of $\Adm(\{\mu\})$ in $W_K\backslash W/W_K.$ This subset only depends on the parahoric $\G$ and not on the choice of alcove $\mathfrak{a}$. We sometimes write $\Adm^G_K(\{\mu\})$ if we want to specify the group $G$ we are working with.

We have the Iwahori decomposition. For $w\in W$, we write $\dot{w}$ for a lift of $w$ to $N(L)$. Then the map $w\mapsto \dot{w}$ induces a bijection:
$$W_K\backslash W / W_K \cong \G(\Ok_L)\backslash G(L)/\G(\Ok_L)$$

Finally we recall the definition  and some properties of $\sigma$-straight elements. The Frobenius $\sigma$ induces an action on $W$ and $W_a$ which preserves $\mathbb{S}$. 

\begin{definition}
We say an element $w$ is $\sigma$-straight if $nl(w)=l(w\sigma(w)\dots\sigma^{n-1}(w))$ for all $n\in \mathbb{N}$. 
\end{definition}
For $w,w'\in W$ and $s\in\mathbb{S}$ we write $w\sim_sw'$ if $w'=sw\sigma(s)$ and $l(w)=l(sw\sigma(s))$. We write $w\sim w'$ if these exists a sequence $w=w_1,\dots,w_n\in W$,  $s_1,\dots,s_{n-1}\in\mathbb{S}$ and $\tau\in\Omega$ such that $w_i\sim_{s_i} w_{i+1}$ for all $i$ and we have $\tau^{-1} w_n\sigma(\tau)=w'$. We have the following which is \cite[Theorem 3.9]{HeNie}.
\begin{thm}
Let $w, w'\in W$ be straight elements such that there exists $x\in W$ with $xw\sigma(x)=w'$. Then $w\sim w'$.
\end{thm}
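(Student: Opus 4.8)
The plan is to reduce the claim to showing that, inside the common $\sigma$-conjugacy class $\mathcal{O}$ of $w$ and $w'$, all minimal length elements form a single $\sim$-equivalence class, and then to prove this by induction on the semisimple rank of $G$, using the Newton point to pass to Levi subgroups. The first point to record is that a $\sigma$-straight element has minimal length in its $\sigma$-conjugacy class. For $v\in W$ set $v^{(n)} := v\,\sigma(v)\cdots\sigma^{n-1}(v)$; taking $v = x\,w\,\sigma(x)^{-1}$ gives $v^{(n)} = x\,w^{(n)}\,\sigma^{n}(x)^{-1}$, so, since $\sigma$ preserves $\ell$ and $\ell$ is subadditive,
$$ n\,\ell(v)\ \geq\ \ell\big(v^{(n)}\big)\ \geq\ \ell\big(w^{(n)}\big) - 2\,\ell(x). $$
When $w$ is straight, $\ell(w^{(n)}) = n\,\ell(w)$, so dividing by $n$ and letting $n\to\infty$ gives $\ell(v)\geq\ell(w)$. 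Since moreover the dominant Newton point satisfies $\langle\nu_w,2\rho\rangle = \inf_{n}\ell(w^{(n)})/n$ and is constant on $\sigma$-conjugacy classes, the bound $\ell(v^{(n)})\leq n\,\ell(v)$ then forces any $v$ in the class of $w$ with $\ell(v)=\ell(w)$ to be straight as well. Hence both $w$ and $w'$ lie in the set $\mathcal{O}_{\min}$ of minimal length elements of $\mathcal{O}$, which here is precisely the set of straight elements of $\mathcal{O}$, and it remains to prove that $\mathcal{O}_{\min}$ is a single $\sim$-class.

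Next I would bring in the common dominant Newton point $\nu := \nu_w = \nu_{w'}$, which is $\sigma$-stable; then $M := \mathrm{Cent}_G(\nu)$ is a $\sigma$-stable Levi subgroup, with Iwahori Weyl group $W_M\subset W$. The crucial step is to show that every $u\in\mathcal{O}_{\min}$ can be carried, by the relation $\sim$ inside $W$, into $W_M$, where it becomes a minimal length element of an $M$-$\sigma$-conjugacy class $\mathcal{O}_M$ whose Newton point is again $\nu$, now \emph{central} in $M$. I would do this by the partial conjugation / cyclic shift method (Geck--Pfeiffer in the finite case, He for affine Weyl groups): so long as $u$ is not yet in a suitable position relative to $W_M$, one produces a simple reflection $s\in\mathbb{S}$ with $\ell(s\,u\,\sigma(s))\leq\ell(u)$ that strictly decreases an auxiliary quantity measuring the distance of the $\sigma$-orbit of $u$ from $M$, the strictness coming from $\sigma$-straightness; iterating lands one in $W_M$, and as all the elements met have minimal length the cyclic shifts used are length preserving and so realize $\sim$. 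I expect this to be the main obstacle: controlling the length under cyclic shifts for an orbit that is already ``generic'' (straight), and checking that the relation $\sim$ is compatible with the passage between $W_M$ and $W$.

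Finally I would run the induction on the semisimple rank. If $M = G$, i.e. $\nu$ is central, then $\ell(w) = \langle\nu,2\rho\rangle = 0$, so $w,w'\in\Omega$; writing $W = W_a\rtimes\Omega$ and using $W_a\cap\Omega = \{1\}$ one checks that two $\sigma$-conjugate length-zero elements satisfy $w' = \tau\,w\,\sigma(\tau)^{-1}$ for some $\tau\in\Omega$, which is an instance of $\sim$. If $M\subsetneq G$, the previous step puts $w$ and $w'$ into $W_M$ as minimal length elements of the straight $M$-$\sigma$-conjugacy class $\mathcal{O}_M$, which has central Newton point in $M$; the inductive hypothesis applied to $M$ shows that these two elements are $\sim$-equivalent in $W_M$, and one checks that a $\sim$-relation in $W_M$ yields a $\sim$-relation in $W$ (the elementary moves of $W_M$ being realized by chains of length-preserving cyclic shifts by elements of $\mathbb{S}$ together with an $\Omega$-twist). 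Combining the three steps gives $w\sim w'$.
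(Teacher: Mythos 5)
The theorem you are proving is not actually proved in the paper: it is quoted from He--Nie (cited there as \cite[Theorem 3.9]{HeNie}), so the only question is whether your argument stands on its own. Your first step does: the estimate $n\,\ell(v)\geq \ell(v^{(n)})\geq \ell(w^{(n)})-2\ell(x)$ together with $\ell(w^{(n)})=n\,\ell(w)$ shows straight elements have minimal length in their $\sigma$-conjugacy class, and the characterization $\ell(u)\geq\langle\overline{\nu}_u,2\rho\rangle$ with equality exactly for straight elements identifies $\mathcal{O}_{\min}$ with the straight elements of $\mathcal{O}$. The base case is also fine: if $\nu$ is central then $w,w'\in\Omega$, and writing the conjugator as $u\tau$ with $u\in W_a$, $\tau\in\Omega$, the fact $W_a\cap\Omega=\{1\}$ forces $w'=\tau w\sigma(\tau)^{-1}$, which is an instance of $\sim$.

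The gap is that the two steps carrying all the content are asserted, not proved. First, the claim that any straight $u$ can be driven into $W_M$, $M=\mathrm{Cent}_G(\nu)$, by length-preserving cyclic shifts: you never define the ``auxiliary quantity'' nor exhibit the simple reflection $s\in\mathbb{S}$ with the required strict decrease. This is precisely the combinatorial core of He--Nie's theorem (their partial-conjugation analysis for extended affine Weyl groups); it is not an application of Geck--Pfeiffer, which concerns finite Coxeter groups. Second, the transfer claim that a $\sim$-relation in $W_M$ yields one in $W$ is not something one merely ``checks'': the simple reflections of $W_M$ (for the induced alcove) are in general not in $\mathbb{S}$, the restriction of $\ell$ to $W_M$ differs from $\ell_M$, and $\Omega_M$ is not contained in $\Omega$ --- so the elementary moves your inductive step produces (in particular the $\Omega_M$-conjugation coming from the base case applied to $M$) are not elementary moves for $\sim$ in $(W,\ell)$, and realizing them as chains of such moves is essentially equivalent to the theorem being proved; in He--Nie this is exactly what the key lemmas about conjugating a straight element by elements of $W_M$ accomplish. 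Third, even granting that both $w$ and $w'$ can be brought into $W_M$, the element conjugating one to the other lies a priori only in $W$, and you need a conjugator in $W_M$ before the inductive hypothesis for $M$ can be invoked; this is not addressed. As it stands, the proposal reduces the statement to assertions of comparable depth rather than proving it; filling these in would amount to reproducing the He--Nie argument that the paper simply cites.
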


We will also need the following property of the Iwahori double coset corresponding to straight elements.

\begin{thm}[\cite{He1} Proposition 4.5]\label{thm2} Let $w$ be $\sigma$-straight and $\mathcal{I}$ the Iwahori subgroup correspondig to $\mathfrak{a}$. Then for every $g\in \mathcal{I}(\Ok_L)\dot{w}\mathcal{I}(\Ok_L)$ there exists $i\in \mathcal{I}(\Ok_L)$ such that $i^{-1}g\sigma(i)=\dot{w}$.
	
	\end{thm}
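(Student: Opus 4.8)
The plan is to reformulate the assertion as the transitivity of the $\mathcal{I}(\mathcal{O}_L)$-action on the double coset $\mathcal{I}(\mathcal{O}_L)\dot{w}\mathcal{I}(\mathcal{O}_L)$ by $\sigma$-conjugation $i\cdot g=i^{-1}g\sigma(i)$, and to prove this by induction on $\ell(w)$, using the structure theory of $\sigma$-straight elements from \cite{HeNie}. Throughout write $\mathcal{I}$ for $\mathcal{I}(\mathcal{O}_L)$. First a reduction: if $g=i_1\dot{w}i_2$ with $i_1,i_2\in\mathcal{I}$, then $\sigma$-conjugating by $i_1$ sends $g$ to $\dot{w}\,(i_2\sigma(i_1))\in\dot{w}\,\mathcal{I}$, so it suffices to show that every element of $\dot{w}\,\mathcal{I}$ is $\sigma$-conjugate to $\dot{w}$. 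Unwinding, this is equivalent to surjectivity onto $\mathcal{I}$ of the map $\Phi_w\colon\mathcal{I}\to G(L)$, $\Phi_w(i)=\dot{w}^{-1}i\dot{w}\cdot\sigma(i)^{-1}$ (indeed $\Phi_w(i)=u$ for $u\in\mathcal{I}$ says exactly that $i^{-1}\dot{w}u\,\sigma(i)=\dot{w}$).

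For the base case $\ell(w)=0$, write $w=\tau\in\Omega$, so $\dot{\tau}$ normalizes $\mathcal{I}$ and $\mathcal{I}\dot{\tau}\mathcal{I}=\dot{\tau}\,\mathcal{I}$. The substitution $j=\dot{\tau}^{-1}i\dot{\tau}$ turns $\Phi_\tau$ into the Lang map $j\mapsto j\,\psi(j)^{-1}$ for the automorphism $\psi=\sigma\circ\mathrm{Ad}(\dot{\tau})$ of $\mathcal{I}$. Since $\mathcal{I}$ is a smooth affine group scheme over $\mathcal{O}_F$ whose special fibre is connected (it is a connected parahoric), $\psi$ induces a Frobenius-type (Steinberg) endomorphism of $\mathcal{I}\otimes\overline{\mathbb{F}}_q$; Lang's theorem gives surjectivity of the Lang map on $\overline{\mathbb{F}}_q$-points, and smoothness lets one lift a solution up the tower $\mathcal{I}(\mathcal{O}_L/p^n)$ by successive approximation. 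Hence $\Phi_\tau$ is surjective.

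For the inductive step $\ell(w)\ge 1$ one invokes \cite{HeNie}: a $\sigma$-straight $w$ of positive length is $\sim$-equivalent to a $\sigma$-straight $w'$ admitting $s\in\mathbb{S}$ with $v:=sw'\sigma(s)$ again $\sigma$-straight and $\ell(v)=\ell(w')-2$. One first checks that the truth of the statement is invariant under the relation $\sim$ recalled above --- for conjugation by $\Omega$ this is immediate since length-zero elements normalize $\mathcal{I}$, and for the $\sim_s$-moves it follows from the same double-coset bookkeeping as below --- so we may assume $w$ itself admits such an $s$. The length identities then force $\ell(sv)=\ell(v)+1$ and $\ell(sv\sigma(s))=\ell(sv)+1=\ell(w)$, so $\mathcal{I}\dot{w}\mathcal{I}=(\mathcal{I}\dot{s}\mathcal{I})(\mathcal{I}\dot{v}\mathcal{I})(\mathcal{I}\dot{\sigma(s)}\mathcal{I})$ with all lengths adding. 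Given $g\in\mathcal{I}\dot{w}\mathcal{I}$, after a first $\sigma$-conjugation by an element of $\mathcal{I}$ one writes $g=\dot{s}\,y\,\dot{v}\,z\,\dot{\sigma(s)}$ with $y,z\in\mathcal{I}$; using the Iwahori factorization of $\mathcal{I}$ relative to $s$ (resp. $\sigma(s)$) --- splitting off the single affine root subgroup $U_{\alpha_s}$ (resp. $U_{\alpha_{\sigma(s)}}$) that $\mathrm{Ad}(\dot{s})$ (resp. $\mathrm{Ad}(\dot{\sigma(s)})$) carries outside $\mathcal{I}$ --- one $\sigma$-conjugates $g$ by a suitable element of $\mathcal{I}$ into the smaller double coset $\mathcal{I}\dot{v}\mathcal{I}$. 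The inductive hypothesis applied to $v$ supplies $i$ with $i^{-1}(\text{that element})\sigma(i)=\dot{v}$; transporting $i$ back through the factorization and composing with the earlier conjugators yields a $\sigma$-conjugator of $g$ to $\dot{w}$.

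I expect the main obstacle to be precisely this last bookkeeping: making the length reductions of \cite{HeNie} compatible with $\sigma$-\emph{conjugation} by $\mathcal{I}$ rather than ordinary conjugation, tracking exactly which affine root subgroups of $\mathcal{I}$ are carried outside $\mathcal{I}$ by $\mathrm{Ad}(\dot{w})$, $\mathrm{Ad}(\dot{s})$ and $\sigma$, and checking that the extra Iwahori double cosets appearing in products such as $(\mathcal{I}\dot{s}\mathcal{I})(\mathcal{I}\dot{w}\mathcal{I})$ do not obstruct the reduction --- this is exactly where $\sigma$-straightness enters, via the no-cancellation property $\ell(w\sigma(w)\cdots\sigma^{n-1}(w))=n\ell(w)$ that prevents any periodic behaviour of the relevant root subgroups under $\mathrm{Ad}(\dot{w})\circ\sigma$. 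An alternative to the induction is to prove surjectivity of $\Phi_w$ directly, solving $\Phi_w(i)=u$ modulo the congruence filtration of $\mathcal{I}$ and passing to the limit: the graded equations are then additive, and $\sigma$-straightness is exactly the ``no-resonance'' condition making them solvable, but the ordering of the root subgroups and the convergence of the approximation become the corresponding technical crux.
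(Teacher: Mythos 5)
Your reduction to surjectivity of the twisted Lang map $\Phi_w$ and your treatment of the case $\ell(w)=0$ (Lang--Steinberg on the special fibre of $\mathcal{I}$, then successive approximation along the congruence filtration of $\mathcal{I}(\Ok_L)$) are sound, and that is indeed how length-zero elements are handled; note also that the paper gives no argument of its own here, it simply cites \cite{He1}. The genuine gap is your inductive step. The statement you attribute to \cite{HeNie} --- that a $\sigma$-straight $w$ of positive length is $\sim$-equivalent to a $\sigma$-straight $w'$ admitting $s\in\mathbb{S}$ with $v=sw'\sigma(s)$ again $\sigma$-straight and $\ell(v)=\ell(w')-2$ --- is not in \cite{HeNie} and is false. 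A $\sigma$-straight element has minimal length in its $\sigma$-conjugacy class of $W$: one always has $\ell(x)\geq\langle \nu_x,2\rho\rangle$ (dominant Newton point), with equality exactly for $\sigma$-straight elements, and the moves $w'\mapsto sw'\sigma(s)$ and $w'\mapsto\tau^{-1}w'\sigma(\tau)$ stay inside the class and preserve the Newton point. Hence every $\sigma$-conjugate of $w$ has length $\geq\ell(w)$, and every straight element of the class has length exactly $\ell(w)$; there is nothing of length $\ell(w)-2$ to reduce to, the He--Nie relation $\sim$ is length-preserving by definition, and so your induction cannot begin once $\ell(w)>0$. The double-coset bookkeeping built on top of this is therefore vacuous.

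The actual mechanism in \cite{He1} (and the one this paper itself uses elsewhere, when it recalls that a straight $w$ lies in $W_{M_{\nu_w}}$ and has length zero there, so that $\dot{w}\mathcal{M}(\Ok_L)\dot{w}^{-1}=\mathcal{M}(\Ok_L)$, cf.\ \cite{Nie}) is not an induction on length but a reduction to the Levi $M=M_{\nu_w}$ centralizing the Newton point: the pair $(\dot{w},\mathcal{I})$ satisfies a $P$-alcove condition in the sense of G\"ortz--Haines--Kottwitz--Reuman, one $\sigma$-conjugates an element of $\dot{w}\,\mathcal{I}(\Ok_L)$ by elements of $\mathcal{I}\cap N$ and $\mathcal{I}\cap\overline{N}$ (unipotent radicals attached to $\nu_w$), and the iteration converges $p$-adically because $\mathrm{Ad}(\dot{w})\circ\sigma$ pushes those affine root subgroups strictly deeper, $\langle\alpha,\nu_w\rangle$ being nonzero off $M$; this lands you in $\dot{w}\,(\mathcal{I}\cap M(L))$, where $\dot{w}$ normalizes the Iwahori of $M$ and the Lang argument of your base case finishes. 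Your closing ``alternative'' (solve $\Phi_w(i)=u$ graded piece by graded piece) gestures at exactly this, but ``$\sigma$-straightness is the no-resonance condition'' only becomes a proof once it is cashed out as these two facts --- length zero in $W_M$, plus contraction transverse to $M$ --- so as written the proposal does not establish the theorem.
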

\section{Local models of Shimura varieties}
\subsection{}In this section we recall the construction of the local models of Shimura varieties and prove certain results concerning their embeddings into Grassmannians. Let $F$ denote a finite unramified extension of $\mathbb{Q}_p$ and $L/F$ the completion of the maximal unramified extension of $F$. We write $k$ for the residue field of $\Ok_L$.

 We start with a triple $(G,\G,\{\mu\})$ where:

$\bullet$  $G$ is a connected reductive group over $F$ which splits over a tamely ramified extension of $F$.

$\bullet$ $\G$ is a connected parahoric group scheme   associated to a point $x\in\B(G,F)$

$\bullet\ \{\mu\}$ is a conjugacy class of minuscule geometric cocharacters of $G$.

We will only need the case when $G$ is quasi split, so for the rest of this section we will make this assumption. We also assume $\G$ is the parahoric group scheme associated to a subset $K\subset \mathbb{S}$.

Let $E$ be the field of definition of  the conjugacy class $\{\mu\}$. In \cite{PZ} there is a construction of a reductive group scheme $\underline{G}$ over $\Ok_F[u^\pm]:=\Ok_F[u,u^{-1}]$ which specializes to $G$ under the map $\Ok_F[u^\pm]\rightarrow F$ given by $u\rightarrow  p$. There is also the construction of a smooth affine group scheme $\underline{\mathcal{G}}$ over $\Ok_F[u]$ which specialises to $\mathcal{G}$ under the map $\Ok_F[u]\rightarrow \Ok_F$ given by $u\rightarrow  p$. Moreover the specialization of $\underline{\G}_{{k[[t]]}}$ of $\underline{\G}$ under the map  $\Ok_F[u]\rightarrow k[[t]]$ given by $u\mapsto t$ is a parahoric subgroup of $\underline{G}_{k((t))}:=\underline{G}\otimes_{\Ok_F[u^\pm]}k((t))$.

Using these groups, there is constructed the global affine Grassmanian $Gr_{\mathcal{G},X}$ over $X:=Spec(\Ok_F[u])$ which, under the base change $\Ok_F[u]\rightarrow F$ given by $u\mapsto  p$, can be identified with the affine Grassmanian $Gr_{G,F}$ for $G$.  Recall $Gr_{G,F}$ is the ind-scheme which represents the fpqc sheaf associated to the functor on $F$-algebras $R\mapsto G(R((t))/G(R[[t]])$ (the identification is given by $t=u- p$).

Let $P_{\mu^{-1}}$ be the parabolic corresponding to $\mu^{-1}$ (we use the convention that the parabolic $P_\nu$ defined by a cocharacter $\nu$ has Lie algebra consisting of the subspace of the Lie algebra of $G$ where $\nu$ acts by weights $\geq 0$). The homogeneous space $G_{\overline{\Q}_p}/P_{\mu^{-1}}$ has a canonical model $X_\mu$ defined over $E$. We may consider $\mu$ as a $\overline{\Q}_p((t))$-point $\mu(t)$ of $G$ which gives a $\overline{\Q}_p$ point of $Gr_{G,F}$. As $\mu$ is minuscule, the action $G(\overline{\Q}_p[[t]])$ on $\mu(t)$ factors through $G(\overline{\Q}_p[t]])\rightarrow G(\overline{\Q}_p)$ and the image of the stabilizer of $\mu(t)$ in $G(\overline{\Q}_p)$ is equal to $P_{\mu^{-1}}$. Thus the $G(\overline{\Q}_p[[t]])$-orbit of $\mu(t)$ in $Gr_{G,F}$ can be $G_E$-equivariantly identified with $X_\mu$.
  \begin{definition} The local model $M^{loc}_{\G,\mu}$ is defined to be the Zariski closure of $X_\mu$ in $Gr_{\G,X}\times _X Spec(\Ok_E)$, where the specialization map $X\rightarrow \Spec\Ok_E$ is given by $u\mapsto p$
\end{definition}
We will usually write $M^{loc}_{\G}$ for $M^{loc}_{\G,\mu}$ when it is clear what the  cocharacter $\mu$ is.
By its construction $M^{loc}_{\G}$ is a projective scheme over $\Ok_E$ admitting an action of $\G\times_{\Ok_F}\Ok_E$. The following is \cite[Theorem 8.1]{PZ}
\begin{thm} Suppose $p$ does not divide the order of $\pi_1(G_{der})$. Then the scheme $\loc$ is normal, the geometric special fibre is reduced and admits a stratification by locally closed smooth strata; the closure of each stratum is normal and Cohen-Macaulay.
\end{thm}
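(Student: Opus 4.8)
The plan is to reduce every assertion to standard facts about affine Schubert varieties in the affine flag variety of the tamely ramified group $\underline{G}_{k((t))}$, the bridge being the coherence conjecture of Pappas and Rapoport (proved in general by Zhu) together with Frobenius splitting in characteristic $p$. Since $\loc$ is by construction the Zariski closure of the integral scheme $X_\mu$ inside the projective $\Ok_E$-scheme $Gr_{\G,X}\times_X\Spec\Ok_E$, it is automatically reduced, irreducible, projective and flat over the discrete valuation ring $\Ok_E$, with generic fibre $X_\mu$; and because $\{\mu\}$ is minuscule, $X_\mu\cong G/P_{\mu^{-1}}$ is smooth and projective, coinciding (as recalled in \S3) with the minuscule affine Grassmannian Schubert variety $\overline{Gr}_{\leq\mu}\subset Gr_{G,E}$, which is itself already closed and smooth. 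A standard dévissage (passage to a $z$-extension, to the product of simple factors, and to the simply connected cover, which is harmless precisely because $p\nmid|\pi_1(G_{der})|$) lets one assume $G$ is as simple as the inputs below require.

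First I would identify the geometric special fibre set-theoretically. By the construction recalled in \S3, the geometric special fibre of $Gr_{\G,X}\times_X\Spec\Ok_E$ is the affine flag variety $\mathcal{F}\ell$ of $\underline{G}_{k((t))}$ attached to the parahoric $\underline{\G}_{k[[t]]}$, whose reduced locus stratifies into affine Schubert cells $\mathcal{F}\ell_w$ indexed by $w\in W_K\backslash W/W_K$. The claim is that $(\loc\otimes_{\Ok_E}\kbar)_{\mathrm{red}}$ equals the admissible union $A:=\bigcup_{w\in\Adm_K(\{\mu\})}\overline{\mathcal{F}\ell}_w$. The inclusion ``$\subseteq$'' follows by degenerating the $W_0$-translates of $\mu(t)$ and using that the special fibre is $\underline{\G}_{k[[t]]}$-stable of dimension $\dim X_\mu$, together with the Bruhat-order description of $\Adm(\{\mu\})$; the reverse inclusion comes from degenerating explicit lifts of the maximal translation elements $t_{x(\underline{\mu})}$, $x\in W_0$, whose Schubert closures exhaust $A$. (The Iwahori case $K=\emptyset$ is the essential one, the general case following by pushforward along the projection from Iwahori to parahoric level.)

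The crucial step, and the one I expect to be the main obstacle, is to promote this to the scheme-theoretic equality $\loc\otimes_{\Ok_E}\kbar=A$, i.e.\ to show the special fibre is \emph{reduced}. Let $\mathcal{L}$ be the natural ample determinant line bundle on $Gr_{\G,X}$, written $\mathcal{L}$ also for its restrictions. Flatness of $\loc$ over $\Ok_E$ gives, for $n\gg0$,
$$\dim_{\kbar}H^0(\loc\otimes_{\Ok_E}\kbar,\mathcal{L}^{\otimes n})=\dim H^0(X_\mu,\mathcal{L}^{\otimes n})=\dim H^0(\overline{Gr}_{\leq\mu},\mathcal{L}^{\otimes n}),$$
the last equality because $X_\mu\cong\overline{Gr}_{\leq\mu}$ with matching polarization. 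The coherence conjecture of Pappas and Rapoport, proved by Zhu, then identifies $\dim H^0(\overline{Gr}_{\leq\mu},\mathcal{L}^{\otimes n})$ with $\dim H^0(A,\mathcal{L}^{\otimes n})$. Since $A=(\loc\otimes_{\Ok_E}\kbar)_{\mathrm{red}}$, the nilradical sheaf $\mathcal{N}$ of $\loc\otimes_{\Ok_E}\kbar$ sits in an exact sequence $0\to\mathcal{N}\otimes\mathcal{L}^{\otimes n}\to\mathcal{L}^{\otimes n}\to\mathcal{L}^{\otimes n}|_A\to0$ in which the restriction is surjective on $H^0$ for $n\gg0$ by Serre vanishing; comparing the displayed dimensions forces $H^0(\mathcal{N}\otimes\mathcal{L}^{\otimes n})=0$ for all large $n$, hence $\mathcal{N}=0$ by ampleness. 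Thus $\loc\otimes_{\Ok_E}\kbar=A$ is reduced. Everything here rests on the coherence conjecture, which is the genuinely deep input.

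It remains to read off the structural consequences. The strata of $A$ are the affine Schubert cells $\mathcal{F}\ell_w$, $w\in\Adm_K(\{\mu\})$, each isomorphic to an affine space, hence smooth and locally closed, and the closure of the $w$-stratum is the affine Schubert variety $\overline{\mathcal{F}\ell}_w$. Normality and Cohen--Macaulayness of the $\overline{\mathcal{F}\ell}_w$ follow from a Frobenius splitting of $\mathcal{F}\ell$ compatible with all Schubert divisors (Faltings in the split case; Pappas--Rapoport and Zhu for the tamely ramified twisted groups, the hypothesis $p\nmid|\pi_1(G_{der})|$ being exactly what makes such a splitting exist): the compatible splitting forces each $\overline{\mathcal{F}\ell}_w$ to be normal, while the accompanying vanishing $H^{i}(\overline{\mathcal{F}\ell}_w,\mathcal{L}^{\otimes n})=0$ for $i>0$, $n\geq0$ yields Cohen--Macaulayness by the usual Kempf-type argument. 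Finally, $\loc$ is flat over $\Ok_E$ with Cohen--Macaulay special fibre (the union $A$ of equidimensional Cohen--Macaulay Schubert varieties meeting in codimension one is itself Cohen--Macaulay), hence $\loc$ is Cohen--Macaulay and in particular satisfies $(S_2)$; and since by reducedness the special fibre is a reduced Cartier divisor, $\loc$ is regular at the generic points of its special-fibre components, so satisfies $(R_1)$. By Serre's criterion $\loc$ is normal, which completes the proof.
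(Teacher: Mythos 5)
Your sketch is, in outline, the actual Pappas--Zhu argument: the paper does not prove this statement but quotes it verbatim from \cite[Theorem 8.1]{PZ}, and the proof there runs exactly through the coherence conjecture of Pappas--Rapoport (proved by Zhu) for reducedness, normality/Cohen--Macaulayness and Frobenius splitting of Schubert varieties in twisted affine flag varieties under $p\nmid|\pi_1(G_{der})|$, and Serre's criterion for normality of $\loc$. However, two of your steps have real gaps as written. First, the set-theoretic inclusion $(\loc\otimes\kbar)_{\mathrm{red}}\subseteq\bigcup_{w\in\Adm_K(\{\mu\})}S_w$ does not follow from ``$\underline{\G}$-stability plus dimension plus the Bruhat order'': flatness only tells you the special fibre is a union of Schubert varieties of dimension $\langle2\rho,\mu\rangle$, and there are plenty of elements of $W_K\backslash W/W_K$ of that length which are \emph{not} admissible, so a dimension count cannot exclude them. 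What one proves directly is only the easy containment (the admissible union sits inside the special fibre, by specializing the points $x(\mu)(t)$, $x\in W_0$); the reverse containment is an \emph{output} of the coherence argument, not an input: since the admissible union $A$ is a closed subscheme of $\loc\otimes\kbar$ with the same $\dim H^0(\cdot,\mathcal{L}^{\otimes n})$ for $n\gg0$ (flatness on one side, the coherence conjecture on the other), the ideal sheaf of $A$ is killed by ampleness, so $\loc\otimes\kbar=A$ as schemes, giving the containment and reducedness simultaneously. As organized, your reducedness step presupposes the identification $A=(\loc\otimes\kbar)_{\mathrm{red}}$ that you have not established.

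Second, your route to $(S_2)$ is not valid: the parenthetical claim that a union of equidimensional Cohen--Macaulay Schubert varieties meeting in codimension one is Cohen--Macaulay is false as a general lemma (pure, strongly connected unions of CM components need not be CM -- Reisner-type examples already show this), and Cohen--Macaulayness of the whole admissible union, equivalently of $\loc$, is a genuinely hard later theorem (Haines--Richarz) which is neither claimed in the statement nor obtainable this way; note the theorem only asserts CM-ness of the closures of the individual strata. Fortunately this detour is unnecessary: once the special fibre is reduced, normality of $\loc$ follows from flatness over $\Ok_E$ and smoothness of the generic fibre alone. Indeed $(R_1)$ holds because a codimension-one point either lies in the smooth generic fibre or is a generic point of the special fibre, where reducedness forces the uniformizer to generate the maximal ideal, so the local ring is a DVR; and $(S_2)$ holds because at a point of the special fibre of codimension at least two one has $\mathrm{depth}\,\Ok_{\loc,x}\geq 1+\mathrm{depth}\,\Ok_{\loc\otimes\kbar,x}\geq 2$, using only $(S_1)$ of the reduced fibre and that $p$ is a nonzerodivisor. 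Replacing your CM argument by this standard dévissage, and reorganizing the first step as above, yields a correct proof along the lines of \cite{PZ}.
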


This theorem is proved by identifying the geometric special fiber with an explicit subscheme of the partial affine flag variety $\mathcal{FL}_ {\underline{\G}_{k[[t]]}}$ for $\underline{\G}_{{k[[t]]}}$. This is an ind-scheme which represents the fpqc sheaf associated to the functor $R\mapsto \underline{G}_{k((t))}(R((t)))/\underline{\G}_{k[[t]]}(R[[t]])$ for a $k$-algebra $R$. We have an identification $$\mathcal{FL}_ {\underline{\G}_{k[[t]]}}(k)\cong \underline{G}_{k((t))}(k((t)))/\underline{\G}_{k[[t]]}(k[[t]])$$
By \cite[\S 3.a.1]{PZ}, there is an identification of Iwahori Weyl groups for $G$ and $\underline{G}_{k((t))}$. Thus for $w\in W_K\backslash W /W_K$ we obtain a point $\dot{w}_{k[[t]]}\in\mathcal{FL}_{\underline{\G}_{k[[t]]}}$, a lift of $w$ to $\underline{G}_{k((t))}(k((t)))$ and we let $C_w$ denote the $\underline{\G}_{k[[t]]}$ orbit of $\dot{w}_{k[[t]]}$ in $\mathcal{FL}_{\underline{\G}_{k[[t]]}}$ and $S_w$ its closure. $C_w$ and $S_w$ are respectively known as the Schubert cell and Schubert variety corresponding to $w$. Then by \cite[Theorem 8.3]{PZ} we have an identification $$M_{\G}^{loc}\otimes_{\Ok_E}k\cong \bigcup_{w\in \Adm_K(\{\mu\})}C_w$$

\begin{example}\label{GL}
Let $G=GL_n$ and we let $\mu$ is the cocharacter $a\mapsto\text{diag}(a^{(r)},1^{(n-r)})$. Let $e_1,\dots,e_n$ be the standard basis for $F^n$, and $\mathcal{GL}$ the Iwahori subgroup which is the stabilizer of the standard lattice chain $$\Lambda_0\supset\Lambda_1\supset \dots \supset\Lambda_{n-1}$$ where $$\Lambda_i:=\text{span}\langle  p e_1,\dots, p e_i,e_{i+1},\dots,e_n\rangle$$

The local model in this case agrees with that considered in \cite{RZ}, as mentioned in \cite[\S 6.b.1]{PZ}. In this case there is the following description. Given an $\Ok_F$-scheme $S$, we let $\mathbf{M}^{loc}_{\mathcal{GL}}(S)$ denote the set of isomorphism classes of commutative diagrams:

\[\xymatrix{ \Lambda_{0,S} & \Lambda_{1,S} \ar[l] & \dots\ar[l] & \Lambda_{n-1,S}\ar[l] \\ \mathcal{F}_0 \ar[u]&\mathcal{F}_1 \ar[l] \ar[u]  & \dots\ar[l] & \mathcal{F}_{n-1} \ar[l] \ar[u]}\]
where $\Lambda_{i,S}:=\Lambda_i\otimes_{\Ok_F}S$ and $\mathcal{F}_i$ is a locally free $\Ok_S$ module of rank $r$ and $\mathcal{F}_i\rightarrow\Lambda_{i,S}$ is an inclusion which locally on $S$ is a direct summand of $\Lambda_{i,S}$.
Let us explain how this description  is related to the $M^{loc}_{\mathcal{GL}}$ considered by \cite{PZ} and which was described in the last section. 

We use the following convention for a filtration defined by a cocharacter. Let $V$ be a finite dimensional vector space over $F$ or a finite free $\Ok_F$-module. Then a cocharacter $\mu:\mathbb{G}_m\rightarrow GL(V)$ induces a grading $V=\bigoplus_{i\in \Z}V_i$ where $\mathbb{G}_m$ acts on $V_i$ by the character $z\mapsto z^i$. It induces the filtration on $V$ given by $F^i:=\bigoplus_{j\geq i}V_j$. The stabilizer of this filtration is given by the parabolic subgroup $P_{\mu}\subset GL_n$ associated to $\mu$.

From the description of $\mathbf{M}^{loc}_{\mathcal{GL}}$ above, its generic fiber can be identified with the homogeneous space $GL_n/P_{\mu}$. Indeed when $ p$ is inverted, all the $\Lambda_i$ coincide and the choice of $\mathcal{F}_0\subset \Lambda_{0,F}$ determines the other $\mathcal{F}_i$. 

This implies that in the  construction of the local model $M^{loc}_{\mathcal{GL}}$, we must therefore take the defining cocharacter to be $\mu^{-1}$. In this case the stabilizer of the point of $GL_n(\overline{\Q}_p((t)))/GL_n(\overline{\Q}_p[[t]])$ corresponding to $\mu^{-1}$ is $P_{\mu}$, hence the generic fiber of $M^{loc}_{\G,\mu^{-1}}$ is identified with $GL_n/P_\mu$ as above. 

The identification of $\mathbf{M}^{loc}_{\mathcal{GL}}(k)$ and $M^{loc}_{\mathcal{GL}}(k)$ is given as follows. The group $\underline{\mathcal{GL}}_{{k[[t]]}}$ is the Iwahori subgroup in $GL_n(k((t))$ stabilizing the standard lattice chain $$\Lambda'_0\supset\dots\supset\Lambda_{n-1}'$$ in $k((t))^n$ where $\Lambda'_i:=\text{span}\langle t e_1,\dots,t e_i,e_{i+1},\dots,e_n\rangle$. We may identify the special fibers of $\Lambda_i$ and $\Lambda_i'$ by the choice of standard basis. Given a point of $\mathbf{M}^{loc}_{\mathcal{GL}}(k)$, we obtain a subspace $F^i\subset \Lambda_i'\otimes k$ via the above identification, where $F^i$ is of dimension $r$. The preimages  $\mathcal{L}_i$ of $F^i$ in $\Lambda'_i$ corresponds to a lattice chain of the same type as $\Lambda_0'\supset\dots\supset\Lambda_{n-1}'$. As in \cite[3.1.3]{Go1}, there exists an element $g\in GL_n(k((t)))/\underline{\mathcal{GL}}(k[[t]])$ such that $\mathcal{L}_i=g\Lambda_i'$ for all $i$. The corresponding point of $M^{loc}_{\mathcal{GL}}(k)$ is given by $xt^{-1}$ (here we consider $k((t))^\times\subset GL_n(k((t)))$ via the scalar matrices).

The same consideration apply when considering parahorics of $GL_n$ which contain the standard Iwahori. These parahorics are the stabilizers of subchains of the standard lattice chain considered in the example.

\end{example}

\subsection{}Let $\rho:G\rightarrow GL_{2n}$ be a closed group scheme immersion over $F$ such that $\rho\circ\mu$ is in the conjugacy class of the minuscule cocharacter $a\mapsto\text{diag}(1^{(n)},(a^{-1})^{(n)})$. Suppose also that $\rho$ satisfies the following conditions:

$\bullet$ $\rho$ extends to a closed group scheme immersion $\underline{\G}\rightarrow GL(W_{\bullet})$, where $W_{\bullet}$ is a lattice chain in $\Ok_F[u]^{2n}$ as in \cite[6.b.1]{PZ}.

$\bullet$ The Zariski closure of $\underline{G}_{k((u))}$ in $GL(W_{\bullet}\otimes_{\Ok_F[u]}k[[u]])$ is a smooth group scheme $\mathcal{P}'$ whose identity component can be idenitified with $\underline{\G}_{k[[u]]}$.

Then it is shown in \cite[Proposition 7.1]{PZ} that extending torsors along $\underline{\G}\rightarrow GL(W_\bullet)$ induces a closed immersion:

$$\iota:M^{loc}_{\G,\mu}\rightarrow M^{loc}_{\mathcal{GL},\rho\circ\mu}\otimes_{\mathcal{O}_F}\mathcal{O}_E$$
where $\mathcal{GL}$ is the parahoric subgroup of $GL_{2n}$ corresponding to the lattice chain $W_{\bullet}\otimes_{\Z_p[u]}\mathcal{O}_F$.  We will need a more explicit description of this map on the level of $k$ points which we now explain.

\subsection{}Let $\rho:G\rightarrow GSp(V)$ be a local Hodge embedding in the sense of \cite[\S2.3]{KP}, in particular we assume $G$ contains the scalars in $GL(V)$. As explained in \cite[2.3.7]{KP}, there is an embedding $\underline{\G}\rightarrow GL(W_{\bullet})$ satisfying the above conditions and hence an embedding of local models. Base changing to $\mathcal{O}_L[u^{\pm}]$ we obtain an embedding $\underline{G}\rightarrow GL(\underline{\Lambda})$ where $\underline{\Lambda}$ is a free module over $\mathcal{O}_L[u^\pm]$ and is the common generic fibre of $W_{\bullet}$. The fiber over $L$ of this embeddding is given by $\rho$ and we denote the fibers over $\kappa((u))$, where $\kappa=k,L$, by $\rho_{\kappa((u))}$. As shown in \cite[\S 1.2]{KP}, these maps induce embeddings

$$\mathcal{B}(G,L)\rightarrow \mathcal{B}(GL_{2n},L)$$ $$\mathcal{B}(G,\kappa((u)))\rightarrow\mathcal{B}(GL_{2n},\kappa((u)))$$
 These embeddings satisfy the following property: There is a choice of a maximal $\Ok_L[u^\pm]$ split torus $\underline{S}$ of $\underline{G}$ and a choice of basis $\underline{b}$ for $\underline{\Lambda}$ such that the above embeddings of buildings induce embeddings of the corresponding apartments
\begin{equation}\mathcal{A}(G,S,L)\rightarrow\mathcal{A}(GL_{2n},S',L)
\end{equation}
\begin{equation}
 \mathcal{A}(\underline{G}_{\kappa((u))},\underline{S}_{\kappa((u))},\kappa((u)))\rightarrow\mathcal{A}(GL_{2n},S',\kappa((u)))\end{equation} The choice of $\underline{b}$ determines an isomorphism of $GL(\underline{\Lambda})$ with $GL_{2n}$ and $S'$ is the diagonal torus of $GL_{2n}$. 

The choice of $\underline{S}$ and $\underline{b}$ also give identifications $$\mathcal{A}(G,S,L)\cong \mathcal{A}(\underline{G}_{\kappa((u))},\underline{S}_{\kappa((u))},\kappa((u)))$$ and $$\mathcal{A}(GL_{2n},S',L)\cong\mathcal{A}(GL_{2n},S',\kappa((u)))$$ Moreover there is an identification of Iwahori Weyl groups for the different groups over the fields $L$ and $\kappa((u))$ and the identification of apartments is compatible with the actions of these groups, see \cite[\S 3.a.1]{PZ}. The maps (3.3.1) and (3.3.2) are compatible with these identifications.

Let $x\in\mathcal{A}(G,S,L)$ be a point corresponding to the parahoric $\G$ over $\Ok_L$ ($\underline{S}$ can always be chosen in this way) and let $x_{\kappa((u))}\in\mathcal{A}(\underline{G}_{\kappa((u))},\underline{S}_{\kappa((u))},\kappa((u)))$ be the corresponding points. Then the group scheme $\underline{\G}_{\kappa[[u]]}$ is identified with the parahoric group scheme corresponding to $x_{\kappa((u))}$. The images of $x$ (resp. $x_{\kappa((u))}$)  under the above embeddings to the buildings for $GL_n$ give points $y$ (resp. $y_{\kappa((u))}$) whose corresponding parahoric $\mathcal{GL}$ (resp. $\underline{\mathcal{GL}}_{\kappa[[u]]}$) is the stabilizer of the base change of $W_{\bullet}$ to $L$ (resp. $\kappa((u))$).

\subsection{}The image of the alcove $\mathfrak{a}$ under the embedding of apartments $\mathcal{A}(G,S,L)\rightarrow \mathcal{A}(GL_{2n},S',L)$ is contained in (the closure of) an alcove in the apartment for $GL_{2n}$. We fix such an alcove and let $\mathbb{S}'$ denote the corresponding set of simple reflections. Then $\mathcal{GL}$ corresponds to a subset $K'\subset \mathbb{S}'$ and we may apply the constructions in \S2 to $\mathcal{GL}$ to obtain $\Adm^{GL_{2n}}_{K'}(\{\mu\}_{GL_{2n}})$, this is a subset of $W'_{K'}\backslash W' / W'_{K'}$ where $W'$ denotes the Iwahori subgroup for $GL_n$ and $\{\mu\}_{GL_{2n}}$ denotes the $GL_{2n}$-conjugacy class of cocharacters induced by $\{\mu\}$.

We identify $$M^{loc}_{\G}(k)\subset\underline{G}(k((u)))/\underline{\G}_{k[[t]]}(k[[u]])$$ with the union over the Schubert varieties $S_w$ where $w\in\Adm_{K}(\{\mu\})$. 
Similarly  $$M^{loc}_{\mathcal{GL}}(k)\subset GL_{2n}(k((u)))/\mathcal{GL}(k[[u]])$$ is the union of the Schubert varieties $S_{w'}^{GL_{2n}}$ in $GL_{2n}$ for $w'\in \Adm^{GL_{2n}}_{K'}(\{\mu\}_{GL_{2n}})$.

On the level of $k$ points the embeddings $M^{loc}_{\G}(k)\hookrightarrow M^{loc}_{\mathcal{GL}_{2n}}(k)$ is induced by the map $\underline{G}(k((u)))\rightarrow GL_{2n}(k((u)))$. On the other hand, the choice of basis $\underline{b}$ gives an embedding \begin{equation}\label{eq2} M^{loc}_{\mathcal{GL}}(k)\subset GL_{2n}(L)/\mathcal{GL}(\Ok_L)\end{equation} Indeed the choice of basis gives an identification between the special fibers of the lattice chains $W_\bullet\otimes k[[u]]$ and $W_{\bullet}\otimes\Ok_L$. Then as in Example \ref{GL} a $k$-point of $M_{\mathcal{GL}}^{loc}$ corresponds to a filtration on each of the $k$ vectors spaces $W_{\bullet}\otimes k$. If $g'\in GL_n(k((u)))/\mathcal{GL}(k((u)))$ lies in $M^{loc}_{\mathcal{GL}}(k)$, the filtration is induced by reducing the image of the lattice chain $ug'W_\bullet\otimes k$ modulo $u$. Taking the preimage of this filtration in $W_{\bullet}\otimes\Ok_L$, we obtain a lattice  chain of type $W_\bullet\otimes\Ok_L$ which is given by $p^{-1}gW_\bullet\otimes \Ok_L$ for some $g\in GL_{2n}(L)/GL_{2n}(\Ok_L)$. The embedding is then given by $g'\mapsto g$.  We may thus use (\ref{eq2}) to identify $M_{\G}^{loc}(k)$ with a subset of $GL_n(L)/GL_n(\Ok_L)$. 

Note that the embedding (\ref{eq2}) identifies $M_{\mathcal{GL}}^{loc}(k)$ with \begin{equation}\label{eq4}\bigcup_{w\in \Adm^{GL_{2n}}_{K'}(\{\mu\})}\mathcal{GL}(\Ok_L)\dot{w}\mathcal{GL}(\Ok_L)/\mathcal{GL}(\Ok_L)\end{equation}  where $\dot{w}$ denotes a representative of $w$ in $G(L)$ (see for example, \cite[\S11]{Ha1}). Then this identification is equivariant for the action of $\mathcal{GL}(k)$, where $\mathcal{GL}(k)$  acts on (\ref{eq4}) by left multiplication. Indeed since $\mu$ is minuscule, left multiplication by $\mathcal{GL}(\Ok_L)$ factors through $\mathcal{GL}(k)$.

\subsection{}Recall we have assumed $G\subset GL(V)$ contains the scalars. We let $\lambda:\mathbb{G}_m\rightarrow G$ denote the cocharacter giving scalar multiplication.

\begin{prop}\label{prop4}
Let $g\in G(L)$ with $$g\in \mathcal{G}(\Ok_L)\dot{w}\mathcal{G}(\Ok_L)$$ for some $w\in W_K\backslash W/W_K$. Then the image $\overline{\rho}(g)$ of $\rho(g)$ in $GL_{2n}(L)/\mathcal{GL}(\Ok_L)$ lies in $M^{loc}_\mathcal{G}(k)$ if and if and only if $w\in\Adm_{\G}(\{\mu\})$.
\end{prop}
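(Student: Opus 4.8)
The plan is to locate both the point $\overline\rho(g)$ and the set $\loc(k)$ inside the affine flag variety $GL_{2n}(L)/\mathcal{GL}(\Ok_L)$ and to compare them by unwinding the identifications recalled above. The inputs are: the identification $\loc\otimes_{\Ok_E}k\cong\bigcup_{v\in\Adm_K(\{\mu\})}C_v$ of \cite[Theorem 8.3]{PZ}; the closed immersion $\iota$ of \cite[Proposition 7.1]{PZ}, which by its construction (extension of torsors along $\underline{\G}\to GL(W_\bullet)$) is equivariant for the $\G$-action on $\loc$ and the $\G$-action on $M^{loc}_{\mathcal{GL}}$ through $\rho$; and the identification (\ref{eq2}), (\ref{eq4}) of $M^{loc}_{\mathcal{GL}}(k)$ with $\bigcup_{w'\in\Adm^{GL_{2n}}_{K'}(\{\mu\})}\mathcal{GL}(\Ok_L)\dot{w}'\mathcal{GL}(\Ok_L)/\mathcal{GL}(\Ok_L)$, which is equivariant for left translation by $\mathcal{GL}(k)$.

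I would begin with two reductions. Since $\G$ is a connected parahoric and the embedding of buildings recalled above (from \cite[\S 1.2]{KP}) is compatible with apartments, the stabilizer of $x$ in $G(L)$ equals $G(L)\cap\mathcal{GL}(\Ok_L)$; as $\G$ is connected, this is exactly $\G(\Ok_L)$. Hence $\rho$ induces an \emph{injection} $\overline\rho\colon G(L)/\G(\Ok_L)\hookrightarrow GL_{2n}(L)/\mathcal{GL}(\Ok_L)$, with $\overline\rho(hg_0h')=\rho(h)\cdot\overline\rho(g_0)$ for all $h,h'\in\G(\Ok_L)$. Secondly, since $\{\mu\}$ is minuscule, the Schubert cell $C_w$ is the orbit of $\dot{w}_{k[[u]]}$ under $\underline{\G}_{k[[u]]}(k)$, and $\underline{\G}_{k[[u]]}(k)$ is canonically identified with $\G(k)$ --- both being $\underline{\G}(k)$ for the $\Ok_F[u]$-group scheme $\underline{\G}$, via the specializations $u\mapsto t$ and $u\mapsto p$; similarly, by minusculity, left translation by $\G(\Ok_L)$ on the locus $\bigcup_v\G(\Ok_L)\dot{v}\G(\Ok_L)/\G(\Ok_L)$ of $G(L)/\G(\Ok_L)$ factors through $\G(k)$, just as for $\mathcal{GL}$ above.

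The heart of the argument is to check, by following $\dot{w}_{k[[u]]}$ through the identifications recalled above, that $\iota$ followed by (\ref{eq2}) carries $\dot{w}_{k[[u]]}\in C_w$ to the class of $\rho(\dot{w})$ in $GL_{2n}(L)/\mathcal{GL}(\Ok_L)$, where $\dot{w}\in G(L)$ and $\dot{w}_{k[[u]]}\in\underline{G}(k((u)))$ are compatible lifts of $w\in W$; concretely this asserts that the embeddings of apartments, the identification of the two affine flag varieties from \cite[\S 3.a.1]{PZ}, and the normalization (\ref{eq2}) all respect the actions of the Iwahori Weyl groups. Granting this, the $\G$-equivariance of $\iota$ together with the first reduction shows that the image of $C_w=\G(k)\cdot\dot{w}_{k[[u]]}$ in $GL_{2n}(L)/\mathcal{GL}(\Ok_L)$ is $\rho(\G(\Ok_L))\cdot\overline\rho(\dot{w})=\overline\rho\bigl(\G(\Ok_L)\dot{w}\G(\Ok_L)\bigr)$. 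The two directions of the Proposition then follow formally. If $w\in\Adm_K(\{\mu\})$ and $g\in\G(\Ok_L)\dot{w}\G(\Ok_L)$, then $\overline\rho(g)\in\overline\rho\bigl(\G(\Ok_L)\dot{w}\G(\Ok_L)\bigr)$, which is the image of $C_w\subseteq\loc(k)$, so $\overline\rho(g)\in\loc(k)$. Conversely, if $\overline\rho(g)\in\loc(k)=\bigcup_{v\in\Adm_K(\{\mu\})}C_v$, then $\overline\rho(g)$ lies in the image of $C_v$ for some $v\in\Adm_K(\{\mu\})$, so $\overline\rho(g)=\overline\rho(h)$ with $h\in\G(\Ok_L)\dot{v}\G(\Ok_L)$; injectivity of $\overline\rho$ on $G(L)/\G(\Ok_L)$ gives $g\G(\Ok_L)=h\G(\Ok_L)$, hence $g\in\G(\Ok_L)\dot{v}\G(\Ok_L)$, and the Iwahori decomposition of \S 2 forces $v=w$ in $W_K\backslash W/W_K$; thus $w=v\in\Adm_K(\{\mu\})$.

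I expect the main obstacle to be this last verification: tracing $\dot{w}_{k[[u]]}$ through the chain of identifications and checking that the $u\mapsto p$ and $u\mapsto t$ specializations of $\underline{\G}$, the basis $\underline{b}$, and the normalizations in (\ref{eq2})--(\ref{eq4}) are mutually compatible with the Iwahori Weyl groups --- this is essentially forced by \cite[\S 3.a.1]{PZ}, but needs to be spelled out carefully. It is also worth stressing why one must test membership in $\loc(k)$ rather than in the larger $M^{loc}_{\mathcal{GL}}(k)$: a class $w$ that is not $\{\mu\}$-admissible for $G$ may still have admissible image in the Iwahori Weyl group of $GL_{2n}$, so $\overline\rho(g)\in M^{loc}_{\mathcal{GL}}(k)$ by itself does not detect $G$-admissibility.
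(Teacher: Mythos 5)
Your argument is correct and follows essentially the same route as the paper's: reduce using the left $\G$-equivariance of the embedding of local models together with the fact that right multiplication by $\G(\Ok_L)\subset\mathcal{GL}(\Ok_L)$ does not change the class, match the representative $\dot{w}$ over $L$ with $\dot{w}_{k[[u]]}$ over $k((u))$ via the compatibility of the apartment embeddings and the Iwahori--Weyl identifications of \cite[\S 3.a.1]{PZ}, and conclude from the decomposition of $M^{loc}_{\G}\otimes k$ into Schubert cells indexed by $\Adm_K(\{\mu\})$. The only cosmetic difference is how the converse is closed: you use injectivity of $\overline{\rho}$ on $G(L)/\G(\Ok_L)$, coming from $G(L)\cap\mathcal{GL}(\Ok_L)=\G(\Ok_L)$ (valid here since $\G$ is a connected parahoric and the building embedding is equivariant and injective), whereas the paper first reduces to $g=\dot{w}$ and then invokes disjointness of the Schubert cells --- the two maneuvers amount to the same verification.
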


\begin{proof}Let $g_1\dot{w}g_2$ in the Bruhat decomposition $\mathcal{G}(\Ok_L)\dot{w}\mathcal{G}(\Ok_L)$. Since $\G(\Ok_L)$ maps to $\mathcal{GL}(\Ok_L)$, we may assume $g=g_1\dot{w}$. 
	
	Now $M^{loc}_{\G}$ is equipped with an action by $\mathcal{G}\times_{\Ok_F}\Ok_E$ and $M^{loc}_{\mathcal{GL}}$ with an action of $\mathcal{GL}$. Over the special fiber this action is identified with the one given by left multiplication by $\G(\Ok_L)$ on $M_{\G}^{loc}(k)\subset GL_n(L)/\mathcal{GL}(\Ok_L)$, which as above, factors through $\G(k)$. Thus, modifying $g$ by $g_1$ on the left, we may assume $g=\dot{w}$.

Since the embedding of apartments (3.1) and (3.2) over $L$ and $k((u))$ is compatible with the identification of apartments  $$\mathcal{A}(G,S,L)\cong \mathcal{A}(\underline{G}_{k((u))},\underline{S}_{k((u))},k((u)))$$ respecting the action of Iwahori Weyl groups, we see that $\overline{\rho}(g)$ corresponds to the point $\rho_{k((u))}(\dot{w}_{k[[u]]})\in GL_{2n}(k((u)))/\mathcal{GL}(k[[u]])$. Thus by the description of $M^{loc}_{\G}(k)$ above, we see that $\overline{\rho}(g)\in M^{loc}_{\G}(k)$ if and only if $w$ lies in $\Adm_{\G}(\{\mu\})$.
\end{proof}

\begin{cor}\label{cor1}
Let $g\in\G(\Ok_L)\dot{w}\G(\Ok_L)$ with $w\in\Adm_K(\{\mu\})$, then $$\rho(g)\in \mathcal{GL}(\Ok_L)\dot{w}'\mathcal{GL}(\Ok_L)$$ for some  $\dot{w}'\in\Adm^{GL_{2n}}_{J\mathcal{GL}}(\{\mu\}_{GL_{2n}})$.
\end{cor}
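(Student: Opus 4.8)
The plan is to read this off directly from Proposition~\ref{prop4} together with the combinatorial description \eqref{eq4} of $M^{loc}_{\mathcal{GL}}(k)$. First I would note that $\Adm_K(\{\mu\})=\Adm_{\G}(\{\mu\})$ by definition, so the hypothesis $w\in\Adm_K(\{\mu\})$ places us exactly in the setting of Proposition~\ref{prop4}: the image $\overline{\rho}(g)$ of $\rho(g)$ in $GL_{2n}(L)/\mathcal{GL}(\Ok_L)$ lies in $M^{loc}_{\G}(k)$, where the latter is regarded as a subset of $GL_{2n}(L)/\mathcal{GL}(\Ok_L)$ via the closed immersion $\iota\colon M^{loc}_{\G}\hookrightarrow M^{loc}_{\mathcal{GL}}\otimes_{\Ok_F}\Ok_E$ followed by the embedding \eqref{eq2}. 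If one prefers not to worry about the coset representative, one can argue as in the proof of Proposition~\ref{prop4}, first using that $\rho(\G(\Ok_L))\subseteq\mathcal{GL}(\Ok_L)$ and that the $\mathcal{GL}$-action on the special fibre of $M^{loc}_{\mathcal{GL}}$ is left translation, so as to reduce to $g=\dot{w}$; but this reduction is not strictly necessary for the statement.

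Next I would invoke \eqref{eq4}: under the embedding \eqref{eq2}, the set $M^{loc}_{\mathcal{GL}}(k)$ is identified with the \emph{disjoint} union $\bigcup_{w'\in\Adm^{GL_{2n}}_{K'}(\{\mu\}_{GL_{2n}})}\mathcal{GL}(\Ok_L)\dot{w}'\mathcal{GL}(\Ok_L)/\mathcal{GL}(\Ok_L)$ inside $GL_{2n}(L)/\mathcal{GL}(\Ok_L)$, the individual pieces being the Kottwitz--Rapoport strata. Since $M^{loc}_{\G}(k)\subseteq M^{loc}_{\mathcal{GL}}(k)$, the point $\overline{\rho}(g)$ lies in exactly one of these pieces, indexed by some $w'\in\Adm^{GL_{2n}}_{K'}(\{\mu\}_{GL_{2n}})$. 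Unwinding what membership in $\mathcal{GL}(\Ok_L)\dot{w}'\mathcal{GL}(\Ok_L)/\mathcal{GL}(\Ok_L)$ means — namely that the coset $\rho(g)\cdot\mathcal{GL}(\Ok_L)$ is contained in $\mathcal{GL}(\Ok_L)\dot{w}'\mathcal{GL}(\Ok_L)$ — yields $\rho(g)\in\mathcal{GL}(\Ok_L)\dot{w}'\mathcal{GL}(\Ok_L)$, which is the assertion.

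I do not expect a genuine obstacle here: the statement is a formal consequence of Proposition~\ref{prop4} and \eqref{eq4}, and the only thing requiring care is that the bookkeeping is consistent. In particular one must make sure that the identifications invoked — of the Iwahori Weyl group of $G$ over $L$ with that of $\underline{G}_{k((u))}$ over $k((u))$, of the corresponding apartments, and the scalar twist built into \eqref{eq2} (the $p^{-1}$ in $p^{-1}gW_{\bullet}\otimes\Ok_L$, matching the $u$-twist of Example~\ref{GL}) — are precisely those already used to establish Proposition~\ref{prop4} and \eqref{eq4}, so that the combinatorial datum $w'$ one extracts genuinely lands in $\Adm^{GL_{2n}}_{K'}(\{\mu\}_{GL_{2n}})$ and not in some shift of it.
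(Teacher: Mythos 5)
Your proposal is correct and follows exactly the paper's own (one-line) argument: Proposition~\ref{prop4} places $\overline{\rho}(g)$ in $M^{loc}_{\G}(k)\subset M^{loc}_{\mathcal{GL}}(k)$, and the identification \eqref{eq4} of $M^{loc}_{\mathcal{GL}}(k)$ with the union of the double cosets indexed by $\Adm^{GL_{2n}}_{K'}(\{\mu\}_{GL_{2n}})$ gives the conclusion. The extra remarks (reduction to $g=\dot{w}$, disjointness of the strata) are harmless but not needed.
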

\begin{proof} This follows from Proposition \ref{prop4} and the description  of $M^{loc}_{\mathcal{GL}}(k)$ as a subset of $GL_{2n}(L)/\mathcal{GL}(\Ok_L)$.
\end{proof}

\subsection{}As explained in \cite[2.3.15]{KP}, we may compose $\rho:G\rightarrow GSp(V,\Psi)$ with a diagonal embedding to obtain a new minuscule Hodge embedding $\rho':GSp(V',\Psi')$ with $\dim V'=2n'$ such that there is a self-dual lattice $V'_{\Z_p}\subset V'$ and the above embedding of buildings takes $x$ to the hyperspecial point $y\in\mathcal{B}(GL(V'),L)$ corresponding to $V'_{\Z_p}\otimes_{\Z_p}\Ok_L$. $V'_{\Z_p}$ is contructed by taking the direct sum of the lattices in the lattice chain corresponding to $\mathcal{GL}$. Then $\rho'$ factors through a diagonal embedding $GL(V)\rightarrow GL(V')$. In this case we obtain an embedding of local models:

$$M_{\G}^{loc}\rightarrow Gr(V_{\Z_p}')\otimes_{\Z_p}\Ok_E$$
where $Gr(V_{\Z_p}')$ is the smooth grassmannian parametrising dimension $n'$ sub-bundles $\mathcal{F}\subset V_{\Z_p}'\otimes_{\Z_p}S$ for any $\Z_p$-scheme $S$.

Choosing a basis $\underline{b}$ as above, we obtain an embedding $$M^{loc}_{\G}(k)\hookrightarrow GL_{2n'}(L)/\mathcal{GL}'(\Ok_L)$$ where $\mathcal{GL}'$ is the hyperspecial subgroup stabilising $V'_{\Z_p}$. Let $T'\subset GL'(V')$ denote a maximal torus whose apartment contains the hyperspecial vertex corresponding to $\mathcal{GL}'$.

\begin{cor}\label{cor2}
Let $g\in\G(\Ok_L)\dot{w}\G(\Ok_L)\subset G(L)$ with $w\in \Adm_{\G}(\{\mu\})$, then $$\rho(g)\in\mathcal{GL}'(\Ok_L)\mu_{GL}'(p)\mathcal{GL}'(\Ok_L)$$ where $\mu_{GL}'$ is a representative of $\{\mu\}_{GL(V')}$.
\end{cor}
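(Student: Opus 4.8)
The plan is to deduce this statement by combining Corollary \ref{cor1} with the comparison of the two embeddings of local models: the one into $M^{loc}_{\mathcal{GL}}$ coming from the Hodge embedding $\rho: G \hookrightarrow GSp(V)$ and the one into the smooth Grassmannian $Gr(V'_{\Z_p})$ coming from $\rho': G \hookrightarrow GSp(V')$, where $V'$ is obtained from $V$ by the diagonalization trick of \cite[2.3.15]{KP}. First I would record that, by Corollary \ref{cor1}, if $g \in \G(\Ok_L)\dot{w}\G(\Ok_L)$ with $w \in \Adm_K(\{\mu\})$, then $\rho(g)$ lies in $\mathcal{GL}(\Ok_L)\dot{w}'\mathcal{GL}(\Ok_L)$ for some $w' \in \Adm^{GL_{2n}}_{K'}(\{\mu\}_{GL_{2n}})$. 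The point is then to push this forward along the diagonal embedding $GL(V) \hookrightarrow GL(V')$ and observe that, since $V'_{\Z_p}$ is the direct sum of the lattices in the lattice chain defining $\mathcal{GL}$ (so $\mathcal{GL}'$ is hyperspecial and contains the image of $\mathcal{GL}$), the image of the whole admissible set $\Adm^{GL_{2n}}_{K'}(\{\mu\}_{GL_{2n}})$ in $\mathcal{GL}'(\Ok_L)\backslash GL_{2n'}(L)/\mathcal{GL}'(\Ok_L)$ collapses to the single double coset $\mathcal{GL}'(\Ok_L)\mu'_{GL}(p)\mathcal{GL}'(\Ok_L)$.

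Concretely, I would argue as follows. The $\mu$-admissible set for a hyperspecial subgroup has a unique element: since $\{\mu\}_{GL(V')}$ is minuscule, $\Adm^{GL(V')}_{\mathcal{GL}'}(\{\mu\}_{GL(V')})$ consists of the single class $W'_0 \mu' W'_0$, which in $\mathcal{GL}'(\Ok_L)\backslash GL_{2n'}(L)/\mathcal{GL}'(\Ok_L)$ is exactly $\mathcal{GL}'(\Ok_L)\mu'_{GL}(p)\mathcal{GL}'(\Ok_L)$ by the Cartan decomposition. So it suffices to check that the image of $\rho(g)$ in $GL_{2n'}(L)/\mathcal{GL}'(\Ok_L)$ lands in $M^{loc}_{\G}(k)$ viewed inside $Gr(V'_{\Z_p})(k)$ — equivalently, that it is admissible for $GL(V')$ — and then invoke uniqueness. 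But the composite embedding of local models $M^{loc}_{\G} \to Gr(V'_{\Z_p}) \otimes \Ok_E$ is exactly the one constructed in \cite[2.3.15]{KP} by composing $\iota: M^{loc}_{\G} \to M^{loc}_{\mathcal{GL}} \otimes \Ok_E$ with the embedding $M^{loc}_{\mathcal{GL}} \to Gr(V'_{\Z_p})$ induced by $GL(V) \hookrightarrow GL(V')$; so on $k$-points, if $\rho(g)$ defines a point of $M^{loc}_{\mathcal{GL}}(k)$ (which it does, by Corollary \ref{cor1} and the identification of $M^{loc}_{\mathcal{GL}}(k)$ with the union of admissible double cosets as in \eqref{eq4}), then its further image defines a point of $M^{loc}_{\G}(k) \subset Gr(V'_{\Z_p})(k)$, hence lies in $\mathcal{GL}'(\Ok_L)\mu'_{GL}(p)\mathcal{GL}'(\Ok_L)/\mathcal{GL}'(\Ok_L)$ by what was just said.

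The step I expect to require the most care is the bookkeeping around the two different presentations of the target of the Grassmannian embedding — namely, matching the description of $M^{loc}_{\mathcal{GL}'}$ as a honest (smooth) Grassmannian $Gr(V'_{\Z_p})$ with its description as $\bigcup_{w'} \mathcal{GL}'(\Ok_L)\dot{w}'\mathcal{GL}'(\Ok_L)/\mathcal{GL}'(\Ok_L)$, and making sure the normalization factor $p^{-1}$ (resp. $t^{-1}$, via the scalar cocharacter $\lambda$) that appeared in Example \ref{GL} and in \eqref{eq2} is handled consistently so that one genuinely gets $\mu'_{GL}(p)$ and not some twist of it. This is the same sign/normalization subtlety already navigated in Example \ref{GL} and in the paragraph preceding Proposition \ref{prop4}; here one simply has to observe that the diagonal embedding $GL(V) \to GL(V')$ sends scalars to scalars compatibly, so the normalization is preserved, and then the claim is immediate from the minuscule Cartan decomposition for the hyperspecial group $\mathcal{GL}'$.
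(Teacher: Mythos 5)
Your proposal is correct, and it reaches the conclusion by a genuinely different middle step than the paper, although the skeleton (Corollary \ref{cor1} plus the hyperspecial--minuscule collapse) is the same. The paper's own proof is purely combinatorial: it invokes the equality $\Adm(\{\mu\})=\mathrm{Perm}(\{\mu\})$ for general linear groups (\cite{HN1}) to see that under the diagonal embedding $GL(V)\rightarrow GL(V')$ the set $\Adm^{GL(V)}_{\mathcal{GL}}(\{\mu\}_{GL(V)})$ maps into $\Adm^{GL(V')}_{\mathcal{GL}'}(\{\mu\}_{GL(V')})$, and then notes that for the hyperspecial group $\mathcal{GL}'$ (and minuscule $\{\mu\}_{GL(V')}$) this last set is the single Cartan coset of $t_{\mu'}$, so Corollary \ref{cor1} finishes the argument. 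You instead stay inside the lattice/Grassmannian picture of \S 3.4--3.6: you push the $k$-point furnished by Corollary \ref{cor1} through the sum-of-filtrations embedding $M^{loc}_{\mathcal{GL}}\rightarrow Gr(V'_{\Z_p})$ and use that the hyperspecial analogue of (\ref{eq2}) identifies $Gr(V'_{\Z_p})(k)$ with the single coset $\mathcal{GL}'(\Ok_L)\mu'_{GL}(p)\mathcal{GL}'(\Ok_L)/\mathcal{GL}'(\Ok_L)$; this buys independence from the admissible-equals-permissible input, at the cost of the compatibility you rightly flag, namely that the sum-of-lattices embedding carries the class of $\rho(g)$ in $GL_{2n}(L)/\mathcal{GL}(\Ok_L)$ to the class of the image of $g$ in $GL_{2n'}(L)/\mathcal{GL}'(\Ok_L)$ with the $p^{-1}$ normalization of Example \ref{GL} handled uniformly --- which is immediate since the image of $g$ in $GL(V')$ is block-diagonal with all blocks $\rho(g)$, and which is, in effect, a hands-on version of the permissibility computation the paper outsources to \cite{HN1}. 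Two small points of hygiene: the sentence claiming that the further image ``defines a point of $M^{loc}_{\G}(k)$'' does not follow from membership in $M^{loc}_{\mathcal{GL}}(k)$ alone --- it is Proposition \ref{prop4} (using $w\in\Adm_{\G}(\{\mu\})$) that puts the point in $M^{loc}_{\G}(k)$ --- but your argument only needs the image to land in $Gr(V'_{\Z_p})(k)$, so this is harmless; and your explicit use of minusculeness to collapse the hyperspecial admissible set to one coset is needed (the paper leaves it implicit).
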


\begin{proof}Under the diagonal  embedding $GL(V)\rightarrow GL(V')$, we have that $\Adm^{GL(V)}_{\mathcal{GL}}(\{\mu\}_{GL(V)})$ maps to $\Adm^{GL(V')}_{\mathcal{GL}'}(\{\mu\}_{GL(V')})$. This follows by the equality $\Adm(\{\mu\})=\mbox{Perm}(\{\mu\})$ for general linear groups, see \cite{HN1}. Since $\mathcal{GL}'$ is hyperspecial, $\Adm^{GL(V')}_{\mathcal{GL}'}(\{\mu\}_{GL(V')})$ is just the one coset corresponding to  $t_\mu$, hence the result follow from Corollary \ref{cor1}.
\end{proof}

\section{$p$-divisible groups}
In this section we review the theory of $\mathfrak{S}$-modules and their applications to deformation theory of $p$-divisible groups equipped with a collection of crystalline tensors. The main result is the construction of a certain deformation of such a $p$-divisible group in Proposition \ref{adapted} which is needed in the arguments of \S5.

\subsection{}We now let $F=\Q_p$ so that $L=W(\Fpbar)[\frac{1}{p}]$. For $K/L$ a finite totally ramified extensions, let $\Gamma_K$ be the absolute Galois group of $K$. Let $Rep_{cris}$ denote the category of crystalline $\Gamma_K$ representations, and $Rep_{cris}^\circ$ the category of $\Gamma_K$ representations in finite free $\Z_p$ modules which are lattices in some crystalline representation of $\Gamma_K$. For $V$ a crystalline representation of $\Gamma_K$, recall Fontaine's functors $D_{cris},D_{dR}$:

$$D_{cris}(V)=(V\otimes_{\Q_p}B_{cris})^{\Gamma_K}\ \ \ \ \ D_{dR}(V)=(V\otimes_{\Q_p}B_{dR})^{\Gamma_K}$$

Fix a uniformizer $\pi$ for $K$ and let $E(u)$ be the Eisenstein polynomial which is the minimal polynomial of $\pi$. Let $\mathfrak{S}=\Ok_L[[u]]$, we equip this with a lift $\varphi$ of Frobenius given by the usual Frobenius on $\Ok_L$ and $u\mapsto u^p$. We write $D^\times$ for the scheme $\Spec\mathfrak{S}$ with its closed point removed. Let $\Mod$ denote the category of finite free $\mathfrak{S}$ modules $\mathfrak{M}$ equipped with $\varphi$-linear isomorphism:

$$1\otimes\varphi:\mathfrak{M}\otimes_{\mathfrak{S},\varphi}\mathfrak{S}[1/E(u)]\rightarrow\mathfrak{M}[1/E(u)]$$

Let $BT^\varphi$ denote the subcategory of $\Mod$ consisting of $\mathfrak{M}$, such that $1\otimes\varphi$ maps $\varphi^*(\mathfrak{M})$ into $\mathfrak{M}$ and whose cokernel is killed by $E(u)$.

Given $\mathfrak{M}\in\Mod$ we equip $\varphi^*(\mathfrak{M})$ with the filtration:

$$Fil^i\varphi^*(\mathfrak{M})=(1\otimes\varphi)^{-1}(E(u)^i\mathfrak{M})\cap\varphi^*(\mathfrak{M}))$$

Let $\Ok_{\mathcal{E}}$ denote the $p$-adic completion of $\mathfrak{S}_{(p)}$; it is a discrete valuation ring with uniformiser $p$ and residue field $k((u))$ and let $\mathcal{E}$ denote its fraction field. We equip $\Ok_{\mathcal{E}}$ with the unique Frobenius $\varphi$ which extends that on $\mathfrak{S}$, and let $\mbox{Mod}_{\Ok_{\mathcal{E}}}^\varphi$ denote the category of finite free $\Ok_{\mathcal{E}}$-modules $M$ equipped with a Frobenius semilinear isomorphism:

$$1\otimes\varphi:\varphi^*(M)\rightarrow M$$

There is a functor $\Mod\rightarrow \mbox{Mod}_{\Ok_{\mathcal{E}}}^\varphi$ given by $$\mathfrak{M}\mapsto \mathfrak{M}\otimes_{\mathfrak{S}}\Ok_{\mathcal{E}}$$
 with the Frobenius on $\mathfrak{M}\otimes_{\mathfrak{S}}\Ok_{\mathcal{E}}$ induced by that on $\mathfrak{M}$.
 
 Let $\Ok_{\widehat{\mathcal{E}^{ur}}}$ denote the $p$-adic completion of a strict Henselization of $\Ok_{\mathcal{E}}$. We have the following which is contained in \cite[Theorem 1.1.2]{Ki3} and \cite[Theorem 3.3.2]{KP}:
 
 \begin{prop}\label{F-cryst}
There is a fully faithful functor $$\mathfrak{M}:Rep_{cris}^\circ\rightarrow \Mod$$
which is compatible with the formation of symmetric and exterior products and is such that $\Lambda\mapsto \mathfrak{M}(\Lambda)|_{D^\times}$ is exact. If $\Lambda$ is in $Rep^\circ_{cris}$, $V=\Lambda\otimes\Q_p$ and $\mathfrak{M}=\mathfrak{M}(\Lambda)$

 i) There are canonical isomorphisms 
 
 $$D_{cris}(V)\cong \mathfrak{M}/u\mathfrak{M}[\frac{1}{p}] \text{ and }  D_{dR}(V)\cong\varphi^*(\mathfrak{M})\otimes_{\mathfrak{M}}K$$
the first being compatible with $\varphi$ and the second being compatible with filtrations. 

 ii) There is a canonical isomorphism
 
 $$\Lambda\otimes_{\Z_p}\Ok_{\widehat{\mathscr{E}^{ur}}}\cong\mathfrak{M}\otimes_{\mathfrak{S}}\Ok_{\widehat{\mathscr{E}^{ur}}}$$

 \end{prop}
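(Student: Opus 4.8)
The plan is to assemble the statement from Kisin's classification of crystalline $\Gamma_K$-representations by $\mathfrak{S}$-modules, in the integral form recorded in \cite[Theorem 1.1.2]{Ki3} and \cite[Theorem 3.3.2]{KP}, which apply verbatim in the present situation (here $F=\Q_p$, so $L=W(\Fpbar)[1/p]$). First I would recall the rational construction. For a crystalline representation $V$ of $\Gamma_K$ the filtered $\varphi$-module $D_{cris}(V)$ is weakly admissible, and Kisin's theorem attaches to it a finite free $\varphi$-module $\mathfrak{M}(V)$ over $\mathfrak{S}[1/p]$, sitting inside the base change $D_{cris}(V)\otimes_{\Ok_L}\Ok_{\mathcal{E}}[1/p]$ and characterized by $\mathfrak{M}(V)/u\mathfrak{M}(V)\cong D_{cris}(V)$ compatibly with $\varphi$, together with the requirement that $\varphi^*\mathfrak{M}(V)$, base changed to $K$ along $u\mapsto\pi$, recovers $D_{dR}(V)$ with its Hodge filtration (through the filtration $Fil^i\varphi^*\mathfrak{M}(V)$ defined above). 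Given $\Lambda\in Rep_{cris}^\circ$ with $V=\Lambda\otimes\Q_p$, the field-of-norms equivalence between étale $\varphi$-modules over $\Ok_{\mathcal{E}}$ and $\Z_p$-lattice representations (which loses nothing for crystalline $V$) produces an $\Ok_{\mathcal{E}}$-lattice $M(\Lambda)\subset D_{cris}(V)\otimes_{\Ok_L}\Ok_{\mathcal{E}}[1/p]$, and one realizes $\mathfrak{M}(\Lambda):=\mathfrak{M}(V)\cap M(\Lambda)$ in the common ambient module. Since $\mathfrak{S}$ is a two-dimensional regular local ring and $\mathfrak{M}(\Lambda)$ is $\mathfrak{S}$-finite and reflexive (a standard check), it is finite free over $\mathfrak{S}$; and $1\otimes\varphi$ becomes an isomorphism after inverting $E(u)$, so $\mathfrak{M}(\Lambda)\in\Mod$.

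Granting this construction, parts (i) and (ii) are essentially built in. Part (i) is immediate from the characterizing properties: $(\mathfrak{M}(\Lambda)/u\mathfrak{M}(\Lambda))[1/p]=\mathfrak{M}(V)/u\mathfrak{M}(V)\cong D_{cris}(V)$ compatibly with $\varphi$, and $\varphi^*\mathfrak{M}(\Lambda)\otimes_{\mathfrak{S}}K=\varphi^*\mathfrak{M}(V)\otimes_{\mathfrak{S}[1/p]}K\cong D_{dR}(V)$ compatibly with filtrations. For (ii), by construction $\mathfrak{M}(\Lambda)\otimes_{\mathfrak{S}}\Ok_{\mathcal{E}}=M(\Lambda)$ is the étale $\varphi$-module of $\Lambda$, and base-changing to the $p$-adically completed strict Henselization $\Ok_{\widehat{\mathcal{E}^{ur}}}$ and applying the étale $\varphi$-module/Galois equivalence gives $\mathfrak{M}(\Lambda)\otimes_{\mathfrak{S}}\Ok_{\widehat{\mathcal{E}^{ur}}}\cong\Lambda\otimes_{\Z_p}\Ok_{\widehat{\mathcal{E}^{ur}}}$. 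Full faithfulness follows by running this comparison both ways: a morphism in $\Mod$ yields after $\otimes_{\mathfrak{S}}\Ok_{\mathcal{E}}$ a map of étale $\varphi$-modules, hence a $\Gamma_K$-equivariant map of lattices, while a $\Gamma_K$-equivariant map $\Lambda\to\Lambda'$ induces both $M(\Lambda)\to M(\Lambda')$ and (after inverting $p$, by functoriality in Kisin's theorem) $\mathfrak{M}(V)\to\mathfrak{M}(V')$, hence a map of the intersections $\mathfrak{M}(\Lambda)\to\mathfrak{M}(\Lambda')$; the two assignments are mutually inverse. Compatibility with symmetric and exterior products is inherited, since $D_{cris}$, the functor $D\mapsto\mathfrak{M}(V)$ and $\Lambda\mapsto M(\Lambda)$ are all tensor functors and intersecting lattices commutes with passing to a direct summand of a tensor power.

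The one genuinely substantial input, and where I expect the main difficulty, is the exactness of $\Lambda\mapsto\mathfrak{M}(\Lambda)|_{D^\times}$; this is the key technical assertion, used repeatedly in \S4--5. The point is that $\mathfrak{M}(\Lambda)|_{D^\times}$ is a vector bundle on $D^\times=\Spec\mathfrak{S}\setminus\{(p,u)\}$ which can be understood locally: over the open locus where $p$ is invertible it is $\mathfrak{M}(V)$ localized, where exactness is part of Kisin's equivalence for weakly admissible filtered $\varphi$-modules, and near the prime $(p)$ it is governed by the étale $\varphi$-module $M(\Lambda)$, where exactness is part of the Fontaine equivalence; a short exact sequence $0\to\Lambda'\to\Lambda\to\Lambda''\to 0$ in $Rep_{cris}^\circ$ therefore induces short exact sequences of these local models compatibly (using that $E(u)$ is a unit on the relevant loci), so the restriction is exact. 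Since all the ingredients above are established in \cite[Theorem 1.1.2]{Ki3} and \cite[Theorem 3.3.2]{KP} in exactly this generality, the proof amounts to citing those results.
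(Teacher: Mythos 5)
Your proposal is correct and ultimately coincides with what the paper does: Proposition \ref{F-cryst} is stated without any proof, being quoted directly from \cite[Theorem 1.1.2]{Ki3} and \cite[Theorem 3.3.2]{KP}, which is exactly the citation your argument reduces to in its final sentence. The additional sketch you give of how those cited results are established (the rational Kisin module attached to $D_{cris}(V)$, the lattice obtained from the \'etale $\varphi$-module over $\Ok_{\mathcal{E}}$ together with freeness over the two-dimensional regular ring $\mathfrak{S}$, and the two-chart argument for exactness of $\Lambda\mapsto\mathfrak{M}(\Lambda)|_{D^\times}$) is a fair summary of Kisin's construction, but none of it appears in the paper, which treats the statement purely as a recollection of known results.
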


\subsection{}For an $R$-module $M$, we let $M^\otimes$ denote the direct sum of all $R$-modules obtained from $M$ by taking duals, tensor products, symmetric and exterior products.

Let $\Lambda\in Rep_{cris}^\circ$ and suppose $\set\in\Lambda^\otimes$ are a collection of $\Gamma_K$-invariant tensors whose stabilizer is a smooth group scheme $\G$ over $\Z_p$ with reductive generic fiber $G$. Since the $\set$ are $\Gamma_K$-invariant, we obtain a representation:

$$\rho:\Gamma_K\rightarrow \G(\Z_p)$$

We may think of each $\set$ as a morphism in $Rep_{cris}^\circ$ from the trivial representation $\Z_p$ to $\Lambda^\otimes$. Applying the functor $\mathfrak{M}$ to these morphisms gives us $\varphi$-invariant tensors $\tilde{s}_\alpha\in\mathfrak{M}(\Lambda)^\otimes$. 

\begin{prop}\label{sigma}
Suppose that the special fiber of $\G$ is connected and $H^1(\G,D^\times)=1$. Then there exists an isomorphism.
$$\Lambda\otimes_{\Z_p}\mathfrak{S}\cong \mathfrak{M}(\Lambda)$$
taking $\set$ to $\tilde{s}_\alpha$.
\end{prop}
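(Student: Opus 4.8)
Write $\mathfrak{M}=\mathfrak{M}(\Lambda)$. The plan is to interpret the desired isomorphism as a trivialisation of a $\G$-torsor over the punctured spectrum $D^\times$ of the $2$-dimensional regular local ring $\mathfrak{S}=\Ok_L[[u]]$, and then to extend it across the closed point, which has codimension $2$. Let $P\to\Spec\mathfrak{S}$ be the scheme of isomorphisms $\Lambda\otimes_{\Z_p}\mathfrak{S}\xrightarrow{\sim}\mathfrak{M}$ carrying each $\set\otimes 1$ to $\tilde{s}_\alpha$; it is a closed subscheme of the affine $\mathfrak{S}$-scheme $\underline{\mathrm{Isom}}_{\mathfrak{S}}(\Lambda\otimes\mathfrak{S},\mathfrak{M})$. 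Since $\G$ is by definition the scheme-theoretic stabiliser of the $\set$ in $GL(\Lambda)$, precomposition endows $P$ with a right action of $\G_{\mathfrak{S}}:=\G\times_{\Z_p}\mathfrak{S}$ which is simply transitive on $P(R)$ whenever $P(R)\neq\emptyset$. Hence it suffices to show that $P$ is a trivial $\G_{\mathfrak{S}}$-torsor.

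First I would show that $P|_{D^\times}$ is a $\G_{D^\times}$-torsor. By Proposition~\ref{F-cryst} the functor $\Lambda'\mapsto\mathfrak{M}(\Lambda')|_{D^\times}$ is exact and compatible with tensor products, duals, and symmetric and exterior powers, and $\mathfrak{M}|_{D^\times}$ is a vector bundle on $D^\times$. Using that every algebraic representation of $\G$ is a subquotient of a direct sum of representations built from $\Lambda$ by these operations, and that crystalline $\Z_p$-representations of $\Gamma_K$ are stable under them, the Galois representation $\rho$ attached to $(\set)$ transports $\mathfrak{M}(-)|_{D^\times}$ to an exact tensor functor from representations of $\G$ to vector bundles on $D^\times$ carrying the tautological representation to $\mathfrak{M}|_{D^\times}$ and each $\set$ to $\tilde{s}_\alpha$; by the Tannakian description of $\G$-torsors over the regular scheme $D^\times$ (cf.\ \cite{KP}, \cite{Ki3} and work of Broshi, where the connectedness of the special fibre of $\G$ is used) this functor is represented by a $\G$-torsor, necessarily $P|_{D^\times}$. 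The delicate point is $(p)\in D^\times$, where $\G_{D^\times}$ specialises to the (generally non-reductive) group $\G\otimes_{\Z_p}\Fpbar((u))$: there the canonical isomorphism of Proposition~\ref{F-cryst}(iii) --- which sends $\set\otimes 1$ to $\tilde{s}_\alpha$, as $\tilde{s}_\alpha$ is the image of $\set$ under the tensor-compatible functor $\mathfrak{M}$ --- provides a point of $P$ over the faithfully flat $\mathfrak{S}_{(p)}$-algebra $\Ok_{\widehat{\mathscr{E}^{ur}}}$, and together with the connectedness hypothesis this yields the torsor property there.

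By the hypothesis $H^1(\G,D^\times)=1$ the torsor $P|_{D^\times}$ is then trivial, so there is an isomorphism $f:\Lambda\otimes_{\Z_p}\Ok_{D^\times}\xrightarrow{\sim}\mathfrak{M}|_{D^\times}$ taking $\set\otimes 1$ to $\tilde{s}_\alpha$. Since $\underline{\Hom}_{\mathfrak{S}}(\Lambda\otimes\mathfrak{S},\mathfrak{M})$ is finite free over the regular ring $\mathfrak{S}$ and $\Spec\mathfrak{S}\setminus D^\times$ has codimension $2$, the map $f$ extends uniquely to an $\mathfrak{S}$-linear map $\tilde f:\Lambda\otimes_{\Z_p}\mathfrak{S}\to\mathfrak{M}$; its determinant is a section of $\mathfrak{S}$ that is invertible on $D^\times$, hence a unit, so $\tilde f$ is an isomorphism; and $\tilde f^{\otimes}(\set\otimes 1)$ and $\tilde{s}_\alpha$ lie in the finite free, hence torsion-free, module $\mathfrak{M}^{\otimes}$ and agree over the dense open $D^\times$, so they coincide. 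This $\tilde f$ is the required isomorphism.

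The main obstacle is the second step: that $P|_{D^\times}$ is a $\G$-torsor, i.e.\ that the $\tilde{s}_\alpha$ cut out a genuine $\G$-structure on $\mathfrak{M}|_{D^\times}$ and not merely a $GL$-structure. Beyond the exactness and tensor-compatibility of $\mathfrak{M}(-)|_{D^\times}$ supplied by Proposition~\ref{F-cryst}, this hinges on the \'etale comparison isomorphism of Proposition~\ref{F-cryst}(iii) at the point $(p)$ and on the connectedness of the special fibre of $\G$, which is what makes the Tannakian formalism applicable to the non-reductive group $\G\otimes_{\Z_p}\Fpbar((u))$.
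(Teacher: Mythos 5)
Your proposal is correct and is essentially the argument behind the paper's proof: the paper simply invokes \cite[Prop.~3.3.5]{KP} (using the connectedness hypothesis only to identify $\G$ with $\G^\circ$), and your outline --- the $\set$-preserving isomorphism scheme is a $\G$-torsor over $D^\times$ via the \'etale comparison at $(p)$ together with the exact tensor-functor/Tannakian input, it is trivial by the hypothesis $H^1(\G,D^\times)=1$, and the trivialization extends across the codimension-two closed point of $\Spec\mathfrak{S}$ by freeness, a unit determinant, and torsion-freeness of $\mathfrak{M}^\otimes$ --- is precisely the strategy of that cited proof. The one step you leave to the literature (that the $\tilde{s}_\alpha$ cut out a genuine $\G$-torsor over $D^\times$, which is where Broshi-type results and the comparison over $\Ok_{\widehat{\mathscr{E}^{ur}}}$ enter) is exactly the content the paper outsources to \cite{KP} wholesale, so nothing essential is missing.
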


\begin{proof}This is a special case of \cite[3.3.5]{KP}, indeed with our assumptions, $\G=\G^\circ$.
\end{proof}
 \subsection{}For  a $p$-divisible group $\pdiv$ over a scheme where $p$ is locally nilpotent we write $\D(\pdiv)$ for its contravariant Dieudonn\'e crystal. For $\pdiv$ a $p$-divisible group over $\Ok_K$, we let $T_p\pdiv$ be the Tate module of $\pdiv$ and $T_p\pdiv^\vee$ the linear dual of $T_p\pdiv$. We will apply the above to $\Lambda=T_p\pdiv^\vee$.

Let $R$ be a complete local ring with maximal ideal $\mathfrak{m}$ and residue field $k$. We let $W(R)$ denote the Witt vectors of $R$. Recall \cite{Zi1} we have a subring $\widehat{W}(R)=W(k)\oplus\mathbb{W}(\mathfrak{m})\subset W(R)$, where $\mathbb{W}(\mathfrak{m})\subset W(R)$ consists of Witt vectors $(w_i)_{i\geq1}$ with $w_i\in\mathfrak{m}$ and $w_i\rightarrow 0$ in the $\mathfrak{m}$-adic topology. Then $\hW(R)$ is preserved by the Frobenius $\varphi$ on $W(R)$ and we write $V$ for the Verschiebung. We have $I_R:=V\hW(R)$ is the kernel of the projection map $\hW(R)\rightarrow R$. Fix a uniformizer $\pi$ of $K$ and write $[\pi]\in \hW(\Ok_K)$ for its Teichmuller representative. Recall the following definition from \cite{Zi1}.
\begin{definition}A Dieudonn\'e display over $R$ is a tuple $(M,M_1,\Phi,\Phi_1)$ where

i) $M$ is a free $\hW(R)$ module.

ii) $M_1\subset M$ is a  $\hW(R)$ submodule such that $$I_RM\subset M_1\subset M$$
and $M/M_1$ is a projective $R$-module.

iii) $\Phi:M\rightarrow M$ is a $\varphi$ semi-linear map.

iv) $\Phi_1:M_1\rightarrow M$ is a $\varphi$ semi-linear map whose image generates $M$ as a $\widehat{W}(R)$ module and which satisfies

$$\Phi_1(V(w)m)=w\Phi(m), \text{ for } w\in\hW(R), m\in M$$

\end{definition}

Let $\pdiv$ be a $p$-divisible group over $R$. Then $\D(\pdiv)(\hW(R))$ naturally has the structure of a Dieudonn\'e display, and by the main result of \cite{Zi1} the functor $\pdiv\mapsto \D(\pdiv)(\hW(R))$ is an anti-equivalence of categories between $p$-divisible groups over $R$ and Dieudonn\'e displays over $R$.

\subsection{}If $\pdiv$ is a $p$-divisible group over $\Ok_K$, then by \cite[Theorem 3.3.2]{KP} there is a canonical isomorphism $$\D(\pdiv)(\hW(\Ok_K))\cong\mathfrak{M}\otimes_{\mathfrak{S},\varphi}\hW(\Ok_K)$$
where $\mathfrak{M}=\mathfrak{M}(T_p\pdiv^\vee)$ and the tensor product is over the map  given by composing the map $\mathfrak{S}\rightarrow \hW(\Ok_K), u\mapsto [\pi]$ with $\varphi$.  Moreover the induced map $$\D(\pdiv)(\Ok_K)\cong\varphi^*(\mathfrak{M})\otimes_{\mathfrak{S}}\Ok_K\rightarrow D_{dR}(T_p\pdiv^\vee\otimes_{\Z_p}\Q_p)$$ respects filtrations and we have a canonical identification
$$\D(\pdiv_0)(\Ok_L)\cong \varphi^*(\mathfrak{M}/u\mathfrak{M})$$
where $\pdiv_0:=\pdiv\otimes_{\Ok_K}k$.

If $\set\in T_p\pdiv^{\vee,\otimes}$ are a collection of $\Gamma_K$ invariant tensors, we let $$\sa\in D_{cris}(T_p\pdiv^\vee\otimes_{\Z_p}\Q_p)$$ denote the $\varphi$-invariant tensors corresponding to $\set$ under the $p$-adic comparison isomorphism. We assume from now on that the stabilizer of $\set$ is of the form $\G_x$ for $x\in\mathcal{B}(G,\Q_p)$, where $G$ is a tamely ramified reductive group containing no factors of type $E_8.$ The following is \cite[3.3.8]{KP}

\begin{prop}\label{prop3}
$\sa\in\D(\pdiv_0)(\Ok_L)^\otimes$ where we view $\D(\pdiv_0)(\Ok_L)^\otimes$ as an $\Ok_L$-submodule of $D_{cris}(T_p\pdiv^\vee\otimes_{\Z_p}\Q_p)^\otimes$. Moreover the $\sa$ lift to $\varphi$-invariant tensors $\tilde{s}_\alpha\in\D(\pdiv)(\hW(\Ok_K))^\otimes$ which map to $\mbox{Fil}^0\D(\pdiv)(\Ok_K)^\otimes$, and there exists an isomorphism:

$$\D(\pdiv)(\hW(\Ok_K))\cong T_p\pdiv^\vee\otimes_{\Z_p}\hW(\Ok_K)$$ taking $\tilde{s}_\alpha$ to $\set$. In particular, there is an isomorphism
$$\D(\pdiv_0)(\Ok_L)\cong T_p\pdiv^\vee\otimes_{\Z_p}\Ok_L$$ taking $s_{\alpha,0}$ to $s_{\alpha,\acute{e}t}$.
\end{prop}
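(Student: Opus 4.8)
The plan is to transport all three assertions into the category $\Mod$ via the functor $\mathfrak{M}$ of Proposition \ref{F-cryst}, where they become statements about $\mathfrak{M}=\mathfrak{M}(\Lambda)$ for $\Lambda=T_p\pdiv^\vee$ and its $\varphi$-invariant tensors $\tilde{s}_\alpha\in\mathfrak{M}^\otimes$ (obtained by applying $\mathfrak{M}$, which is $\otimes$-compatible, to the maps $\Z_p\to\Lambda^\otimes$ defined by the $\set$). Throughout I would use the canonical identifications recalled above in this section --- $\D(\pdiv)(\hW(\Ok_K))\cong\mathfrak{M}\otimes_{\mathfrak{S},\varphi}\hW(\Ok_K)$, $\D(\pdiv)(\Ok_K)\cong\varphi^*(\mathfrak{M})\otimes_{\mathfrak{S}}\Ok_K$ (respecting filtrations), and $\D(\pdiv_0)(\Ok_L)\cong\varphi^*(\mathfrak{M}/u\mathfrak{M})$ --- together with $D_{cris}(V)\cong(\mathfrak{M}/u\mathfrak{M})[1/p]$ and $D_{dR}(V)\cong\varphi^*(\mathfrak{M})\otimes_{\mathfrak{S}}K$ from Proposition \ref{F-cryst}(i), where $V=\Lambda[1/p]$.

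Given these, the first two assertions are essentially formal. Reducing $\tilde{s}_\alpha$ modulo $u$ and twisting by $\varphi$ gives tensors in $\varphi^*(\mathfrak{M}/u\mathfrak{M})^\otimes=\D(\pdiv_0)(\Ok_L)^\otimes$; under $\D(\pdiv_0)(\Ok_L)[1/p]=D_{cris}(V)$ these coincide with the $\sa$, the comparison isomorphisms above and Proposition \ref{F-cryst}(i) being compatible by construction with the $p$-adic comparison and with the passage from $\set$ to the associated $\varphi$-invariant crystalline tensors; this yields the integrality $\sa\in\D(\pdiv_0)(\Ok_L)^\otimes$. For the filtration statement I would argue by functoriality: $\Z_p$ sits in filtration degree $0$ and $\Z_p\to\Lambda^\otimes$ becomes a morphism of filtered objects after applying $\mathfrak{M}$, so $\varphi^*(\tilde{s}_\alpha)\in\mathrm{Fil}^0\varphi^*(\mathfrak{M})^\otimes$, and base changing along $u\mapsto\pi$ places $\tilde{s}_\alpha$ in $\mathrm{Fil}^0\D(\pdiv)(\Ok_K)^\otimes$.

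The substance of the proposition is the trivialization. It suffices to produce an isomorphism $\Lambda\otimes_{\Z_p}\mathfrak{S}\xrightarrow{\sim}\mathfrak{M}$ carrying $\set$ to $\tilde{s}_\alpha$: base changing along $\mathfrak{S}\to\hW(\Ok_K)$ then yields the displayed isomorphism of $\D(\pdiv)(\hW(\Ok_K))$, and a further reduction along $\hW(\Ok_K)\to\hW(\Fpbar)=\Ok_L$ gives $\D(\pdiv_0)(\Ok_L)\cong\Lambda\otimes_{\Z_p}\Ok_L$ matching $\sa$ with $\set$ (consistency with $\set$ on the target being ensured by the canonical comparison $\Lambda\otimes\Ok_{\widehat{\mathcal{E}^{ur}}}\cong\mathfrak{M}\otimes\Ok_{\widehat{\mathcal{E}^{ur}}}$ of Proposition \ref{F-cryst}(iii)). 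To construct such an isomorphism I would consider the scheme $P$ of tensor-matching isomorphisms $\Lambda\otimes\mathfrak{S}\to\mathfrak{M}$; since $\mathfrak{M}$ restricted to $D^\times$ defines an exact $\otimes$-functor (Proposition \ref{F-cryst}) and the $\set$ cut out $\G_x$ inside $GL(\Lambda)$, the restriction $P|_{D^\times}$ is a torsor under $\G_x$, trivial over $\Ok_{\widehat{\mathcal{E}^{ur}}}$ by Proposition \ref{F-cryst}(iii). The goal is to show $P|_{D^\times}$ is trivial; it then extends uniquely to a trivialization over $\Spec\mathfrak{S}$, since $\mathfrak{S}$ is regular and $D^\times$ omits only the codimension $2$ closed point, so an isomorphism of vector bundles over $D^\times$ extends uniquely over $\Spec\mathfrak{S}$, and the tensors still match. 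First I would push $P$ out along $\G_x\to\pi_0(\G_x)$: the resulting torsor under a finite \'etale group over $D^\times$ is trivial because $\pi_1^{\et}(D^\times)=\pi_1^{\et}(\Spec\mathfrak{S})=1$, the latter by Zariski--Nagata purity and the fact that $\mathfrak{S}=\Ok_L[[u]]$ is a regular complete local ring with algebraically closed residue field. Hence $P$ reduces to a torsor under the connected parahoric $\G_x^\circ$, and Proposition \ref{sigma} applies: connectedness of its special fiber is now automatic, and granting the remaining hypothesis $H^1(D^\times,\G_x^\circ)=1$ one obtains the desired trivialization.

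The hard part is exactly this vanishing $H^1(D^\times,\G_x^\circ)=1$ for a connected parahoric group scheme. In the hyperspecial case $\G_x^\circ$ is reductive and one combines a purity statement for reductive-group torsors with the triviality of torsors over the henselian local ring $\mathfrak{S}$; for a general parahoric no such soft argument is available --- the relevant torsor does not obviously extend across the closed point --- and one must instead invoke Bruhat--Tits theory and the loop-group description of parahoric group schemes for tamely ramified groups, which is where the tameness and the exclusion of $E_8$-factors enter. Granting that input (this is \cite[3.3.8]{KP}, built on \cite{PZ} and the constructions of \cite{KP} recalled here), the base-change and reduction steps above complete the proof.
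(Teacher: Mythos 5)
The statement you were asked to prove is not actually proved in the paper: the paper simply quotes it as \cite[3.3.8]{KP}, and your sketch is, in effect, a reconstruction of that cited argument (pass to the Breuil--Kisin module $\mathfrak{M}$, get integrality of $\sa$ by reducing $\tilde{s}_\alpha$ mod $u$, get the filtration statement by functoriality, trivialize the scheme $P$ of tensor-preserving isomorphisms over $D^\times$, and extend across the codimension-$2$ point). Those outer steps are fine, and like the paper you ultimately delegate the hard cohomological input to \cite{KP}/\cite{PZ}. But two of the bridging steps you supply yourself are not sound as written. First, the claim that $P|_{D^\times}$ is a $\G_x$-torsor does not follow just from exactness of $\Lambda\mapsto\mathfrak{M}(\Lambda)|_{D^\times}$ for the single lattice together with ``the $\set$ cut out $\G_x$'': establishing fppf-local triviality is a genuine step in \cite{Ki3} and \cite{KP} (it uses that $\G_x$ is the scheme-theoretic stabilizer of the tensors together with a Tannakian criterion of Broshi applied to the exact tensor functor, plus the explicit trivializations over $\Ok_{\widehat{\mathcal{E}^{ur}}}$ and over $\mathfrak{S}[1/p]$), and it is part of what \cite[3.3.8]{KP} actually proves.

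Second, and more seriously, your reduction from $\G_x$ to the connected parahoric $\G_x^\circ$ is wrong as stated. The quotient $\G_x/\G_x^\circ$ is not a finite \'etale group scheme over the base: it has trivial generic fibre and is only quasi-finite \'etale, supported over $p=0$. Consequently a torsor under it over $D^\times$ is not a finite \'etale cover of $D^\times$, so Zariski--Nagata purity and $\pi_1^{\et}(D^\times)=1$ say nothing about it; and on the stratum $V(p)\cap D^\times\cong\Spec k((u))$, where the group really is the finite component group, torsors can be nontrivial because $k((u))$ is far from simply connected. So ``push out along $\G_x\to\pi_0(\G_x)$ and conclude triviality from $\pi_1^{\et}(D^\times)=1$'' does not produce the required $\G_x^\circ$-reduction; controlling the component group is precisely one of the technical points of \cite[3.3.8]{KP}, handled there by a different argument (via the canonical, Frobenius-compatible trivializations on the two charts of $D^\times$ rather than by a purity statement). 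With those two steps either repaired or honestly attributed to \cite{KP}, your outline matches the cited proof; as it stands it has a genuine gap at exactly the points where the disconnectedness of $\G_x$ and the torsor property have to be confronted.
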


\subsection{} Now let $\pdiv_0$ be a $p$-divisible group over $k$ and suppose $(\sa)\in\D^\otimes$, where $\D:=\D(\pdiv_0)(\Ok_L)$ are a collection of $\varphi$-invariant tensors  whose image in $\D(\pdiv_0)(k)$ lie in $Fil^0$. We  assume that the stabilizer $\G_{\Ok_L}$ of the $\sa$ is a connected Bruhat-Tits parahoric group scheme, i.e.  $\G_{\Ok_L}=\G_x=\G_x^{\circ}$ for some $x\in\B(G,L)$ as above and also that $G$ contains the scalars.

Let $P\subset GL(\D)$ be a parabolic subgroup lifting the parabolic $P_0$ corresponding to the filtration on $\D(\pdiv_0)(k)$. Write $M^{loc}=GL(\D)/P$ and $\mbox{Spf}A=\hat{M}^{loc}$ the completion at the identity. We write $\overline{M}_1$ for the universal filtration on $\D\otimes_{\Ok_L}A$. Let $y:A\rightarrow K'$ be a map such that $\sa\in Fil^0\D\otimes_{\Ok_L}{K'}$ for the filtration induced by $y$ on $\D\otimes_{\Ok_L}K'$. By \cite[Lemma 1.4.5]{Ki1}, the filtration corresponding to $y$ is induced by a $G$-valued cocharacter $\mu_y$.

Let $G.y$ be the orbit of $y$ in $M^{loc}\otimes_{\Ok_L}K'$ which is defined over the field of definition $E$ of the $G$-conjugacy class of  cocharacters $\{\mu_y\}$, and we write $M^{loc}_{\G_{\Ok_L}}$ for the closure of this orbit in $M^{loc}$. By \cite[Proposition 2.3.16]{KP}, $M^{loc}_{\G_{\Ok_L}}$ can be identified with the local model for $\G_{\Ok_L}$ and the conjugacy class of  cocharacters $\{\mu_y^{-1}\}$ considered in \S3. 

\begin{definition}\label{G-adapted}
Let $\pdiv$ be a $p$-divisible group over $\Ok_K$ whose special fiber is isomorphic to $\pdiv_0$. We say $\pdiv$ is $(\G_{\Ok_L},\mu_y)$-adapted if the tensors $\sa$ extend to tensors $\tilde{s}_\alpha\in\D(\pdiv)(\hW(\Ok_K))^\otimes$ such that the following two conditions hold:

1) There is isomorphism $\D(\pdiv)(\hW(\Ok_K))\cong \D\otimes_{\Ok_L}\hW(\Ok_K)$ taking $\tilde{s}_\alpha$ to $\sa$.

2) Under the canonical identification $\D(\pdiv)(\Ok_K)\otimes_{\Ok_K}K\cong \D\otimes_{\Ok_L}K$, the filtration on $\D\otimes_{\Ok_L}K$ is induced by a $G$-valued cocharacter conjugate to $\mu_y$. 
\end{definition}
\begin{remark}\label{remark}
It can be checked from the construction in \cite{KP}, that the notion of $(\G_{\Ok_L},\mu_y)$ adapted liftings only depends on the $G$ conjugacy class of $\mu_y$ and the specialization of the filtration induced by $\mu_y$. 
\end{remark}

\begin{prop}
Let $\mbox{Spf}A$ denote the versal deformation space of $\pdiv_0$. Then there is a versal quotient $A_{\G}$ of $A\otimes_{\Ok_L}\Ok_E$ such that for any $K$ as above, a map $\varpi:A\otimes_{\Ok_L}\Ok_E\rightarrow K$ factors through $A_{\G}$ if and only if the $p$-divisible group $\pdiv_{\varpi}$ induced is $(\G,\mu_y)$ adapted. 
\end{prop}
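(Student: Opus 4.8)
The plan is to construct $A_{\G}$ as the quotient of $A \otimes_{\Ok_L} \Ok_E$ that corepresents the condition of being $(\G,\mu_y)$-adapted, using the explicit description of versal deformations via $\mathfrak{S}$-modules from \cite{Ki3}, \cite{KP}. First I would recall that by Proposition \ref{F-cryst} and the theory reviewed in \S4.4, the universal $p$-divisible group $\pdiv$ over $\mbox{Spf}A$ has a Dieudonn\'e display, hence an associated $\mathfrak{S}$-module $\widetilde{\mathfrak{M}}$ over $\mathfrak{S}_A := A[[u]]$, together with the universal filtration $\overline{M}_1$ on $\D \otimes_{\Ok_L} A$. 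The tensors $\sa \in \D^\otimes$, after the identification of $\D(\pdiv_0)(\Ok_L)$ with $\varphi^*(\mathfrak{M}/u\mathfrak{M})$, propagate to $\varphi$-invariant tensors $\tilde{s}_\alpha$ in $\widetilde{\mathfrak{M}}^\otimes$ over the locus where they remain $\varphi$-invariant; the point is that over a \emph{characteristic $0$} point $\varpi: A\otimes_{\Ok_L}\Ok_E \to K$ these tensors automatically extend (since crystalline tensors always do, by Proposition \ref{F-cryst}), so the only real condition is the \emph{position} of the Hodge filtration.

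The key step is then to translate conditions 1) and 2) of Definition \ref{G-adapted} into the geometry of the local model. By Proposition \ref{sigma} (whose hypotheses hold here since $\G_{\Ok_L}$ is a connected parahoric and $H^1(\G,D^\times)=1$ for such groups by \cite[3.3.5]{KP}), there is an isomorphism $\Lambda \otimes_{\Z_p}\mathfrak{S} \cong \mathfrak{M}(\Lambda)$ matching $\set$ with $\tilde{s}_\alpha$; this makes condition 1) automatic for \emph{any} lifting once condition 2) is suitably interpreted, essentially because the $\G$-torsor structure on $\widetilde{\mathfrak{M}}$ trivializes over $\mathfrak{S}_A$. Hence by Remark \ref{remark}, being $(\G,\mu_y)$-adapted depends only on the $G$-conjugacy class of the cocharacter inducing the Hodge filtration on $\D\otimes_{\Ok_L}K$. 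Concretely: the filtration $\overline{M}_1$ gives a map $\mbox{Spf}A \to \hat{M}^{loc}=\mbox{Spf}A$ (the identity, by construction of $A$), and $\varpi$ is $(\G,\mu_y)$-adapted if and only if the induced $K$-point of $M^{loc}\otimes_{\Ok_L}\Ok_E$ lies on $M^{loc}_{\G_{\Ok_L}}$, the closure of the $G$-orbit $G.y$. Therefore I would define $A_{\G}$ to be the quotient of $A\otimes_{\Ok_L}\Ok_E$ corresponding to the closed subscheme $M^{loc}_{\G_{\Ok_L}} \hookrightarrow M^{loc}\otimes_{\Ok_L}\Ok_E$ under the identification $\mbox{Spf}A \cong \hat{M}^{loc}$; it is a versal quotient because $M^{loc}_{\G_{\Ok_L}}$ is (formally) versal for its own deformation problem, being the local model.

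The remaining verification is the ``if and only if'': a map $\varpi$ factors through $A_{\G}$ iff the $K$-point lands in $M^{loc}_{\G_{\Ok_L}}$ iff the Hodge filtration on $\D\otimes_{\Ok_L}K$ is induced by a cocharacter in $\{\mu_y\}$ (this is exactly \cite[Prop.\ 2.3.16]{KP} together with the orbit description, using that $\{\mu_y^{-1}\}$-filtrations are parametrized by the generic fiber of $M^{loc}_{\G_{\Ok_L}}$), which by the discussion above is equivalent to $\pdiv_\varpi$ being $(\G,\mu_y)$-adapted. The main obstacle I expect is the careful bookkeeping in showing that condition 1) of Definition \ref{G-adapted} is genuinely implied by condition 2) in the presence of the $\mathfrak{S}$-module trivialization — i.e.\ that one does not need to impose the existence of a $\G$-equivariant trivialization of $\D(\pdiv)(\hW(\Ok_K))$ as a \emph{separate} closed condition. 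This comes down to the fact, extracted from the proof of \cite[3.3.8, 3.3.9]{KP} (cf.\ also \cite[\S1.5]{Ki1}), that over the totally ramified base $\Ok_K$ with $\Ok_K$ of characteristic $0$, any lifting of the tensors $\sa$ to $\widetilde{s}_\alpha \in \D(\pdiv)(\hW(\Ok_K))^\otimes$ automatically yields a $\G$-trivialization because the relevant $\G$-torsor over $\hW(\Ok_K)$ is trivial ($\hW(\Ok_K)$ being a henselian local ring and $\G$ smooth with connected special fiber, so $H^1$ vanishes), and the Frobenius-invariance then forces the filtration to land in the correct $\G$-orbit; conversely, versality of $M^{loc}_{\G_{\Ok_L}}$ guarantees that every such filtration position is achieved.
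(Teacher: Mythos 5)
Your construction of $A_{\G}$ is the right one (it is exactly how $A_{\G}$ is defined in \cite{KP}: identify $\mbox{Spf}A$ with $\hat{M}^{loc}$ via a chosen versal display, and take the quotient cut out by the closed subscheme $M^{loc}_{\G_{\Ok_L}}$), and your "if" direction is fine in outline. But note that the paper does not reprove the equivalence: its proof is a citation, the forward direction being "clear from the construction" of the display with tensors over $A_{\G}$, and the converse being precisely \cite[Prop.\ 3.2.17]{KP}. Your proposal tries to make that converse self-contained, and this is where there is a genuine gap.

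The gap is your claim that condition 1) of Definition \ref{G-adapted} is automatic, i.e.\ that "the only real condition is the position of the Hodge filtration." First, the extension of $\sa$ to \emph{Frobenius-invariant} tensors $\tilde{s}_\alpha\in\D(\pdiv_\varpi)(\hW(\Ok_K))^\otimes$ is not free of charge in characteristic $0$: Propositions \ref{F-cryst} and \ref{sigma} apply to a lattice in a crystalline Galois representation already equipped with $\Gamma_K$-invariant tensors, and an arbitrary deformation $\pdiv_\varpi$ of $\pdiv_0$ over $\Ok_K$ comes with no such étale tensors; the implication used elsewhere in the paper runs the other way (étale tensors $\Rightarrow$ crystalline tensors, Proposition \ref{prop3}). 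If condition 1) really followed from condition 2), the definition of $(\G_{\Ok_L},\mu_y)$-adapted would be redundant and \cite[3.2.17]{KP} would be contentless. Second, and independently, even granting condition 2) you still have to compare two trivializations: condition 2) measures the filtration through the \emph{canonical} identification $\D(\pdiv_\varpi)(\Ok_K)\otimes_{\Ok_K}K\cong\D\otimes_{\Ok_L}K$, whereas the map $\varpi:\mbox{Spf}\,\Ok_K\rightarrow\mbox{Spf}A\cong\hat{M}^{loc}$ measures it through the fixed versal display trivialization. These differ a priori by an automorphism of $\D\otimes\hW(\Ok_K)$ that need not preserve $M^{loc}_{\G_{\Ok_L}}$; it is exactly the tensor-compatible isomorphism of condition 1) (together with Frobenius-invariance) that forces this discrepancy to lie in $\G(\hW(\Ok_K))$, whose action does preserve $M^{loc}_{\G_{\Ok_L}}$, after which one concludes by closedness of $M^{loc}_{\G_{\Ok_L}}$ in $M^{loc}$. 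This comparison is the technical heart of \cite[Prop.\ 3.2.17]{KP} and is precisely the step your sketch replaces by an assertion; also, the triviality issue is not whether a $\G$-torsor over $\hW(\Ok_K)$ has a section, but whether the isomorphism scheme $\underline{\mathrm{Isom}}\bigl((\D\otimes\hW(\Ok_K),\sa),(\D(\pdiv_\varpi)(\hW(\Ok_K)),\tilde{s}_\alpha)\bigr)$ is a $\G$-torsor at all, which again is part of what must be proved (or cited).
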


\begin{proof}This is \cite[Prop. 3.2.17]{KP}. Indeed it is clear from their construction that the $p$-divisible group $\pdiv_\varpi$ induced by  a map $\varpi:A_{\G}\rightarrow K$ is $(\G,\mu_y)$ adapted. Then Proposition 3.2.17 in loc. cit. provides the converse.
\end{proof}

 \subsection{}Now assume there is a $\Z_p$-module $U$ and an isomorphism $U\otimes_{\Z_p}\Ok_L\cong \D$ such that $\sa\in U^\otimes$. Then the stabilizer of $\sa$ in $U^\otimes$ is a group $\G$ over $\Z_p$ such that $\G\otimes_{\Z_p}\Ok_L\cong \G_{\Ok_L}$. We assume $\G$ is of the form $\G_x$ for some $x\in B(G,\Q_p)$. Since the $\sa$ are $\varphi$-invariant, we have $\varphi$ is of the form $b\sigma$ for some $b\in G(L)$.

Under these assumptions one can make the following construction of a certain $(\G_{\Ok_L},\mu_y)$-adapted  lift, which will be needed in \S5 for the reduction to Levi subgroups argument

\begin{prop}\label{adapted} There exists a $(\G_{\Ok_L},\mu_y)$-adapted deformation of $\pdiv$ such that $\sa\in\D^\otimes$ correspond to tensors $\set\in T_p\pdiv^\vee$ and such that there exists an isomorphism:

$$T_p\pdiv^\vee\otimes_{\Z_p}\Ok_L\cong\D$$
taking $\set$ to $\sa$.
\end{prop}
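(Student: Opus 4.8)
The plan is to produce the desired lift inside the versal family described above, choosing the defining data so that the crystalline-to-\'etale comparison isomorphism of Proposition \ref{prop3} can be arranged to respect the fixed $\Z_p$-structure $U$. First I would construct an explicit cocharacter and filtration realizing the situation: since $\varphi = b\sigma$ with $b\in G(L)$, and $\mu_y$ is the cocharacter governing the filtration on $\D(\pdiv_0)(k)$, I would pick a representative $\mu$ of $\{\mu_y\}$ which is defined over $\Ok_L$ and which, together with $b$, satisfies the compatibility $b\sigma \in \G(\Ok_L)\mu(p)\G(\Ok_L)$ — this is automatic from the hypothesis that $\pdiv_0$ lifts to a $(\G_{\Ok_L},\mu_y)$-adapted $p$-divisible group (i.e. the relative position of the Hodge and Verschiebung lattices lies in $\Adm_\G(\{\mu\})$). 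Then the filtration $\mathrm{Fil}^0 := \mu(p)^{-1}(\D\otimes_{\Ok_L}\Ok_K)\cap \ldots$, or more precisely the filtration on $\D\otimes_{\Ok_L}K$ cut out by conjugating $\mu$ into standard position, defines a $K$-point $y$ of $M^{loc}_{\G_{\Ok_L}}$, hence by the previous Proposition a map $\varpi:A_\G \to \Ok_K$ and an associated $(\G_{\Ok_L},\mu_y)$-adapted $p$-divisible group $\pdiv$.

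Next, I would analyze the Tate module of this $\pdiv$. By condition (1) of Definition \ref{G-adapted} there is an isomorphism $\D(\pdiv)(\hW(\Ok_K))\cong \D\otimes_{\Ok_L}\hW(\Ok_K)$ carrying $\tilde s_\alpha$ to $\sa$; combining with the fact that $\sa\in U^\otimes$ and $U\otimes_{\Z_p}\Ok_L\cong\D$, the tensors $\sa$ descend to tensors in $\D(\pdiv)(\hW(\Ok_K))^\otimes$ that are actually defined over the $W(\Fpbar)$-structure. Applying Proposition \ref{prop3}, the $\sa$ correspond to $\Gamma_K$-invariant tensors $\set\in T_p\pdiv^{\vee,\otimes}$, whose common stabilizer is a smooth $\Z_p$-group scheme. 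I would then observe that, via the canonical isomorphism $\D(\pdiv_0)(\Ok_L)\cong T_p\pdiv^\vee\otimes_{\Z_p}\Ok_L$ of Proposition \ref{prop3} matching $\sa$ with $\set$, and via the chosen isomorphism $U\otimes_{\Z_p}\Ok_L\cong\D=\D(\pdiv_0)(\Ok_L)$, both $U$ and $T_p\pdiv^\vee$ are $\Z_p$-lattices in the same space carrying the same tensors $\set$. The stabilizer group scheme of these tensors is $\G$, which is connected (being parahoric) and by hypothesis of the form $\G_x$; a twisting argument in $H^1(\Z_p,\G)$ — which vanishes by Lang's theorem since $\G$ has connected special fiber — shows the two $\Z_p$-structures are isomorphic compatibly with the $\set$, giving an isomorphism $T_p\pdiv^\vee\otimes_{\Z_p}\Ok_L\cong\D$ taking $\set$ to $\sa$ as required.

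The main obstacle, I expect, is the bookkeeping needed to align the three comparison isomorphisms — the $\mathfrak{S}$-module one from Proposition \ref{F-cryst}/\ref{sigma}, the Dieudonn\'e-display one of Proposition \ref{prop3}, and the chosen $U\otimes_{\Z_p}\Ok_L\cong\D$ — so that a single $\Z_p$-lattice with tensors $\set$ emerges, rather than merely lattices that agree after inverting $p$ or after passing to $\Ok_L$. Concretely, the subtlety is that Proposition \ref{prop3} produces \emph{some} isomorphism $\D(\pdiv_0)(\Ok_L)\cong T_p\pdiv^\vee\otimes_{\Z_p}\Ok_L$ matching $\sa$ with $\set$, but to get descent to $\Z_p$ one must use that this isomorphism is $\G$-equivariant for the action on tensors and invoke connectedness of $\G$ together with vanishing of the relevant torsor cohomology; making sure the hypotheses of Proposition \ref{sigma} ($H^1(\G,D^\times)=1$, connected special fiber) or their analogue are in force here is where care is needed. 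The construction of the adapted lift itself is a direct appeal to the preceding Proposition once the $K$-point $y$ is produced, so the genuine content is entirely in this descent-of-lattices step, which I would handle by the Lang-theorem twisting argument sketched above.
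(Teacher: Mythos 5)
There is a genuine gap, and it sits exactly where you locate the ``genuine content'': the passage from the crystalline tensors $\sa$ to \emph{integral} \'etale tensors $\set\in T_p\pdiv^{\vee,\otimes}$ and to a tensor-compatible trivialization. You take an arbitrary $(\G_{\Ok_L},\mu_y)$-adapted lifting coming from the versal quotient $A_\G$ and then invoke Proposition \ref{prop3} to say that the $\sa$ ``correspond to $\Gamma_K$-invariant tensors $\set\in T_p\pdiv^{\vee,\otimes}$''. But Proposition \ref{prop3} runs in the opposite direction: its hypotheses are a $p$-divisible group over $\Ok_K$ \emph{already equipped} with $\Gamma_K$-invariant tensors in the integral Tate module, from which it produces crystalline tensors and comparison isomorphisms. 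Starting from $\sa$, the rational comparison only gives $\Gamma_K$-invariant tensors in $(T_p\pdiv^\vee\otimes_{\Z_p}\Q_p)^\otimes$; nothing in the paper asserts that for an arbitrary adapted lifting these are integral, and indeed the whole point of Proposition \ref{adapted} is to exhibit one particular adapted lifting for which this holds (later sections explicitly say ``an adapted lifting satisfying the conditions of Proposition \ref{adapted}''). Your subsequent descent step inherits the same problem: to run any twisting argument you must first know the scheme $\underline{\mathrm{Isom}}$ of tensor-preserving isomorphisms is a $\G$-torsor, i.e.\ that the two lattices-with-tensors are locally isomorphic, which is precisely what is at stake; and the isomorphism required is over $\Ok_L$ (strictly henselian, so a smooth torsor is automatically trivial), not over $\Z_p$, so Lang's theorem is not the relevant tool.

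The paper's proof does not select a lifting from the versal family and then verify the properties; it builds the lifting by hand so that the properties are visible. One sets $\mathfrak{M}:=\D\otimes_{\sigma^{-1},\Ok_L}\mathfrak{S}$, takes $F\subset\sigma^*(\mathfrak{M})$ the preimage of the filtration $y^*(\overline{M}_1)$ (a free module containing the $\sa$, with $\underline{\mathrm{Isom}}_{\sa,\mathfrak{S}}(F,\sigma^*(\mathfrak{M}))$ a $\G$-torsor by \cite[Lemma 3.2.6]{KP}), notes that $\sigma^{-1}(b/p)$ gives a tensor-preserving point of this torsor at $u=0$, and lifts it by smoothness of $\G$ to a tensor-preserving $\Theta:F\xrightarrow{\sim}\sigma^*(\mathfrak{M})$; the Frobenius $\Theta\circ(\times\, p E(u)/E(0))$ makes $\mathfrak{M}$ an object of $BT^\varphi$ whose associated $p$-divisible group is the desired adapted lifting, with $\tilde{s}_\alpha\in\mathfrak{M}^\otimes$ Frobenius-invariant \emph{by construction}. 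Integrality of $\set$ then follows from full faithfulness of the functor $\mathfrak{M}$ in Proposition \ref{F-cryst} (a morphism $\mathbf{1}\to\mathfrak{M}^\otimes$ descends to $\Z_p\to T_p\pdiv^{\vee,\otimes}$), and the isomorphism $T_p\pdiv^\vee\otimes_{\Z_p}\Ok_L\cong\D$ respecting tensors comes from showing the Isom scheme becomes a trivial $\G$-torsor after the faithfully flat base change $\Ok_L\to\Ok_{\widehat{\mathcal{E}^{ur}}}$, using the canonical comparison of Proposition \ref{F-cryst} ii) together with the constructed tensor-compatible identification $\mathfrak{M}\cong\D\otimes\mathfrak{S}$, and then descending and using smoothness of $\G$ over the strictly henselian $\Ok_L$. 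Your proposal is missing this construction, and the citation of Proposition \ref{prop3} cannot substitute for it.
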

\begin{proof}

Let $\mathfrak{M}:=\D\otimes_{\sigma^{-1},\Ok_L}\mathfrak{S}$, then $\sigma^*(\mathfrak{M})\cong \D\otimes_W\mathfrak{S}$. Let $y^*(\overline{M}_1)\subset \D\otimes_{\Ok_L}\Ok_{K'}$ denote the filtration induced by $y:A\rightarrow \Ok_{K'}$ and let $F\subset \sigma^*(\mathfrak{M})$ denote be the premiage of $y^*(\overline{M}_1)$. By \cite[Lemma 3.2.6]{KP}, $F$ is a free $\mathfrak{S}$ module and  $s_{\alpha,0}\in F$, moreover the scheme $\underline{\text{Isom}}_{\sa,\mathfrak{S}}(F,\sigma^*(\mathfrak{M}))$ of $\mathfrak{S}$-isomorphisms which respect the $\sa$ is a $\G$ torsor. The Frobenius $\varphi$ on $\D$ induces a map
$$\D_1\xrightarrow{\simeq}\D\xrightarrow \cong\sigma^{-1*}\D$$
Here $\D_1$ is the preimage of the filtration on $\D(\pdiv_0)(k)$, the first arrow is given by $\sigma^{-1}(b/p)$ and the second isomorphism is induced by the identity on $U$. The specialization of $F$ at $u=0$ is identified with $\D_1$, then since $G$ contains the scalars, $\sigma^{-1}(b/p)$ preserves $\sa$ and hence corresponds to a point $\underline{\text{Isom}}_{\sa,\mathfrak{S}}(F,\sigma^*(\mathfrak{M}))(W)$. By smoothness of $\G$, this lifts to an isomorphism $$\Theta:F\xrightarrow\sim \sigma^*(\mathfrak{M})$$
respecting $\sa$.
Let $c=p\frac{E(u)}{E(0)}$. The morphism 
$$\varphi:\sigma^*(\mathfrak{M})\xrightarrow{\times c}F\xrightarrow\Theta\sigma^*(\mathfrak{M})\rightarrow \mathfrak{M}$$
where the last map is induced from the identity on $U$, gives $\mathfrak{M}$ the structure of an element of $BT^{\varphi}$ such that $\varphi^*(\mathfrak{M}/u\mathfrak{M})$ is identified with $\D$, and hence corresponds to a $p$-divisible group $\pdiv$ over $\Ok_K$ deforming $\pdiv_0$.

Since $\varphi$ preserves $\sa$, these give Frobenius invariant tensors in $\s_\alpha\in\D(\pdiv)(\hW(\Ok_K))^\otimes$, and by construction, we have an isomorphism $\D(\pdiv)(\hW(\Ok_K))\cong\D\otimes_{\Ok_L}\hW(\Ok_K)$ taking $\s_\alpha$ to $\sa$. Moreover under this isomorphism, the filtration on $\D\otimes_{\Ok_L}K$ is given by $\mu_y$. The natural isomorphism $$\D(\pdiv)(\Ok_K)\otimes_{\Ok_K}K\cong\D\otimes_{\Ok_L}K$$ takes $\s_\alpha$ to $\sa$ by \cite[Lemma 3.2.13]{KP}, hence the natural filtration on $\D\otimes_{\Ok_L} K$ is induced by a $G$ cocharacter conjugate to $\mu_y$. Thus $\pdiv$ is a $(\G,\mu_y)$ adapted deformation, and since $\tilde{s}_\alpha\in\mathfrak{M}^\otimes$, we have $\set\in T_p\pdiv^\vee$ by the fully faithfulness of $\mathfrak{M}$ in Proposition \ref{F-cryst}.

We now show that there exists an isomorphism $T_p\pdiv^\vee\otimes_{\Z_p}\Ok_L\cong \D$ respecting tensors. Let $\mathcal{P}\subset\underline{Isom}(T_p\pdiv^\vee\otimes_{\Z_p}\Ok_L,\D)$ be the isomorphism scheme taking $\set$ to $\sa$. By construction we have an isomorphism $\mathfrak{M}(T_p\pdiv^\vee)\cong\D\otimes_{\sigma^{-1},\Ok_L}\mathfrak{S}\xrightarrow \sim\D\otimes_w\mathfrak{S}$ taking $\set$ to $\sa$. By Proposition \ref{F-cryst} there is a canonical isomorphism 

$$T_p\pdiv^\vee\otimes_{\Z_p}\Ok_{\widehat{\mathcal{E}^{ur}}}\cong \mathfrak{M}(T_p\pdiv^\vee)\otimes_{\mathfrak{S}}\Ok_{\widehat{\mathcal{E}^{ur}}}$$ 
and this isomorphism takes $\set$ to $\s_{\alpha}$. Thus there is an isomorphism $T_p\pdiv^\vee\otimes_{\Z_p}\Ok_{\widehat{\mathcal{E}^{ur}}}\cong \D\otimes_{\Ok_L}\Ok_{\widehat{\mathcal{E}^{ur}}}$ taking $\set$ to $\sa$, i.e. $\mathcal{P}\otimes_{W}\Ok_{\widehat{\mathcal{E}^{ur}}}$ is a trivial $\G$ torsor. Since $\Ok_L\rightarrow \Ok_{\widehat{\mathcal{E}^{ur}}}$ is faithfully flat, $\mathcal{P}$ is a $\G$ torsor which is necessarily trivial since $\G$ is smooth and $\Ok_L$ is strictly henselian.

\end{proof}

\section{Affine Deligne Luzstig varieties}
This section forms the main part of the local argument for the description of the isogeny classes. It is used for the argument in \S6 of lifting isogenies to characteristic $0$. An essential part is a bound on the connected components of affine Deligne-Lusztig varieties obtained in \cite{HZ}, which is recalled here.

\subsection{}Let $G$  be a reductive group over $\Q_p$ which splits over a tamely ramified extension. Recall $S$ is a maximal $L$ split torus defined over $\Q_p$ and $T$ its centralizer. We have fixed a $\sigma$-invariant alcove in the apartment corresponding to $S$ which induces a length function and ordering on the affine Weyl group $W_a$ and hence on the Iwahori Weyl group $W$. We also fix a special vertex $\mathfrak{s}$ (not necessarily $\sigma$-invariant) which determines a Borel $B$ of $G$ defined over $L$.

Let $b\in G(L)$, we denote by $[b]=\{g^{-1}b\sigma(g)|g\in G(L)\}$  its $\sigma$-conjugacy class. Let $B(G)$ be the set of $\sigma$-conjugacy classes of $G(F)$. We let $\nu$ be the Newton map:

$$\nu:B(G)\rightarrow X_*(T)^+_{I,\Q}$$
where $X_*(T)^+_{I,\Q}$ is the intersection of $X_*(T)_{I}\otimes_{\Z}\Q$ with the dominant chamber determined by $B$. We let $$\kappa:B(G)\rightarrow \pi_1(G)_\Gamma$$ denote the map induced by composition of $\tilde{\kappa}:G(L)\rightarrow \pi_1(G)_I$ with the projection $\pi_1(G)_I\twoheadrightarrow\pi_1(G)_\Gamma$.

By \cite[\S4.13]{Ko1} the map $$(\nu,\kappa):B(G)\rightarrow X_*(T)^+_{I,\Q}\times\pi_1(G)_\Gamma$$ is injective.

\subsection{}Let $K\subset\mathbb{S}$ be a $\sigma$-invariant subset and $W_K$ the group generated by the reflection in $K$.  Let $\G$ denote the associated parahoric group scheme over $\Z_p$. For $b\in G(L)$ and $w\in W_K\backslash W/W_K$ we have the affine Deligne-Lusztig variety

$$X_{K,w}(b):=\{g\in G(L)/\G(\Ok_L)|g^{-1}b\sigma(g)\in \G(\Ok_L)\dot{w}\G(\Ok_L)\}$$ 
It is known that $X_{K,w}(b)$ has the structure of a perfect scheme over $k$, for example by \cite{BS} (see also \cite{Zhu}). When $K=\emptyset$ and $\G$ is an Iwahori subgroup,  we write $X_w(b)$ for the corresponding affine Deligne-Lusztig variety.

 Let $\{\mu\}$ be a geometric conjugacy class of cocharacters for $G$ and let $\underline{\mu}$ be the image in $X_*(T)_I$ of a dominant representative $\mu$ in $X_*(T)$. Recall we have associated to this data the $\mu$-admissible set $\Adm_K(\{\mu\})\subset W_K\backslash W/W_K$.
 
Let 
$$X(\{\mu\},b)_K:=\{g\in G(L)/\G(\Ok_L)|g^{-1}b\sigma(g)\in\bigcup_{w\in\Adm_{K}(\{\mu\})} \G(\Ok_L)\dot{w}\G(\Ok_L)\}$$
$$=\bigcup_{w\in\Adm_K(\{\mu\})}X_{K,w}(b)$$
As before, when $\G$ is the Iwahori subgroup we write $X(\{\mu\},b)$ for this union of affine Deligne-Lusztig varieties. For notational convenience will also consider the unions $$X(\sigma(\{\mu\}),b)_K:=\cup_{w\in\Adm_K(\{\mu\})}X_{K,\sigma(w)}(b)$$
It can be identified with the set $X(\{\sigma'(\mu)\},b)$ where $\sigma'\in Gal(\overline{\Q_p}/\Q_p)$ is a lift of Frobenius. The map $g\mapsto b\sigma(g)$ defines a bijection from $X(\{\mu\},b)$ to $X(\sigma(\{\mu\}),b)$.

We recall the definition of the neutral acceptable set $B(G,\{\mu\})$ in \cite{RV}. For $\lambda,\lambda'\in X_*(T)\otimes_{\Z}\Q$ be dominant, we write $\lambda\leq\lambda'$ if $\lambda'-\lambda$ is a non-negative rational linear combination of positive coroots. Set

$$B(G,\{\mu\})=\{[b]\in B(G):\kappa([b])=\mu^{\natural},\nu([b])\leq\overline{\mu}\}$$
where $\mu^\natural$ is the common image of $\mu\in\{\mu\}$ in $\pi_1(G)_\Gamma$,  and  $\overline{\mu}$ denotes the Galois average of $\underline{\mu}\in X_*(T)$ with respect to the $L$-action of $\sigma$ on $X_*(T)_{I,\Q}^+$.

\subsection{}The following result on the non-emptiness pattern of the $X(\{\mu\},b)_K$ was conjectured by Kottwitz and Rapoport in \cite{KR} and proved by He in \cite{He3}.

\begin{thm}
[\cite{He3}] 1) The set $X(\{\mu\},b)_K\neq\emptyset $ if and only if $[b]\in B(G,\{\mu\})$.

2) Let $K\subset K'$ with $K'$ also $\sigma$-invariant and let $\G'$ be the associated parahoric. The natural projection $G(L)/\G'(\Ok_L)\rightarrow G(L)/\G(\Ok_L)$ induces a surjection

$$X(\{\mu\},b)_{K'}\twoheadrightarrow X(\{\mu\},b)_K$$
\end{thm}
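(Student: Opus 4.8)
The plan is to treat separately the three assertions packed into the statement --- necessity in (1), sufficiency in (1), and the surjectivity (2) --- and to reduce the last two to the Iwahori case. \emph{Necessity} is the easy one: if $g^{-1}b\sigma(g)\in\G(\Ok_L)\dot w\G(\Ok_L)$ with $w\in\Adm_K(\{\mu\})$, then since every element of $\Adm(\{\mu\})$ lies in a fixed coset of $W_a$ in $W$ it has the same image $\mu^\natural$ in $\pi_1(G)_\Gamma$, and as $W_K\subset W_a$ the same holds for the double coset $W_KwW_K$; applying the Kottwitz homomorphism gives $\kappa([b])=\mu^\natural$. The remaining inequality $\nu([b])\le\overline\mu$ is the group-theoretic Mazur inequality for the partial affine flag variety, which I would deduce from the arguments of Rapoport and Richartz together with the refinements of Gashi and of He, all valid for the groups under consideration.

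\emph{Reduction to Iwahori level.} Since $\mathcal I(\Ok_L)\dot v\mathcal I(\Ok_L)\subset\G(\Ok_L)\dot v\G(\Ok_L)$ and the image of $v\in\Adm(\{\mu\})$ in $W_K\backslash W/W_K$ lies in $\Adm_K(\{\mu\})$, the quotient map $G(L)/\mathcal I(\Ok_L)\to G(L)/\G(\Ok_L)$ carries $X(\{\mu\},b)$ into $X(\{\mu\},b)_K$. Hence sufficiency in (1) at level $K$ will follow once $X(\{\mu\},b)\ne\emptyset$ at Iwahori level is known; and if this Iwahori-level map is surjective for every $\sigma$-stable $K$ with $W_K$ finite, then factoring the projection of (2) through the Iwahori level (and through any intermediate parahoric) shows that it too is surjective. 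So it remains to prove, at Iwahori level, that $X(\{\mu\},b)\ne\emptyset$ whenever $[b]\in B(G,\{\mu\})$, and that $X(\{\mu\},b)\twoheadrightarrow X(\{\mu\},b)_K$.

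\emph{Iwahori level.} For non-emptiness the essential combinatorial input is that $\Adm(\{\mu\})$ meets the $\sigma$-conjugacy class $[b]$ in a $\sigma$-straight element $w$; this is the point where the acceptability $\nu([b])\le\overline\mu$ is genuinely used, and I would obtain it from the classification of minimal-length elements in $\sigma$-conjugacy classes of $W$ combined with the combinatorial (straightness and $P$-alcove) description of $\Adm(\{\mu\})$ of He and of He--Nie, together with the fact that the $\sigma$-straight conjugacy classes of $W$ surject onto $B(G)$. Granting a $\sigma$-straight $w\in\Adm(\{\mu\})$ with $\dot w\in[b]$, write $b=h\dot w\sigma(h)^{-1}$; then $h^{-1}b\sigma(h)=\dot w\in\mathcal I(\Ok_L)\dot w\mathcal I(\Ok_L)$, so $h\mathcal I(\Ok_L)\in X_w(b)\subset X(\{\mu\},b)$, which is non-empty --- and by Theorem~\ref{thm2} this $X_w(b)$ is a single point, consistent with $w$ being the minimal stratum. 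For surjectivity, take $g\G(\Ok_L)\in X(\{\mu\},b)_K$, so $y:=g^{-1}b\sigma(g)$ lies in $\mathcal I(\Ok_L)\dot v_1\mathcal I(\Ok_L)$ for some $v_1$ in the double coset $W_KwW_K$ with $w\in\Adm_K(\{\mu\})$; one must find $h\in\G(\Ok_L)$ with $h^{-1}y\sigma(h)\in\mathcal I(\Ok_L)\dot v\mathcal I(\Ok_L)$ for some $v\in\Adm(\{\mu\})$, for then $gh\,\mathcal I(\Ok_L)$ lifts $g\G(\Ok_L)$. Running the Deligne--Lusztig reduction --- $\sigma$-conjugating $y$ by elements of $\G(\Ok_L)$ and using the elementary moves $v\mapsto sv\sigma(s)$, $v\mapsto sv$ attached to simple reflections $s\in K$, which preserve $W_KwW_K$ because $K$ is $\sigma$-stable --- lowers the length of $v_1$ inside $W_KwW_K$; since $w\in\Adm_K(\{\mu\})$ forces some element of $W_KwW_K$, hence its length-minimal element, to lie in $\Adm(\{\mu\})$, and $\Adm(\{\mu\})$ is closed under the Bruhat order, the reduction terminates inside $\Adm(\{\mu\})$, producing the required $h$.

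\emph{Main obstacle.} The two genuinely hard points are combinatorial rather than geometric: first, that $\Adm(\{\mu\})$ meets every neutral acceptable class $[b]$ in a $\sigma$-straight element --- a delicate length analysis in the Iwahori--Weyl group, and the one place where $\nu([b])\le\overline\mu$ enters the sufficiency direction; and second, that the Deligne--Lusztig reduction above can always be steered so as to land inside $\Adm(\{\mu\})\cap W_KwW_K$ rather than merely inside $W_KwW_K$. Everything else is formal manipulation of Iwahori and parahoric double cosets, together with the known structure of the $X(\{\mu\},b)_K$ as perfect schemes (\cite{BS}, \cite{Zhu}).
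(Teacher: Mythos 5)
This statement is not proved in the paper at all: it is quoted verbatim as a theorem of He (\cite{He3}), so there is no in-paper argument to compare against, and any proposal must either reproduce He's argument or give a genuinely new one. Your write-up is a reasonable roadmap of He's strategy (the easy $\kappa$-computation, reduction of sufficiency and of (2) to Iwahori level, and the role of $\sigma$-straight elements), but as a proof it has genuine gaps: the two facts you isolate as the ``main obstacle'' --- that every $[b]\in B(G,\{\mu\})$ contains $\dot w$ for some $\sigma$-straight $w\in\Adm(\{\mu\})$, and the group-theoretic Mazur inequality $\nu([b])\le\overline\mu$ for ramified groups at parahoric level --- are precisely the content of He's theorem, and ``I would obtain it from the classification of minimal-length elements\dots'' is a citation, not an argument. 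So the hard directions of (1) are deferred rather than proved.

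There is also a concrete error in your argument for (2). The Deligne--Lusztig/partial-conjugation reduction, $\sigma$-conjugating $y\in\mathcal I(\Ok_L)\dot v_1\mathcal I(\Ok_L)$ by elements of $\G(\Ok_L)$, terminates at an element of minimal length in a $W_K$-$\sigma$-conjugation orbit, \emph{not} at (or Bruhat-below) the length-minimal element of the double coset $W_Kv_1W_K$; such a terminal element need not be $\le$ any prescribed element of $\Adm(\{\mu\})\cap W_Kv_1W_K$, so Bruhat-closedness of $\Adm(\{\mu\})$ does not finish the argument. Closing this gap is exactly where He needs his deeper combinatorics (partial conjugation plus the compatibility of admissible sets with the parahoric $K$, i.e.\ the relation between $\Adm_K(\{\mu\})$, $W_K\Adm(\{\mu\})W_K$ and minimal-length representatives), none of which is supplied here. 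Finally, a small but incorrect aside: Theorem \ref{thm2} does not say $X_w(b)$ is a single point for $\sigma$-straight $w$; it says every element of $\mathcal I(\Ok_L)\dot w\mathcal I(\Ok_L)$ is $\mathcal I$-$\sigma$-conjugate to $\dot w$, which makes $X_w(\dot w)$ a single $J_b(\Q_p)$-orbit (already for $b=1$, $w=1$ it is the infinite discrete set $G(\Q_p)/\mathrm{I}_p$).
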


We will need the following two results which is proved in \cite{HZ}.

\begin{thm}\label{thm1}
i) Let $Y\subset X(\{\mu\},b)$ be a connected component, then $Y\cap X_w(b)\neq\emptyset$ for some $\sigma$-straight element $w\in W$.

ii) Assume $G_{ad}$ is $\Q_p$ simple. If $\mu$ is not central, then the Kottwitz homomorphism induces an isomorphism:

$$\pi_0(X(\{\mu\},\dot{\tau}_{\{\mu\}}))\xrightarrow \sim\pi_1(G)_{I}^\sigma$$ and if $\mu$ is central, $X(\{\mu\},b)$ is discrete and we have a bijection $$X(\{\mu\},b) \simeq G(\Q_p)/\G(\Z_p)$$
\end{thm}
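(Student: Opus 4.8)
The statement is a purely group-theoretic assertion about affine Deligne--Lusztig varieties at Iwahori level, and the plan is to prove it along the lines of \cite{HZ}: the Deligne--Lusztig reduction method together with the combinatorics of $\sigma$-straight elements for part i), and a connectedness argument parallel to the hyperspecial case (Chen--Kisin--Viehmann) for part ii). Throughout I write $\mathcal{I}$ for the Iwahori and $J_b(\Q_p)=\{h\in G(L):h^{-1}b\sigma(h)=b\}$, which acts on $X(\{\mu\},b)$ by left translation.

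For i), let $Y\subset X(\{\mu\},b)$ be a connected component and choose $w\in\Adm(\{\mu\})$ with $Y\cap X_w(b)\neq\emptyset$ of minimal length. The essential input is the reduction method of Deligne--Lusztig, in the form due to G\"ortz--Haines--Kottwitz--Reuman and He: for $s\in\mathbb{S}$ with $\ell(sw\sigma(s))\leq\ell(w)$ one has $sw\sigma(s)\in\Adm(\{\mu\})$ (the admissible set being a union of lower Bruhat intervals and stable under $\sigma$-twisted cyclic shifts), and $X_w(b)$ is related to $X_{sw\sigma(s)}(b)$ and $X_{sw}(b)$ by morphisms that are Zariski-locally $\mathbb{A}^1$- or $\mathbb{G}_m$-bundles, or isomorphisms when lengths agree; all of these take place inside $X(\{\mu\},b)$. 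Iterating, and using minimality of $\ell(w)$, one reduces to the case that $w$ has minimal length in its $\sigma$-conjugacy class $\mathcal{O}_w\subset W$. By the theorem of He--Nie quoted in \S2, every $\sigma$-conjugacy class of $W$ contains a $\sigma$-straight element and all minimal length elements of a class are linked by cyclic shifts; performing these shifts (each an isomorphism of the corresponding $X_w(b)$, still within $\Adm(\{\mu\})$) produces a $\sigma$-straight $w'$ with $Y\cap X_{w'}(b)\neq\emptyset$. The delicate point, as always with the reduction method, is the bookkeeping of connected components through the fibrations, and this is where most of the work in i) lies.

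For ii), first dispose of the central case. If $\mu$ is central then $\Adm(\{\mu\})=\{\tau_{\{\mu\}}\}$, a single length-zero element, so $X(\{\mu\},b)=X_{\tau_{\{\mu\}}}(b)$; replacing $b$ by $\dot\tau_{\{\mu\}}$ within its $\sigma$-conjugacy class, this is the set of $g\mathcal{I}(\Ok_L)$ with $g^{-1}\dot\tau_{\{\mu\}}\sigma(g)\in\mathcal{I}(\Ok_L)\dot\tau_{\{\mu\}}$, i.e.\ the fixed points of a Frobenius-type operator on $G(L)/\mathcal{I}(\Ok_L)$; since $\dot\tau_{\{\mu\}}$ normalizes $\mathcal{I}$ and $\dot\tau_{\{\mu\}}\sigma$ defines a (necessarily inner) form of $G$, Lang's theorem and Bruhat--Tits theory identify this discrete set with $G(\Q_p)/\G(\Z_p)$. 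Now assume $\mu$ non-central, $G_{ad}$ $\Q_p$-simple and $b=\dot\tau_{\{\mu\}}$. Applying $\kappa_G$ to the defining condition $g^{-1}b\sigma(g)\in\bigcup_{w\in\Adm(\{\mu\})}\mathcal{I}(\Ok_L)\dot w\mathcal{I}(\Ok_L)$, and using that all $w\in\Adm(\{\mu\})$ have the common image of $\underline{\mu}$ in $\pi_1(G)_I$, which equals $\kappa_G(\dot\tau_{\{\mu\}})$, gives $(\sigma-1)\kappa_G(g)=0$; since $\kappa_G$ is constant on connected components of the affine flag variety $G(L)/\mathcal{I}(\Ok_L)$, it descends to a map $\pi_0(X(\{\mu\},\dot\tau_{\{\mu\}}))\to\pi_1(G)_I^\sigma$. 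Surjectivity is immediate: for $\tau'\in\Omega^\sigma\cong\pi_1(G)_I^\sigma$, a lift $\dot\tau'\in N(L)$ lies in $X(\{\mu\},\dot\tau_{\{\mu\}})$ (because $\Omega$ is abelian and normalizes $\mathcal{I}$, so $(\dot\tau')^{-1}\dot\tau_{\{\mu\}}\sigma(\dot\tau')$ stays in $\mathcal{I}(\Ok_L)\dot\tau_{\{\mu\}}\mathcal{I}(\Ok_L)$) and maps to $\tau'$, while the base point $g=1$ maps to $0$; moreover left translation by these $\dot\tau'$, which lie in $J_{\dot\tau_{\{\mu\}}}(\Q_p)$, permutes the fibers of $\kappa_G$ simply transitively.

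The remaining, and hardest, point is injectivity: each fiber of $\kappa_G$ is connected. By the $\Omega^\sigma$-equivariance just noted it suffices to show the identity fiber $\kappa_G^{-1}(0)$ is connected, and here the plan is to follow Chen--Kisin--Viehmann: reduce by a Hodge--Newton argument to the case where $(G,\{\mu\})$ is not Hodge--Newton decomposable (otherwise descend to a proper Levi and induct on semisimple rank), and in that case connect any two points of $\kappa_G^{-1}(0)$ by a chain of rational curves --- the $\mathbb{P}^1$'s and affine lines produced by the length-one elements of $\Adm(\{\mu\})$ and, more generally, by the reduction method applied to the various $X_w(b)$, using part i) to land on $\sigma$-straight $w$, for which $X_w(b)$ decomposes into classical Deligne--Lusztig varieties permuted transitively by $J_b(\Q_p)$ and is thus understood. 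Carrying out this chain-connecting argument uniformly at Iwahori level rather than at hyperspecial level is the real novelty, and it is exactly what forces one to work with the finer combinatorial input recalled in \S2, in particular the structure of the Iwahori double cosets of $\sigma$-straight elements via Theorem \ref{thm2}.
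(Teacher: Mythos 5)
The paper does not prove this statement at all --- it is quoted from \cite{HZ} --- so what you have written is an attempt to reprove the cited result, and while it follows the same general strategy (Deligne--Lusztig reduction plus $\sigma$-straight elements for i), a Chen--Kisin--Viehmann-style connectedness argument for ii)), it has two genuine gaps. For part i), the pivot of your argument is the claim that every $\sigma$-conjugacy class of $W$ contains a $\sigma$-straight element, so that after reducing to an element of minimal length in its class you can reach a straight one by length-preserving shifts. That claim is false: a class contains a $\sigma$-straight element only when its minimal length equals $\langle \nu,2\rho\rangle$ for its Newton point $\nu$. For instance the class of a finite simple reflection $s$ (say with $\sigma$ acting trivially) has Newton point $0$ but minimal length $1$, so it contains no $\sigma$-straight element, and no sequence of cyclic shifts starting from $s$ will ever produce one. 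The reduction method only takes a component down to strata $X_w(b)$ with $w$ of minimal length in its class; the entire content of i) is to show that a connected component cannot be supported solely on such non-straight minimal-length strata, i.e.\ that from a point of $X_w(b)$ with $w$ minimal but not $\sigma$-straight one can still move inside $X(\{\mu\},b)$ to a stratum of strictly smaller length. This step (in \cite{HZ} it uses the description of $X_w(b)$ for minimal-length $w$ in terms of classical Deligne--Lusztig varieties and a closure/further-reduction argument, checking that the connecting curves stay inside the admissible union) is absent. You also acknowledge, but do not carry out, the bookkeeping of connected components through the $\mathbb{A}^1$- and $\mathbb{G}_m$-fibrations; combined with the false assertion above, part i) is not established.

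For part ii), the well-definedness of the map to $\pi_1(G)_I^\sigma$, its surjectivity via $\Omega^\sigma$-translates, and the central-$\mu$ case are essentially fine. But injectivity --- connectedness of each fiber of $\kappa_G$ --- is the heart of the theorem, and your proposal only states a plan: reduce to the Hodge--Newton-indecomposable case and connect points of $\kappa_G^{-1}(0)$ by chains of rational curves produced by the reduction method. You yourself identify carrying this out at Iwahori level as ``the real novelty,'' and nothing in the proposal does it; the hyperspecial-level arguments of Chen--Kisin--Viehmann do not transfer verbatim, precisely because one must control which admissible Iwahori double cosets the connecting curves pass through, which is where the work of \cite{HZ} lies. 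So the proposal is a reasonable outline of the strategy of \cite{HZ}, but both of its crucial steps (passage from minimal-length to $\sigma$-straight strata in i), and the connectedness of $\kappa_G$-fibers in ii)) are missing, and the first rests on an incorrect combinatorial statement.
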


Now assume $G$ is residually split. To any straight element $w$ one may associate a vector $\nu_w\in X_*(T)_{I,\Q}$, its non-dominant Newton vector. Explicitly, we let $n$ be sufficiently large such that  $w\sigma(w)\dots\sigma^{n-1}(w)=t_\lambda$ with $\lambda \in X_*(T)_I$ and we define $$\nu_w:=\frac{t_\lambda}{n}$$ We have an associated semistandard Levi subgroup $M_{\nu_w}$ which is generated by $T$ and the roots subgroups $U_a$ such that $\langle a,\nu_w\rangle=0$. The alcove $\mathfrak{a}$ determines an alcove for $\mathcal{B}(M,\Q_p)$ and hence an Iwahori subgroup $\mathcal{M}$. Explicitly, $\mathcal{M}(\Ok_L)=\G(\Ok_L)\cap M(L)$. This induces a Bruhat order and length function on $W_M$. Then it is known that $w$ lies in $W_M$ and is a length $0$ element, i.e. $\dot{w}\mathcal{M}(\Ok_L)\dot{w}^{-1}$, cf \cite[Theorem 1.3]{Nie}. Hence $\dot{w}$ is a basic element in $M(L)$. The following is \cite[Theorem 7.1]{HZ}.

\begin{thm}\label{bound} There is a map
$$\coprod_{w\in W, w \text{ a straight element with } \dot{w}\in[b]}X^{M_{\nu_w}}(\{\lambda_w\}_{M_{\nu_w}},\dot{w})\rightarrow X(\{\mu\},b)$$
which induces a surjection
$$\coprod_{w\in W, w \text{ a straight element with } \dot{w}\in[b]}\pi_0(X^{M_{\nu_w}}(\{\lambda_w\}_{M_{\nu_w}},\dot{w}))\rightarrow \pi_0(X(\{\mu\},b))$$
\end{thm}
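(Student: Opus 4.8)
\emph{Sketch of proof.} The plan is to assemble the map from contributions indexed by the $\sigma$-straight $w$ with $\dot w\in[b]$, and then to extract the surjectivity on $\pi_0$ from Theorem~\ref{thm1}(i) together with the structure theory of $\sigma$-straight elements (Theorem~\ref{thm2}). Fix such a $w$ and put $M=M_{\nu_w}=Z_G(\nu_w)$, with its Iwahori $\mathcal{M}$ determined by $\mathcal{M}(\Ok_L)=\mathcal{I}(\Ok_L)\cap M(L)$; recall $w\in\Omega_M$ has length $0$ and $\dot w$ is basic in $M(L)$. The first task is to pin down $\lambda_w$: it is to be the $M$-dominant cocharacter whose image in $\pi_1(M)_I$ is the class of $w$ in $\Omega_M$, whose Galois average dominates $\nu_w$, and — this is the essential requirement — for which $\Adm^M_{\mathcal{M}}(\{\lambda_w\})$ maps into $\Adm^G(\{\mu\})$ under $W_M\hookrightarrow W$. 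With such a choice one checks $w=\tau^M_{\{\lambda_w\}}\in\Adm^M(\{\lambda_w\})$ and $[\dot w]_M\in B(M,\{\lambda_w\})$, so by He's non-emptiness criterion \cite{He3} the source below is non-empty precisely when $w$ is relevant. I would then fix $h_w\in G(L)$ with $h_w^{-1}b\sigma(h_w)=\dot w$ and define
\[
\iota_w\colon X^{M}(\{\lambda_w\}_M,\dot w)\longrightarrow G(L)/\mathcal{I}(\Ok_L),\qquad g\,\mathcal{M}(\Ok_L)\longmapsto h_w g\,\mathcal{I}(\Ok_L),
\]
which is well defined because $\mathcal{M}(\Ok_L)\subset\mathcal{I}(\Ok_L)$, and which is a morphism of perfect schemes, being the closed immersion $M(L)/\mathcal{M}(\Ok_L)\hookrightarrow G(L)/\mathcal{I}(\Ok_L)$ followed by left translation by $h_w$.

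That $\iota_w$ lands in $X(\{\mu\},b)$ follows from the identity $h_w\dot w\sigma(h_w)^{-1}=b$: for $g\,\mathcal{M}(\Ok_L)$ in the source,
\[
(h_w g)^{-1}b\,\sigma(h_w g)=g^{-1}\dot w\sigma(g)\in\bigcup_{v\in\Adm^M(\{\lambda_w\})}\mathcal{M}(\Ok_L)\dot v\mathcal{M}(\Ok_L)\subset\bigcup_{u\in\Adm^G(\{\mu\})}\mathcal{I}(\Ok_L)\dot u\mathcal{I}(\Ok_L),
\]
using $\mathcal{M}(\Ok_L)\subset\mathcal{I}(\Ok_L)$ and the defining property of $\lambda_w$; hence $h_w g\,\mathcal{I}(\Ok_L)\in X(\{\mu\},b)$. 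Taking the disjoint union of the $\iota_w$ over all $\sigma$-straight $w$ with $\dot w\in[b]$ produces the asserted map.

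For the surjection on $\pi_0$, let $Y$ be a connected component of $X(\{\mu\},b)$. By Theorem~\ref{thm1}(i) there is a $\sigma$-straight $w$ and a point $g_0\,\mathcal{I}(\Ok_L)\in Y\cap X_w(b)$; since the $X_u(b)$ for $u\in\Adm^G(\{\mu\})$ are pairwise disjoint and cover $X(\{\mu\},b)$, necessarily $w\in\Adm^G(\{\mu\})$. Applying Theorem~\ref{thm2} to the straight element $w$ and replacing $g_0$ by $g_0 i$ for a suitable $i\in\mathcal{I}(\Ok_L)$ (which changes neither the coset nor the point of $Y$), we may assume $g_0^{-1}b\sigma(g_0)=\dot w$; in particular $\dot w\in[b]$, so $\iota_w$ is defined. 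Put $j:=h_w^{-1}g_0$. Then $j^{-1}\dot w\sigma(j)=g_0^{-1}b\sigma(g_0)=\dot w$, so $j$ lies in the $\sigma$-centralizer $J_{\dot w}(\Q_p)=\{x\in G(L):x^{-1}\dot w\sigma(x)=\dot w\}$; since $\dot w$ is basic in $M=M_{\nu_w}=Z_G(\nu_w)$, this centralizer is contained in $M(L)$, so $j\in M(L)$. Moreover $j^{-1}\dot w\sigma(j)=\dot w\in\mathcal{M}(\Ok_L)\dot w\mathcal{M}(\Ok_L)$ and $w\in\Adm^M(\{\lambda_w\})$, hence $j\,\mathcal{M}(\Ok_L)\in X^M_w(\dot w)\subset X^{M}(\{\lambda_w\}_M,\dot w)$ and $\iota_w(j\,\mathcal{M}(\Ok_L))=h_w j\,\mathcal{I}(\Ok_L)=g_0\,\mathcal{I}(\Ok_L)\in Y$. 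So $Y$ lies in the image, which is the desired surjectivity.

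The main obstacle is the combinatorial statement packaged into the choice of $\lambda_w$: the inclusion $\Adm^M_{\mathcal{M}}(\{\lambda_w\})\subset\Adm^G(\{\mu\})$, together with the vanishing of the extra terms (so the map is genuinely surjective and not inflated by empty pieces). Proving this requires comparing the Bruhat order on $W_M$ with that on $W$ restricted to $W_M$, and using that $\sigma$-straightness of $w$ forces a ``$P$-alcove''-type configuration and that $\dot w$ is basic in $M_{\nu_w}$ with Newton point $\nu_w$; He's non-emptiness criterion and the identification of the twisted centralizer $J_{\dot w}$ as an inner form of (a subgroup of) $M_{\nu_w}$ are the other inputs. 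The remaining steps are formal manipulations with Iwahori double cosets.
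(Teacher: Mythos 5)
First, note that the paper itself does not prove this statement: it is imported verbatim from \cite[Theorem 7.1]{HZ}, so there is no internal proof to compare against and your sketch has to stand on its own. Its formal skeleton is sound and is indeed how such a map is assembled: since $\dot w\in[b]$ one may choose $h_w$ with $h_w^{-1}b\sigma(h_w)=\dot w$, the map is the inclusion $M_{\nu_w}(L)/\mathcal{M}(\Ok_L)\hookrightarrow G(L)/\mathcal{I}(\Ok_L)$ followed by left translation by $h_w$, and for the $\pi_0$-statement you correctly combine Theorem \ref{thm1}(i) with Theorem \ref{thm2} to normalize $g_0^{-1}b\sigma(g_0)=\dot w$, and then use that $j=h_w^{-1}g_0$ lies in the $\sigma$-centralizer of $\dot w$, which sits inside $M_{\nu_w}(L)$ because the slope cocharacter of $\dot w$ is $\nu_w$ and $M_{\nu_w}$ is its full centralizer. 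Those manipulations are fine, and your observation that $\dot w\in[b]$ comes for free from Theorem \ref{thm2} is correct.

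The genuine gap is the step you yourself flag as ``the main obstacle'' and then do not carry out: the construction of the class $\{\lambda_w\}_{M_{\nu_w}}$ with the properties that make everything work, above all that $w\in\Adm^{M_{\nu_w}}(\{\lambda_w\})$ while $\lambda_w$ is $G$-conjugate to $\mu$, and the inclusion $\Adm^{M_{\nu_w}}(\{\lambda_w\})\subset\Adm(\{\mu\})$ inside $W$, which is exactly what makes $\iota_w$ land in $X(\{\mu\},b)$ and guarantees the relevant sources are non-empty. This is not a formality: the Bruhat order on $W_{M_{\nu_w}}$ induced by $\mathcal{M}$ does not obviously compare with the order on $W$ restricted to $W_{M_{\nu_w}}$, and what the present paper actually uses later (Step 1 of the proof of Proposition \ref{prop6.1}) is precisely the nontrivial combinatorial assertion of \cite[Theorem 7.1 b)]{HZ} that $t_{\underline\lambda}\in\Adm(\{\mu\})$ when the straight element is written $w=w_0t_{\underline\lambda}$, combined with Lemma \ref{cochar} to lift $\underline\lambda$ to a cocharacter $G$-conjugate to $\mu$. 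Your sketch assumes this package (in the form of the admissible-set inclusion) rather than proving it, so while the reduction of the theorem to that combinatorial statement is correctly organized, the heart of the cited result is missing and the proposal does not yet constitute a proof.
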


Here $\{\lambda_w\}_{M_{\nu_w}}$ is a certain $M_{\nu_w}$ conjugacy class of cocharacters of $M_{\nu_w}$ which maps to $\{\mu\}$.

\subsection{}Now let $\pdiv_0$ be a $p$-divisible group over $k=\Fpbar$ and write $\D(\pdiv_0)$ for $\D(\pdiv_0)(\Ok_L)$. Let $\sa\in\D(\pdiv_0)^\otimes$ be a collection of $\varphi$-invariant tensors such that $\sa$ lie in $Fil^0\D(\pdiv_0)(k)$ and let $\G_{\Ok_L}\subset GL(\D(\pdiv_0))$ denote their stabilizer.
Assume that there is a free $\Z_p$-module $U$ together with an isomorphism $U\otimes_{\Z_p}W\cong \D(\pdiv_0)^\otimes$ such that $\sa\in U^\otimes $. Assume also that the stabilizer  $\G\subset GL(U)$ of these tensors is a connected  parahoric group scheme corresponding to $K\subset\mathbb{S}$. Then we have an isomorphism $\G\otimes_{\Z_p}\Ok_L\simeq \G_{\Ok_L}$ so that $\G_{\Ok_L}$ is also parahoric group scheme over $\Ok_L$. If $U'$ is another such $\Z_p$ module, the scheme of isomorphisms $U'\xrightarrow\sim U$ taking $\sa$ to $\sa$ is $\G$ torsor, which is necessarily trivial since $\G$ is smooth and has connected special fiber. Let $G$ denote the generic fibre of $\G$ which is reductive group over $\Q_p$.

Since the $\sa$ are $\varphi$ invariant, we can write $\varphi=b\sigma$ for some $b\in G(L)$, and the choice is independent of the choice of $U$ up to $\sigma$-conjugation by an element of $\G(\Ok_L)$. We pick a filtration on $\D(\pdiv_0)\otimes_{\Ok_L}K'$ lifting the one on $\D(\pdiv_0)(k)$ as in \S4.1 so that $s_{\alpha,0}\in Fil^0\D\otimes_{\Ok_L}K'$. This filtration is defined by a $G$-valued cocharacter $\mu_y$ and we have the embedding of local models $$M^{loc}_{\G}\hookrightarrow M^{loc}_{\mathcal{GL}}\otimes_{\Ok_L}\Ok_E$$ where the defining cocharacter for $M^{loc}_{\G}$ is $\mu_y^{-1}$.

The filtration on $\D(\pdiv_0)\otimes k=\D(\pdiv_0)(k)$ is by definition the kernel of $\varphi$, thus the preimage of the filtration in $\D(\pdiv_0)$ is given by $$\{v\in \D(\pdiv_0)|b\sigma(v)\in p\D(\pdiv_0)\}$$

This is precisely the sub $\Ok_L$ lattice in $\D(\pdiv_0)$ corresponding to $\sigma^{-1}(b^{-1})p\D(\pdiv_0)$. By Proposition \ref{prop4} we have 
$$\sigma^{-1}(b^{-1})\in \bigcup_{w\in\Adm_K(\{\mu_y^{-1}\})}\G(\Ok_L)\dot{w}\G(\Ok_L)$$
i.e., $1\in X(\{\sigma(\mu_y)\},b)_K$.

Let $K/L$ be a finite extension. By Proposition \ref{adapted}, there exists a $(\G_{\Ok_L},\mu_y)$ adapted lifting $\pdiv$ such that if $\set\in T_p\pdiv^\vee\otimes \Q_p$ denotes the tensors corresponding to $s_{\alpha,0}$ under the $p$-adic comparison isomorphism, then we have $s_{\alpha,\acute{e}t}\in T_p\pdiv^\vee$ and we have an isomorphism 

$$T_p\pdiv^\vee\otimes_{\Z_p}\mathfrak{S}\cong\mathfrak{M}(T_p\pdiv^\vee)$$
taking $\set $ to $\tilde{s}_\alpha$, which induces an isomorphism
$$T_p\pdiv^\vee\otimes_{\Z_p} W\simeq\D(\pdiv_0)$$ taking $\set$ to $\sa$. As in \S4, $\tilde{s}_{\alpha}$ denotes the functor $\mathfrak{M}$ applied to $s_{\alpha,\acute{e}t}$.

\subsection{}Now suppose $M\subset G$ is a closed reductive subgroup defined over $\Q_p$ such that $b\in M(L)$. Suppose that $M(L)\cap\G(\Ok_L)$ is the $\Ok_L$ points of a parahoric  subgroup $\mathcal{M}$ of $M$, then $\mathcal{M}$ is a defined over $\Z_p$.
Since $b\in M(L)$, we may extend the tensors $s_{\alpha,0}\in U^\otimes$ to $\varphi$-invariant tensors $t_{\beta,0}$ whose stabilizer is $\mathcal{M}$. We make the following assumption:

(*) The filtration on $\D(\pdiv_0)\otimes_{\Ok_L}k$ lifts to a filtration on $\D(\pdiv_0)\otimes_{\Ok_L}K$ which is induced by an $M$-valued cocharacter $\mu_y'$ which is conjugate $\mu_y$ in $G$.

We have the local model $M^{loc}_{\mathcal{M},\mu_{y'}^{-1}}$ which is defined over $\Ok_{E'}$ where $E'$ is the local reflex field for $\{\mu_y'\}$.  Then there is an embedding $$M^{loc}_{\mathcal{M},\mu_{y'}^{-1}}\hookrightarrow M^{loc}_{\mathcal{GL}}\otimes_{\Ok_F}\Ok_{E'}$$
which factors through a closed embedding $M^{loc}_{\mathcal{M},\mu_{y'}^{-1}}\hookrightarrow M^{loc}_{\mathcal{G},\mu_y^{-1}}\otimes\Ok_{E'}$. 

Now by Proposition \ref{G-adapted}, there exists a $(\mathcal{M},\mu_{y'})$-adapted lifting  $\pdiv$ of $\pdiv_0$ such that there is an isomorphism \begin{equation}\label{id}T_p\pdiv^\vee\otimes_{\Z_p}\mathfrak{S}\cong\mathfrak{M}(T_p\pdiv^\vee)\end{equation}
such that if $t_{\alpha,\acute{e}t}$ denotes the tensors corresponding to $t_{\alpha,0}$ under the $p$-adic comparison isomorphism, then $t_{\alpha,\acute{e}t}\in T_p\pdiv ^{\vee,\otimes}$ and the above map takes $t_{\alpha,\acute{e}t}$ to $\tilde{t}_\alpha$. In particular since $t_{\alpha,0}$ extends $\sa$, it takes $\set$ to $\sa$. Note that since $\mu_y'$ is $G$-conjugate to $\mu_y$, any $(\mathcal{M},\mu_{y'})$-lifitng is also a $(\G,\mu_y)$-adapted lifting of $\pdiv_0$. Fixing such an isomorphism as above, we may take $U$ to be $T_p\pdiv^\vee$. Since the notion of $(\G,\mu_y)$-adapted lifting only depends on the $G$-conjugay class of $\mu_y$ and its specialization, we may replace $\mu_y$ with $\mu_{y'}$ (see Remark \ref{remark}). We relabel this $\mu_y$, thus $\mu_y$ is an $M$-valued cocharacter inducing the filtration on $\D(\pdiv_0)\otimes K$.

\subsection{}Let $g\in G(\Q_p)$, then there is a finite extension $K'/K$ for which $g^{-1}T_p\pdiv$ is stable by $\Gamma_{K'}$ in $T_p\pdiv\otimes_{\Z_p}\Q_p$ hence corresponds to a $p$-divisible group $\pdiv'$ over $K'$, which is isogenous to $\pdiv$.  Let $\mathfrak{M}':=\mathfrak{M}(T_p\pdiv'^\vee)$ and $\mathfrak{M}:=\mathfrak{M}(T_p\pdiv^\vee)$, then the quasi-isogeny $\theta:\pdiv \rightarrow \pdiv'$ induces an identification $$\tilde{\theta}:\mathfrak{M}(T_p\pdiv'^\vee)[1/p]\xrightarrow\sim \mathfrak{M}(T_p\pdiv^\vee)[1/p]$$ so that $\mathfrak{M}'=\tilde{g}\mathfrak{M}$ for some $\tilde{g}\in GL(\mathfrak{M}[1/p])$.

\begin{prop}\label{key1}

i) $\tilde{g}$ can be taken to be in $G(\mathfrak{S}[1/p])$ under the above identification $$T_p\pdiv^\vee\otimes_{\Z_p}\mathfrak{S}
\xrightarrow\sim \mathfrak{M}$$

ii) We have $\tilde{g}\in\mathcal{M}(\Ok_{\widehat{\mathscr{E}^{ur}}}) g\mathcal{G}(\Ok_{\widehat{\mathscr{E}^{ur}}})$.
\end{prop}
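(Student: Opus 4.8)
The plan is to trivialise the Breuil--Kisin modules $\mathfrak M=\mathfrak M(T_p\pdiv^\vee)$ and $\mathfrak M'=\mathfrak M(T_p\pdiv'^\vee)$ compatibly with the tensors and then read off $\tilde g$ from the resulting change of lattice. Write $V=T_p\pdiv^\vee\otimes_{\Z_p}\Q_p$; identifying rational dual Tate modules via $\theta$, we have $T_p\pdiv'^\vee=g\cdot T_p\pdiv^\vee$ inside $V$. Since $g\in G(\Q_p)$ stabilises the $\set$, these tensors lie in $T_p\pdiv'^{\vee,\otimes}$, are $\Gamma_{K'}$-invariant, and their stabiliser in $GL(T_p\pdiv'^\vee)$ is the $G(\Q_p)$-conjugate $g\G g^{-1}$; as conjugation by $g$ carries $\G=\G^{\circ}$ onto it, this is again a connected parahoric group scheme. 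Hence Propositions \ref{F-cryst} and \ref{sigma} apply to $T_p\pdiv'^\vee$ and yield an isomorphism $\iota':T_p\pdiv'^\vee\otimes_{\Z_p}\mathfrak S\xrightarrow{\sim}\mathfrak M'$ carrying $\set$ to $\tilde s_\alpha:=\mathfrak M(\set)$ (after, as usual, passing to the field $K'$ so that the $\mathfrak M$-functors are formed over the same base).

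For i), I would use the identification $\iota_{\mathcal M}:T_p\pdiv^\vee\otimes_{\Z_p}\mathfrak S\xrightarrow{\sim}\mathfrak M$ fixed in the construction of the $(\mathcal M,\mu_y)$-adapted lift (which carries $t_{\alpha,\et}$, and in particular $\set$, to $\tilde t_\alpha$, resp.\ $\tilde s_\alpha$), and set $\Xi:=\iota_{\mathcal M}[1/p]^{-1}\circ\iota'[1/p]$, an $\mathfrak S[1/p]$-linear automorphism of $V\otimes_{\Q_p}\mathfrak S[1/p]=\mathfrak M[1/p]=\mathfrak M'[1/p]$. Both factors send $\set\otimes 1$ to the common rational tensor $\mathfrak M(\set)$, so $\Xi$ fixes $\set$ and hence $\Xi\in G(\mathfrak S[1/p])$. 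Since $\theta$ induces the identity on rational dual Tate modules, $\tilde\theta$ is the identity of $\mathfrak M[1/p]$, and $\tilde\theta(\mathfrak M')=\iota'\bigl(g\,(T_p\pdiv^\vee\otimes\mathfrak S)\bigr)$; reading $\mathfrak M$ as $T_p\pdiv^\vee\otimes\mathfrak S$ via $\iota_{\mathcal M}$, this says $\tilde\theta(\mathfrak M')=(\Xi g)(T_p\pdiv^\vee\otimes\mathfrak S)$, so one may take $\tilde g=\Xi g\in G(\mathfrak S[1/p])$.

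For ii), I would base change to $\mathcal O:=\Ok_{\widehat{\mathscr E^{ur}}}$ and bring in the canonical comparison isomorphisms $c_{\pdiv}:T_p\pdiv^\vee\otimes\mathcal O\xrightarrow{\sim}\mathfrak M\otimes\mathcal O$ and $c_{\pdiv'}:T_p\pdiv'^\vee\otimes\mathcal O\xrightarrow{\sim}\mathfrak M'\otimes\mathcal O$ of Proposition \ref{F-cryst}(iii). As these are functorial in the representation, after inverting $p$ they coincide with the lattice-independent canonical isomorphism attached to $V$, so $c_{\pdiv}[1/p]^{-1}\circ c_{\pdiv'}[1/p]=\mathrm{id}$ on $V\otimes\mathcal O[1/p]$. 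Now $c_{\pdiv}$ and $\iota_{\mathcal M}\otimes\mathcal O$ both respect the $t$-tensors --- for $c_{\pdiv}$ because $\mathfrak M$ is a tensor functor and the $t_{\alpha,\et}$, being $\varphi$- and filtration-invariant by assumption $(*)$, are $\Gamma_K$-invariant --- so they differ by an element $m\in\mathcal M(\mathcal O)$; likewise $c_{\pdiv'}$ and $\iota'\otimes\mathcal O$ respect the $s$-tensors, hence differ by some $n\in(g\G g^{-1})(\mathcal O)$. Substituting, $\Xi\otimes\mathcal O[1/p]=m\,n^{-1}$, and writing $n^{-1}=g\,c\,g^{-1}$ with $c\in\G(\mathcal O)$ gives $\tilde g\otimes\mathcal O=m\,n^{-1}g=m\,g\,c\in\mathcal M(\mathcal O)\,g\,\G(\mathcal O)$, which is the asserted double coset.

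The step I expect to be the main obstacle is the torsor bookkeeping in ii): one must check carefully that the canonical comparison isomorphism of \ref{F-cryst}(iii) is genuinely natural in the representation --- hence both tensor-compatible and lattice-independent after inverting $p$ --- and compatible with the quasi-isogeny $\theta$, and that the $(\mathcal M,\mu_y)$-adapted hypothesis really upgrades the a priori $\G$-ambiguity on the $\pdiv$-side to an $\mathcal M$-ambiguity; this is exactly what produces the factor $\mathcal M(\Ok_{\widehat{\mathscr E^{ur}}})$ rather than merely $\G(\Ok_{\widehat{\mathscr E^{ur}}})$. A minor but necessary chore is pinning down the duality and isogeny conventions so that the element appearing is $g$ and not $g^{-1}$.
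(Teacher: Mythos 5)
Your proposal is correct and follows essentially the same route as the paper: part i) is the same composite of tensor-preserving trivializations of $\mathfrak{M}$ and $\mathfrak{M}'$ (the latter supplied by the parahoric $g\G g^{-1}$) linked by the quasi-isogeny, and part ii) is the same comparison of these trivializations with the canonical $\Ok_{\widehat{\mathscr{E}^{ur}}}$-isomorphisms of Proposition \ref{F-cryst}, the discrepancies landing in $\mathcal{M}(\Ok_{\widehat{\mathscr{E}^{ur}}})$ and $\G(\Ok_{\widehat{\mathscr{E}^{ur}}})$ exactly as in the paper. Your explicit bookkeeping $\Xi=mn^{-1}$, $n^{-1}=gcg^{-1}$ just spells out what the paper leaves terse; only your stated reason for the $\Gamma_K$-invariance of the $t_{\alpha,\acute{e}t}$ (via "assumption (*)") is slightly off --- their invariance comes from the construction of the $(\mathcal{M},\mu_y)$-adapted lift itself --- but this does not affect the argument.
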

\begin{proof}i) Since $\set$ are fixed by $G(\Q_p)$, we have $\set\in T_p\pdiv'^{\vee\otimes}$, and we have the stabiliser of $\set$ in $T_p\pdiv'^{\vee\otimes}$ is a parahoric subgroup of $G$. Thus by Proposition 3.2. there is an isomorphism:
$$T_p\pdiv'^\vee\otimes_{\Z_p}\mathfrak{S}\cong\mathfrak{M}'$$ taking $\set$ to $\tilde{s}$. Under the identification $ T_p\pdiv^\vee\otimes_{\Z_p}\mathfrak{S}\cong\mathfrak{M}$, we have $\tilde{g}$ is given by the composition:
\begin{equation}
T_p\pdiv^\vee\otimes_{\Z_p}\mathfrak{S}\xrightarrow{ g}T_p\pdiv'^\vee\otimes_{\Z_p}\mathfrak{S}\xrightarrow{s_\alpha}\mathfrak{M}'\xrightarrow{\tilde{\theta}}\mathfrak{M}[1/p]\xrightarrow{t_{\alpha}} T_p\pdiv^\vee\otimes_{\Z_p}\mathfrak{S}[1/p]\end{equation}
where the $\xrightarrow{s_\alpha}, \ \xrightarrow{t_\alpha}$ means that map preserves tensors of that type. Thus the composition preserves, $\s_\alpha$ and we have $\tilde{g}\in G(\mathfrak{S}[1/p])$.

ii) Over $\Ok_{\widehat{\mathscr{E}^{ur}}}$ there are canonical identifications:
$$T_p\pdiv^\vee\otimes_{\Z_p}\Ok_{\widehat{\mathscr{E}^{ur}}}\cong\mathfrak{M}\otimes_{\mathfrak{S}}\Ok_{\widehat{\mathscr{E}^{ur}}}$$
$$T_p\pdiv'^\vee\otimes_{\Z_p}\Ok_{\widehat{\mathscr{E}^{ur}}}\cong\mathfrak{M}'\otimes_{\mathfrak{S}}\Ok_{\widehat{\mathscr{E}^{ur}}}$$
the first one taking $t_{\alpha,\acute{e}t}$ to $ \tilde{t}_{\alpha}$ and the second taking $\set$ to $\tilde{s}_\alpha$. Thus, if we identify $T_p\pdiv^\vee$ with $T_p\pdiv'^\vee$ via $g$, these isomorphisms differ from the ones above by elements of $\mathcal{M}(\Ok_{\widehat{\mathscr{E}^{ur}}})$ and $\mathcal{G}(\Ok_{\widehat{\mathscr{E}^{ur}}})$ respectively. Since $\tilde{g}$ is identified with the map:

$$T_p\pdiv^\vee\otimes_{\Z_p}\Ok_{\widehat{\mathscr{E}^{ur}}}\xrightarrow{ g}T_p\pdiv'^\vee\otimes_{\Z_p}\Ok_{\widehat{\mathscr{E}^{ur}}}\xrightarrow{can}\mathfrak{M}'\otimes_{\mathfrak{S}}\Ok_{\widehat{\mathscr{E}^{ur}}}$$ $$\xrightarrow{\tilde{\theta}_{\widehat{\mathscr{E}^{ur}}}}\mathfrak{M}\otimes_{\mathfrak{S}[1/p]}{\widehat{\mathscr{E}^{ur}}}\xrightarrow{can} T_p\pdiv^\vee\otimes_{\Z_p}{\widehat{\mathscr{E}^{ur}}}$$
we have $\tilde{g}\in\mathcal{M}(\Ok_{\widehat{\mathscr{E}^{ur}}}) g\mathcal{G}(\Ok_{\widehat{\mathscr{E}^{ur}}})$.

\end{proof}

We will apply the above Proposition in the cases $M\subset G$ is a Levi subgroup or if we are in the situation of Proposition \ref{prop2}.

\subsection{}Using the canonical identification $\D(\pdiv_0)$ with $\varphi^*(\mathfrak{M}/u\mathfrak{M})$, we have $\tilde{\theta}^{-1}$ induces an isomorphism $\D(\pdiv_0)[1/p]\xrightarrow\sim\D(\pdiv'_0)[1/p]$. Then $\D(\pdiv_0')$ can be identified with $g_0\D(\pdiv_0)$ for $g_0=\sigma^{-1}(\tilde{g})|_{u=0}\in G(L)$. 

\begin{prop}\label{lifting}The association $g\mapsto g_0$ induces a well-defined map.
$$G(\Q_p)/\G(\Z_p)\rightarrow X(\sigma(\{\mu_y\}),b)_K$$

and we have $\kappa(g)=\kappa(g_0)\in \pi_1(G)_I$.
\end{prop}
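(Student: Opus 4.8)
The plan is to identify $g_0$ with (the lattice defining) the Dieudonn\'e module of the isogenous $p$-divisible group $\pdiv_0'$, and then to check four things in turn: that $g_0$ is well defined in $G(L)/\G(\Ok_L)$, that the assignment descends to $G(\Q_p)/\G(\Z_p)$, that $g_0\in X(\sigma(\{\mu_y\}),b)_K$, and that $\kappa_G(g_0)=\kappa_G(g)$. For well-definedness I would argue as follows. By Proposition~\ref{key1}(i) we may take $\tilde g\in G(\mathfrak{S}[1/p])$, so $g_0\in G(L)$, and since $\tilde g$ fixes $\s_\alpha$ and these specialise to $\sa$, the element $g_0$ fixes $\sa$. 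The lattice $g_0\D(\pdiv_0)=\D(\pdiv_0')$ is intrinsic: it is the Dieudonn\'e module of $\pdiv_0'$, realised inside the isocrystal $\D(\pdiv_0)[1/p]$ by the canonical quasi-isogeny, and so depends neither on $K'/K$ nor on the trivialisations of Proposition~\ref{key1} (which are unique up to $\G(\Ok_{\widehat{\mathscr{E}^{ur}}})$). Hence $g_0$ is determined modulo $\{h\in G(L):h\D(\pdiv_0)=\D(\pdiv_0)\}=\G_{\Ok_L}(\Ok_L)=\G(\Ok_L)$, the last equality because $\G_{\Ok_L}$ is a connected parahoric. Finally, a $\G(\Z_p)$-translate of $g$ defines the same $\Gamma_{K'}$-stable lattice $g^{-1}T_p\pdiv$, since $\G(\Z_p)$ preserves $T_p\pdiv$; so $\pdiv_0'$, and with it $g_0$, is unchanged, and the map factors through $G(\Q_p)/\G(\Z_p)$.

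The heart of the matter is the membership $g_0\in X(\sigma(\{\mu_y\}),b)_K$, and the idea is to reduce to the computation already performed for $\pdiv_0$ (which yielded $1\in X(\sigma(\{\mu_y\}),b)_K$), now applied with $\pdiv_0'$ in its place. For this one must first check that the lift $\pdiv'$ of $\pdiv_0'$ with Tate module $g^{-1}T_p\pdiv$ is again a $(\G_{\Ok_L},\mu_y)$-adapted deformation: the tensors $\set$ lie in $(g^{-1}T_p\pdiv^\vee)^\otimes$ because $g\in G(\Q_p)$ fixes them, their stabiliser is the parahoric $g^{-1}\G g=\G_{g^{-1}x}$, and the de Rham filtration on $D_{dR}(T_p\pdiv'^\vee\otimes\Q_p)=D_{dR}(T_p\pdiv^\vee\otimes\Q_p)$ is literally the same one, hence of type $\{\mu_y\}$; so by Remark~\ref{remark} and Proposition~\ref{prop3} applied to $\pdiv'$, all the hypotheses under which $1\in X(\sigma(\{\mu_y\}),b)_K$ was obtained hold with $\pdiv_0$ replaced by $\pdiv_0'$. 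Trivialising $\D(\pdiv_0')=g_0\D(\pdiv_0)$ by $m\mapsto g_0m$, the tensors are carried to $\sa$ and the Frobenius becomes $b'\sigma$ with $b'=g_0^{-1}b\sigma(g_0)$; the same computation as before then shows that the preimage in $\D(\pdiv_0')$ of its Hodge filtration is $\sigma^{-1}((b')^{-1})p\,\D(\pdiv_0')$, and Proposition~\ref{prop4} identifies the corresponding relative position as lying in $\Adm_K(\{\mu_y^{-1}\})$, i.e. $1\in X(\sigma(\{\mu_y\}),b')_K$. Unwinding the definition with $b'=g_0^{-1}b\sigma(g_0)$ gives precisely $g_0\in X(\sigma(\{\mu_y\}),b)_K$. (The trivialisation of $\D(\pdiv_0')$ supplied by Proposition~\ref{prop3} differs from the one used here by an element of $\G(\Ok_L)$, so the two Frobenius elements differ by $\sigma$-conjugation by $\G(\Ok_L)$, which preserves each double coset $\G(\Ok_L)\dot w\G(\Ok_L)$ and hence the conclusion.)

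For the Kottwitz compatibility, Proposition~\ref{key1}(ii) gives $\tilde g\in\G(\Ok_{\widehat{\mathscr{E}^{ur}}})\,g\,\G(\Ok_{\widehat{\mathscr{E}^{ur}}})$, so $\tilde g$ and $g$ have the same image under the Kottwitz homomorphism over $\widehat{\mathscr{E}^{ur}}$, $\G$ being a parahoric. Passing to the fibre at $u=0$, invoking the identification of Iwahori--Weyl groups and the compatibility of Kottwitz homomorphisms for $\underline G$ under the relevant specialisation maps (\cite[\S3.a.1]{PZ}, \cite{HaRa}), and using the $\sigma$-invariance of $\kappa_G(g)\in\pi_1(G)_I$ (which holds since $g\in G(\Q_p)$), one obtains $\kappa_G(g_0)=\kappa_G(g)$.

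I expect the main obstacle to be the claim, in the second step, that $\pdiv'$ remains $(\G_{\Ok_L},\mu_y)$-adapted relative to $\pdiv_0'$: once that is in hand, the membership of $g_0$ in the admissible affine Deligne--Lusztig union is a verbatim transcription of the earlier computation for $\pdiv_0$. The other point demanding care is the comparison in the last step between the Frobenius of $\pdiv$ regarded over $\mathfrak{S}$ and specialised at $u=0$ and the element over $\Ok_{\widehat{\mathscr{E}^{ur}}}$ of Proposition~\ref{key1}(ii), which must be made through a single identification of $\pi_1(G)$.
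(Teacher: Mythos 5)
Your route coincides with the paper's: reduce the membership $g_0\in X(\sigma(\{\mu_y\}),b)_K$ to the statement that the lifting $\pdiv'$ attached to $g^{-1}T_p\pdiv$ is an adapted deformation of $\pdiv_0'$ for a cocharacter in the class $\{\mu_y\}$, then transport the computation of \S 5.4 by $g_0$ and invoke Proposition \ref{prop4}; and deduce $\kappa_G(g_0)=\kappa_G(g)$ from Proposition \ref{key1}(ii) by a specialisation argument (the paper makes this precise by viewing $\tilde g$ as a point of the Witt vector affine Grassmannian and using that $\kappa_G$ is locally constant, $\pi_0(Gr_{\G})\cong\pi_1(G)_I$; your appeal to ``compatibility under specialisation maps'' is this same mechanism, stated rather than proved). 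The genuine gap is exactly at the step you flag as the main obstacle: asserting that the de Rham filtration of $T_p\pdiv'^\vee\otimes\Q_p$ ``is literally the same one, hence of type $\{\mu_y\}$'' does not yield condition (2) of Definition \ref{G-adapted} for $\pdiv'$. What \cite[Cor.\ 3.3.8]{KP} gives is that $\pdiv'$ is adapted with respect to \emph{some} $G$-valued cocharacter $\mu_y'$, and one must still prove that $\mu_y'$ is $G$-conjugate to $\mu_y$; equality of the induced filtrations does not force conjugacy of the inducing cocharacters (already for $GL_2$, $t\mapsto\mathrm{diag}(t,1)$ and $t\mapsto\mathrm{diag}(t^2,1)$ induce the same filtration). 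The paper closes this by comparing the $\mu_y$- and $\mu_y'$-filtrations with the canonical filtration of the common rational Galois representation and running the argument of \cite[Lemma 1.1.9]{Ki3}; some version of that comparison has to appear in your write-up.

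Two smaller points. First, your descent to $G(\Q_p)/\G(\Z_p)$ is off as written: for $\gamma\in\G(\Z_p)$ one has $(g\gamma)^{-1}T_p\pdiv=\gamma^{-1}\bigl(g^{-1}T_p\pdiv\bigr)$, and $\gamma^{-1}$ preserves $T_p\pdiv$, not its $g^{-1}$-translate, so the lattice (hence $\pdiv'$ and $g_0$) is \emph{not} literally unchanged; what is immediate with this convention is invariance under $g\mapsto\gamma g$. The paper's own proof is silent on this (its Remark only treats the dependence on the isomorphism \ref{id}), so this is not a divergence from the paper's argument, but your stated justification would fail and needs to be rephrased. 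Second, your identification of the coset $g_0\G(\Ok_L)$ with the lattice $\D(\pdiv_0')$ uses that the stabiliser of $\D(\pdiv_0)$ in $G(L)$ equals $\G(\Ok_L)$, which is where the standing hypothesis that $\G$ is a \emph{connected} parahoric enters; you invoke it correctly, but it should be said that this is the point where connectedness is used.
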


\begin{remark}
$g_0$ and $b\in G(L)$ both depend on the choice of the isomorphism \ref{id}. Modifying the isomorphism by $h\in\G(\Ok_L)$ conjugates $g_0$ by $h$ and $\sigma$-conjugates $b$ by $h$, so that $$(hg_0^{-1}h^{-1})(hb\sigma(h^{-1}))\sigma(hg_0h^{-1})=hg_0b\sigma(g_0)\sigma(h^{-1})$$
In particular, the part of the Iwahori decomposition it lies in is independent of the choice of \ref{id}.
\end{remark}
\begin{proof}

We  identify $\D(\pdiv'_0)\otimes_{\Ok_L}L$ with $\D(\pdiv_0)\otimes_{\Ok_L}L$, so that we consider $$\D(\pdiv_0')=g_0\D(\pdiv_0)\subset \D(\pdiv_0)\otimes_{\Ok_L}L$$
Under this identification, we have $\sa\in\D(\pdiv_0')^\otimes$ and the stabilizer of these tensors in $\D(\pdiv_0')$ can be identified with $g_0\G_{\Ok_L}g_0^{-1}$. 

By \cite[Corollary 3.3.8]{KP}, there exists a $G$-valued cocharacter $\mu_y'$ defined over $K''$ such that $\pdiv'$ is a $(g_0\G_{\Ok_L}g_0^{-1},\mu_y')$ adapted-lifting of $\pdiv_0'\otimes_{\Ok_K}k$. We have three filtrations on $\D\otimes_{\Ok_L}K$: the one induced by $\mu_y$, the canonical filtration corresponding to the Galois representation $T_p\pdiv^\vee\otimes_{\Z_p}\Q_p\cong T_p'\pdiv^\vee\otimes_{\Z_p}\Q_p$ and the one induced by $\mu_y'$.  The second filtration is induced by $G$-valued cocharacters $\mu$ and $\mu'$ which are conjugate to $\mu_y$ and $\mu_y'$ respectively. The same proof as in \cite[Lemma 1.1.9]{Ki3}, shows that $\mu$ and $\mu'$ are $G$-conjugate, hence $\mu_y$ and $\mu_y'$ are $G$-conjugate. 

Thus $g_0^{-1}\mu_y'g_0$ induces a filtration on $\D\otimes_{\Ok_L}K''$ corresponding to a point in $M^{loc}_{\G,\mu_y^{-1}}(K'')$. The specialization of this point gives a filtration on $\D(\pdiv_0)\otimes_{\Ok_K}k$ which lies in $M^{loc}_{\G,\mu_y^{-1}}(k)$. This filtration is given by the reduction of $g_0^{-1}\sigma^{-1}(b^{-1}g_0)p\mod p$, hence by Propositon \ref{prop4} we have $$g_0^{-1}b\sigma(g_0)\in \G(\Ok_L)\sigma(\dot{w})\G(\Ok_L)$$ where $w\in\Adm_K(\{\mu_y\})$, i.e. $g_0\in X(\sigma(\{\mu_y\}),b)$.

To show $\kappa_G(g)=\kappa_G(g_0)$, note that $\tilde{g}$ gives a $k[[u]]^{perf}$ point of $Gr_{\G}$, where $Gr_{\G}$ is the Witt vector affine flag variety (see \cite{Zhu}, \cite{BS}). For $\mathbf{k}$ a perfect field of characterstic $p$, we have $$\kappa_G:Gr_{\G}(\mathbf{k})\rightarrow \pi_1(G)_I$$ and this induces an isomorphism $\pi_0(Gr_{\G})\cong\pi_1(G)_I$. In particular, $\kappa_g$ is a locally constant function. Let $h\in Gr_{\G}(\overline{k((u))})$ be the generic point of $\tilde{g}$, then by Proposition \ref{key1} ii), we have $\kappa_G(h)=\kappa_G(g)$, hence $\kappa_G(\sigma^{-1}(g_0))=\kappa_G(g)$. Since $g$ is $\sigma$-invariant, we hav $\kappa_G(g_0)=\kappa_G(g)$.

\end{proof}

\begin{prop}\label{prop2}
Let  $f:\G\rightarrow \mathcal{H}$ be a surjection of parahoric subgroups such that the following two conditions hold:

i) The composition of $f$ with $\Gamma_K\rightarrow \G$ factors through the center $Z_{\mathcal{H}}$ of $\mathcal{H}$. 

ii) The connected component of the identity of $f^{-1}(Z_{\mathcal{H}})$ is a parahoric subgroup of its generic fibre.

 Then we may choose the isomorphism \ref{id} so that we have $$f(g)=f(g_0)\in H(L)/\mathcal{H}(\Ok_L)$$
\end{prop}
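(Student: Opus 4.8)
The plan is to deduce the statement from the construction of a $(\mathcal{M},\mu_{y'})$-adapted deformation carried out above, applied to the reductive subgroup
\[
\mathcal{M} := \bigl(f^{-1}(Z_{\mathcal{H}})\bigr)^{\circ} \subseteq \G ,
\]
with generic fibre $M \subseteq G$, and then to push the resulting data forward along $f$, using crucially that $f(\mathcal{M}) \subseteq Z_{\mathcal{H}}$.

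First I would verify that $M$ satisfies the hypotheses needed to run that construction. By condition (ii), $\mathcal{M}$ is a parahoric group scheme of $M$; being a closed flat subscheme of $\G$, it satisfies $\mathcal{M}(\Ok_L) = M(L) \cap \G(\Ok_L)$. For $b \in M(L)$ I would argue as follows: composing the crystalline representation $\rho \colon \Gamma_K \to \G(\Z_p)$ with $f$ gives, by condition (i), a crystalline representation of $\Gamma_K$ valued in $Z_{\mathcal{H}}$; by functoriality of the $p$-adic comparison this produces an $\mathcal{H}$-isocrystal with Frobenius $f(b)\sigma$ which is central, so $f(b) \in Z_{\mathcal{H}}(L)$ and hence $b \in M(L)$ (one checks the identity component using that $f\circ\rho$ lands in $Z_{\mathcal{H}}^{\circ}$, and that $b$ is anyway only well-defined up to $\sigma$-conjugation by $\G(\Ok_L)$). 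The same reasoning shows the Hodge cocharacter $\mu_y$ becomes central in $H$, hence is $G$-conjugate into $M$; together with the fact that the preimage in $\D(\pdiv_0)$ of the mod $p$ filtration is $\sigma^{-1}(b^{-1})p\,\D(\pdiv_0)$ with $\sigma^{-1}(b^{-1})\in M(L)$, this yields the assumption $(*)$ above. Running the construction for $M$ then produces the isomorphism \ref{id}; this is the choice asserted in the proposition, and for this choice $\mu_y$ is $M$-valued.

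Next, given $g \in G(\Q_p)$, I form $\pdiv'$, $\tilde g$ and $g_0$ as in the definition of the map $g \mapsto g_0$. By Proposition \ref{key1}(i), $\tilde g \in G(\mathfrak{S}[1/p])$, and by Proposition \ref{key1}(ii) --- which applies precisely in the present situation --- $\tilde g \in \mathcal{M}(\Ok_{\widehat{\mathscr{E}^{ur}}})\, g\, \G(\Ok_{\widehat{\mathscr{E}^{ur}}})$. Applying $f$, and using that $f(\mathcal{M}) \subseteq Z_{\mathcal{H}}$ is central in $\mathcal{H}$ and commutes with $f(g) \in H(\Q_p)$, I obtain
\[
f(\tilde g) \in Z_{\mathcal{H}}(\Ok_{\widehat{\mathscr{E}^{ur}}})\, f(g)\, \mathcal{H}(\Ok_{\widehat{\mathscr{E}^{ur}}}) = f(g)\,\mathcal{H}(\Ok_{\widehat{\mathscr{E}^{ur}}}),
\]
so $f(g)^{-1} f(\tilde g) \in \mathcal{H}(\Ok_{\widehat{\mathscr{E}^{ur}}})$. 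On the other hand $f(g)^{-1} f(\tilde g) \in H(\mathfrak{S}[1/p])$ since $f(g)$ is constant. The key local input is then the identity $\mathfrak{S}[1/p] \cap \Ok_{\widehat{\mathscr{E}^{ur}}} = \mathfrak{S}$ inside $\widehat{\mathscr{E}^{ur}}$, which follows from the faithful flatness of $\mathfrak{S}_{(p)} \to \Ok_{\mathcal{E}} \to \Ok_{\widehat{\mathscr{E}^{ur}}}$ and the fact that $\mathfrak{S} = \Ok_L[[u]]$ is a UFD in which $p$ is prime; since $\mathcal{H}$ is affine this upgrades to $H(\mathfrak{S}[1/p]) \cap \mathcal{H}(\Ok_{\widehat{\mathscr{E}^{ur}}}) = \mathcal{H}(\mathfrak{S})$, whence $f(g)^{-1} f(\tilde g) \in \mathcal{H}(\mathfrak{S})$.

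Finally I would reduce modulo $u$ along $\mathfrak{S} \to \Ok_L$. Since $\tilde g|_{u=0} = \sigma(g_0)$ and $f(g)$ is a $\sigma$-fixed constant, this gives $f(g)^{-1}\sigma(f(g_0)) \in \mathcal{H}(\Ok_L)$; applying $\sigma^{-1}$ (recalling $\mathcal{H}$ is defined over $\Z_p$, so $\mathcal{H}(\Ok_L)$ is $\sigma$-stable) yields $f(g)^{-1} f(g_0) \in \mathcal{H}(\Ok_L)$, i.e.\ $f(g) = f(g_0)$ in $H(L)/\mathcal{H}(\Ok_L)$. I expect the main obstacle to be the verification in the second paragraph --- establishing $b \in M(L)$ and the assumption $(*)$ from condition (i) --- since this is the only place condition (i) is genuinely used and it requires care with the functoriality of $p$-adic Hodge theory and with component groups; the ring-theoretic identity used in the third paragraph is routine.
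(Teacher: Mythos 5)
Your overall strategy is the paper's: take $\mathcal{M}=\G':=(f^{-1}(Z_{\mathcal{H}}))^{\circ}$, choose the isomorphism \ref{id} adapted to this subgroup, apply Proposition \ref{key1} to write $\tilde{g}=hgi$ with $h\in\G'(\Ok_{\widehat{\mathscr{E}^{ur}}})$ and $i\in\G(\Ok_{\widehat{\mathscr{E}^{ur}}})$, push forward along $f$ using that $f(\G')\subset Z_{\mathcal{H}}$, cut down to $\mathcal{H}(\mathfrak{S})$ via $\mathcal{H}(\mathfrak{S}[1/p])\cap\mathcal{H}(\Ok_{\widehat{\mathscr{E}^{ur}}})=\mathcal{H}(\mathfrak{S})$, and specialize at $u=0$. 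Your endgame is in fact a mild streamlining of the paper's proof: by applying $f$ to $\tilde{g}$ directly you avoid the kernel $\mathcal{P}=\ker(f)$ and the lifting across the surjection $\G(\mathfrak{S})\rightarrow\mathcal{H}(\mathfrak{S})$ that the paper uses, and this part of your argument is correct.

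The genuine gap is your second paragraph. The element $b$ is only defined relative to a trivialization matching the tensors $s_{\alpha,0}$, and only up to $\sigma$-conjugation by $\G(\Ok_L)$; there is no ready-made ``functoriality of the $p$-adic comparison'' producing an isocrystal with Frobenius literally $f(b)\sigma$ and literally central, because the machinery relating the Galois representation to the Frobenius of the Kisin module (Proposition \ref{prop3}) applies to parahoric stabilizers of tensors, and $Z_{\mathcal{H}}$ need not be such a group scheme --- this is precisely what hypothesis (ii) is for. The correct route, and the paper's, stays on the \'etale side: after replacing $K$ by a finite extension, the representation $\Gamma_K\rightarrow\G(\Z_p)$ factors through $\G'(\Z_p)$ (condition (i) puts the image in $f^{-1}(Z_{\mathcal{H}})(\Z_p)$, and the component group is finite; note (i) gives $Z_{\mathcal{H}}$, not $Z_{\mathcal{H}}^{\circ}$, so your parenthetical does not dispose of the component issue); one then extends $s_{\alpha,\acute{e}t}$ to $\Gamma_K$-invariant tensors $t_{\beta,\acute{e}t}$ whose stabilizer is $\G'$, and applies Proposition \ref{prop3} to $\G'$ to obtain crystalline tensors $t_{\beta,0}$ with stabilizer $\G'_{\Ok_L}$ together with a trivialization matching them, with respect to which $b\in G'(L)$ holds automatically. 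Likewise, assumption $(*)$ --- that the filtration lifts to one induced by an $M$-valued cocharacter which is $G$-conjugate to $\mu_y$ --- is not a formal consequence of centrality of the Hodge--Tate cocharacter of $f\circ\rho$; the paper invokes \cite[Corollary 3.3.10]{KP} for exactly this point. With these repairs (which is where conditions (i) and (ii) are actually consumed), your argument becomes a valid, slightly shorter version of the paper's proof.
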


\begin{proof}We write $\mathcal{G}'$ for the connected component of the identity of $f^{-1}(Z_{\mathcal{H}})$. By assumption $\G'$ is a parahoric subgroup of its generic fibre $G'$. Upon replacing $K$ by a finite extension, we may assume $\Gamma_K\rightarrow \G(\Z_p)$ factors through $\G'(\Z_p)$, and we may extend $\set\in T_p\pdiv^{\vee,\otimes}$ to a set $t_{\beta,\acute{e}t}$ of $\Gamma_K$-invariant tensors whose stabilizer is $\G'$. By Proposition \ref{prop3}, we obtain tensors $t_{\beta,0}\in\D(\pdiv_0)^\otimes$ whose stabilizer $\G'_{\Ok_L}$ can be identified with $\G'\otimes_{\Z_p}\Ok_L$. By \cite[Corollary 3.3.10]{KP} there is a $G'$-valued cocharacter $\mu_y'$ satisfying the conditions in (*), hence we may apply the construction in \ref{key1}. We fix a $\mathfrak{S}$-linear bijection $$T_p\pdiv^\vee\otimes_{\Z_p}\mathfrak{S}\cong \mathfrak{M}(T_p\pdiv^\vee)$$
taking $t_{\beta,\acute{e}t}$ to $\tilde{t}_\beta$. 

Let $g\in G(\Q_p)$, applying the previous construction we obtain $\tilde{g}\in G(\mathfrak{S}[1/p])$ and by Proposition \ref{key1} we have $\tilde{g}=hgi$ where $h\in \G'(\Ok_{\widehat{\mathscr{E}^{ur}}})$ and $i\in\G(\Ok_{\widehat{\mathscr{E}^{ur}}})$. Since $\G'\subset f^{-1}(Z_{\mathcal{H}})$, we have $f(g^{-1}hg)=f(h)$, so $$p:=g^{-1}hgh^{-1}\in  \mathcal{P}({\widehat{\mathscr{E}^{ur}}})$$ where $\mathcal{P}:=\ker(f:\G\rightarrow \mathcal{H})$.

Thus $f(\tilde{g})=f(gphi)=f(g)f(hi)$, and we obtain $$f(hi)\in \mathcal{H}(\Ok_{\widehat{\mathscr{E}^{ur}}})\cap\mathcal{H}(\mathfrak{S}[1/p])=\mathcal{H}(\mathfrak{S})$$

Since $\mathfrak{S}$ is strictly henselian, $f:\mathcal{G}(\mathfrak{S})\rightarrow\mathcal{H}(\mathfrak{S})$ is surjective, so let $m\in\G(\mathfrak{S})$ such that $f(m)=hi$. Thus $f(phim^{-1})=0$, and we have $$phim^{-1}\in\mathcal{P}({\widehat{\mathscr{E}^{ur}}})\cap\G(\mathfrak{S}[1/p])$$
But this last group is just $\mathcal{P}(\mathfrak{S}[1/p])$. Thus $\tilde{g}=g(phim^{-1})m\in g\mathcal{P}(\mathfrak{S}[1/p])\G(\mathfrak{S})$, and hence $$g_0=\sigma^{-1}(\tilde{g})|_{u=0}\in g\mathcal{P}(L)\G(\Ok_L)$$
so that $f(g_0)=f(\sigma(g))=f(g)\in H(L)/\mathcal{H}(\Ok_L)$.
\end{proof}

\begin{lemma}\label{lemma1}
i) The map $\kappa_G|_{G(\Q_p)}:G(\Q_p)\rightarrow \pi_1(G)_I^{\sigma}$ is surjective.

ii) Let $g_{ad}\in G_{ad}(\Q_p)$. Suppose the image of $g_{ad}$  under $\kappa_{G_{ad}}$ lifts to an element of $\pi_1(G)_I^\sigma$, then the image of $g_{ad}$ in $G(\Q_p)/\G(\Z_p)$ is in the image of $$G(\Q_p)/\G(\Z_p)\rightarrow G_{ad}(\Q_p)/\G_{ad}(\Z_p)$$
\end{lemma}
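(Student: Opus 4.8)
The plan is to prove (i) by a Lang‑type descent argument carried out at Iwahori level, and then to deduce (ii) from (i) together with the transitivity of $G_{sc}(\Q_p)$ on the alcoves of the building. For (i), the inclusion $\kappa_G(G(\Q_p))\subseteq\pi_1(G)_I^\sigma$ is immediate from the $\sigma$‑equivariance of $\kappa_G$ and the fact that elements of $G(\Q_p)$ are $\sigma$‑fixed. For surjectivity, recall that the $\sigma$‑invariant alcove $\mathfrak{a}$ induces a $\sigma$‑equivariant identification of $\pi_1(G)_I$ with the subgroup $\Omega\subseteq W$ of length‑zero elements, so a given $c\in\pi_1(G)_I^\sigma$ corresponds to some $\tau\in\Omega^\sigma$. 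First I would lift $\tau$ to $\dot\tau\in N(L)$. Since $\tau$ has length zero it normalizes the Iwahori $\mathcal{I}$ attached to $\mathfrak{a}$, which is defined over $\Z_p$ because $\mathfrak{a}$ is $\sigma$‑invariant; hence the Iwahori double coset of $\dot\tau$ is the single coset $\dot\tau\,\mathcal{I}(\Ok_L)$, and this coset is $\sigma$‑stable since $\sigma(\tau)=\tau$ in $W$. Because $\mathcal{I}$ is a smooth affine $\Z_p$‑group scheme with connected special fibre, $H^1(\langle\sigma\rangle,\mathcal{I}(\Ok_L))$ vanishes (reduce modulo $p$, apply Lang's theorem on the special fibre, and lift successively along the $\Ok_L/p^n$); consequently the $\sigma$‑stable $\mathcal{I}(\Ok_L)$‑torsor $\dot\tau\,\mathcal{I}(\Ok_L)$ contains a $\sigma$‑fixed point $g\in G(\Q_p)$, and $\kappa_G(g)=\kappa_G(\dot\tau)=c$ as $\mathcal{I}(\Ok_L)\subseteq\ker\kappa_G$.

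For (ii), write $\pi\colon G\to G_{ad}$, and note the assertion is equivalent to $g_{ad}\in\pi(G(\Q_p))\cdot\G_{ad}(\Z_p)$. Choose $c\in\pi_1(G)_I^\sigma$ lifting $\kappa_{G_{ad}}(g_{ad})$ and, using (i), pick $g_1\in G(\Q_p)$ with $\kappa_G(g_1)=c$; then $h:=\pi(g_1)^{-1}g_{ad}$ satisfies $\kappa_{G_{ad}}(h)=0$ by functoriality of $\kappa$, and since $g_{ad}=\pi(g_1)h$ it suffices to show $h\in\pi(G(\Q_p))\cdot\G_{ad}(\Z_p)$. As $G_{sc}$ is simply connected, $G_{sc}(\Q_p)$ acts transitively on the alcoves of $\B(G,\Q_p)$; choose $g_{sc}\in G_{sc}(\Q_p)$ whose image $g_2\in G(\Q_p)$ (via $G_{sc}\to G_{der}\hookrightarrow G$) satisfies $\pi(g_2)\mathfrak{a}=h\mathfrak{a}$. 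Then $n:=\pi(g_2)^{-1}h$ stabilizes $\mathfrak{a}$, hence fixes its barycentre $x$, so $n\in\G_{ad,x}(\Z_p)$; moreover $\kappa_{G_{ad}}(n)=0$, since $\kappa_{G_{ad}}(\pi(g_2))$ is the image of $\kappa_{G_{sc}}(g_{sc})\in\pi_1(G_{sc})_I=0$. Therefore $n$ lies in the connected parahoric $\G_{ad,x}^\circ(\Z_p)$, which is the Iwahori $\mathcal{I}_{ad}(\Z_p)$ and is contained in $\G_{ad}(\Z_p)$; so $h=\pi(g_2)\,n\in\pi(G(\Q_p))\cdot\G_{ad}(\Z_p)$, and in fact $g_{ad}\in\pi(g_1g_2)\,\G_{ad}(\Z_p)$.

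The part I expect to require the most care is the bookkeeping forced by the non‑normality of $\G_{ad}(\Z_p)$: the ``error terms'' produced by the descent in (i) and by the transitivity argument in (ii) must be kept on the correct side of the relevant coset. This is why in (ii) I multiply $g_{ad}$ on the \emph{left} by $\pi(g_1)^{-1}$ (so that $\pi(g_1)$ can be pulled back out on the left), and why it is essential that $n$ be forced, by the vanishing of its Kottwitz class, into the Iwahori rather than merely into the stabilizer of $\mathfrak{a}$, which can be strictly larger. The supporting facts — vanishing of $H^1(\langle\sigma\rangle,\mathcal{I}(\Ok_L))$, the $\sigma$‑equivariance and functoriality of the Kottwitz homomorphism, and the identification of the pointwise stabilizer of $\mathfrak{a}$ with the Iwahori — are standard and need only to be cited accurately.
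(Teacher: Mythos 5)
Your proof is correct, but both halves run along genuinely different lines from the paper's. For (i) the paper does not pass through $\Omega$: it quotes \cite{HaRa} (Lemma 5) to get the exact sequence $0\rightarrow\mathcal{T}^\circ(\Ok_L)\rightarrow T(L)\xrightarrow{\kappa_G}\pi_1(G)_I\rightarrow 0$ for the centralizer $T$ of the maximal $L$-split torus and its connected N\'eron model, and then uses the Lang-type vanishing of $H^1(\langle\sigma\rangle,\mathcal{T}^\circ(\Ok_L))$ to conclude that already $T(\Q_p)\rightarrow\pi_1(G)_I^\sigma$ is onto; you run the same Lang-plus-smoothness descent on the Iwahori instead, via the $\sigma$-equivariant splitting $\pi_1(G)_I\cong\Omega$ and the $\sigma$-stable coset $\dot\tau\,\mathcal{I}(\Ok_L)$. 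Both are sound; the paper's is essentially a one-line citation, while yours produces a rational representative normalizing $\mathcal{I}$. For (ii) the paper argues through a rational Iwahori--Bruhat decomposition $g_{ad}=g_1\dot w_{ad}g_2$ with $w_{ad}\in W_{ad}^\sigma$ and $g_1,g_2\in\G_{ad}(\Z_p)$, notes that $w_{ad}$ lies in the affine Weyl group once the Kottwitz invariant is killed, transports it through $W_a\cong W_{a,ad}$ and lifts a representative to $G_{der}(\Q_p)$; you instead kill the Kottwitz invariant using (i), then use transitivity of $G_{sc}(\Q_p)$ on the alcoves of the building together with the characterization $\G^\circ_x(\Z_p)=\G_x(\Z_p)\cap\ker\kappa_G$ recalled in \S 2 to force the discrepancy $n$ into the Iwahori $\mathcal{I}_{ad}(\Z_p)\subset\G_{ad}(\Z_p)$. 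Your route avoids all rational Bruhat bookkeeping (no choice of $\sigma$-fixed representatives $\dot w_{ad}$, no replacement of $T$ by $g_1Tg_1^{-1}$), at the cost of invoking alcove-transitivity for the simply connected group; and your left-multiplication normalization is indeed the cleaner way to keep the error terms on the correct side of the non-normal subgroup $\G_{ad}(\Z_p)$.
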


\begin{proof} i) By \cite[Lemma 5]{HaRa}, the Kottwitz homomorphism induces an exact sequence:

$$0\rightarrow \mathcal{T}^\circ(\Ok_L)\rightarrow T(L)\xrightarrow{\kappa_G}\pi_1(G)_I\rightarrow 0$$ 
where $\mathcal{T}^\circ$ is the connected Neron model of $T$. Since $H^1(\mathcal{T}^\circ,\Z_p)=0$ we have $$\kappa_G|_{T(\Q_p)}:T(\Q_p)\rightarrow \pi_1(G)_I^\sigma$$ is surjective, hence $\kappa_G|_{G(\Q_p)}$ is surjective.

ii) By part i), there exists $g\in G(\Q_p)$ which such that $\kappa_G(g)\in\pi_1(G)^\sigma_I$ lifts $\kappa_{G_{ad}}(g_{ad})$. Replacing $g_{ad}$ with $g_{ad}g^{-1}$, we may assume $\kappa_{G_{ad}}(g_{ad})$ is trivial.

By the Iwahori decomposition there exists $w_{ad}\in W_{ad}^\sigma, g_1,g_2\in\G_{ad}(\Z_p)$ such that $g_{ad}=g_1\dot{w}_{ad}g_2$, with $\dot{w}_{ad}\in G_{ad}(\Q_p)$ a lift of $w_{ad}$.  Changing our choice of torus $T$ with $T':=g_1Tg_1^{-1}$ we may assume $g_{ad}=\dot{w}_{ad}g_2$. Since $\kappa_{G_{ad}}(g)$ is trivial, $w_{ad}$ lies in the affine Weyl group $W_{a,ad}$ of $G_{ad}$. But the natural projection induces an isomorphism $W_a\cong W_{a,ad}$ hence, $w_{ad}$ lifts to an element $w$ of $W_a^\sigma$. Thus we may take $\dot{w}\in G_{der}(\Q_p)$ lifting $w$, and the image of $\dot{w}$ in $G(\Q_p)/\G(\Z_p)$ gives the required lifting.
\end{proof}
With the above notations we have the following

\begin{prop}\label{prop1} Assume $b=\sigma(\dot{\tau}_{\{\mu_y\}})$. Then there exists a $(\G,\mu_y)$-adapted lifting such that the map 

$$G(\Q_p)/\G(\Z_p)\rightarrow \pi_0(X(\sigma(\{\mu_y\}),\sigma(\dot{\tau}_{\mu_y}))),\ \ \ g\mapsto g_0$$
defined above is surjective.
\end{prop}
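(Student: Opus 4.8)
The plan is to construct a suitable identification \ref{id} using the minimal element $\tau = \tau_{\{\mu_y\}}$ and then reduce the surjectivity onto $\pi_0$ to a combination of the explicit description of connected components in Theorem \ref{thm1}ii), the Kottwitz-compatibility in Proposition \ref{lifting}, and the group-theoretic lifting in Lemma \ref{lemma1}. First I would reduce to the case where $G_{ad}$ is $\Q_p$-simple and $\mu_y$ is non-central: the general case follows by taking products (the adjoint group and its parahoric decompose as a product over the $\Q_p$-simple factors, and the whole construction $g\mapsto g_0$, the set $X(\sigma(\{\mu_y\}),b)$, and its $\pi_0$ are all compatible with such products), while if $\mu_y$ is central then Theorem \ref{thm1}ii) says $X(\{\mu_y\},b)\simeq G(\Q_p)/\G(\Z_p)$ is already discrete and one checks directly that $g\mapsto g_0$ is essentially the identity on this set, so surjectivity onto $\pi_0$ is automatic. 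So assume $\mu_y$ non-central and $G_{ad}$ simple.

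Next I would invoke Theorem \ref{thm1}ii): since $b = \sigma(\dot\tau_{\{\mu_y\}})$, the Kottwitz homomorphism induces an isomorphism $\kappa_G \colon \pi_0(X(\sigma(\{\mu_y\}),b)) \xrightarrow{\sim} \pi_1(G)_I^\sigma$ (applying the theorem to $X(\{\mu_y\},\dot\tau_{\{\mu_y\}})$ and transporting along the bijection $g\mapsto b\sigma(g)$ from $X(\{\mu_y\},b)$ to $X(\sigma(\{\mu_y\}),b)$, which is $\sigma$-equivariant and respects the Kottwitz map up to the fixed shift $\kappa(b)$). Therefore it suffices to show that the composite
\begin{equation}
G(\Q_p)/\G(\Z_p) \xrightarrow{g\mapsto g_0} X(\sigma(\{\mu_y\}),b) \xrightarrow{\kappa_G} \pi_1(G)_I^\sigma
\end{equation}
is surjective. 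But by the last assertion of Proposition \ref{lifting} we have $\kappa_G(g_0) = \kappa_G(g)$ for the map attached to \emph{any} $(\G,\mu_y)$-adapted lifting, so this composite is just $\kappa_G|_{G(\Q_p)} \colon G(\Q_p)\to\pi_1(G)_I^\sigma$ — composed first with $g\mapsto g_0$, but since $\kappa_G(g_0)=\kappa_G(g)$ the composite equals $\kappa_G|_{G(\Q_p)}$ on the nose. By Lemma \ref{lemma1}i) this map is surjective onto $\pi_1(G)_I^\sigma$. Hence, for \emph{any} choice of $(\G,\mu_y)$-adapted lifting, the induced map on $\pi_0$ is surjective, which is even stronger than the statement; in particular the lifting furnished by Proposition \ref{adapted} works.

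The point I expect to require the most care is making sure the reductions are legitimate: namely that the isomorphism \ref{id} and the resulting element $b$ can genuinely be arranged so that $b = \sigma(\dot\tau_{\{\mu_y\}})$ in the first place (this is the hypothesis, but one should check it is compatible with the normalizations of $\tau_{\{\mu_y\}}$ coming from the alcove $\mathfrak{a}$ and the special vertex $\mathfrak{s}$ fixed in \S5.1), that the identification $X(\{\mu_y\},b) \leftrightarrow X(\sigma(\{\mu_y\}),b)$ indeed intertwines the two Kottwitz maps up to a constant so that the isomorphism of Theorem \ref{thm1}ii) transfers cleanly, and that in the product reduction the admissible sets and the minimal elements $\tau$ decompose as expected (so that $b=\sigma(\dot\tau_{\{\mu_y\}})$ on each factor). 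None of these is deep, but they are exactly the places where an off-by-a-twist error would hide; once they are in place, the surjectivity is immediate from Lemma \ref{lemma1}i) and Proposition \ref{lifting}, with Theorem \ref{thm1}ii) providing the identification of $\pi_0$ with $\pi_1(G)_I^\sigma$.
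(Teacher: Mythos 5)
Your argument works only in the case it silently reduces to, and the reduction itself is where the gap lies. The identification $\pi_0(X(\sigma(\{\mu_y\}),b))\cong\pi_1(G)_I^\sigma$ via the Kottwitz map is available (Theorem \ref{thm1} ii)) only when $G_{ad}$ is $\Q_p$-simple \emph{and} $\mu_y$ is non-central; in general some $\Q_p$-simple factors of $G_{ad}$ see a central cocharacter, and on those factors the affine Deligne--Lusztig variety is discrete with $\pi_0$ equal to the whole of $G_1(\Q_p)/\G_1(\Z_p)$, which is strictly finer than $\pi_1(G_1)_I^\sigma$. Your proposed reduction ``by taking products'' is not legitimate: only $G_{ad}$ decomposes as $G_1\times G_2$, while $G$, its parahoric, the tensors and the set $X(\sigma(\{\mu_y\}),b)$ do not split accordingly, so you cannot treat the central and non-central factors separately at the level of $G$. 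Consequently the composite $G(\Q_p)\to\pi_0\to\pi_1(G)_I^\sigma$ being surjective (Proposition \ref{lifting} plus Lemma \ref{lemma1} i)) does not give surjectivity onto $\pi_0$: the Kottwitz invariant cannot distinguish components lying over the $G_1(\Q_p)/\G_1(\Z_p)$ part.

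This is also why your stronger conclusion --- that \emph{any} $(\G,\mu_y)$-adapted lifting works --- overshoots what the argument can deliver, and why the statement only asserts existence of a suitable lifting. The paper's proof computes $\pi_0(X(\{\mu_y^{ad}\},\dot\tau_{\mu_y^{ad}}))\cong G_1(\Q_p)/\G_1(\Z_p)\times\pi_1(G_2)_I^\sigma$ for the adjoint group, and then \emph{chooses} the isomorphism \ref{id} (i.e.\ the adapted lifting) so that Proposition \ref{prop2} applies to the projection $\G\to\G_1$; this is precisely what controls the image of $g_0$ in the discrete $G_1$-direction, something $\kappa_G$ alone cannot do. One then lifts a class $h_{ad}$ to $g_{ad}\in G_{ad}(\Q_p)$ by Lemma \ref{lemma1} i), lifts $g_{ad}$ to $g\in G(\Q_p)$ by Lemma \ref{lemma1} ii) (using that $\kappa_{G_{ad}}(g_{ad})$ lifts to $\kappa_G(h)\in\pi_1(G)_I^\sigma$), and finally corrects the remaining discrepancy between $g_0$ and $h$ by a central element $z\in Z(\Q_p)$ via \cite[Corollary 4.5]{HZ}, using functoriality $(gz)_0=g_0z$. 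Your proposal contains none of these three ingredients (the choice of lifting via Proposition \ref{prop2}, the descent-and-lift through $G_{ad}$, and the central correction), and without them the surjectivity onto $\pi_0$ does not follow. The special cases you do treat correctly ($G_{ad}$ simple with $\mu_y$ non-central) are genuinely easier, but they do not cover the situation the proposition is needed for.
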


\begin{proof}We let $\mu_{y}^{ad}$ denote the cocharacter $G_{ad}$ induced by $\mu_y$. Let $G_{ad}=G_1\times G_2$ where $\mu_y^{ad}$ induces the trivial cocharacter of $G_1$ and induces a non-trivial cocharacter in every $\Q_p$-factor of $G_2$. We write $\G_1\times\G_2$ for the parahoric in $G_{ad}$ corresponding to $\G$. By Theorem 4.1, we have 

$$\pi_0(X(\{\mu_y^{ad}\},\dot{\tau}_{\mu_y^{ad}}))\cong G_1(\Q_p)/\G_1(\Z_p)\times \pi_1(G_2)_I^\sigma$$
We pick the isomorphism \ref{id} so that the conclusion of Proposition \ref{prop2} holds for the projection $\G\rightarrow \G_1$.

Let $h\in X(\sigma(\{\mu_y\}),\sigma(\dot{\tau}_{\mu_y})),$ $g\in G(\Q_p)$ and $h_{ad}$ the image of $h$ in $\pi_0(X(\sigma(\{\mu_y^{ad}\}),\sigma(\dot{\tau}_{\mu_y^{ad}})))$, then by Lemma \ref{lemma1} i), there exists $g_{ad}\in G_{ad}(\Q_p)$ mapping to $h_{ad}$. Since $\kappa_{G_{ad}}(g_{ad})$ lifts to the element $\kappa_G(h)\in\pi_1(G)_I^\sigma$, we have by Lemma \ref{lemma1} ii) that $g_{ad}$ lifts to an element $g\in G(\Q_p)$.  By Proposition \ref{prop2} and \ref{prop1}, the image of $g_0$ is equal to the image of $h$ in $\pi_0(X(\sigma(\{\mu_y^{ad}\}),\sigma(\dot{\tau}_{\mu_y^{ad}})))$. By \cite[Corollary 4.5]{HZ}, there exists $z\in Z(\Q_p)$ such that $g_0z=h$. By the functoriality of the construction, $(gz)_0=g_0z_0=g_0z=h$.
\end{proof}
\section{Shimura varieties}

\subsection{}We recall the construction of the integral models of Shimura varieties of Hodge type in \cite{KP}.

Let $G$ be a reductive group over $\Q$ and $X$ a conjugacy class of homomorphisms

$$h:\mathbb{S}:=\text{Res}_{\mathbb{C}/\mathbb{R}}\mathbb{G}_m\rightarrow G_\mathbb{R}$$
 such that $(G,X)$ is a Shimura datum in the sense of \cite{De}. 
 
 Let $c$ be complex conjugation, then $\text{Res}_{\C/\mathbb{R}}(\C)\cong(\C\otimes_{\R}\C)^\times\cong \C^\times \times c^*(\C^\times)$ and we write $\mu_h$ for the cocharacter given by $$\C^\times\rightarrow \C^\times\times c^*(\C^\times)\xrightarrow h G(\C)$$
 We set $w_h:=\mu_h^{-1}\mu_h^{c-1}$.

Let $\A_f$ denote the ring of finite adeles and $\A_f^p$ the subring of $\A_f$ with trivial $p$-component. Let $\rmK_p\subset G(\Q_p)$ and $\rmK^p\subset G(\A_f^p)$ be compact open subgroups and write $\rmK:=\rmK_p\rmK^p$. Then for $\rmK^p$ sufficiently small

$$Sh_{\rmK}(G,X)_{\C}=G(\Q)\backslash X\times G(\A_f)/\rmK$$
has the structure of an algebraic variety over $\C$, which has a model over the reflex field $\mathbf{E}:=E(G,X)$, which is a number field and is the field of definition of the conjugacy class of $\mu_h$.

We will also consider the pro-varieties

$$Sh(G,X):=\lim\limits_{\leftarrow \rmK}Sh_{\rmK}(G,X)$$

$$Sh_{\rmK_p}(G,X):=\lim\limits_{\leftarrow\rmK^p}Sh_{\rmK_p\rmK^p}(G,X)$$

\subsection{}Let $V$ be a vector space over $\Q$ of dimension $2g$ equipped with an alternating bilinear form $\psi$, we write $V_R=V\otimes_{\Q}R$ for an $\Q$ algebra $R$. Let $GSp=GSp(V,\psi)$ denote the corresponding group of symplectic similitudes, the Siegel half space is defined to be the set of homomorphisms $h:\mathbb{S}\rightarrow GSp_\R$ such that:

1) The Hodge structure on $V_\C$ induced by $h$ is of type $(-1,0),(0,-1)$, i.e. $$V_{\C}=V^{-1,0}\oplus V^{0,-1}$$

2) $(x,y)\mapsto \psi(x,h(i)y)$ is positive or negative definite on $V_{\R}$.

For the rest of this section we assume there is an embedding of Shimura data $$\iota:(G,X)\rightarrow (GSp,S^{\pm})$$ We sometimes write $G$ for $G_{\Q_p}$ when there is no risk of confusion. For the rest of this section we assume the following condition holds 
\begin{equation}\text{$G$ splits over a tamely ramified extension of $\Q_p$ and $p\nmid|\pi_1(G_{der})|$}\end{equation}

Let $\G$ be a connected parahoric subgroup of $G$, i.e. $\G=\G_x=\G_x^{\circ}$ for some $x\in B(G,\Q_p).$ By \cite[2.3.15]{KP}, upon replacing $\iota$ by another symplectic embedding, there is a closed immersion $\G\rightarrow \mathcal{GSP}$, where $\mathcal{GSP}$ is a parahoric group scheme of $GSp$ corresponding to the stabilizer of a lattice $V_{\Z_p}\subset V$.  Upon scaling $V_{\Z_p}$, we may assume $V_{\Z_p}^\vee\subset V_{\Z_p}$ and we let $p^d=|V_{\Z_p}/V_{\Z_p}^\vee|$.
This induces a closed immersion of local models

$$M^{loc}_{\G,\mu_h}\rightarrow M^{loc}_{\mathcal{GSP},\mu_h}\otimes\Ok_{E}$$
where $E$ is the local reflex of $\mu_h$ in $G_{\Q_p}$.

\subsection{} Let $V_{\Z_{(p)}}=V_{\Z_p}\cap V$, we write $G_{\Z_{(p)}}$ for the Zariski closure of $G$ in $GL(V_{\Z_{(p)}})$, then $G_{\Z_{(p)}}\otimes_{\Z_{(p)}}\Z_p\cong \G$. The choice of $V_{\Z_{(p)}}$ gives rise to an interpretation of $Sh_{\rmK'}(GSp,S^\pm)$ as a moduli space of abelian varieties and hence an integral model over $\Z_{(p)}$ which we now describe. We let $\rmK'=\rmK_p'\rmK'^p $ where $\rmK'_p=\mathcal{GSP}(\Z_p)$ and $\rmK'^p \subset GSp(\A_f^p)$ is a compact open.

Let $\mathcal{A}$ be an abelian scheme of dimension $2g=\dim V$ over a scheme $T$. We write $$\widehat{V}(\mathcal{A})=\lim_{\leftarrow p\nmid n}\mathcal{A}[n]$$
Consider the category obtained from the category of abelian varieties by tensoring the $\Hom$ groups by $\Z_{(p)}$,  an object in this category will be called an abelian variety up to prime to $p$ isogeny. An isomorphism in this category will be called a prime to $p$ isogeny.

Let $\mathcal{A}$ be an abelian variety up to prime to $p$ isogeny and let $\mathcal{A}^*$ be the dual abelian variety, by a weak-polarization we mean an equivalence class of quasi-isogenies $\lambda:\mathcal{A}\rightarrow\mathcal{A}^*$ such that $p^d$ exactly divides $\deg\lambda$ and some multiple of $\lambda$ is a polarization. Two such quasi-isogenies are equivalent if they differ by a multiple of $\Z_{(p)}^\times$.

Let $(\mathcal{A},\lambda)$ be a pair as above, we write $\underline{Isom}_{\lambda,\psi}(\widehat{V}(\mathcal{A}),V_{\A_f^p})$ for the (pro)-\'etale sheaf of isomorphisms $\widehat{V}(\mathcal{A})\cong V_{\A_f^p}$ which preserves the pairings induced $\lambda$ and $\psi$ up to a $\widehat{\Z}_p^\times$ scalar. 

We write $\mathscr{A}_{g,d,\rmK'}(T)$ for the set of triples $(\mathcal{A},\lambda,\epsilon_{\rmK'}^p)$ consisting of an abelian variety up to prime to $p$-isogeny $\mathcal{A}$ over $T$ together with a weak polarization $\lambda:\mathcal{A}\rightarrow \mathcal{A}^*$ and a global section $$\epsilon_{\rmK'}^p\in\Gamma(T,\underline{Isom}_{\lambda,\psi}(\widehat{V}(\mathcal{A}),V_{\A_f^p})/\rmK'^p)$$
For $\rmK'^p$ sufficiently small, $\mathscr{A}_{g,d,\rmK'}$ is representable by a quasi-projective scheme over $\Z_{(p)}$ which we denote by $\mathscr{S}_{\rmK'}(GSp,S^\pm)$.

\subsection{} For the rest of this paper we fix an algebraic closure $\overline{\Q}$, and for each place $v$ of $\Q$ an algebraic closure $\overline{\Q}_v$ together with an embedding $\overline{\Q}\rightarrow \overline{\Q}_v$.

By \cite[Lemma 2.1.2]{Ki1}, we can choose $\rmK'$ such that $\iota$ induces a closed immesion:

$$Sh_{\rmK}(G,X)\hookrightarrow Sh_{\rmK'}(GSp,S^\pm)_{\mathbf{E}}$$ defined over $\mathbf{E}$. The choice of embedding $\mathbf{E}\rightarrow \overline{\Q}_p$ determines a place $v$ of $\mathbf{E}$. Write $\Ok_{\mathbf{E},{(v)}}$ for the localisation of $\Ok_{\mathbf{E}}$ at $v$, $E$ the completion of $\mathbf{E}$ at $v$ and $\Ok_E$ the ring of intgers of $E$. We assume the residue field has $q=p^r$ elements and as before $k$ will denote an algebraic closure of $\mathbb{F}_q$. We define $\mathscr{S}_\rmK(G,X)^-$ to be the Zariski closure of $Sh_{\rmK}(G,X)$ inside $\mathscr{S}_{\rmK'}(GSp,S^\pm)\otimes_{\Z_{(p)}}\Ok_{\mathbf{E},(v)}$, and $\mathscr{S}_{\rmK}(G,X)$ to be its normalization. By construction, for $\rmK^p_1\subset \rmK^p_2$ compact open subgroups of $G(\A_f^p)$, there are well defined maps $\mathscr{S}_{\rmK_p\rmK^p_1}(G,X)\rightarrow \mathscr{S}_{\rmK_p\rmK^p_2}(G,X)$ and we write $\mathscr{S}_{\rmK_p}(G,X):=\varprojlim_{\rmK^p}\mathscr{S}_{\rmK_p\rmK^p}(G,X)$.

Under these assumptions we have the following.
\begin{thm}[\cite{KP} Theorem 4.2.2, Theorem 4.2.7]) The $\Ok_{\mathbf{E},(v)}$ scheme $\mathscr{S}_{\rmK_p}(G,X)$ is a flat $G(\A_f^p)$-equivariant extension of $Sh_{\rmK_p}(G,X)$. 

ii) Let $\hat{U}_x$ be the completion of $\mathscr{S}_{\rmK}(G,X)^-$ at some $k$-point $x$, there exists a point $x'\in M^{loc}_{\G,\mu_h}(k)$ such that the irreducible components of $\hat{U}_x$ are isomorphic to the completion $\widehat{M}^{loc}_{\G,\mu_h}$ at $x'$. Moreover $\mathscr{S}_{\rmK}(G,X)$ fits in a local model diagram:

\[\xymatrix{ &\widetilde{\mathscr{S}}_{\rmK}(G,X)_{\Ok_E}\ar[dr]^\pi\ar[dl]^q&\\
\mathscr{S}_{\rmK}(G,X)_{\Ok_E} & &M^{loc}_{\G,X}}\]
where $q$ is a $\G$-torsor and $\pi$ is smooth of relative dimension $\dim G$.
\end{thm}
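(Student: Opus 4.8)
The plan is to follow Kisin and Pappas, reducing (ii) to the local deformation theory of \S3 and \S4 and settling (i) by general arguments. For (i): since $\iota$ induces a closed immersion $Sh_{\rmK}(G,X)\hookrightarrow Sh_{\rmK'}(GSp,S^\pm)_{\mathbf{E}}$, the Zariski closure $\mathscr{S}_{\rmK}(G,X)^-$ of $Sh_{\rmK}(G,X)$ inside $\mathscr{S}_{\rmK'}(GSp,S^\pm)\otimes_{\Z_{(p)}}\Ok_{\mathbf{E},(v)}$ is flat over $\Ok_{\mathbf{E},(v)}$: it is the scheme-theoretic closure of its own (dense) generic fibre, hence $\Ok_{\mathbf{E},(v)}$-torsion-free, and $\Ok_{\mathbf{E},(v)}$ is a discrete valuation ring. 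Its normalization $\mathscr{S}_{\rmK}(G,X)$ is finite over it and again flat, being reduced with every generic point lying over the generic point of $\Spec\Ok_{\mathbf{E},(v)}$; and since $Sh_{\rmK}(G,X)$ is smooth, hence normal, the generic fibre is unchanged, so $\mathscr{S}_{\rmK}(G,X)$ is an extension of $Sh_{\rmK}(G,X)$. The $G(\A_f^p)$-action is the restriction of the Hecke action of $GSp(\A_f^p)$ on the Siegel tower (whose transition maps in $\rmK'^p$ are finite): it preserves the generic fibre $Sh_{\rmK_p}(G,X)$, hence the Zariski closures, and passes to normalizations by functoriality.

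For (ii), fix $x\in\mathscr{S}_{\rmK}(G,X)^-(k)$. Via the moduli interpretation $x$ gives an abelian variety $\mathcal{A}_x/k$ with weak polarization, and I would work with $\pdiv_0=\mathcal{A}_x[p^\infty]$ and its Dieudonn\'e module $\D=\D(\pdiv_0)(\Ok_L)$. By Serre--Tate the completion of $\mathscr{S}_{\rmK'}(GSp,S^\pm)$ at $x$ is the versal deformation ring of $(\pdiv_0,\lambda)$, and $\hat{U}_x$ is a quotient of it; the task is to recognize that quotient. First, specializing the Hodge cycles on $Sh_{\rmK}(G,X)$ through their $p$-adic \'etale and crystalline realizations --- this is where a characteristic-$0$ statement is transported to characteristic $p$, via the $\mathfrak{S}$-module functor of Proposition~\ref{F-cryst} and its compatibility with Dieudonn\'e theory in Proposition~\ref{prop3} --- yields $\varphi$-invariant tensors $\sa\in\D^\otimes$ whose stabilizer is the parahoric $\G_{\Ok_L}$, together with an identification $\D\cong\Lambda\otimes_{\Z_p}\Ok_L$ for a suitable $\Z_p$-lattice $\Lambda$ matching the $\sa$ with the tensors defining $\G$. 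Second, one shows --- by the crystalline comparison argument of Kisin (the analogue of \cite[Lemma~1.1.9]{Ki3} already invoked in the proof of Proposition~\ref{lifting}) --- that a characteristic-$0$ deformation of $\mathcal{A}_x$ lies on $Sh_{\rmK}(G,X)$ if and only if it preserves the $\sa$, and that this is equivalent to the corresponding lift of $\pdiv_0$ being $(\G,\mu_h)$-adapted in the sense of Definition~\ref{G-adapted}. By the versality statement \cite[Prop.~3.2.17]{KP} the $(\G,\mu_h)$-adapted locus is $\mathrm{Spf}\,A_\G$, which by the local model description of \S3 is a finite union of copies of $\widehat{M}^{loc}_{\G,\mu_h}$ at the point $x'$ determined by the Hodge filtration of $\mathcal{A}_x$; since $p\nmid|\pi_1(G_{der})|$ each such copy is normal by \cite[Theorem~8.1]{PZ}. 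Hence the finite map from $\hat{U}_x$ to this union is an isomorphism on generic fibres, and passing to normalizations identifies each branch of $\mathscr{S}_{\rmK}(G,X)$ at $x$ with $\widehat{M}^{loc}_{\G,\mu_h}$.

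For the local model diagram I would let $\widetilde{\mathscr{S}}_{\rmK}(G,X)_{\Ok_E}$ parametrize, over $\mathscr{S}_{\rmK}(G,X)_{\Ok_E}$, trivializations of the first de Rham cohomology of the universal abelian variety carrying the de Rham tensors $s_{\alpha,\mathrm{dR}}$ to the fixed tensors cutting out $\G$; forgetting the trivialization gives $q$, which is manifestly a $\G$-torsor, and sending a trivialized datum to its Hodge filtration gives $\pi$, which lands in $M^{loc}_{\G,\mu_h}$ because that filtration has type $\mu_h$ and respects the tensors. Smoothness of $\pi$ then follows formally from the completion statement in (ii): after the $\G$-torsor twist and passing to completions, $\pi$ is identified with the action morphism $\widehat{M}^{loc}_{\G,\mu_h}\times\widehat{\G}\to\widehat{M}^{loc}_{\G,\mu_h}$, which is smooth of relative dimension $\dim\G=\dim G$. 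I expect the main obstacle to be precisely the characteristic-$0$-to-$p$ comparison of the previous paragraph --- identifying ``lying on the Shimura subvariety'' with ``$(\G,\mu_h)$-adapted'' --- which is the integral $p$-adic Hodge theory input assembled in \S4; everything else is either formal (flatness, the torsor $q$) or a direct appeal to the local model results of \S3 together with the normality statement \cite[Theorem~8.1]{PZ}.
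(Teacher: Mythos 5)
This theorem is not proved in the paper at all: it is quoted from \cite{KP} (Theorems 4.2.2 and 4.2.7), and the only piece of its proof the paper reproduces is Proposition \ref{prop12}. Your part (i) (flatness of the closure over the discrete valuation ring $\Ok_{\mathbf{E},(v)}$, passage to the normalization, Hecke equivariance) and your description of the local model diagram (the torsor of trivializations of de Rham cohomology respecting the $s_{\alpha,dR}$, with $\pi$ given by the Hodge filtration, smoothness checked on completions after formally trivializing the torsor) follow the same route as \cite{KP} and are essentially fine.

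The gap is in your part (ii). You assert a biconditional --- a characteristic-zero deformation of $\mathcal{A}_x$ lies on $Sh_{\rmK}(G,X)$ iff it preserves the $s_{\alpha}$ iff the corresponding lift of $\pdiv_0$ is $(\G,\mu_h)$-adapted --- and claim this follows from ``the crystalline comparison argument''. Only one implication follows that way: parallel transport of the tensors along an irreducible component $Z$ of $\widehat{U}_x$ through a characteristic-zero point (the argument of \cite[2.3.5]{Ki1} recalled in the proof of Proposition \ref{prop12}) shows every characteristic-zero point of $Z$ is adapted, i.e. $Z\subset \mathrm{Spf}\,A_{\G}$. The converse (adapted $\Rightarrow$ lies on the Shimura variety, equivalently $\mathrm{Spf}\,A_{\G}\subset\widehat{U}_x$) is not a comparison statement and is precisely what requires proof; in \cite{KP}, and in Proposition \ref{prop12}, it is obtained by a dimension count: flatness of $\mathscr{S}_{\rmK}(G,X)^-$ over $\Ok_{\mathbf{E},(v)}$ forces $\dim Z=\dim \widehat{M}^{loc}_{\G,\mu_h}=\dim \mathrm{Spf}\,A_{\G}$, and $\mathrm{Spf}\,A_{\G}$ is integral because $M^{loc}_{\G,\mu_h}$ is normal and excellent (this is where $p\nmid|\pi_1(G_{der})|$ and \cite[Theorem 8.1]{PZ} enter), so $Z=\mathrm{Spf}\,A_{\G}$. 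Your substitute step --- ``the finite map from $\widehat{U}_x$ to this union is an isomorphism on generic fibres, and passing to normalizations identifies each branch'' --- does not parse: $\widehat{U}_x$ and $\mathrm{Spf}\,A_{\G}$ are both closed formal subschemes of $\mathrm{Spf}\,A$, there is no finite map between them to compare, and $\mathrm{Spf}\,A_{\G}$ is a single copy of $\widehat{M}^{loc}_{\G,\mu_h}$ at $x'$ (it is by construction the completed local ring of the local model), not a finite union of copies. With $Z=\mathrm{Spf}\,A_{\G}$ in hand, the rest of your outline (each component of $\widehat{U}_x$ is $\widehat{M}^{loc}_{\G,\mu_h}$, and $\pi$ is smooth of relative dimension $\dim G$) is the standard conclusion.
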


\subsection{}We will need a more explicit description of $\hat{U}_x$ and this local model diagram for the next section. To do this we will need to introduce Hodge cycles.

By \cite[1.3.2]{Ki1}, the subgroup $G_{\Z_{(p)}}$ is the stabilizer of a collection of tensors $s_\alpha\in V_{\Z_{(p)}}^\otimes$. Let $h:\mathcal{A}\rightarrow \mathscr{S}_{\rmK}(G,X)$ denote the pullback of the universal abelian variety on $\mathscr{S}_{\rmK'}(GSp,S^\pm)$ and let $V_B:=R^1h_{an,*}\Z_{(p)}$, where $h_{an}$ is the map of complex analytic spaces associated to $h$. We also let $\mathcal{V}=R^1h_*\Omega^\bullet$ be the relative de Rham cohomology of $\mathcal{A}$. Using the de Rham isomorphism, the $s_\alpha$ give rise to a collection of Hodge cycles $s_{\alpha,dR}\in \mathcal{V}_\C^\otimes$, where $\mathcal{V}_\C$ is the complex analytic vector bundle associated to $\mathcal{V}$. By \cite[\S 2.2.]{Ki1}, these tensors are defined over $E$, and in fact over $\mathcal{O}_{E,(v)}$ by \cite[Proposition 4.2.6]{KP}.

Similarly for a finite prime $l\neq p$, we let $\mathcal{V}_l=R^1h_{\acute{e}t*}\Q_l$ and $\mathcal{V}_p=R^1h_{\eta,\acute{e}t*}\Z_p$ where $h_\eta$ is the generic fibre of $h$. Using the \'etale-Betti comparison isomorphism, we obtain tensors $s_{\alpha,l}\in \mathcal{V}^\otimes_l$ and $s_{\alpha,p}\in\mathcal{V}_p^\otimes$.

For $*=B, dR,l$ and $x\in \mathscr{S}_{\rmK_p}(G,X)(T)$, we write $\mathcal{A}_x$ for the pullback of $\mathcal{A}$ to $x$ and $s_{\alpha,*,x}$ for the pullback of $s_{\alpha,*}$ to $x$.

As in \cite[3.4.2.]{Ki1}, if $x\in\mathscr{S}_{K_p}(G,X)(T)$ corresponds to a triple $(\mathcal{A}_x,\lambda,\epsilon_{\rmK'}^p)$, then $\epsilon_{K'}^p$ can be promoted to a section:

$$\epsilon_{\rmK}^p\in\Gamma(T,\underline{Isom}_{\lambda,\psi}(\widehat{V}(\mathcal{A}),V_{\A_f^p})/K^p)$$ which takes $s_{\alpha,l}$ to $s_{\alpha}$ ($l\neq p$).

\subsection{} Recall $k$ is an algebraic closure of $\F_q$ and $L=W(k)[1/p]$. Let $\overline{x}\in\mathscr{S}_{\rmK}(G,X)(k)$ and $x\in\mathscr{S}_{\rmK}(G,X)(\Ok_K)$ a point lifting $\overline{x}$, where $K/L$ is a finite extension.

Let $\pdiv_x$ denote the $p$-divisible group associated to $\mathcal{A}_x$ and $\pdiv_{x,0}$ its special fiber. Then $T_p\pdiv_x^\vee$ is identified with $H^1_{\acute{e}t}(\mathcal{A}_x,\Z_p)$ and we obtain $\Gamma_K$-invariant tensors $s_{\alpha,\acute{e}t,x}\in T_p\pdiv^{\vee\otimes}$ whose stabilizer can be identified with $\G$. We may thus apply the constructions of section 3 and we obtain $\varphi$-invariant tensors $s_{\alpha,0,x}\in\D(\pdiv_{x,0})$ whose stabilizer group $\G_{\Ok_L}$ can be identified with $\G\otimes_{\Z_p}\Ok_L$. The filtration on $\D\otimes_{\Ok_L}K$ corresponding to $\pdiv_x$ is induced by a $G$-valued cocharacter conjugate to $\mu_h^{-1}$. By \cite[Proposition 3.3.8]{KP}, there is an isomorphism:

$$\D(\pdiv_x)(\Ok_K)\cong\D(\pdiv_{x,0})\otimes_{\Ok_L}\Ok_K$$ taking $s_{\alpha,dR,x}$ to $s_{\alpha,0,x}$ lifting the identity mod$\pi$. Thus there is a $G$-valued cocharacter $\mu_y$ which is $G$-conjugate to $\mu_h^{-1}$ and induces a filtration on $\D(\pdiv_{x,0})$ lifting the filtration on $\D(\pdiv_{x,0})$. Thus we have a notion of $(\G_{\Ok_L},\mu_y)$-adapted liftings as in section 3 and by definition $\pdiv_x$ is a $(\G_{\Ok_L},\mu_y)$-adapted lifting.

As before we let $P\subset GL(\D)$ be a parabolic lifting $P_0$. We obtain formal local models $\widehat{M}^{loc}_{\mu_y^{-1}}=\mbox{Spf}A$ and $\widehat{M}^{loc}_{\G,\mu_y^{-1}}=\mbox{Spf}A_{\G}$, and the filtration corresponding to $\mu_y$ is given by a point $y:A_{\G}\rightarrow \Ok_K$.

\begin{prop}\label{prop12}Let $\widehat{U}_{\overline{x}}$ be the completion of $\mathscr{S}^-_{\rmK}(G,X)$ at $\overline{x}$. 

i) $\widehat{U}_{\overline{x}}$ can be identified with a closed subspace of $\mbox{Spf}A$ containing $\mbox{Spf}A_{\G}$. 

ii) Let $x'\in\mathscr{S}_{\rmK}(G,X)(\Ok_{K'})$ whose special fibre $\overline{x}'$  maps to the image of  $\overline{x}$ in $\mathscr{S}^-_{\mathrm{K}}(G,X)$. Then $s_{\alpha,0,x'}=s_{\alpha,0,x}\in\D(\pdiv_{x,0})$ if and only if $x$ and $x'$ lie on the same irreducible component of $\mathscr{S}_{\rmK}(G,X)$.

ii) A deformation $\pdiv$ of $\pdiv_{x,0}$ corresponds to a point on the same irreducible component of $\widehat{U}_{\overline{x}}$ if and only if $\pdiv$ is $\G_{\Ok_L}$-adapted
\end{prop}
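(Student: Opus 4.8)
The plan is to pin down $\widehat{U}_{\overline{x}}$ explicitly inside the versal deformation space of $\pdiv_{x,0}$ and then read off the three assertions from the versal property of $A_{\G}$ together with the flatness of the de Rham tensors. For part (i): by construction $\mathscr{S}^-_{\rmK}(G,X)$ is a closed subscheme of $\mathscr{S}_{\rmK'}(GSp,S^\pm)\otimes_{\Z_{(p)}}\Ok_{\mathbf{E},(v)}$, so $\widehat{U}_{\overline{x}}$ is a closed formal subscheme of the completion of the Siegel model at the image of $\overline{x}$. By Serre--Tate theory that completion is the deformation space of the polarized $p$-divisible group $(\pdiv_{x,0},\lambda_0)$ (the prime-to-$p$ level structure is \'etale, hence rigid), and by Grothendieck--Messing deformation theory, as used in \cite{KP}, the latter is a closed subspace of $\mbox{Spf}A$, the versal deformation space of the unpolarized $\pdiv_{x,0}$; this is the first containment. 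For $\mbox{Spf}A_{\G}\subseteq\widehat{U}_{\overline{x}}$, recall that the chosen lift $\pdiv_{x}$ is $(\G_{\Ok_L},\mu_y)$-adapted, so by the versal property of $A_{\G}$ (\cite[Proposition 3.2.17]{KP}) the point $x$ already factors through $A_{\G}$; thus $\mbox{Spf}A_{\G}$ meets $\widehat{U}_{\overline{x}}$ in $x$. Now $\mbox{Spf}A_{\G}=\widehat{M}^{loc}_{\G,\mu_h}$ by \cite[Proposition 2.3.16]{KP}, which is irreducible, being the completion of the normal scheme $M^{loc}_{\G,\mu_h}$; and by \cite[Theorem 4.2.7]{KP} every irreducible component of $\widehat{U}_{\overline{x}}$ is isomorphic to $\widehat{M}^{loc}_{\G,\mu_h}$, hence of that dimension. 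An irreducible closed subscheme of $\widehat{U}_{\overline{x}}$ of full dimension through $x$ must be the whole component through $x$, so the component of $\widehat{U}_{\overline{x}}$ through $x$ equals $\mbox{Spf}A_{\G}$; in particular $\mbox{Spf}A_{\G}\subseteq\widehat{U}_{\overline{x}}$.

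For the last assertion: by part (i) the component of $\widehat{U}_{\overline{x}}$ through $x$ is exactly $\mbox{Spf}A_{\G}$, so a deformation $\pdiv$ of $\pdiv_{x,0}$ corresponds to a point of that component if and only if its classifying map factors through $A_{\G}$, which by the versal property \cite[Proposition 3.2.17]{KP} holds precisely when $\pdiv$ is $(\G_{\Ok_L},\mu_y)$-adapted, i.e.\ $\G_{\Ok_L}$-adapted. (For a component of $\widehat{U}_{\overline{x}}$ other than the one through $x$ one runs the same argument with the tensors attached to that component in place of $s_{\alpha,0,x}$, as furnished by part (ii); the relevant stabilizer is then a $GL(\D)$-conjugate of $\G_{\Ok_L}$.)

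For part (ii): if $s_{\alpha,0,x'}=s_{\alpha,0,x}$, write $\G_{\Ok_L}$ for the common stabilizer. Then both $\pdiv_x$ and $\pdiv_{x'}$ are $(\G_{\Ok_L},\mu_y)$-adapted --- the tensors extend by Proposition \ref{prop3}, and the Hodge filtration on $\D\otimes_{\Ok_L}K$, resp.\ on $\D\otimes_{\Ok_L}K'$, is induced by a $G$-cocharacter conjugate to $\mu_h^{-1}$ with the prescribed specialization, so the notion applies and is unchanged by Remark \ref{remark} --- hence $x$ and $x'$ both lie on $\mbox{Spf}A_{\G}\subseteq\widehat{U}_{\overline{x}}$, which is irreducible by part (i), so they lie on the same component. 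Conversely, suppose $x$ and $x'$ lie on the same component $Z$ of $\widehat{U}_{\overline{x}}$. Over $Z$ we have the universal abelian scheme together with the de Rham tensors $s_{\alpha,dR}$, which lie in $\mathrm{Fil}^0$ and, being de Rham (absolute Hodge) cycles, are horizontal for the Gauss--Manin connection (\cite[Proposition 4.2.6]{KP}); a horizontal section over the connected formal scheme $Z$ has a canonical value at the closed point, and by the crystalline comparison of Proposition \ref{prop3} this value is exactly $s_{\alpha,0,x}$ for every characteristic-$0$ lift $x$ lying on $Z$. Therefore $s_{\alpha,0,x}=s_{\alpha,0,x'}$.

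The step I expect to be the main obstacle is the identification in part (i): one has to follow the construction of $\mathscr{S}_{\rmK}(G,X)$ in \cite{KP} --- Zariski closure in the Siegel model, normalization, and the local model diagram --- carefully enough to see that the adapted locus $\mbox{Spf}A_{\G}$ sits inside the ambient $\mbox{Spf}A$ \emph{compatibly} with the local-model identification, rather than merely abstractly as one of the components of $\widehat{U}_{\overline{x}}$. The ``horizontality plus canonical specialization'' input used in the converse of (ii) --- essentially Kisin's principle that the Hodge tensors are crystalline, so that $s_{\alpha,0,x}$ depends only on the branch and not on the characteristic-$0$ lift --- is the other delicate point, though it is by now routine.
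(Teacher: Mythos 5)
The decisive step of your part (i) --- the containment $\mbox{Spf}A_{\G}\subseteq\widehat{U}_{\overline{x}}$ --- is not actually established. What you know is that the single point $x$ lies in both $\mbox{Spf}A_{\G}$ and $\widehat{U}_{\overline{x}}$ (both closed formal subschemes of $\mbox{Spf}A$), and that every irreducible component of $\widehat{U}_{\overline{x}}$ is \emph{abstractly} isomorphic to $\widehat{M}^{loc}_{\G}$, hence has the same dimension as $\mbox{Spf}A_{\G}$. From this you conclude that the component of $\widehat{U}_{\overline{x}}$ through $x$ equals $\mbox{Spf}A_{\G}$; but the phrase ``an irreducible closed subscheme of $\widehat{U}_{\overline{x}}$ of full dimension through $x$'' already presupposes the containment you are trying to prove, and without it the dimension count proves nothing: two irreducible closed subschemes of $\mbox{Spf}A$ of equal dimension can meet in a proper closed subset (a single point, say). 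Moreover \cite[Theorem 4.2.7]{KP} identifies the components of $\widehat{U}_{\overline{x}}$ with $\widehat{M}^{loc}_{\G}$ only up to abstract isomorphism, not as subschemes of $\mbox{Spf}A$ --- this is exactly the difficulty you flag at the end as ``the main obstacle,'' but your argument does not overcome it. Since your forward direction of (ii) and your (iii) are then deduced from this containment, the gap propagates through the whole proof.

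The paper closes the gap by running what you present as the converse of (ii) \emph{first}: for the component $Z$ of $\widehat{U}_{\overline{x}}$ through $x$ and any $x'\in Z(\Ok_{K'})$, horizontality of the de Rham tensors (the argument of \cite[2.3.5]{Ki1}) gives $s_{\alpha,0,x'}=s_{\alpha,0,x}$, and the Hodge filtration attached to $x'$ is induced by a $G$-valued cocharacter conjugate to $\mu_h^{-1}$, so $\pdiv_{x'}$ is $(\G_{\Ok_L},\mu_y)$-adapted and $x'$ factors through $A_{\G}$. As such points are dense in $Z$, this gives $Z\subseteq\mbox{Spf}A_{\G}$, and then $Z=\mbox{Spf}A_{\G}$ by comparing dimensions; this one chain of reasoning yields (i), (iii) and both directions of (ii) simultaneously. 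You already have the horizontality ingredient in your treatment of the converse of (ii); reordering the argument so that it is used to prove $Z\subseteq\mbox{Spf}A_{\G}$, instead of invoking part (i) beforehand, repairs the proof.
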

\begin{proof}This is effectively \cite[Proposition 4.2.2]{KP} we recall the argument for the reader's convenience.

Recall we assumed that $G$ splits over a tamely ramified extension of $\Q_p$. Moreover $G_{\Ok_L}\otimes_{\Ok_L}L\subset GL(\D(\pdiv_{x,0}))$ contains the scalars, since it contains the image of $w_h$. Thus we may apply the construction of section 3 to the tensors $s_{\alpha,0,x}$; we may equip $\mbox{Spf}A$ with the structure of a versal deformation space for $\pdiv_{x,0}$ and the subspace $\mbox{Spf}A_{\G}$ is such that $\varpi:A\otimes_{\Z_p}\Ok_{E}\rightarrow K$ factors through $A_{\G}$ if and only if the induced $p$-divisible group $\pdiv_{\varpi}$ is $(\G_{\Ok_L},\mu_y)$-adapted, where $\G_{\Ok_L}$ is the stabilizer of $s_{\alpha,0,x}$  and $\mu_y$ is $G$-conjugate to $\mu_{h^{-1}}$.

The $p$-divisible over $\widehat{U}_{\overline{x}}$ is induced by pullback from a map $\widehat{U}_{\overline{x}}\rightarrow \mbox{Spf}A$ which is a closed immersion by the Serre-Tate theorem. Let $Z\subset U_{\overline{x}}$ be the irreducible component containg $x$. Let $x'\in Z(K')$, the same arguement as in \cite[2.3.5]{Ki1} shows that $s_{\alpha,0,x'}=s_{\alpha,0,x}$, hence we obtain one direction in ii) and $x'\in\text{Spf}A_{\G}$ since the filtration on $\D\otimes_{\Ok_L}K'$ corresponding to $\G_{x'}$ is given by a $G$-valued cocharacter conjugate to $\mu_{h}^{-1}$. Since this holds for all $x'\in Z(K')$, we have $Z\subset \text{Spf}A_\G$ and hence they are equal since they have the same dimension. We thus obtain iii) and the other implication in ii).

\end{proof}

The previous proposition shows that the tensors $s_{\alpha,0,x}$ are independent of the choice of $x\in\mathscr{S}_{\rmK_p}(G,X)$ lifting  $\overline{x}$, thus we denote them by $s_{\alpha,0,\overline{x}}$. The following is then immediate.

\begin{cor}\label{cor3}
Let $\overline{x},\overline{x}'\in\mathscr{S}_{\rmK}(G,X)(k)$ be points whose image in $\mathscr{S}_{\rmK_p}(G,X)^-(k)$ coincide. Then $\overline{x}=\overline{x}'$ if and only if $s_{\alpha,0,\overline{x}}=s_{\alpha,0,\overline{x}'}$.
\end{cor}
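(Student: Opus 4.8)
The plan is to deduce Corollary~\ref{cor3} formally from Proposition~\ref{prop12}(ii), once the fibres of the normalization map $\pi\colon\mathscr{S}_{\rmK}(G,X)\to\mathscr{S}_{\rmK}(G,X)^-$ have been identified with sets of branches of the target. Write $\overline{z}$ for the common image of $\overline{x}$ and $\overline{x}'$ in $\mathscr{S}_{\rmK}(G,X)^-(k)$. First I would observe that the equation in the statement is meaningful: since $\mathcal{A}_{\overline{x}}$ and $\mathcal{A}_{\overline{x}'}$ are both pulled back from the point of $\mathscr{S}_{\rmK'}(GSp,S^{\pm})$ lying below $\overline{z}$, the $p$-divisible groups attached to $\mathcal{A}_{\overline{x}}$ and $\mathcal{A}_{\overline{x}'}$ coincide, and hence so do their contravariant Dieudonn\'e modules over $\Ok_L$, so that $s_{\alpha,0,\overline{x}}$ and $s_{\alpha,0,\overline{x}'}$ lie in the same module. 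The forward implication being trivial, the content is the converse.

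Next I would pass to the completion $\widehat{U}_{\overline{z}}=\mathrm{Spf}\,\widehat{\mathcal{O}}_{\mathscr{S}_{\rmK}(G,X)^-,\overline{z}}$. Since $\mathscr{S}_{\rmK}(G,X)^-$ is excellent and $\pi$ is finite, normalization commutes with completion, so the completion of $\mathscr{S}_{\rmK}(G,X)$ along $\pi^{-1}(\overline{z})$ is the normalization of $\widehat{U}_{\overline{z}}$, namely $\prod_{\mathfrak{q}}(\widehat{\mathcal{O}}_{\overline{z}}/\mathfrak{q})^{\nu}$ with $\mathfrak{q}$ running over the minimal primes of $\widehat{\mathcal{O}}_{\overline{z}}$. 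Each factor $(\widehat{\mathcal{O}}_{\overline{z}}/\mathfrak{q})^{\nu}$ is a domain, finite over the complete (hence Henselian) local ring $\widehat{\mathcal{O}}_{\overline{z}}/\mathfrak{q}$, so it is local; matching local factors on the two sides gives a bijection between $\pi^{-1}(\overline{z})$ and the set of irreducible components of $\widehat{U}_{\overline{z}}$, under which $\overline{x}$ corresponds to the unique such component through it. (In fact, by Proposition~\ref{prop12}(i) and the local model diagram of \S6.4 these components are isomorphic to the normal formal scheme $\widehat{M}^{loc}_{\G,\mu_h}$, so $\pi$ even restricts to a closed immersion onto each component near the relevant point; but only the bijection is needed.) In particular $\overline{x}=\overline{x}'$ if and only if $\overline{x}$ and $\overline{x}'$ determine the same component of $\widehat{U}_{\overline{z}}$, i.e.\ lie on the same irreducible component of $\mathscr{S}_{\rmK}(G,X)$ in the sense used in the proof of Proposition~\ref{prop12}.

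Finally I would choose a finite extension $K'/L$ and lifts $x,x'\in\mathscr{S}_{\rmK}(G,X)(\Ok_{K'})$ of $\overline{x},\overline{x}'$ — these exist by flatness of $\mathscr{S}_{\rmK}(G,X)$ over $\Ok_{\mathbf{E},(v)}$ together with normality, as used before Proposition~\ref{prop12} — and apply Proposition~\ref{prop12}(ii): $s_{\alpha,0,x}=s_{\alpha,0,x'}$ (equivalently $s_{\alpha,0,\overline{x}}=s_{\alpha,0,\overline{x}'}$, by independence of the lift) if and only if $x$ and $x'$ lie on the same irreducible component of $\mathscr{S}_{\rmK}(G,X)$. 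Combined with the previous paragraph this yields $s_{\alpha,0,\overline{x}}=s_{\alpha,0,\overline{x}'}\iff\overline{x}=\overline{x}'$. The only substantive input is Proposition~\ref{prop12}(ii), which is already available; the step most in need of care — and which I would regard as the main, though routine, obstacle — is the identification of $\pi^{-1}(\overline{z})$ with the set of branches of $\widehat{U}_{\overline{z}}$, together with checking that this matches the notion of ``irreducible component of $\mathscr{S}_{\rmK}(G,X)$'' appearing in Proposition~\ref{prop12}(ii).
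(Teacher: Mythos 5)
Your argument is correct and is essentially the paper's own (unwritten) proof: the paper simply declares the corollary immediate from Proposition \ref{prop12}, and your identification of the fibre of the normalization map over $\overline{z}$ with the formal branches of $\mathscr{S}_{\rmK}(G,X)^-$ at $\overline{z}$ (via excellence and the locality of the normalization of a complete local domain) supplies exactly the bookkeeping that makes it immediate. The notion of ``irreducible component'' in Proposition \ref{prop12}(ii) is indeed the branch-level one its proof establishes (the branch through a lift of $\overline{x}$ is $\mathrm{Spf}\,A_{\G}$, which is determined by the tensors $s_{\alpha,0,\overline{x}}$ and the conjugacy class $\{\mu_h^{-1}\}$), so the matching of notions you flag at the end is unproblematic.
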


\subsection{} We would like to show the isogeny classes in $\mathscr{S}_{\rmK}(G,X)(k)$ admit maps from $X(\sigma(\{\mu_y\}),b)$. We will show this when $G$ is residually split at $p$ and in general for the basic case. In the rest of this section we will prove the case when $\G$ is an Iwahori subgroup of $G$; the general case will be deduced from this in \S 7. We thus assume $\G$ is an Iwahori subgroup for the rest of the section.

Let $\overline{x}\in\mathscr{S}_{\rmK}(G,X)(k)$ and $x\in\mathscr{S}_{\rmK}(G,X)(K)$ a point lifting $\overline{x}$. Let $\G_{\Ok_L}$ denote the stabilizer $s_{\alpha,0,\overline{x}}$. By the above $\pdiv_x$ is a $(\G_{\Ok_L},\mu_y)$-adapted lifting of $x$ and we have an $\Ok_L$-linear bijection $$T_p\pdiv_x^\vee\otimes_{\Z_p}\Ok_L\cong \D(\pdiv_{\overline{x}})$$ taking $s_{\alpha,\acute{e}t,x}$ to $s_{\alpha,0,\overline{x}}$. We fix an isomorphism $V^*_{\Z_p}\cong T_p\pdiv_x^\vee$ taking $\sa$ to $s_{\alpha,\acute{e}t,x}$, this identifies the stabilizer $\G_{\Ok_L}$ of $s_{\alpha,\acute{e}t,x}$ with $\G\otimes_{\Z_p}\Ok_L$. 

Since the $s_{\alpha,0,\overline{x}}$ are $\varphi$-invariant, we may write $\varphi=b\sigma$ for some $b\in G(L)$ which is independent of the above choices up to $\sigma$-conjugation by elements of $\G(\Ok_L).$

Fix $S$ a maximal $L$-split torus in $G$ with centralizer $T$ as in section 5 so that $\G$ corresponds to an alcove in the apartment $\mathcal{A}(G,S,\Q_p)$. As in \S5.4, we have $$b\in \bigcup_{w\in\Adm(\{\mu_y\})}\G(\Ok_L)\sigma(\dot{w})\G(\Ok_L)$$
Write $\mu\in X_*(T)$ for the dominant (with respect to a choice of Borel defined over $L$) representative of $\{\mu_y\}=\{\mu_h^{-1}\}$, and $\underline{\mu}$ its image in $X_*(T)_I$. With the notation of section 5, we have $1\in X(\sigma(\{\mu\}),b)$.

Recall $X(\sigma(\{\mu\}),b)$ is equipped with an action $\Phi$ given by $$\Phi(g)=(b\sigma)^r(g)=b\sigma(b)\dots\sigma^{r-1}(b)\sigma^r(g)$$
where $r$ is the residue degree of $\Ok_{E}/\Z_p$. We have $$\Phi(g)^{-1}b\sigma(\Phi(g))=\sigma^r(g^{-1}b\sigma(g))\in\bigcup_{w\in\Adm(\{\mu\})}\G(\Ok_L)\sigma^{r+1}(\dot{w})\G(\Ok_L)$$ By \cite[Lemma 5.1]{Ra}, $\Adm(\{\mu\})$ is stable under $\sigma^r$, hence $\Phi(g)\in X(\sigma(\{\mu\}),b)$ and $\Phi$ is well defined.

Pick a basis for $V_{\Z_p}$ compatible with $S$ as in section 3, this is equivalent to the choice of a maximal split torus $T'\subset GL(V_{\Z_p})$. By Corollary \ref{cor2}, for $g\in X(\sigma(\{\mu\}),b))$ we have $g^{-1}b\sigma(g)\in \mathcal{GL}(\Ok_L)v_{GL}(p)\mathcal{GL}(\mathcal{O}_L)$ where $v_{GL}$ is the cocharacter $(1^{(g)},0^{(g)})$. Thus the Hodge polygon of the $F$-crystal $g\D(\pdiv_{\overline{x}})$ has slopes 0,1 hence corresponds  to a $p$-divisible group $\G_{g\overline{x}}$ which is isogenous to $\pdiv_x$ and hence to an abelian variety $\mathcal{A}_{g\overline{x}}$ isogenous to $\mathcal{A}_{\overline{x}}$. $\mathcal{A}_{g\overline{x}}$ is equipped with a prime to $p$ level structure corresponding to the one on $\mathcal{A}_{\overline{x}}$. Since $g(s_{\alpha,0,\overline{x}})=s_{\alpha,0,\overline{x}}$, we have $s_{\alpha,0,\overline{x}}\in\D(\pdiv_{g\overline{x}})$.

Since $g\in GSp(L)$ the weak polarisation on $\lambda_{\overline{x}}$ induces a weak polarisation on $\mathcal{A}_{g\overline{x}}$. Thus $\mathcal{A}_{g\overline{x}}$ together with the extra structure gives a point of $\mathscr{S}_{\rmK'}(GSp(V),S^\pm)(k)$ and we obtain a map $$i_{\overline{x}}':X(\sigma(\{\mu_y\}),b)\rightarrow\mathscr{S}_{\rmK'}(GSp(V),S^\pm)(k)$$

The main result of this section is the following:

\begin{prop}\label{prop6.1}
Let $G$ be residually split or suppose $b$ is basic. Then there exists a unique map $$i_{\overline{x}}: X(\sigma(\{\mu_y\}),b)\rightarrow \mathscr{S}_{\rmK}(G,X)(k)$$ lifting $i_{\overline{x}}'$ such that $s_{\alpha,0,i_{\overline{x}}(g)}=s_{\alpha,0,\overline{x}}$. Moreover we have $$\Phi\circ i_{\overline{x}}=i_{\overline{x}}\circ\Phi$$ where $\Phi$ acts on $\mathscr{S}_{\rmK}(G,X)(k)$ via the geometric Frobenius.
\end{prop}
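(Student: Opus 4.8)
The plan is to establish three things in turn: uniqueness of $i_{\overline{x}}$, its existence, and the compatibility with $\Phi$. Uniqueness is immediate: if $i_{\overline{x}},i'$ both lift $i'_{\overline{x}}$ and have $\sa$-tensor equal to $s_{\alpha,0,\overline{x}}$ at every point, then for each $g\in X:=X(\sigma(\{\mu_y\}),b)$ the points $i_{\overline{x}}(g)$ and $i'(g)$ of $\mathscr{S}_{\rmK}(G,X)(k)$ have the same image $i'_{\overline{x}}(g)$ in $\mathscr{S}_{\rmK_p}(G,X)^-(k)$ and carry the same tensor, so they coincide by Corollary \ref{cor3}. Thus I only need existence, and $i_{\overline{x}}(g)$ is then forced to be the unique point of $\mathscr{S}_{\rmK}(G,X)(k)$ above $i'_{\overline{x}}(g)$ with $\sa$-tensor $s_{\alpha,0,\overline{x}}$; call $g$ \emph{good} when such a point exists. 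The point $1\in X$ is good, with $i_{\overline{x}}(1)=\overline{x}$, so it suffices to prove (A) the good locus is open and closed in $X$, and (B) every connected component of $X$ contains a good point.

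For (A) --- the deformation step --- suppose $g$ is good with lift $\overline{y}\in\mathscr{S}_{\rmK}(G,X)(k)$. By Proposition \ref{prop12} the completion $\widehat{U}_{\overline{y}}$ of $\mathscr{S}_{\rmK}(G,X)^-$ at $\overline{y}$ sits inside the versal deformation space $\mathrm{Spf}\,A$ of $\pdiv_{g\overline{x}}$, the irreducible component through $\overline{y}$ is the locus $\mathrm{Spf}\,A_{\G}$ of $(\G_{\Ok_L},\mu_y)$-adapted deformations, and $\sa$ is constant along that component. Since $X$ is (the perfection of) a $k$-scheme locally of finite type --- a closed subscheme of the Witt vector affine flag variety --- it is enough to show that, for $g'$ lying on a curve in $X$ through $g$, the tautological quasi-isogeny between $\pdiv_{\overline{x}}$ and $\pdiv_{g'\overline{x}}$ lifts compatibly, over the (possibly non-smooth) test ring interpolating $g$ and $g'$, to a $(\G_{\Ok_L},\mu_y)$-adapted deformation; by the adaptedness criterion of Proposition \ref{prop12} such a deformation is pulled back from $\mathscr{S}_{\rmK}(G,X)$, and specializing at $g'$ exhibits $g'$ as good, so the good locus is open and closed. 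The needed lift is produced using Zink's theory of displays (cf. \cite{Zi1} and Proposition \ref{adapted}) rather than Grothendieck--Messing theory as in \cite{Ki3}, precisely because $M^{loc}_{\G}$ is not smooth and the relevant test rings need not be formally smooth. I expect this to be the main obstacle: running the display-theoretic deformation argument uniformly over the non-smooth local model while keeping track of the crystalline tensors $\sa$ and the adaptedness condition.

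For (B), the key observation is that any $g$ in the image of the map $G(\Q_p)/\G(\Z_p)\to X$, $h\mapsto h_0$, of Proposition \ref{lifting} is good: $h_0=g$ records the Dieudonn\'e lattice $g\,\D(\pdiv_{\overline{x}})$ of the $p$-divisible group obtained from the chosen lift $\pdiv_x/\Ok_K$ by the characteristic-$0$ quasi-isogeny attached to $h\in G(\Q_p)$, and since $h$ fixes $s_{\alpha,\acute{e}t,x}$ the resulting abelian scheme over a finite extension of $\Ok_K$ defines a point of $Sh_{\rmK}(G,X)$ whose reduction lifts $i'_{\overline{x}}(g)$ and has $\sa$-tensor $g(s_{\alpha,0,\overline{x}})=s_{\alpha,0,\overline{x}}$ (as $g\in G(L)$ stabilizes these tensors). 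In the \emph{basic} case, after $\sigma$-conjugating $b$ we may take $b=\sigma(\dot\tau_{\{\mu_y\}})$, and Proposition \ref{prop1} supplies an adapted lifting for which $h\mapsto h_0$ surjects onto $\pi_0(X)$; with (A) this settles the basic case. In the \emph{residually split} case I would instead use the bound of \cite{HZ} (Theorem \ref{bound}): every connected component of $X$ meets the image of $X^{M_{\nu_w}}(\sigma(\{\lambda_w\}_{M_{\nu_w}}),\sigma(\dot w))\to X$ for some $\sigma$-straight $w$ with $\dot w\in[b]$, where $\dot w$ is a basic element of the semistandard Levi $M_{\nu_w}$; running the basic argument for $M_{\nu_w}$ in place of $G$ (the set-up of \S5.6--5.8, Proposition \ref{key1}, and the $M_{\nu_w}$-versions of Propositions \ref{lifting} and \ref{prop1}) yields $h\in M_{\nu_w}(\Q_p)\subset G(\Q_p)$ with $h_0$ in the prescribed component, and since $h\in G(\Q_p)$ the characteristic-$0$ lifting above applies verbatim, so $h_0$ is good. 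The delicate part here is keeping the choice of adapted lifting and the bound of \cite{HZ} compatible as one moves between $G$ and the various Levis $M_{\nu_w}$.

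Finally, for $\Phi$-equivariance: the geometric Frobenius on $\mathscr{S}_{\rmK'}(GSp(V),S^\pm)(k)$ sends $\mathcal{A}_{g\overline{x}}$ to its Frobenius twist, whose crystalline Dieudonn\'e lattice is $(b\sigma)^r(g)\,\D(\pdiv_{\overline{x}})$, i.e. the lattice attached to $\Phi(g)$ --- this being exactly the computation the formula for $\Phi$ on $X$ encodes (cf. \cite{Ki3}) --- so $i'_{\overline{x}}\circ\Phi=\Phi\circ i'_{\overline{x}}$. Since $\mathscr{S}_{\rmK}(G,X)\to\mathscr{S}_{\rmK'}(GSp(V),S^\pm)\otimes\Ok_{\mathbf{E},(v)}$ is a morphism of $\Ok_{\mathbf{E},(v)}$-schemes, geometric Frobenius on $\mathscr{S}_{\rmK}(G,X)(k)$ lifts the one on the Siegel side; and in the trivialization $\D(\pdiv_{\overline{x}})\cong U\otimes_{\Z_p}W$ the tensor $s_{\alpha,0,\overline{x}}$ is the constant $s_\alpha\in U^\otimes$, hence is unaffected by the Frobenius twist, so $s_{\alpha,0,\Phi(i_{\overline{x}}(g))}=s_{\alpha,0,\overline{x}}$. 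Therefore $\Phi\circ i_{\overline{x}}$ and $i_{\overline{x}}\circ\Phi$ are both lifts of $i'_{\overline{x}}\circ\Phi$ carrying $\sa$-tensor $s_{\alpha,0,\overline{x}}$, and they coincide by the uniqueness above.
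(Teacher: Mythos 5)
Your overall architecture matches the paper's: uniqueness via Corollary \ref{cor3}, a deformation step (Proposition \ref{prop8}, displays instead of Grothendieck--Messing because $A_{\G}$ is not smooth), a characteristic-$0$ isogeny-lifting step via Propositions \ref{adapted}, \ref{key1}, \ref{lifting}, \ref{prop1}, and formal $\Phi$-equivariance. (One small inaccuracy in (A): the paper's interpolating test rings $R$ are \emph{smooth} $k$-algebras with frames, and $\pi_0'=\pi_0$ by \cite[Theorem A.4]{HZ}; the non-smoothness that forces displays sits in the deformation ring $A_{\G}[[t]]$, not in $R$.) But in the residually split case there is a genuine gap. Theorem \ref{bound} expresses $\pi_0(X(\{\mu_y\},b))$ as a quotient of $\coprod_w \pi_0(X^{M_{\nu_w}}(\{\lambda_w\},\dot w))$ over \emph{all} $\sigma$-straight $w$ with $\dot w\in[b]$. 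To run your Levi-version of the basic argument for a given $w$, you must first re-center: you need some point of $X_w(b)$ already known to lift, so that after replacing $\overline{x}$ by that lift and changing the trivialization by $\G(\Ok_L)$ (Theorem \ref{thm2}) you may assume $b=\dot w$, and you also need the cocharacter input (Lemma \ref{cochar} plus $t_{\underline{\lambda}}\in\Adm(\{\mu\})$) to verify condition (*) so that an $(\mathcal{M},\lambda_w)$-adapted lifting exists. Your proposal only secures this for the single straight element reachable from $1$ (via Theorem \ref{thm1} i) and the deformation step); for the remaining straight elements in $[b]$ you assert the construction "applies verbatim," which is precisely what is not yet available.

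The missing mechanism is the paper's Steps 2--4: by \cite{HeNie} any two straight elements of $[b]$ are related by a chain of length-preserving conjugations $w\mapsto sws$ ($s\in\mathbb{S}$) and an $\Omega$-conjugation, and for each such elementary move one must \emph{prove} that a liftable point of $X_{w'}(b)$ exists. This is done by applying Proposition \ref{key1} to $\dot s$ (resp. $\dot\tau$) over the $(\mathcal{M},\lambda_w)$-adapted lifting, computing $\tilde s^{-1}\dot w\sigma(\tilde s)$ inside $\G(\Ok_{\widehat{\mathscr{E}^{ur}}})\,s\dot w s\,\G(\Ok_{\widehat{\mathscr{E}^{ur}}})$ using that $\dot w$ normalizes $\mathcal{M}(\Ok_{\widehat{\mathscr{E}^{ur}}})$, then using closedness of the Schubert variety $S_{w'}$ to control the specialization $s_0$, and finally Lemma \ref{lemma3} (a straight element cannot drop in the Bruhat order within its $\sigma$-conjugacy class) to conclude $s_0\in X_{w'}(b)$. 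Without this chain of re-centerings you cannot conclude that the image of $X^{M_{\nu_{w'}}}(\{\lambda_{w'}\},\dot w')$ lies in the good locus for every $w'$, and hence cannot invoke Theorem \ref{bound} to cover all of $\pi_0(X)$. Flagging "the delicate part" does not substitute for this argument; it is the crux of the residually split case rather than a routine compatibility. (A similar, but milder, gloss occurs in your basic case: "after $\sigma$-conjugating $b$" conceals the same re-centering step — one may only $\sigma$-conjugate by $\G(\Ok_L)$ at a fixed point, so one first needs a liftable point in $X_{\sigma(\tau_{\{\mu_y\}})}(b)$, supplied by Theorem \ref{thm1} i) together with step (A).)
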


The rest of this section will be devoted to the proof of Proposition \ref{prop6.1}.

\subsection{}The uniqueness follows from Corollary \ref{cor3}. The same proof as in \cite[\S 1.4.4]{Ki1} shows the compatibility with $\Phi$. Thus it remains to show the existence of $i_{\overline{x}}$. The strategy follows \cite[\S 1.4]{Ki1}; the first step is to show that if $g\in X(\sigma(\{\mu_y\}),b)$ can be lifted, then every point on the connected component of $X(\sigma(\{\mu_y\}),b)$ containing $g$ also lifts. The second step is to show that for every connected component of $X(\sigma(\{\mu_y\}),b)$ contains a point which lifts, this is done by showing the quasi-isogeny $\mathcal{A}_{g\overline{x}}\rightarrow \mathcal{A}_{\overline{x}}$ lifts to characteristic 0.

We recall some definitions from \cite[Appendix A]{HZ}, see also \cite{CKV}.
\begin{definition}
Let $R$ be  $k$ algebra. A frame for $R$ is a $p$ torsion free, $p$-adically complete and separated $\mathcal{O}_{L}$ algebra $\mathscr{R}$ equipped with an isomorphism $R\cong \mathscr{R}/p\mathscr{R}$ and a lift (again denoted $\sigma$) of the the Frobenius $\sigma$ on $R$.
 \end{definition}
 
 Let $R$ be as above and fix $\mathscr{R}$ a frame for $R$. We write $\mathscr{R}_{L}$ for $\mathscr{R}[\frac{1}{p}]$. If $\kappa$ is any perfect field of characteristic $p$ and $s: R\rightarrow \kappa$ is a map, then there is a unique $\sigma$-equivariant map $\mathscr{R}\rightarrow W(\kappa)$, also denoted $s$. If $R\rightarrow R'$ is an \'etale map, then there exists a canonical frame $\mathscr{R}'$ of $R'$ and a unique $\sigma$-equivariant lifting $\mathscr{R}\rightarrow\mathscr{R}'$.
 
 Let $g\in G(\mathscr{R}_L)$. For $C\subset W$, we write 
 $$S_C(g)=\bigcup_{w\in W}\{s\in\Spec R|s(g^{-1}b\sigma(g))\in\mathcal{G}(W(\overline{\kappa}(s)))w\mathcal{G}(W(\overline{\kappa}(s)))\}$$
where $\overline{\kappa}(s)$ is an algebraic closure of residue field $k(s)$ of $s$. Note that this only depends on the image of $g\in G(\mathscr{R}_{L})/\mathcal{G}(\mathscr{R})$, hence we can define $S_C(g)$ for any element of $g\in G(\mathscr{R}_{L})/\mathcal{G}(\mathscr{R})$. For $b\in G(L)$, we define the set $$X_C(b)(\mathscr{R})=\{g\in G(\mathscr{R}_L)/\mathcal{G}(\mathscr{R})| S_C(g)=\Spec R\}$$
When $C=\Adm(\{\mu\})$ we write $X(\{\mu\},b)(\mathscr{R})$ for $X_C(b)(\mathscr{R})$. Similarly when $C=\sigma(\Adm(\{\mu\}))$ we write $X(\sigma(\{\mu\}),b)(\mathscr{R})$.
\begin{definition} For $g_0,g_1\in X(\{\mu\},b)$ and $R$ a smooth  $k$-algebra with connected spectrum and frame $\mathscr{R}$, we say $g_0$ is connected to $g_1$ via $R$ if there exists $g\in X(\{\mu\},b)(\mathscr{R})$ and two $k$-points $s_0,s_1$ of $\Spec R$ such that $s_0(g)=g_0$ and $s_1(g)=g_1$. 

\end{definition}
We write $\sim$ for the equivalence relation on $X(\{\mu\},b)$ generated by the relation $g_0\sim g_1$ if $g_0$ is connected to $g_1$ via some $R$ as above, and we write $\pi_0'(X(\{\mu\},b))$ for the set of equivalence classes.

By \cite[Theorem A.4]{HZ}, we have $$\pi'_0(X(\{\mu\},b))=\pi_0(X(\{\mu\},b))$$

\subsection{} Returning to the situation of Proposition \ref{prop6.1}, let $g\in G(\mathscr{R}_L)$ be a lift of some element of $X(\sigma(\{\mu_y\}),b)(\mathscr{R})$. By Corollary \ref{cor2}, for all $s\in \Spec R$, we have $g(s)\in \mathcal{GL}(\Ok_L)\mu_{GL}(p)\mathcal{GL}(\Ok_L)$. Here we have chosen a maximal split torus $T'$ of $GL_{2g}$ as in \S3.6, then $\mu_{GL}$ is a dominant representative of $\{\mu_y\}$ in $X_*(T')$ and $\mathcal{GL}$ is the hyperspecial subgroup of $GL_{2g}$ stabilizing $V_{\Z_p}$. By \cite[Lemma 2.1.4]{CKV} there is an \'etale covering $R\rightarrow R'$ with canonical frame $\mathscr{R}\rightarrow \mathscr{R}'$ such that $$g\in \mathcal{GL}(\mathscr{R}')\mu_{GL}(p)\mathcal{GL}(\mathscr{R}')$$

 For $n\geq 1$ we write $\mathscr{R}_n$ for the ring $\mathscr{R}$ considered as a $\mathscr{R}$-algebra via $\sigma^n:\mathscr{R}\rightarrow \mathscr{R}$ and we define $R_n$ similarly. By \cite[Lemma 1.4.6]{Ki2}, there exists $n\geq 1$ and a $p$-divisible group $\pdiv_{g\overline{x}}$ over $R_n$ together with a quasi isogeny $\pdiv_{g\overline{x}}\rightarrow \pdiv_{\overline{x}}\otimes R_n$ which identifies $\D(\pdiv_{g\overline{x}})(\mathscr{R}'_n)$ with $g\D(\pdiv_{\overline{x}})\subset \D(\pdiv_{\overline{x}})\otimes_{\Ok_L}\mathscr{R}'_{nL}$. Upon relabelling $\mathscr{R}_n'$ as $\mathscr{R}$, we  obtain an abelian variety $\mathcal{A}_{g\overline{x}}$ over $\Spec R$.

Since $g\in GSp(\mathscr{R}_L)$, $\lambda_{\overline{x}}$ induces a weak polarization $\lambda_{g\overline{x}}$ on $\mathcal{A}_{g\overline{x}},$ and $\mathcal{A}_{g\overline{x}}$ is also equipped with a prime to $p$ level structure. Hence $g$ gives a map \begin{equation}\label{eq1}\Spec R\rightarrow \mathscr{S}_{\rmK'}(GSp,S^\pm)\end{equation}

Since $g\in G(\mathscr{R}_L)$, we have $s_{\alpha,0,\overline{x}}=g(s_{\alpha,0,\overline{x}})\in \D(\pdiv_{g\overline{x}})(\mathscr{R})$.

\begin{prop}\label{prop8}
Suppose there is a point $x_R\in\Spec R(k)$ such that $x_R^*(g)=1$. Then there is a unique lifting $i_R:\Spec R\rightarrow \mathscr{S}_{\rmK}(G,X)(R)$ of (\ref{eq1}) such that $$i_R^*(s_{\alpha,0})=s_{\alpha,0,\overline{x}}$$
\end{prop}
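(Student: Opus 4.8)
The plan is to follow the strategy of \cite[\S1.4]{Ki1}, substituting Proposition \ref{prop12} for the deformation-theoretic input available in the hyperspecial case; this proposition is exactly the tool needed for the first step of that strategy (spreading out a single lift over a whole connected component of $X(\sigma(\{\mu_y\}),b)$). Write $f\colon\Spec R\to\mathscr{S}_{\rmK'}(GSp,S^\pm)$ for the morphism (\ref{eq1}); since $R$ is a smooth $k$-algebra with connected spectrum it is a finite-type normal domain. Uniqueness of $i_R$ is immediate: $R$ is reduced and $\mathscr{S}_{\rmK}(G,X)$ is separated, so a morphism $\Spec R\to\mathscr{S}_{\rmK}(G,X)$ is determined by its values at closed points, and by Corollary \ref{cor3} any lift of $f$ carrying $s_{\alpha,0}$ to $s_{\alpha,0,\overline{x}}$ is determined there.

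For existence I would first lift $f$ to the (reduced) Zariski closure $\mathscr{S}_{\rmK}(G,X)^-$ of $Sh_{\rmK}(G,X)$ in $\mathscr{S}_{\rmK'}(GSp,S^\pm)$. Let $Z\subseteq\Spec R$ be the reduced preimage under $f$ of the closed subscheme $\mathscr{S}_{\rmK}(G,X)^-$; it is closed, and it contains $x_R$ because $x_R^*(g)=1$ forces $\mathcal{A}_{g\overline{x}}$ to specialize at $x_R$ to $\mathcal{A}_{\overline{x}}$, the abelian variety of $\overline{x}\in\mathscr{S}_{\rmK}(G,X)(k)$. It is enough to prove $Z$ open: then $Z=\Spec R$ by connectedness, and since $R$ is reduced $f$ factors through $\mathscr{S}_{\rmK}(G,X)^-$. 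To show $Z$ open, fix $s\in Z(k)$. Passing to an \'etale neighbourhood of $s$ and unwinding the local model diagram for $\mathscr{S}_{\rmK'}(GSp,S^\pm)$, together with Proposition \ref{prop12}, the condition that $f$ factor through $\mathscr{S}_{\rmK}(G,X)^-$ becomes the condition that an induced morphism $\Spec R''\to M^{loc}_{\mathcal{GL}}$ from a finite-type reduced $k$-scheme to the Grassmannian local model factor through the parahoric local model $M^{loc}_{\G}$, where $\G$ is the stabilizer of the $\varphi$-invariant tensors $s_{\alpha,0,\overline{x}}$; these are defined over the frame of $R$ and transported by $g\in G(\mathscr{R}_L)$, hence give a $\G$-reduction of the Dieudonn\'e crystal over all of $\Spec R''$. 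At each closed point $t$ of $\Spec R''$, the element $g(t)^{-1}b\sigma(g(t))$ lies in an admissible Bruhat double coset for $\G$ (since $g$ lifts a point of $X(\sigma(\{\mu_y\}),b)(\mathscr{R})$), so by Proposition \ref{prop4} the corresponding $k$-point of $M^{loc}_{\mathcal{GL}}$ lies in $M^{loc}_{\G}$. As closed points are dense in $\Spec R''$ and $M^{loc}_{\G}$ is closed and reduced, the morphism $\Spec R''\to M^{loc}_{\mathcal{GL}}$ factors through $M^{loc}_{\G}$; unwinding once more, a neighbourhood of $s$ lies in $Z$.

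It remains to lift $f\colon\Spec R\to\mathscr{S}_{\rmK}(G,X)^-$ to the normalization $\mathscr{S}_{\rmK}(G,X)$ and to identify the tensors. By the previous step, $f$ factors formal-locally at each closed point of $\Spec R$ through the completion of a single irreducible component $\overline{\mathscr{C}}$ of $\mathscr{S}_{\rmK}(G,X)^-$, and this completion is normal, being a completion of $M^{loc}_{\G}$, which is normal by \cite{PZ}; hence $f(\Spec R)$ lies in the normal locus of $\overline{\mathscr{C}}$, over which the normalization $\mathscr{S}_{\rmK}(G,X)\to\mathscr{S}_{\rmK}(G,X)^-$ is an isomorphism, and composing $f$ with its inverse there yields the desired lift $i_R$. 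Finally $i_R^*(s_{\alpha,0})$ and $s_{\alpha,0,\overline{x}}$ are two $\varphi$-invariant global sections of $\D(\pdiv_{g\overline{x}})^\otimes$ over the connected base $\Spec R$ which agree at $x_R$ (where $i_R(x_R)=\overline{x}$ and $g(x_R)=1$), and a $\varphi$-invariant section of a crystal over a connected base is determined by its value at one point, so they agree identically. I expect the openness of $Z$ to be the main obstacle: the crux is transporting the description in Proposition \ref{prop12} of $\mathscr{S}_{\rmK}(G,X)^-$ inside $\mathscr{S}_{\rmK'}(GSp,S^\pm)$ via the parahoric local model to a base that is only smooth over $k$ --- so that one argues through the local model and Proposition \ref{prop4}, exploiting that the base is reduced with dense closed points, rather than invoking $(\G,\mu_y)$-adaptedness of a characteristic-zero lift --- and then checking that the resulting \'etale-local factorization globalizes to a factorization of $f$ through $\mathscr{S}_{\rmK}(G,X)^-$.
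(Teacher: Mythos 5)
There is a genuine gap, and it sits exactly at the step you flag as the crux. You try to show that $f$ factors through $\mathscr{S}^-_{\rmK}(G,X)$ by a purely characteristic-$p$ argument: pass to the local model diagram of the Siegel variety and claim that ``the condition that $f$ factor through $\mathscr{S}_{\rmK}(G,X)^-$ becomes the condition that an induced morphism $\Spec R''\to M^{loc}_{\mathcal{GL}}$ factor through $M^{loc}_{\G}$,'' which you then verify pointwise via Proposition \ref{prop4}. This equivalence is unjustified and is essentially the statement one is trying to prove. The subscheme $\mathscr{S}^-_{\rmK}(G,X)$ is defined globally as the Zariski closure of the generic fibre inside $\mathscr{S}_{\rmK'}(GSp,S^\pm)$; there is no known (and in general no true) criterion saying that a mod-$p$ point of the Siegel moduli space whose crystalline tensors exist and whose Hodge filtration lies in $M^{loc}_{\G}$ automatically lies on this closure --- that the special fibre of $\mathscr{S}^-_{\rmK}$ captures all such points is precisely the content of Proposition \ref{prop6.1} and its ingredients. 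Proposition \ref{prop12} does not give you the reduction you want: it describes $\widehat{U}_{\overline{x}}$ only as a closed subspace of the versal space containing $\mbox{Spf}A_\G$, and its criterion for landing on the correct irreducible component is $(\G_{\Ok_L},\mu_y)$-adaptedness of a \emph{characteristic-zero} lift, which cannot be read off from the position of the mod-$p$ Hodge filtration in the local model. So the openness of your locus $Z$ is not established, and explicitly avoiding ``adaptedness of a characteristic-zero lift'' removes the only available handle.

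The paper's proof supplies exactly the missing ingredient: after reducing (by completion at $x_R$, smoothness of $R$, and density of $k[[t]]$-points) to $\hat R=k[[t]]$, it uses the factorization $\psi:A_\G\rightarrow k[[t]]$ of the local-model map to build, via Zink's Dieudonn\'e displays and smoothness of the torsor $\mathcal{T}$, an explicit deformation $\tilde{\pdiv}$ of $\pdiv_{\hat g\overline{x}}$ over the mixed-characteristic ring $A_\G[[t]]$ whose pullback along every $\varpi:A_\G[[t]]\rightarrow\Ok_K$ is $(\G_{\Ok_L},\mu_y)$-adapted; Proposition \ref{prop12} iii) then forces the resulting map $\mbox{Spf}A_\G[[t]]\rightarrow \hat U'_{\overline{x}}$ into the completion of the correct irreducible component of $\mathscr{S}_{\rmK}(G,X)$, which is what yields the factorization through $\mathscr{S}^-_{\rmK}(G,X)$ and, after Lemma \ref{lemma2}, the lift $i_R$ to the normalization. (This display-theoretic lifting is the announced replacement for Grothendieck--Messing theory, unavailable here because $A_\G$ is not smooth.) Your uniqueness argument and the final identification of tensors (which in the paper is done via the global sections $\mathbf{s}_{\alpha,0}$ of the $F$-isocrystal from \cite[Corollary A.7]{KMS} and parallelness over the integral base) are fine in outline, but without the characteristic-zero deformation the existence step does not go through.
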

\begin{proof}
The uniqueness can be checked on $k$ points, hence this follows from Corollary \ref{cor3}.

To show existence, we first claim (\ref{eq1}) factors through $\mathscr{S}^-_{\rmK}(G,X)$. Let $\hat{R}$ denote the completion of $R$ at $x_R$, since $R$ is integral, it suffices to prove the claim for $\hat{R}$.

 Note that the filtration induced by $g^{-1}b\sigma(g)$ gives an $R$ point of the local model $M^{loc}_{\mathcal{GL}}$. For all $k$ points $s:R\rightarrow k$, we have $$g(s)^{-1}b\sigma(g(s))\in \bigcup_{w\in\Adm(\{\mu_y\})}\G(\Ok_L)\sigma(\dot{w})\G(\Ok_L)$$ By Corollary \ref{cor2}, the map $\Spec R\rightarrow M^{loc}_{\mathcal{GL}}$ factors through $M^{loc}_{\G}$. Taking completions at the image of $x_R$, we obtain a map $$\psi:A_\G\rightarrow \hat{R}$$ 
  By smoothness of $R$, we have $\hat{R}$ is power series ring over $k$. Since the $k[[t]]$ points of $\hat{R}$ are dense, we may assume $\hat{R}=k[[t]]$.

 We have a $p$-divisible group  over $k[[t]]$, we would like to use the map $\psi$ to deform this $p$-divisible group to $\tilde{\pdiv}$ over a ring in characteristic $0$, such that the pullback to every $\Ok_K$ point satisfies the condition in Definition \ref{G-adapted}, i.e. it is $(\G_{\Ok_L},\mu_y)$-adapted. The ring we will deform to is $A_{\G}[[t]].$
 
 We have a map $A_\G[[t]]\twoheadrightarrow k[[t]]$  induced by $\psi:A_\G\rightarrow k[[t]]$ and $t\mapsto t$, this induces a surjection $\widehat{W}(A_{\G}[[t]])\twoheadrightarrow\widehat{W}(k[[t]])$. 
 
  Let us write $\hat{g}$ for the image of $g$ in $ G(\hat{\mathscr{R}})$ and $\pdiv_{\hat{g}\overline{x}}$ for the induced $p$-divisible group, then $\D(\pdiv_{\hat{g}\overline{x}})(\hat{\mathscr{R}})$ can be identified with $\hat{g}\D(\pdiv_{\overline{x}})$. We may use $\hat{g}^{-1}$ to identify $\D(\pdiv_{\hat{g}\overline{x}})(\hat{\mathscr{R}})$ with $\D(\pdiv_{\overline{x}})\otimes_{\Ok_L}\hat{\mathscr{R}}$ as a $\hat{\mathscr{R}}$-module. Under this identification the Frobenius is given by $\hat{g}^{-1}b\sigma(\hat{g})$. It follows that the Dieudonn\'e display $\D(\pdiv_{\hat{g}\overline{x}})(\widehat{W}(k[[t]]))$ can be identified with $\D\otimes_{\Ok_L}\widehat{W}(k[[t]])$ and the Frobenius $\Phi$ preserves $s_{\alpha,0,\overline{x}}$. 
  
 Let $\text{Spf}A$ be the completion of $M_{\mathcal{GL},\mu_h^{-1}}^{loc}$ at the image of $x_R$, then  $\D\otimes_{\Ok_L}A$ is equipped with universal  filtration $\overline{M}_1\subset \D\otimes_{\Ok_L}A$. We let $M_1$ denote the preimage of $\overline{M}_1$ in $M:=\D\otimes_{\Ok_L}\widehat{W}(A)$. Let $\widetilde{M}_1$ denote the image of the map $\varphi^*M_1\rightarrow \varphi^*M$.
  
  By construction, the pushforward of $\overline{M}_1$ along $A\rightarrow A_\G\rightarrow k[[t]]$ is the filtration on $\D\otimes_{\Ok_L}k[[t]]$ induced by $\hat{g}^{-1}b\sigma(\hat{g})$. Therefore by \cite[Lemma 3.1.5]{KP} the structure of display on $\D\otimes_{\Ok_L}\widehat{W}(k[[t]])$ corresponding to $\pdiv_{\hat{g}\overline{x}}$ is given by an isomorphism $$\Psi_{k[[t]]}:\widetilde{M}_{1,k[[t]]}\rightarrow \D\otimes_{\Ok_L}\widehat{W}(k[[t]])$$ where for any ring $R$ with $A\rightarrow R$, we write $\widetilde{M}_{1,R}$ for the base change $\widetilde{M}_1\otimes_{\widehat{W}(A)}\widehat{W}(R)$. Since $A\rightarrow k[[t]]$ factors through $A_{\G}$ it follows from \cite[Corollary 3.2.11]{KP} that $s_{\alpha,0,\overline{x}}\in \widetilde{M}_{1,k[[t]]}$, and since  $\hat{g}b\sigma(\hat{g})$ preserves $s_{\alpha,0,\overline{x}}$ we have $\Psi_{k[[t]]}(s_{\alpha,0,\overline{x}})=s_{\alpha,0,\overline{x}}$.
  
  By \cite[Corollary 3.2.1]{KP}, the scheme $$\mathcal{T}=\underline{Isom}_{s_{\alpha,0,\overline{x}}}(\widetilde{M}_{1,A_\G},M\otimes_{\widehat{W}(A)}\widehat{W}(A_\G))$$ is a $\G$-torsor. Base changing to $A_{\G}[[t]]$ we obtain a $\G$-torsor 
  $$\mathcal{T}_{A_\G[[t]]}=\underline{Isom}_{s_{\alpha,0,\overline{x}}}(\widetilde{M}_{1,A_\G[[t]]},M\otimes_{\widehat{W}(A)}\widehat{W}(A_\G[[t]]))$$
  By smoothness of $\G$, $\Psi_{k[[t]]}$ lifts to an isomorphism $$\Psi:\widetilde{M}_{1,A_\G[[t]]}\xrightarrow\sim M\otimes_{\widehat{W}(A)}\widehat{W}(A_\G[[t]]))$$
  Again, b y \cite[Lemma 3.1.5]{KP}, this corresponds to a display over $A_\G[[t]]$ deforming the $\D(\pdiv_{\hat{g}\overline{x}})(\widehat{W}(k[[t]]))$, and hence a $p$-divisible group $\tilde{\pdiv}$ over $A_\G[[t]]$ deforming $\pdiv_{\hat{g}\overline{x}}$.
  
  Let $\varpi:A_\G[[t]]\rightarrow \Ok_K$ be any map and $\tilde{\pdiv}_{\varpi}$ the $p$-divisible group over $\Ok_K$ obtained by pullback. By construction we have an isomorphism $$\iota:\D(\tilde{\pdiv}_{\varpi})(\widehat{W}(\Ok_K))\cong \D\otimes_{\Ok_L}\widehat{W}(\Ok_K)$$ thus $s_{\alpha,0,\overline{x}}$ give $\Phi$-invariant tensors in $\D(\tilde{\pdiv})(\widehat{W}(\Ok_K))^\otimes$. Moreover, under the canonical identification $\D(\tilde{\pdiv}_\varpi)(\Ok_K)\otimes_{\Ok_K}K\cong\D\otimes_{\Ok_L}K$, the filtration is induced by $G$-valued cocharacter conjugate to $\mu_y$. Indeed the composition $$\D\otimes_{\Ok_L}\Ok_K\xrightarrow\iota \D(\tilde{\pdiv}_\varpi)(\Ok_K)\otimes_{\Ok_K}K\xrightarrow\sim \D\otimes_{\Ok_L}K$$ where the second map is the canonical isomorphism takes $s_{\alpha,0,\overline{x}}$ to itself. Since the filtration on the left is induced by the map $A\rightarrow A_\G\rightarrow A_\G[[t]]\rightarrow\Ok_K$, it corresponds to a point of the local model $M^{loc}_\G$ hence is induced by a $G$-valued cocharacter conjugate to $\mu_y$. Thus $\tilde{\pdiv}_{\varpi}$ is $(\G_{\Ok_L},\mu_y)$-adapted as desired.
  
 Let $\hat{U}'_{\overline{x}}$ denote the completion of $\mathscr{S}_{\rmK'}(GSp,S^\pm)$ at the image of ${\overline{x}}$. Then the $p$-divisible group $\tilde{\pdiv}$ corresponds to a map $\epsilon^-:\text{Spf}A_\G[[t]]\rightarrow\hat{U}'_{\overline{x}}$. Let $\hat{Z}\subset \hat{U}'_x$ denote the completion at $\overline{x}$ of the irreducible component containing $x$ (recall $x\in \mathscr{S}_{\rmK}(G,X)(K)$ was a point lifting $\overline{x}$). By the previous paragraph, for any $\varpi:A_\G[[t]]\rightarrow \Ok_K$, the induced point of $\hat{U}'_x$ lies in $\hat{Z}$ by Proposition \ref{prop12} iii). Since this is true for any $\Ok_K$ point, $\epsilon^-$ factors though $\hat{Z}$. Thus $i_R$ factors through $\mathscr{S}^-_{\rmK}(G,X)$.
 
Since $A_{\G}[[t]]$ is normal, the map $\epsilon^-:\Spec A_{\G}[[t]]\rightarrow \mathscr{S}^-_{\rmK}(G,X)$ lifts to $${\epsilon}: \Spec A_{\G}[[t]]\rightarrow\mathscr{S}_{\rmK}(G,X)$$ and this lift is unique since $\mathscr{S}_{\rmK}(G,X)\rightarrow \mathscr{S}^-_{\rmK}(G,X)$  is an isomorphism on the generic fibre. We write $\overline{\epsilon}$ for the induced map $$\overline{\epsilon}:\Spec k[[t]]\rightarrow \mathscr{S}_{\rmK}(G,X)$$ We thus have a diagram:

\[\xymatrix{\Spec k[[t]]\ar[r]^{\overline{\epsilon}}\ar[d] & \mathscr{S}_{\rmK}(G,X)\ar[d]\\
	\Spec R\ar[r]^{i_R}& \mathscr{S}^-_{\rmK}(G,X)}\]

Since $R$ is integral, the map $R\rightarrow k[[t]]$ is injective, in particular, the above diagram induces:

\[\xymatrix{\Spec \text{Frac}(R)\ar[r]\ar[d] & \mathscr{S}_{\rmK}(G,X)\ar[d]\\
\Spec R\ar[r]^{i_R}& \mathscr{S}^-_{\rmK}(G,X)}\]

 By Lemma \ref{lemma2}, there exists a unique lift $i_R:\Spec R\rightarrow \mathscr{S}_{\rmK}(G,X)$.
 
 To show the compatibility of this map with the tensors $s_{\alpha,0,\overline{x}}$, we let $\mathbb{M}$ denote the Dieudonn\'e $F$-crystal over $\mathscr{S}_{\mathrm{K}_p}(G,X)_k$ associated to the universal $p$-divisible group, and $\mathbb{M}[\frac{1}{p}]$ the corresponding $F$-isocrystal. We have by \cite[Corollary A.7]{KMS}, there exists sections:
 $$\mathbf{s}_{\alpha,0}:1\rightarrow \mathbb{M}[\frac{1}{p}]^\otimes$$
 such that for all $\overline{x}'\in\mathscr{S}_{\mathrm{K}_p}(G,X)(k)$, $\mathbf{s}_{\alpha,0}$ pulls back to $s_{\alpha,0,\overline{x}'}\in\D(\pdiv_{\overline{x}'})[\frac{1}{p}]^\otimes$. 
 
 Thus pulling back to $\Spec R$, we obtain $\mathbf{s}_{\alpha,0,R}\in\D(\pdiv_{g\overline{x}})(\mathscr{R})[\frac{1}{p}]^\otimes$ such that for all $z:R\rightarrow k$, the pullback coincides with $s_{\alpha,0,\iota_R(z)}$. Now by construction $s_{\alpha,0,\overline{x}}\in \D(\pdiv_{\hat{g}\overline{x}})(\mathscr{R})^\otimes$ are parallel for the connection and coincide with $\mathbf{s}_{\alpha,0,R}$ at the point $x_R$. Hence since $R$ is integral, we have $\mathbf{s}_{\alpha,0,R}=s_{\alpha,0,\overline{x}}$.

\end{proof}

\begin{lemma}\label{lemma2}
Let $Y$ be a reduced scheme and $Y^n$ it's normalization. Let $X$ be a normal integral scheme with generic point $\Spec K(X)$. Suppose we have a diagram:
\[\xymatrix{ \Spec K(X)\ar[r] \ar[d] & Y^n\ar[d]\\
X\ar[r]  & Y}\]
Then $f$ lifts to a unique map $f':X\rightarrow Y^n$
\end{lemma}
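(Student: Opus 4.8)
The statement is a standard fact about normalization, so the plan is to reduce to the affine case and then use the universal property of normalization together with a valuative criterion argument. First I would reduce to the case where $Y = \Spec B$ is affine: normalization commutes with open immersions, so for an open affine cover $\{U_i = \Spec B_i\}$ of $Y$ the preimages $V_i = f^{-1}(U_i)$ cover $X$, each $V_i$ is an open subscheme of the normal integral scheme $X$ hence itself normal and integral with the same function field $K(X)$, and the normalization $Y^n$ restricts to the normalization of each $U_i$ over $V_i$; so a lift constructed over each $V_i$ is automatically unique (by the generic point condition, since $Y^n$ is separated over $Y$ and $X$ is reduced — two lifts agreeing on the dense generic point agree) and the local lifts glue. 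Thus we may assume $Y = \Spec B$, $X = \Spec A$ with $A$ a normal domain, and $Y^n = \Spec B'$ where $B'$ is the normalization of $B_{\mathrm{red}}$ in its total ring of fractions (a finite product of normalizations of the domains $B_{\mathrm{red}}/\mathfrak{p}$); the map $X \to Y$ factors through $\Spec B_{\mathrm{red}}$ since $A$ is reduced.

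With the affine reduction in hand, the key step is to produce the ring map $B' \to A$. We have $B_{\mathrm{red}} \to A$, and composing with $A \hookrightarrow K(X)$ gives $B_{\mathrm{red}} \to K(X)$; since $\Spec A \to \Spec B_{\mathrm{red}}$ maps the generic point of $\Spec A$ into $\Spec B_{\mathrm{red}}$, its image is a point whose local ring is a domain, so $B_{\mathrm{red}} \to K(X)$ factors through exactly one of the minimal primes $\mathfrak{p}_j$ of $B_{\mathrm{red}}$, i.e.\ through one factor $C := B_{\mathrm{red}}/\mathfrak{p}_j$, and the induced map $C \to K(X)$ is injective on $C$'s fraction field image — more precisely it extends to $\mathrm{Frac}(C) \to K(X)$. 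Then $A$ is a normal domain containing (the image of) $C$ and contained in a field extension of $\mathrm{Frac}(C)$, so $A$ contains the integral closure of $C$ in $\mathrm{Frac}(C)$, which is precisely the factor of $B'$ corresponding to $\mathfrak{p}_j$; projecting $B'$ onto that factor and then including into $A$ gives the desired map $B' \to A$, hence $f' : X \to Y^n$. Compatibility of $f'$ with the given diagram is then checked: the composite $X \to Y^n \to Y$ equals the original $f$ by construction, and the composite $\Spec K(X) \to X \to Y^n$ agrees with the given arrow because both are determined by the ring map $B' \to K(X)$ factoring through the chosen component (one uses here that $Y^n \to Y$ restricted to each component is an isomorphism on generic points, so the given arrow $\Spec K(X) \to Y^n$ necessarily lands in the component determined by where the generic point of $X$ goes).

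Uniqueness is the easiest part: if $f'_1, f'_2 : X \to Y^n$ are two lifts, then since $Y^n \to Y$ is separated and $X$ is reduced, the locus where $f'_1$ and $f'_2$ agree is closed; it contains the generic point $\Spec K(X)$ because both lifts restrict to the given map there (this is part of the data of "lift"); a closed set containing the generic point of the integral scheme $X$ is all of $X$, so $f'_1 = f'_2$.

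The main obstacle I anticipate is purely bookkeeping rather than conceptual: carefully tracking the minimal prime / connected component of $Y^n$ into which the generic point of $X$ must map, and checking that the given arrow $\Spec K(X) \to Y^n$ is compatible with that choice (rather than landing in some other component), so that the constructed $f'$ genuinely fits into the given square and is not merely \emph{a} lift of $X \to Y$. Once one is disciplined about always passing to the component containing the image of the generic point, everything else is the universal property of integral closure for normal domains.
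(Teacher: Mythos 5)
Your affine reduction and your uniqueness argument are fine (and agree with what the paper does implicitly), but the existence step has a genuine gap: you never use the given arrow $\Spec K(X)\rightarrow Y^n$ to build the lift, and the substitute construction you propose only works if the generic point of $X$ happens to map to a generic point of $Y$, which is not a hypothesis. Concretely, the kernel of $B_{\mathrm{red}}\rightarrow K(X)$ is the prime corresponding to the image of the generic point of $X$, and this need not be a minimal prime, nor need the local ring of $B_{\mathrm{red}}$ there be a domain (take $Y$ a nodal curve and $X\rightarrow Y$ the constant map to the node: the kernel is the maximal ideal at the node, which contains both minimal primes). Consequently $C=B_{\mathrm{red}}/\mathfrak{p}_j\rightarrow K(X)$ need not be injective, there is no induced map $\mathrm{Frac}(C)\rightarrow K(X)$, and the pivotal step ``$A$ contains the integral closure of $C$ in $\mathrm{Frac}(C)$'' does not make sense. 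The same example shows why the given generic-point arrow cannot be treated as a mere after-the-fact compatibility check: the map $X\rightarrow Y$ alone has two lifts to $Y^n$ (the constant maps to either preimage of the node), and only the hypothesized arrow singles out the correct one, so a construction that ignores it cannot produce the lift demanded by the square.

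The repair is exactly the paper's (much shorter) argument: in the affine case the hypothesis hands you a ring map $S^{\mathrm{int}}\rightarrow K(X)$ whose restriction to $S$ factors through $R$, by commutativity of the square. Every element of $S^{\mathrm{int}}$ is integral over $S$, so its image in $K(X)$ is integral over a subring of $R$; since $R$ is a normal domain with fraction field $K(X)$, it is integrally closed in $K(X)$, hence the image of $S^{\mathrm{int}}$ lies in $R$. This yields the lift directly, with no need to locate components or generic points of $Y$, and your gluing and uniqueness arguments then finish the general case.
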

\begin{proof}By uniqueness we may assume $X=\Spec R$ and $Y=\Spec S$ is affine, then $Y^n=\Spec S^{int}$ where $S^{int}$ is the integral closure of $S$ in $K(Y):=\text{Frac}(S)$. Thus it suffices to show the induced map $S^{int}\rightarrow K(X)$ factors through $R$. But this follows since $R$ is integrally closed in $K(X)$.
\end{proof}

\begin{proof}[Proof of Proposition 6.4]By uniqueness, there is a maximal subset $X(\sigma(\{\mu_y\},b)^\circ\subset X(\sigma(\{\mu_y\}),b)$ which lifts to a map:

$$i_x:X(\sigma(\{\mu_y\}),b)^\circ\rightarrow \mathscr{S}_{\rmK}(G,X)(k)$$

By Proposition \ref{prop8}, $X(\sigma(\{\mu_y\}),b)^\circ$ is a union of connected components.

Assume first that $b$ is basic. By Theorem \ref{thm1} i) there exists $g_0\in X_{\sigma(\tau_{\{\mu_y\}})}(b)\cap X(\sigma(\{\mu_y\}),b)^\circ$, i.e. $i_{\overline{x}}(g_0)$ lifts to a point $\overline{x}'\in\mathscr{S}_{\mathrm{K}_p}(G,X)(k)$. We may apply the above construction with $\overline{x}$ replaced by $\overline{x}'$. Then $b$ is replaced by $b':=g_0^{-1}b\sigma(g_0)$, and we have a map $$i'_{\overline{x}'}:X(\sigma(\{\mu_y\}),b')\rightarrow \mathscr{S}_{\rmK'}(GSp,S^\pm)(k)$$ which is identified with the map $i_{\overline{x}}$ under the identification $$X(\sigma(\{\mu_y\}),b')\cong X(\sigma(\{\mu_y\}),b)$$ given by $h\mapsto g_0^{-1}h$. Therefore upon replacing $\overline{x}$ by $\overline{x}'$ we may assume $b\in \G(\Ok_L)\sigma(\dot{\tau}_{\{\mu\}})\G(\Ok_L)$. By Theorem \ref{thm2}, upon changing the isomorphism $$V_{\Z_p}^*\otimes_{\Z_p}\Ok_L\cong \D(\pdiv_{\overline{x}})$$ 
by an element of $\G(\Ok_L)$, we may assume $b=\dot{\tau}_{\{\mu\}}$. 

Let $x\in\mathscr{S}_{\rmK}(G,X)(\Ok_K)$ denote a lift of $\overline{x}$ with $K/L$ finite. Fix the isomorphism $$T_p\pdiv_x^\vee\otimes_{\Z_p}\Ok_L\cong\D(\pdiv_{\overline{x}})$$ taking $s_{\alpha,\acute{e}t,x}$ to $s_{\alpha,0,\overline{x}}$ compatibly with the isomorphism $V_{\Z_p}^*\otimes_{\Z_p}\Ok_L\cong \D(\pdiv_{\overline{x}})$ above. We may now apply the construction of \S5.

Let $g\in G(\Q_p)/\G(\Z_p)$ and $g_0$ the corresponding element in $X(\sigma(\{\mu\}),b)$ constructed in \S5.6; upon replacing $K$ by a finite extension, $g^{-1}T_p\pdiv_x$ corresponds to a $p$-divisible group $\pdiv'$ over $\mathcal{O}_K$ together with a quasi-isogeny $\pdiv'\rightarrow \pdiv_x$ which identifies $\D(\pdiv'_0)$ with $ g_0\D(\pdiv_{\overline{x}})$. This corresponds to a quasi-isogeny $\mathcal{A}'\rightarrow \mathcal{A}_x$ which respect $s_{\alpha,\acute{e}t,x}$ and hence to a point $gx\in Sh_{\rmK}(G,X)(K)$ which lifts $i'_{\overline{x}}(g_0)$.  Therefore $g_0\in X(\sigma(\{\mu_y\}),b)^\circ$. 

By Proposition \ref{prop1}, $g\mapsto g_0$ induces a surjection $$G(\Q_p)/\G(\Z_p)\twoheadrightarrow \pi_0(X(\sigma(\{\mu_y\}),b)),$$ hence $$X(\sigma(\{\mu_y\}),b)^\circ=X(\sigma(\{\mu_y\}),b)$$ This proves the proposition when $b$ is basic in $G$.

Now assume $G_{\Q_p}$ is residually split. In this case $\sigma$ acts trivially on $W$, hence $\Adm(\{\mu_y\})=\sigma(\Adm(\{\mu_y\}))$. Recall by Theoream \ref{bound}, there is a map

\begin{equation}\label{map}\coprod_{w\in W, w \text{ a straight element with } \dot{w}\in[b]}X^{M_{\nu_w}}(\{\lambda_w\}_{M_{\nu_w}},\dot{w})\rightarrow X(\{\mu_y\},b)\end{equation}
which induces a surjection
$$\coprod_{w\in W, w \text{ a straight element with } \dot{w}\in[b]}\pi_0(X^{M_{\nu_w}}(\{\lambda_w\}_{M_{\nu_w}},\dot{w}))\rightarrow \pi_0(X(\{\mu_y\},b))$$

It thus suffices to show for each $w\in\Adm(\{\mu\})$ straight, that each connected component of $X^{M_{\nu_w}}(\{\lambda_w\}_{M_{\nu_w}},\dot{w})$ contains an element whose image in $X(\{\mu_y\},b)$ lifts to $\mathscr{S}_{\rmK}(G,X)(k)$ satisfying the above properties. 

\textit{Step 1}: Recall by our assumptions $1\in X(\{\mu_y\},b)$. There exists a straight element $w$ and $g_0\in X_w(b)$ with $1\sim g_0$. By Proposition \ref{prop3}, $i_{\xbar}'(g_0)$ lifts to a point $\overline{x}'\in \mathscr{S}_{\rmK}(G,X)(k)$. As above, upon replacing $\overline{x}$ by $\overline{x}'$, we may assume $b\in \G(\Ok_L)\dot{w}\G(\Ok_L)$. By Theorem \ref{thm2}, we may change the isomorphism $V_{\Z_p}^*\otimes_{\Z_p}\Ok_L\cong \D(\pdiv_{\overline{x}})$ by an element of $\G(\Ok_L)$ and assume  $b=\dot{w}$.

Let $M:=M_{\nu_w}$ denote the semistandard Levi subgroup corresponding to $\nu_w$. Then $M(L)\cap \G(\Ok_L)$ is an Iwahori subgroup of $M$ which is defined over $\Z_p$, we write $\mathcal{M}$ for the associated group scheme. Let $W_M$ denote the Iwahori Weyl group of $M$, then $w\in W_M$ and the main result of \cite{Nie} implies $\dot{w}\mathcal{M}(\Ok_L)\dot{w}^{-1}=\mathcal{M}(\Ok_L)$. We equip $W_M$ with the Bruhat order $\leq_M$ induced by the Iwahori subgroup $\mathcal{M}$, then $w$ is a basic element of $W_M$. If $w=w_0t_{\underline{\lambda}}$ for some $w_0\in W_{M,0}$ the relative Weyl group and $\underline{\lambda}\in X_*(T)_I$, we have $t_{\underline{\lambda}}\in \Adm(\{\mu\})$ by \cite[7.1 b)]{HZ}. By Lemma \ref{cochar}, there exists a cocharacter $\lambda_w\in X_*(T)$ which lifts $\underline{\lambda}$ and whose image in $G$ conjugate to $\mu_y$. Thus since $w\leq_Mt_{\underline{\lambda}}$, we have $w\in \Adm^M(\{\lambda_w\})$. Therefore by Proposition \ref{prop4}, there is an $M$ valued cocharacter conjugate to $\lambda_w$ such that the induced filtration on $\D\otimes_{\Ok_L}K$ specializes to the one on $\D(\pdiv_{\overline{x}})\otimes_{\Ok_L}k$. We may thus apply the construction of \S5.4.

Since $\dot{w}\in M(L)$, we may extend the tensors $s_{\alpha}$ to tensors $t_{\alpha}\in V_{\Z_p}^\otimes$ whose stablizer is the Iwahori $\mathcal{M}$. We obtain an embedding of local models $M^{loc}_{\mathcal{M},\lambda_w^{-1}}\subset M^{loc}_{\mathcal{GL},\mu_y^{-1}}\otimes\mathcal{O}_{E'}$, where $E'$ is the (local) reflex for the $M$-conjugacy class of $\{\lambda_w\}$. Since $\dot{w}\in \Adm^M(\{\lambda_w\})$ the filtration on $\D(\pdiv_{\overline{x}})\otimes k$ gives a point in the local model $M^{loc}_{\mathcal{M}}(k)$. Replacing $\lambda_w$ by an element in its $M$-conjugacy class, we may assume $\lambda_w$ is defined over a finite extension $K/L$ and the induced filtration lifts the filtration on $\D(\pdiv_{\overline{x}})$. 

Let ${\pdiv}$ denote an $(\mathcal{M}_{\Ok_L},\lambda_w)$-adapted lifting of $\pdiv_{\overline{x}}$ satisfying the conditions in  Proposition \ref{adapted}. Note that any $(\mathcal{M}_{\Ok_L},\lambda_w)$-adapted lifting is also $(\G_{\Ok_L},\mu_y)$-adapted, hence corresponds to a point $x\in\mathscr{S}_{\rmK}(G,X)(\Ok_K)$. We may thus apply the construction of \S5 to $\pdiv_{\overline{x}}$, and we obtain a map $$M(\Q_p)/\mathcal{M}(\Z_p)\rightarrow X^M(\{\lambda_w\},\dot{w})$$ Since $\dot{w}$ is basic in $M(L)$, we have by Proposition \ref{prop1} that this induces a surjection
$$M(\Q_p)/\mathcal{M}(\Z_p)\rightarrow \pi_0(X^M(\{\lambda_w\},b))$$
It follows that $X(\{\mu_y\},b)^\circ$ contains the image of  $X^{M_{\nu_w}}(\{\lambda_w\},\dot{w})$.

\textit{Step 2}: Now suppose there exists   $s\in\mathbb{S}$ such that $w'=sws$ and $l(w')=l(w)$. Assume $b=\dot{w}$ for a straight element $w$, then $w'$ is also a straight element and $w'\in\Adm(\{\mu_y\})$ by \cite[Lemma 4.5]{Ha}. Thus $\dot{s}\in X(\{\mu_y\},b)$. We show that $\dot{s}\in X(\{\mu_y\},b)^\circ$. Recall ${\pdiv}$ is an $(\mathcal{M}_{\Ok_L},\lambda_w)$-adapted lifting. We fix the isomorphism $$T_p{\pdiv}^\vee\otimes_{\Z_p}\Ok_L\cong \D({\pdiv}_{\overline{x}})$$ taking $t_{\alpha,\acute{e}t}$ to $t_{\alpha,0,\overline{x}}$. Upon replacing $K$ by a finite extensions, we have $\dot{s}T_p{\pdiv}$ corresponds to a $p$-divisible group ${\pdiv}'$ over $\Ok_K$ equipped with a quasi isogeny ${\pdiv}'\rightarrow {\pdiv}$. This identifies $\mathfrak{M}(T_p{\pdiv}'^\vee)$ with $\tilde{s}\mathfrak{M}(T_p{\pdiv}^\vee)$ for some $\tilde{s}\in G(\mathfrak{S}[1/p])$.
 We also obtain a quasi-isogeny ${\pdiv}'\rightarrow \pdiv$ over $k$ which identifies $\D(\pdiv')$ with $s_0\D(\pdiv)$ where $s_0=\sigma^{-1}(\tilde{s})|_{u=0}$. By Proposition \ref{key} we have $s=m\tilde{s} g$ where $m\in\mathcal{M}(\Ok_{\widehat{\mathscr{E}^{ur}}})$ and $g\in\mathcal{G}(\Ok_{\widehat{\mathscr{E}^{ur}}})$. Using the natural map $L\rightarrow \mathfrak{S}[1/p]$, we may consider $\dot{w}\in G(L)$ as an element of $G(\mathfrak{S}[1/p])$. Then we have $$\tilde{s}^{-1}\dot{w}\sigma(\tilde{s})=g^{-1}sm^{-1}\dot{w}\sigma(m)s\sigma(g)$$
Without loss of generality assume $l(sw)=l(w)+1$. Since $w$ is basic in $W_M$, we have $$m':=m^{-1}\dot{w}\sigma(m)\dot{w}^{-1}\in \mathcal{M}(\Ok_{\widehat{\mathscr{E}^{ur}}})$$ Thus since $\mathcal{M}(\Ok_{\widehat{\mathscr{E}^{ur}}})\subset \mathcal{G}(\Ok_{\widehat{\mathscr{E}^{ur}}})$ and $l(sw)=l(w)+1$, we have $$\tilde{s}\dot{w}\sigma(\tilde{s})=g^{-1}sm'\dot{w}s\sigma(g)\in\G(\Ok_{\widehat{\mathscr{E}^{ur}}})s\dot{w}s\G(\Ok_{\widehat{\mathscr{E}^{ur}}})$$
We consider $\tilde{s}\dot{w}\sigma(\tilde{s})$ as a $k[[u]]^{perf}$ point of $\mathcal{FL}$. The above calculation shows that the generic fiber of this point lies in the the Schubert variety $S_{w'}\subset \mathcal{FL}$. Since the Schubert variety $S_{w'}$ is closed, the special fiber also lies in $S_{w'}$. Hence we have $$s_0\dot{w}\sigma(s_0)\in \G(\Ok_L)\dot{w}''\G(\Ok_L)$$ 
for some $w''\leq w'$. By Lemma \ref{lemma3} below, we have $w'=w''$ and $s_0\in X_{w'}(b)$. As above, this implies $s_0\in X(\{\mu_y\},b)^\circ$. Upon replacing $\overline{x}$ by $i_{\overline{x}}(s_0)$, and applying step 1 to $i_{\overline{x}}(s_0)$, we obtain $X^{M_{\nu_{w'}}}(\{\lambda_{w'}\},\dot{w}')\subset X(\{\mu\},b)^\circ$.

{\it Step 3:} Suppose $b=\dot{w}$ is a straight element and $\tau\in\Omega$. Then $\tau w\tau^{-1}\in\Adm(\{\mu\})$ again by \cite[Lemma 4.5]{Ha}. Therefore $\dot{\tau}\in X(\{\mu\},b)$. As before let $\tilde{\pdiv}$ be an $(\mathcal{M}_{\Ok_L},\lambda_w)$-adapted lifting of $\pdiv_{\overline{x}}$ to $\Ok_K$ and apply the construction of Proposition \ref{lifting} to $\dot{\tau}$, we obtain an element $\tilde{\tau}=m\dot{s}g\in G(\mathfrak{S}[\frac{1}{p}]$ with $m\in\mathcal{M}(\Ok_{\widehat{\mathscr{E}^{ur}}})$ and $g\in\mathcal{G}(\Ok_{\widehat{\mathscr{E}^{ur}}})$. Since $\dot{\tau}\mathcal{G}(\Ok_{\widehat{\mathscr{E}^{ur}}})\dot{\tau}^{-1}=\mathcal{G}(\Ok_{\widehat{\mathscr{E}^{ur}}})$, we have 

$$\tilde{\tau}^{-1}\dot{w}\tilde{\tau}=g^{-1}\dot{\tau}^{-1}m^{-1}\dot{w}\sigma(m)\dot{\tau}\sigma(g)\in\mathcal{G}(\Ok_{\widehat{\mathscr{E}^{ur}}})\dot{\tau}^{-1}\dot{w}\dot{\tau}\mathcal{G}(\Ok_{\widehat{\mathscr{E}^{ur}}})$$

As in Step 2, this implies $\dot{\tau}_0\in X_{\tau^{-1}w\tau}(\dot{w})\cap X(\{\mu\},\dot{w})^\circ$.

{\it Step 4:} For any two straight element $w,w'$, we have $w\sim w'$, so that we have a sequence $s_1,\dots,s_n\in\mathbb{S}$ and straight elements $w=w_0,\dots,w_n\in W$ such that $w_i\sim_{s_{i+1}}w_{i+1}$ and $\tau^{-1}w_n\tau=w'$ for some $\tau\in\Omega$. Applying Step 2 to each $w_i$ in turn and Step 3 to $w_n$, we see that $X(\{\mu\},b)^\circ$ contains an element of $g_0\in X_{w'}(b)$. Applying Step 1 to the lift of $\iota(g_0)$ in $\mathscr{S}_{\mathrm{K}_p}(G,X)(k)$, we have $X(\{\mu\},b)^\circ$ contains the image of $X^{M_{\nu_{w'}}}(\{\lambda_{w'}\},\dot{w}')$. Hence by Theorem \ref{bound} $X(\{\mu\},b)^{\circ}=X(\{\mu\},b)$.
\end{proof}

\begin{lemma}\label{lemma3}
Let $w\in W$ be a straight element and $g\in G(L)$ such that $g^{-1}\dot{w}\sigma(g)\in \G(\Ok_L)\dot{w}'\G(\Ok_L)$ with $w'\leq w$. Then $w'=w$.
\end{lemma}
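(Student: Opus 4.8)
The plan is to deduce the statement from a length inequality combined with the characterization of $\sigma$-straight elements via their Newton point. Write $\mathcal{I}=\G(\Ok_L)$ and put $x=g^{-1}\dot w\sigma(g)$, so $x\in\mathcal{I}\dot w'\mathcal{I}$ and $x$ is $\sigma$-conjugate to $\dot w$; in particular its dominant Newton point $\bar\nu_x$ equals $\bar\nu:=\bar\nu_{\dot w}$. Since $w'\le w$ we have $\ell(w')\le\ell(w)$, and an element below $w$ in the Bruhat order of the same length as $w$ must equal $w$, so it suffices to prove $\ell(w')\ge\ell(w)$. Here I would first record that $\sigma$-straightness of $w$ gives $\ell(w)=\langle\bar\nu,2\rho\rangle$: for $n$ divisible by a suitable $N\ge1$ (the order of the $\sigma$-action on $W$) and large enough, $w\sigma(w)\cdots\sigma^{n-1}(w)$ is a translation $t_\lambda$ with dominant representative $\lambda^+=n\bar\nu$, and $\ell(t_\lambda)=\langle\lambda^+,2\rho\rangle$, while straightness forces $\ell(t_\lambda)=n\ell(w)$; dividing by $n$ gives the identity. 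So the target becomes $\ell(w')\ge\langle\bar\nu,2\rho\rangle$.

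To prove this I would pass to $\sigma$-twisted powers and use Mazur's inequality. Fix a special maximal parahoric $\mathcal{K}\supset\mathcal{I}$ and let $w_0$ be the longest element of $W_0$. For $n$ a multiple of $N$, set $x^{(n)}=x\sigma(x)\cdots\sigma^{n-1}(x)$. Since $\mathcal{I}$ is $\sigma$-stable we have $\sigma^i(x)\in\mathcal{I}\,\dot{\sigma^i(w')}\,\mathcal{I}$ with $\ell(\sigma^i(w'))=\ell(w')$, and because a product of Iwahori double cosets is a union of Iwahori double cosets of bounded length, $x^{(n)}\in\mathcal{I}\dot u_n\mathcal{I}$ for some $u_n\in W$ with $\ell(u_n)\le n\,\ell(w')$. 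Now $\mathcal{I}\dot u_n\mathcal{I}\subset\mathcal{K}\,t_{\lambda(u_n)}\,\mathcal{K}$, where $\lambda(u_n)$ is the dominant representative of the translation part of $u_n$; as $t_{\lambda(u_n)}$ differs from $u_n$ by left and right multiplication by elements of $W_0$, one has $\langle\lambda(u_n),2\rho\rangle=\ell(t_{\lambda(u_n)})\le\ell(u_n)+2\ell(w_0)\le n\,\ell(w')+2\ell(w_0)$. On the other hand, writing $x=h^{-1}\dot w\sigma(h)$ one checks $x^{(n)}=h^{-1}\bigl(\dot w\sigma(\dot w)\cdots\sigma^{n-1}(\dot w)\bigr)\sigma^n(h)$, and for $n$ large the bracketed term represents the translation $t_\lambda$ with $\lambda^+=n\bar\nu$; since $\sigma^n$ is trivial on $X_*(T)_I$, the Newton point of $x^{(n)}$ is $n\bar\nu$. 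Mazur's inequality applied to the coset $\mathcal{K}\,t_{\lambda(u_n)}\,\mathcal{K}$ then gives $n\bar\nu\le\lambda(u_n)$ in the dominance order, so $n\langle\bar\nu,2\rho\rangle\le\langle\lambda(u_n),2\rho\rangle\le n\,\ell(w')+2\ell(w_0)$. Dividing by $n$ and letting $n\to\infty$ yields $\langle\bar\nu,2\rho\rangle\le\ell(w')$, which finishes the argument.

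The crux — and the step I expect to be the only real obstacle — is precisely the inequality $\ell(w')\ge\langle\bar\nu_x,2\rho\rangle$, i.e.\ that an Iwahori cell meeting the $\sigma$-conjugacy class of a straight element cannot be shorter than the ``size'' of its Newton point; the limiting argument above extracts this from Mazur's inequality, but one could instead quote He's non-emptiness criterion for affine Deligne--Lusztig varieties (non-vanishing of the relevant class polynomial, forcing $\ell(w')$ to be at least the minimal length in the corresponding $\sigma$-conjugacy class of $W$, which is $\ge\langle\bar\nu,2\rho\rangle$). The remaining inputs — the length formula $\ell(t_\lambda)=\langle\lambda^+,2\rho\rangle$, stability of Iwahori double cosets under products, Mazur's inequality, and the elementary Bruhat-order fact used at the outset — are all standard.
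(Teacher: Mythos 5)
Your proof is correct in outline, but it takes a genuinely different route from the paper. The paper's argument is a two-step quotation of He's machinery: by the reduction method of \cite{He1} any element of $\G(\Ok_L)\dot{w}'\G(\Ok_L)$ is $\sigma$-conjugate in $G(L)$ to $\dot{w}''$ for some $\sigma$-straight $w''$ with $l(w'')\leq l(w')$, and by \cite[Theorem 3.7]{He1} two straight elements whose associated $\sigma$-conjugacy classes in $G(L)$ agree are $\sigma$-conjugate in $W$, hence $l(w'')=l(w)$; combined with $w'\leq w$ this forces $w'=w$. You instead prove the same key inequality $l(w')\geq l(w)=\langle\overline{\nu}_w,2\rho\rangle$ directly, via $\sigma$-twisted powers, passage from the Iwahori to a special maximal parahoric, and Mazur's inequality, followed by a limiting argument. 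This buys you independence from the classification of straight elements (He--Nie), at the cost of two inputs you should make explicit: (a) the Newton point comparison is for $\sigma^n$-conjugacy, i.e.\ you are applying Mazur's inequality to the group base-changed along $\Q_p\subset\Q_{p^n}$, and for a possibly ramified group at special parahoric level this inequality is not the classical Rapoport--Richartz statement but follows from He's non-emptiness theorem (Theorem 5.1 in the paper, since $t_{\lambda(u_n)}$ lies in the relevant admissible set); and (b) the special maximal parahoric should be chosen $\sigma^n$-rational, which is possible after enlarging $n$ since $\sigma$ has finite order on the local Dynkin diagram. With these points spelled out, your argument is complete; your fallback of quoting He's class-polynomial criterion is essentially the paper's own proof.
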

\begin{proof}By \cite[\S 3]{He1}, there exists $w''\in W$ $\sigma$-straight such that $l(w'')\leq l(w')$ and $[g^{-1}\dot{w}\sigma(g)]=[\dot{w}'']$. By \cite[Theorem 3.7]{He1} $w$ and $w''$ lie in the same $\sigma$-conjugacy class in $W$, in particular $l(w'')=l(w)$. Thus $l(w)=l(w')$ and since $w'\leq w$, we have $w=w'$.\end{proof}
\subsection{} Recall the local model diagram:

$$\mathscr{S}_{\rmK_p}(G,X)\xleftarrow{q}\tilde{\mathscr{S}}_{\rmK_p}(G,X)\xrightarrow\pi M_{\G}^{loc}$$
This induces the Kottwitz Rapoport stratification on the geometric special fiber $\mathscr{S}_{\mathrm{K}_p}(G,X))_k$. We write $\mu$ for a dominant representative of $\mu_h^{-1}$ in  $X_*(T)$, then we have a map $$\lambda:\mathscr{S}_{\rmK_p}(G,X)(k)\rightarrow \Adm(\{\mu\})$$
Let $\overline{x}\in\mathscr{S}_{\rmK_p}(G,X)(k)$ and $i_{\overline{x}}:X(\sigma(\{\mu\}),b)\rightarrow\mathscr{S}_{\rmK_p}(G,X)(k)$ the map defined in Proposition \ref{map} when $G_{\Q_p}$ is residually split or $b$ is basic (here we use that $\mu_y$ and $\mu$ lie in the same $G$ conjugacy class so the associated admissible sets coincide).
\begin{prop}\label{KR}
Let $g\in X_w(b)$ for some $w\in\sigma(\Adm(\{\mu\}))$. Then $\lambda(i_{\overline{x}}(g))=\sigma^{-1}(w)$
\end{prop}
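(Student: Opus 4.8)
The plan is to compute the Kottwitz--Rapoport invariant of $i_{\overline{x}}(g)$ by reading off the Hodge filtration of its $p$-divisible group through the local model diagram. Recall from the construction of $i_{\overline{x}}$ in Proposition \ref{prop6.1} (see \S6.8--6.11) that $i_{\overline{x}}(g)$ is the $k$-point of $\mathscr{S}_{\rmK_p}(G,X)$ whose $p$-divisible group $\pdiv_{i_{\overline{x}}(g)}$ has contravariant Dieudonn\'e module $g\,\D(\pdiv_{\overline{x}})\subset\D(\pdiv_{\overline{x}})[1/p]$ with Frobenius $\varphi=b\sigma$ and crystalline tensors $s_{\alpha,0,\overline{x}}$ (which lie in $(g\,\D(\pdiv_{\overline{x}}))^\otimes$ since $g\in G(L)$ fixes them).

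First I would reduce to the case $g=1$. Passing from the base point $\overline{x}$ to $\overline{x}':=i_{\overline{x}}(g)$ replaces $b$ by $b':=g^{-1}b\sigma(g)\in\G(\Ok_L)\dot{w}\G(\Ok_L)$, so that now $1\in X_w(b')$; and by the uniqueness statement in Proposition \ref{prop6.1} (equivalently Corollary \ref{cor3}) one has $i_{\overline{x}'}(1)=\overline{x}'$, because the underlying abelian variety, tensors and level structure all coincide. So it suffices to prove: if the Frobenius of $\pdiv_{\overline{x}}$ equals $b\sigma$ in a tensor-preserving trivialization $V^*_{\Z_p}\otimes\Ok_L\cong\D(\pdiv_{\overline{x}})$, with $b\in\G(\Ok_L)\dot{w}\G(\Ok_L)$ and $w\in\sigma(\Adm(\{\mu\}))$, then $\lambda(\overline{x})=\sigma^{-1}(w)$.

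For this I would use Proposition \ref{prop12}: the completion $\widehat{U}_{\overline{x}}$ of $\mathscr{S}^-_{\rmK}(G,X)$ has a component isomorphic to the completion of $M^{loc}_{\G}$ at a point $x'\in M^{loc}_{\G}(k)$, this point classifies the Hodge filtration $\ker\overline{\varphi}$ on $\D(\pdiv_{\overline{x}})\otimes k$, and $\lambda(\overline{x})$ is by definition the unique $v\in\Adm(\{\mu\})$ with $x'\in C_v$. Now I would run the computation of \S5.4: the preimage of the Hodge filtration in $V^*_{\Z_p}\otimes\Ok_L$ is the sublattice $\sigma^{-1}(b^{-1})\,p\,(V^*_{\Z_p}\otimes\Ok_L)$, and under the embedding $M^{loc}_{\mathcal{GL}}(k)\hookrightarrow GL(L)/\mathcal{GL}(\Ok_L)$ of \S3.5, restricted to $M^{loc}_{\G}$, together with the identification of $M^{loc}_{\G}(k)$ with the union of Schubert cells $\bigcup_v C_v$ furnished by Proposition \ref{prop4}, the point $x'$ is the one attached to the $\G(\Ok_L)$-double coset of $\sigma^{-1}(b^{-1})$ (the central factor $p$ being the twist built into that embedding, exactly as in Example \ref{GL} and as already used in \S5.4). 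Tracking the Frobenius twist $\sigma$ through, and using that $b\in\G(\Ok_L)\dot{w}\G(\Ok_L)$ together with the conventions of \S3 and \S5 for how such a double coset is recorded as an element of $\Adm(\{\mu\})$, this comes out to $x'\in C_{\sigma^{-1}(w)}$, i.e. $\lambda(\overline{x})=\sigma^{-1}(w)$.

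I expect the only real difficulty to be the bookkeeping in this last step: one must keep simultaneous track of the Frobenius twist $\sigma$ (which is what turns $w$ into $\sigma^{-1}(w)$), of the $\varphi^*$/inversion entering because the Hodge filtration is $\ker\overline{\varphi}=\mathrm{im}$ of Verschiebung rather than $\overline{\varphi}$ itself, and of the central (``$p$ versus $u$'') shift implicit in the embedding of the local model into the Witt-vector affine flag variety, and then verify that --- with the normalizations already fixed in \S3 and \S5 --- these combine to give precisely $\sigma^{-1}(w)$ and not, say, $\sigma^{-1}(w^{-1})$ or a central translate. Once the conventions are aligned there is no further input required; everything else is an unwinding of the constructions of \S3--\S5.
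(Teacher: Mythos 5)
Your argument is essentially the paper's proof: identify $\D(\pdiv_{i_{\overline{x}}(g)})$ with $g\D(\pdiv_{\overline{x}})$, observe that the Hodge filtration pulls back to the lattice $\sigma^{-1}\bigl((g^{-1}b\sigma(g))^{-1}\bigr)p\,(V^*_{\Z_p}\otimes\Ok_L)$ as in \S 5.4, and read off the Schubert cell through the embedding of \S 3.5 and Proposition \ref{prop4}; your preliminary reduction to $g=1$ by moving the base point to $i_{\overline{x}}(g)$ is a harmless repackaging of the paper's step of composing the trivialization with $g$. The one place you are lighter than the paper is where you say $\lambda(\overline{x})$ is ``by definition'' computed from the filtration on $\D(\pdiv_{\overline{x}})\otimes k$ in a trivialization respecting the crystalline tensors: $\lambda$ is defined via the local model diagram, i.e. via trivializations of de Rham cohomology respecting $s_{\alpha,dR}$, and the paper devotes its first two paragraphs to showing the two computations agree, using the isomorphism $\D(\pdiv_x)(\Ok_K)\cong\D(\pdiv_{\overline{x}})\otimes_{\Ok_L}\Ok_K$ carrying $s_{\alpha,dR,x}$ to $s_{\alpha,0,\overline{x}}$ together with the fact that the two resulting trivializations differ by an element of $\G(\Ok_K)$, which does not change the cell. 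The ``bookkeeping'' you defer at the end (the $\sigma$-twist, the inversion coming from $\ker\overline{\varphi}$, and the central factor of $p$) is precisely what the paper's final sentence settles by the identification of apartments and Iwahori Weyl groups in \S 3.3, exactly as already used in \S 5.4, so your route requires no new idea beyond carrying out that verification.
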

\begin{proof}Recall how the map $\lambda$ is defined. Let $x\in \mathscr{S}_{\rmK_p}(G,X)(\Ok_K)$ be a point lifting $\xbar$. The torsor $\tilde{\mathscr{S}}_{\rmK_p}(G,X)$ is constructed by taking triviliazations of the relative de Rham cohomology which respects the cycles $s_{\alpha,dR}$. We have an isomoprhism $$V_{\Z_p}^*\otimes_{\Z_p}\Ok_K\cong \D(\pdiv_x)(\Ok_K)$$ taking $s_{\alpha}$ to $s_{\alpha,dR,x}$. We have the local model $M_\G^{loc}\subset M^{loc}_{\mathcal{GL}}\otimes\Ok_E$, where $M^{loc}_{\mathcal{GL}}$ classifies sub-modules of $V_{\Z_p}^*$. The pullback of the Hodge filtration on $\D(\pdiv_x)(\Ok_K)$ which lies in the local model $M^{loc}(\Ok_K)$. We obtain a point $\tilde{x}\in M^{loc}_\G(k)$ which lies in $\underline{\G}(k[[t]])\dot{w}/\underline{\G}(k[[t]])$\footnote{Here $\dot{w}$ denotes a lift of $w$ to $\underline{G}(k((t)))$ via the identification of Iwahori Weyl groups for $G$ and $\underline{G}_{k((t))}$} for some $w\in\Adm(\{\mu\})$. Then $\lambda(\xbar)=\dot{w}$.

There is an isomorphism $\D(\pdiv_x)(\Ok_K)\cong \D(\pdiv_{\xbar})\otimes \Ok_K$ lifting the identity mod$p$ and taking $s_{\alpha,dR,x}$ to $s_{\alpha,0,\xbar}$. Thus if we fix an isomorphism $$V_{\Z_p}^*\otimes\Ok_L\cong\D(\pdiv_{\xbar})$$ taking $s_{\alpha}$ to $s_{\alpha,0,\xbar}$, then the pullback of the filtration on $\D(\pdiv_{\xbar})(k)$ to $V_{\Z_p}^*$ differs from the one above by translation by an element of $\G(\Ok_K)$. We thus obtain a point on $\tilde{x}'\in M^{loc}_\G(k)$ which lies in the Schubert variety $S_w$. Thus $\lambda(\overline{x})$ can also be computed by a trivialization of $\D(\pdiv_x)$.

Now fix an isomorphism $V_{\Z_p}^*\otimes \Ok_L\cong\D(\pdiv_{\xbar})$. Let  $g\in X_w(b)$, then $\D(\pdiv_{i_{\xbar}(g)})$ is identified with $g\D(\pdiv_{\xbar})$. We may trivialize $\D(\pdiv_{i_{\xbar}(g)})\cong V_{\Z_p}^*\otimes \Ok_L$ by composing the trivilization $V_{\Z_p}^*\otimes \Ok_L\D(\pdiv_{\xbar})$ with the element $g$. The filtration mod$p$ on $V_{\Z_p}^*$ is then induced by the element $g^{-1}b\sigma(g).$ By the identification of apartments and Iwahori Weyl group in \S3.3, this filtration corresponds to a point $M^{loc}_\G(k)$ which lies in $\underline{\G}(k[[t]])\sigma^{-1}(\dot{w})/\underline{\G}(k[[t]])$. Hence $\lambda(i_{\xbar}(g))=w$.
\end{proof}
\subsection{}Now assume $G_{\Q_p}$ is residually split or $b$ is basic. Since $\mathscr{S}_{\rmK_p}(G,X)$ is equipped with an action of $G(\mathbb{A}^p_f)$, $i_{\xbar}$ extends to a map:
$$i_{\xbar}:X(\sigma(\{\mu\}),b)\times G(\mathbb{A}^p_f)\rightarrow \mathscr{S}_{\rmK_p}(G,X)(k)$$

As in \cite[Corollary 1.4.13]{Ki2}, this map is equivariant for the action of $\Phi\times Z_G(\Q_p)\times G(\A_f^p)$.

\begin{definition}
Let $\overline{x},\overline{x}'\in\mathscr{S}_{\rmK_p}(G,X)(k)$. We say $\overline{x}$ and $\overline{x}'$ are in the same isogeny class if there exists a quasi-isogeny $\mathcal{A}_{\overline{x}}\rightarrow \mathcal{A}_{\overline{x}'}$ respecting weak polarizations such that the induced maps $\D(\pdiv_{\overline{x}'})\rightarrow \D(\pdiv_{\overline{x}})$ and $\widehat{V}^p(\mathcal{A}_{\overline{x}})\rightarrow \widehat{V}^p(\mathcal{A}_{\overline{x}'})$ take $s_{\alpha,0,\overline{x}}$ to $s_{\alpha,0,\overline{x}'}$  and $\{s_{\alpha,l,\overline{x}}\}_{l\neq p}$ to $\{s_{\alpha,l,\overline{x}'}\}_{l\neq p}$.
\end{definition}

\begin{prop}\label{isog}Assume $G_{\Q_p}$ is residually split or $b$ is basic and $\G$ is an Iwahori subgroup. Then
$\overline{x}$ and $\overline{x}'$ lie in the same isogeny class if and only if $\overline{x}'$ lies in the  image of $$i_{\overline{x}}:X(\{\mu\},b)\times G(\A_f^p)\rightarrow\mathscr{S}_{\rmK_p}(G,X)(k)$$
\end{prop}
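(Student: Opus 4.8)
The plan is to prove the two implications separately. The \emph{if} direction will be essentially immediate from the construction of $i_{\overline{x}}$ in \S6.1; the \emph{only if} direction will require extracting a point of $X(\{\mu\},b)\times G(\A_f^p)$ out of an abstract quasi-isogeny and then identifying the resulting point with $\overline{x}'$ using the rigidity statement of Corollary \ref{cor3}.

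For the \emph{if} direction, suppose $\overline{x}'=i_{\overline{x}}(g,g^p)$ with $(g,g^p)\in X(\{\mu\},b)\times G(\A_f^p)$. By construction the abelian variety $\mathcal{A}_{\overline{x}'}$ is $\mathcal{A}_{g\overline{x}}$, and the quasi-isogeny $\mathcal{A}_{\overline{x}}\to\mathcal{A}_{g\overline{x}}$ defining the latter identifies $\D(\pdiv_{g\overline{x}})$ with $g\D(\pdiv_{\overline{x}})$. Since $g\in G(L)$ it fixes the $s_{\alpha,0,\overline{x}}$, so this quasi-isogeny carries $s_{\alpha,0,\overline{x}}$ to $s_{\alpha,0,\overline{x}'}$; since $G$ contains the scalars, $g\in GSp(L)$ and the quasi-isogeny respects the weak polarizations; and on $\widehat{V}^p$ it is an isomorphism, hence carries $\{s_{\alpha,l,\overline{x}}\}$ to $\{s_{\alpha,l,\overline{x}'}\}$ (the twist by $g^p$ alters only the level structure, not these intrinsic tensors). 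Thus $\overline{x}$ and $\overline{x}'$ lie in the same isogeny class.

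For the \emph{only if} direction, fix a quasi-isogeny $\theta\colon\mathcal{A}_{\overline{x}}\to\mathcal{A}_{\overline{x}'}$ as in the definition of the isogeny relation, together with the isomorphism $V_{\Z_p}^*\otimes_{\Z_p}\Ok_L\cong\D(\pdiv_{\overline{x}})$ of \S6.1. Because $\overline{x}'\in\mathscr{S}_{\rmK}(G,X)(k)$, the tensors $s_{\alpha,0,\overline{x}'}$ have stabilizer $\G_{\Ok_L}$, so Proposition \ref{prop3} (and triviality of the $\G$-torsor of trivializations, as in \S5.4) provides an isomorphism $V_{\Z_p}^*\otimes_{\Z_p}\Ok_L\cong\D(\pdiv_{\overline{x}'})$ carrying $s_\alpha$ to $s_{\alpha,0,\overline{x}'}$; composing with $\D(\theta)[1/p]$, which matches the crystalline tensors, realizes $\D(\pdiv_{\overline{x}'})$ as $g\D(\pdiv_{\overline{x}})$ inside $\D(\pdiv_{\overline{x}})\otimes_{\Ok_L}L$ for a well-defined class $g\in G(L)/\G(\Ok_L)$. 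The key point is that $g\in X(\sigma(\{\mu_y\}),b)$: the Hodge filtration of $\pdiv_{\overline{x}'}$ is the reduction of a point of the local model $M^{loc}_{\G}$ (Proposition \ref{prop12} and \S6.11), hence defines a point of $M^{loc}_{\G}(k)$, and transporting this via $g$ to $\D(\pdiv_{\overline{x}})$ and running the computation of \S5.4 with $b$ replaced by $g^{-1}b\sigma(g)$ (the Frobenius of $\pdiv_{\overline{x}'}$ in these coordinates), Proposition \ref{prop4} shows $g^{-1}b\sigma(g)\in\bigcup_{w\in\Adm(\{\mu_y\})}\G(\Ok_L)\sigma(\dot w)\G(\Ok_L)$, i.e.\ $g\in X(\sigma(\{\mu_y\}),b)$. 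Finally, composing the prime-to-$p$ component of $\theta$ with the level structures on $\mathcal{A}_{\overline{x}}$ and $\mathcal{A}_{\overline{x}'}$, which both carry $s_{\alpha,l}$ to $s_\alpha$, yields an element $g^p\in G(\A_f^p)$.

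It then remains to check $i_{\overline{x}}(g,g^p)=\overline{x}'$; here the hypothesis that $G_{\Q_p}$ is residually split or that $b$ is basic enters only to guarantee, via Proposition \ref{prop6.1}, that $i_{\overline{x}}$ is defined on all of $X(\sigma(\{\mu_y\}),b)$, after which it is extended by the $G(\A_f^p)$-action. Since $\mathcal{A}_{i_{\overline{x}}(g,g^p)}=\mathcal{A}_{g\overline{x}}$ and, unwinding the definitions, $\theta$ induces an isomorphism of $(\mathcal{A}_{g\overline{x}},\lambda_{g\overline{x}},\epsilon^p g^p)$ with $(\mathcal{A}_{\overline{x}'},\lambda_{\overline{x}'},\epsilon^p_{\overline{x}'})$, the points $i_{\overline{x}}(g,g^p)$ and $\overline{x}'$ have the same image in $\mathscr{S}_{\rmK'}(GSp,S^\pm)(k)$ and hence in $\mathscr{S}_{\rmK_p}(G,X)^-(k)$; moreover $s_{\alpha,0,i_{\overline{x}}(g,g^p)}=s_{\alpha,0,\overline{x}}=s_{\alpha,0,\overline{x}'}$ under this identification. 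Corollary \ref{cor3} then forces $i_{\overline{x}}(g,g^p)=\overline{x}'$. The main obstacle I anticipate is precisely the claim $g\in X(\sigma(\{\mu_y\}),b)$: this is the one point where one genuinely uses that $\overline{x}'$ lies on the Hodge-type integral model rather than merely being an abelian variety carrying the tensors, and it rests on the local model description of $\mathscr{S}_{\rmK_p}(G,X)$; the rest is bookkeeping (matching the Siegel-level data so Corollary \ref{cor3} applies, and tracking the bijection $X(\{\mu\},b)\cong X(\sigma(\{\mu\}),b)$ together with the $G$-conjugacy of $\mu_y$ and $\mu$).
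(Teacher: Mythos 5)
Your proof is correct and follows essentially the same route as the paper: extract $g\in G(L)$ from tensor-preserving trivializations of $\D(\pdiv_{\overline{x}})$ and $\D(\pdiv_{\overline{x}'})$, use the local-model argument of \S 5.4 and Proposition \ref{prop4} (the paper phrases this as ``the same proof as Proposition \ref{lifting}'') to conclude $g\in X(\sigma(\{\mu_y\}),b)$, and absorb the prime-to-$p$ data into a $G(\A_f^p)$-translate. The only difference is that you spell out the final identification $i_{\overline{x}}(g,g^p)=\overline{x}'$ via Corollary \ref{cor3}, a step the paper leaves implicit.
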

\begin{proof} Suppose $\overline{x}$ and $\overline{x}'$ lie in the same isogeny class. The composition

$$V_{\A_f^p}\xrightarrow[\epsilon_{\overline{x}}]{\sim}\widehat{V}(\mathcal{A}_{\overline{x}})\xrightarrow{\sim} \widehat{V}(\mathcal{A}_{\overline{x}'})\xrightarrow[\epsilon_{\overline{x}'}]{\sim} V_{\A_f^p}$$
takes $s_{\alpha}$ to $s_{\alpha}$, hence upon replacing $\overline{x}'$ by a translate under $G(\A_f^p)$, we may assume the quasi isogeny $\mathcal{A}_{\overline{x}}\rightarrow \mathcal{A}_{\overline{x}'}$ is compatible with $\epsilon_{\overline{x}}$ and $\epsilon_{\overline{x}'}$. 

Recall there are isomorphisms $$\D(\pdiv_{\overline{x}})\xrightarrow\sim V^*\otimes_{\Z}\mathcal{O}_L\xrightarrow\sim \D(\pdiv_{\overline{x}}')$$ taking $s_{\alpha,0,\overline{x}}$ to $s_{\alpha,0,\overline{x}'}$. Thus $\D(\pdiv_{\overline{x}})$ corresponds to $g\D(\pdiv_{\overline{x}})$ for some $g\in G(L)$. By the same proof as Proposition \ref{lifting}, we have $g\in X(\{\mu\},b)$ hence $\overline{x}'$ lies in the image of $i_{\xbar}$.

The converse is clear.
\end{proof}

\section{Maps between Shimura varieties}
In this section we show that the Shimura varieties associated to different parahorics levels admit maps between them with good properties. This will allow us to deduce the description of the isogeny classes for general parahorics from the result for Iwahori subgroups proved  in the previous section. This also verifies one of the axioms of \cite{HR} for integral models of Shimura varieties with parahoric level.

\subsection{}We keep the notations from the previous section, so that $\rho:G\rightarrow GSp(V,\psi)$ is a hodge embedding. Let $\rmK_p$ be a connected parahoric subgroup of $G(\Q_p)$ and let $\G$ denote the corresponding group scheme over $\Z_p$. Let $\mathrm{K}_p'\subset G(\Q_p)$ be another connected parahoric subgroup such that $\rmK_p\subset \rmK_p'$. If $\rmK_p$ and $\rmK_p'$ have corresponding facets $\mathfrak{f}$ and $\mathfrak{f}'$, then this is equivalent to $\mathfrak{f}$ lying in the closure of $\mathfrak{f}'$. 

By the construction in the previous section we have integral models $\mathscr{S}_{\rmK'}(G,X)$ and $\mathscr{S}_{\rmK}(G,X)$ over $\Ok_{E_{(v)}}$, where $\rmK'=\rmK_p'\rmK^p$ and $\rmK=\rmK_p\rmK^p$ for some sufficiently small $\rmK^p$. 

\begin{thm}\label{thm3}
i) For sufficiently small $\rmK^p$, there exists a map $$\pi_{\rmK_p,\rmK_p'}:\mathscr{S}_{\rmK}(G,X)\rightarrow\mathscr{S}_{\rmK'}(G,X)$$

ii) The induced  map $$\mathscr{S}_{\rmK}(G,X)(k)\rightarrow\mathscr{S}_{\rmK'}(G,X)(k)$$ is compatible with isogeny classes.
\end{thm}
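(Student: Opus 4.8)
\emph{Part (i): constructing $\pi_{\rmK_p,\rmK_p'}$.} The inclusion $\rmK_p\subset\rmK_p'$ gives a finite surjective transition morphism $Sh_{\rmK}(G,X)\to Sh_{\rmK'}(G,X)$ on generic fibres, and the point is to spread it out over $\Ok_{\mathbf{E},(v)}$. The plan is to descend it from the Siegel side. Since $\rmK_p,\rmK_p'$ correspond to a pair of facets of $\mathcal{B}(G,\Q_p)$ one of which is a face of the other, one first arranges a single Hodge embedding $\rho\colon G\to GSp(V,\psi)$ together with a nested pair of lattice chains $\mathcal{L}'\subseteq\mathcal{L}$ in $V$, the larger chain $\mathcal{L}$ belonging to the smaller parahoric, so that $\rho$ extends to closed immersions $\G\hookrightarrow GL(\mathcal{L})$ and $\G'\hookrightarrow GL(\mathcal{L}')$, compatibly with $\G\hookrightarrow\G'$ and with the restriction map $GL(\mathcal{L})\to GL(\mathcal{L}')$; this is \cite[2.3.15]{KP} applied to the two points simultaneously. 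Performing the self-dual-lattice reduction of loc.\ cit.\ for each chain, the corresponding Siegel integral models $\mathscr{S}_1,\mathscr{S}_2$ (into whose base changes $\mathscr{S}_{\rmK}(G,X)$, resp.\ $\mathscr{S}_{\rmK'}(G,X)$, are defined as normalisations of Zariski closures of the Shimura varieties) admit a finite morphism $f\colon\mathscr{S}_1\to\mathscr{S}_2$: the forgetful morphism dropping the extra terms of the chain (encoded, in the moduli description of \cite{KP}, via the auxiliary self-dual lattices), finite because the dropped isogenies have kernel of bounded order, and whose restriction to the generic fibre is compatible with $Sh_{\rmK}(G,X)\to Sh_{\rmK'}(G,X)$ and with the closed immersions into the Siegel varieties.

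\emph{Part (i), conclusion.} Granting the above, the composite $\mathscr{S}_{\rmK}(G,X)\to\mathscr{S}_{\rmK}(G,X)^-\hookrightarrow\mathscr{S}_1\xrightarrow{f}\mathscr{S}_2$ is finite, hence closed, so its image equals the closure of the image of the dense open $Sh_{\rmK}(G,X)$; by the generic-fibre compatibility this image lies in $\mathscr{S}_{\rmK'}(G,X)^-$, the Zariski closure of $Sh_{\rmK'}(G,X)$ in $\mathscr{S}_2$. We thus obtain a finite morphism $\mathscr{S}_{\rmK}(G,X)\to\mathscr{S}_{\rmK'}(G,X)^-$. Since $\mathscr{S}_{\rmK}(G,X)$ is normal it is the disjoint union of its integral normal components, each of whose generic points is a point of the smooth generic fibre $Sh_{\rmK}(G,X)$ and so maps into $Sh_{\rmK'}(G,X)\subseteq\mathscr{S}_{\rmK'}(G,X)$; applying Lemma~\ref{lemma2} to each component, the morphism lifts uniquely to $\pi_{\rmK_p,\rmK_p'}\colon\mathscr{S}_{\rmK}(G,X)\to\mathscr{S}_{\rmK'}(G,X)$. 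It is finite, and surjective because its image is closed and contains the dense open $Sh_{\rmK'}(G,X)$. All of this requires only that $\rmK^p$ be small enough for the relevant schemes and immersions to be defined.

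\emph{Part (ii): isogeny classes.} By construction $\pi_{\rmK_p,\rmK_p'}$ lies over $f$, and on $k$-points $f$ passes from $\mathcal{A}_{\overline x}$ to an abelian variety obtained from it by a fixed functorial recipe built only from prime-to-$p$ isogenies and isogenies whose kernels have order dividing a fixed power of $p$ (determined by $\mathcal{L}/\mathcal{L}'$); intuitively, it just remembers fewer terms of the chain of abelian varieties. This recipe leaves the associated $G$-isocrystal and the prime-to-$p$ $G(\A_f^p)$-datum unchanged, and it carries a quasi-isogeny respecting the tensors $s_{\alpha,0}$ and $\{s_{\alpha,l}\}_{l\neq p}$ to another such; here one uses that $\rho$, hence the tensors $s_\alpha$ cutting out $G$, is the same for both levels, so that $\pi$ changes only the integral $p$-adic (parahoric-level) structure. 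Hence if $\theta\colon\mathcal{A}_{\overline x}\to\mathcal{A}_{\overline x'}$ witnesses that $\overline x,\overline x'\in\mathscr{S}_{\rmK}(G,X)(k)$ are in the same isogeny class, the induced quasi-isogeny $\mathcal{A}_{\pi(\overline x)}\to\mathcal{A}_{\pi(\overline x')}$ respects weak polarisations and all the $s_{\alpha,0},s_{\alpha,l}$, so $\pi(\overline x),\pi(\overline x')$ lie in the same isogeny class; running this argument backwards and using surjectivity of $\pi$ on $k$-points shows that $\pi_{\rmK_p,\rmK_p'}$ carries the isogeny class of $\overline x$ onto that of $\pi(\overline x)$.

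\emph{Main difficulty.} The only non-formal ingredient is the compatible choice in Part (i): a single Hodge embedding and a nested pair of lattice chains realising both parahorics as schematic closures of $G$, together with the verification that after the self-dual-lattice reduction of \cite[2.3.15]{KP} the two Siegel integral models genuinely carry a finite morphism restricting correctly on the generic fibres. Once this Bruhat--Tits bookkeeping is in place, everything else (finiteness, the density/closedness argument, Lemma~\ref{lemma2}, and functoriality of the Siegel moduli construction under quasi-isogenies) is routine.
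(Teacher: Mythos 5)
Your overall strategy is the paper's (pass to the Siegel side, construct a forgetful map there, take Zariski closures, lift to the normalization via Lemma \ref{lemma2}, and prove (ii) by restricting quasi-isogenies to a sub-collection), but the step you rely on in Part (i) is false as stated, and it is exactly the non-formal content of the paper's argument. After the reduction of \cite[2.3.15]{KP}, the ambient models $\mathscr{S}_1$ and $\mathscr{S}_2$ are moduli of a \emph{single} weakly polarized abelian variety (of dimension $rg$, resp.\ $sg$) up to prime-to-$p$ isogeny with prime-to-$p$ level structure; a point of $\mathscr{S}_1$ carries no decomposition indexed by the lattice chain, so ``dropping the extra terms of the chain'' does not define a morphism $f:\mathscr{S}_1\rightarrow\mathscr{S}_2$, finite or otherwise. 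The decomposition is extra data, and supplying it is what the intermediate Shimura datum $(H,T)$ with $H=\prod'_{i=1}^rGSp(V,\psi)$ is for: $\mathscr{S}_{\mathrm{H}_p\mathrm{H}^p}(H,T)$ parametrises tuples $(\mathcal{A}_i,\lambda_i,\epsilon_i^p)$, the forgetful map $\varpi_{H,H'}$ is defined on \emph{that} moduli problem, and one must prove (a) that $\mathscr{S}_{\mathrm{H}_p\mathrm{H}^p}(H,T)$ is closed in the Siegel model (Proposition \ref{prop7}; the properness there is the real point — a product decomposition of an abelian scheme over the fraction field of a dvr extends over the dvr, by Néron models), and (b) that the closure $\mathscr{S}^-_{\rmK}(G,X)$ of $Sh_{\rmK}(G,X)$ in the Siegel model actually lies inside it (Corollary \ref{cor4}). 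You flag this as the ``main difficulty'' but then wave it through as Bruhat--Tits bookkeeping; without the intermediate moduli and the Néron-model argument the map you compose with does not exist. Once the map to $\mathscr{S}_{\mathrm{H}'_p\mathrm{H}'^p}(H',T')$ is in place, your closure-plus-normalization step is the paper's.

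Part (ii) has a related gap. The tensors at the two levels are not ``the same'': they live in $(\oplus_{i=1}^r\Lambda_{i,\Z_{(p)}})^\otimes$ and $(\oplus_{j=1}^s\Lambda_{i_j,\Z_{(p)}})^\otimes$ respectively and are related only through $\iota^\otimes$ and $\pi^\otimes$ (note $\iota^\otimes$ involves $p$ because of duals), with the level-$\rmK$ tensors $t_\beta$ chosen to extend $\iota^\otimes(s_\alpha)$. More importantly, for a quasi-isogeny of the product abelian variety respecting the level-$\rmK$ tensors to ``remember fewer terms of the chain'' you must know it respects the product decomposition into the $\mathcal{A}_i$; this is not automatic and is arranged in the paper by including the projection idempotents $t_{\beta_k}$ among the tensors (Lemma \ref{lemma4}), after which $\pi^\otimes\circ\iota^\otimes=\mathrm{id}$ gives the statement for the $s_\alpha$ (Proposition \ref{maps}). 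Your argument should make both of these points explicit rather than asserting that the forgetful recipe obviously preserves the tensor data.
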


\subsection{}Let $\mathfrak{g}$ be a facet in $\B(GSp(V_{\Q_p}),\Q_p)$ and let $\mathcal{GSP}$ denote the associated parahoric. Then $\mathfrak{g}$ corresponds to a lattice chain $\Lambda_1\supset\Lambda_2\supset\dots\supset \Lambda_r$ in $V_{\Q_p}$. Let $V_{\Q_p}'=\oplus_{i=1}^rV_{\Q_p}$, then $V'$ is equipped with an alternating form $\psi'$ given by the direct sum of $\psi$. We have the lattice $\Lambda'=\oplus_{i=1}^r\Lambda_i\subset V'_{\Q_p}$ and we write $\mathcal{GSP}'$ for the associated parahoric. 

We have a map $$GSp(V,\psi)\rightarrow GSp(V',\psi')$$ which factors through the subgroup $H:=\prod_{i=1}^{'r}GSp(V,\psi)$ where $\prod'$ denotes the subgroup of the product $\prod_{i=1}^{r}GSp(V,\psi)$ consisting of elements $(g_1,\dots,g_r)$ such that $c(g_1)=\dots=c(g_2)$, where $$c:GSp(V,\psi)\rightarrow \mathbb{G}_m$$ is the multiplier homomorphism. 

The conjugacy class of cocharacters $S^\pm$ for $GSp(V,\psi)$ gives rise to a $H(\mathbb{R})$ conjugacy class of homomorphisms $T$ from $\mathbb{S}$ into $H_{\mathbb{R}}$ and $(H,T)$ is a Shimura datum. We write $\mathrm{H}_p$ and $\mathrm{J}_p$ for the stabilizer of the lattice $\Lambda'$ in $H(\Q_p)$ and $GSp(V'_{\Q_p})$ respectively. We obtain a map of Shimura varieties $$\iota:Sh_{\mathrm{H}_p\mathrm{H}^p}(H,T)\rightarrow Sh_{\mathrm{J}_p\mathrm{J}^p}(GSp(V',S'^\pm))$$ which is a closed immersion. Here $\mathrm{H}^p\subset H(\A_f^p)$ and $\mathrm{J}^p\subset GSp(V'\otimes \A_f^p)$ are sufficiently small compact open subgroups.

The Shimura variety $Sh_{\mathrm{H}_p\mathrm{H}^p}(H,T)$ admits a moduli interpretation over $\Z_{(p)}$ which we will now explain. For $S$ a $\Z_{(p)}$-scheme, we consider the set of tuples $(\mathcal{A}_i,\lambda_i,\epsilon_{i}^p)_{i=1,\dots,r}$, where: 

i) $\mathcal{A}_i$ is an abelian variety over $S$ up to prime to isogeny.

ii) $\lambda_i$ is polarization such that $\deg\lambda_i$ is exactly divisible by $|\Lambda_{i}/\Lambda_i^*|$.

iii) $\epsilon^p_i: \widehat{V}(\mathcal{A}_i)\xrightarrow\sim V\otimes_{\Q}\A_f^p$ is an isomorphism which takes the Riemann form on $\widehat{V}(\mathcal{A}_i)$ to a multiple of $\psi$ on $V\otimes_{\Q}\A_f^p$. This multiple is required to be independent of $i$.

We obtain an integral model $\mathscr{S}_{\mathrm{H}_p\mathrm{H}^p}(H,T)$ of $Sh_{\mathrm{H}_p\mathrm{H}^p}(H,T)$.

\begin{prop}\label{prop7}
For sufficiently small $\mathrm{J}_p$, the embedding $i$ extends to a closed embedding $$\iota:\mathscr{S}_{\mathrm{H}_p\mathrm{H}^p}(H,T)\rightarrow \mathscr{S}_{\mathrm{J}_p\mathrm{J}^p}(GSp(V'),S'^\pm)$$
\end{prop}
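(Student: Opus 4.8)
The plan is to describe $\iota$ on the level of the moduli problems and then verify that it is a proper monomorphism; since both sides are of finite type over $\Ok_{E_{(v)}}$, a proper monomorphism is a closed immersion, which is what is wanted. First I would fix the auxiliary levels so that $\mathrm{H}^p$ maps into $\mathrm{J}^p$ under $H(\A_f^p)\hookrightarrow GSp(V'\otimes\A_f^p)$, with $\mathrm{J}^p$ neat and $\mathrm{J}^p\cap H(\A_f^p)=\mathrm{H}^p$. Then on $S$-points $\iota$ sends a tuple $(\mathcal{A}_i,\lambda_i,\epsilon^p_i)_{i=1}^r$ to $\bigl(\prod_i\mathcal{A}_i,\ \oplus_i\lambda_i,\ \oplus_i\epsilon^p_i\bigr)$; this is well defined because $\deg(\oplus_i\lambda_i)=\prod_i\deg\lambda_i=\prod_i|\Lambda_i/\Lambda_i^*|=|\Lambda'/\Lambda'^*|$ and because condition (iii) of the moduli problem for $(H,T)$ forces the similitude factors of the $\epsilon^p_i$ to agree, so that $\oplus_i\epsilon^p_i$ respects $\psi'$ up to a single scalar. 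On generic fibres this recovers $i$, since under the moduli interpretations the morphism of Shimura data $(H,T)\to(GSp(V'),S'^\pm)$ is exactly ``product of abelian varieties''.

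Next I would check that $\iota$ is proper via the valuative criterion. Given a discrete valuation ring $R$ with fraction field $K$, an $R$-point of $\mathscr{S}_{\mathrm{J}_p\mathrm{J}^p}(GSp(V'),S'^\pm)$ — i.e.\ an abelian scheme $\mathcal{A}'/R$ with polarization and level structure — and a lift of its generic fibre to $\mathscr{S}_{\mathrm{H}_p\mathrm{H}^p}(H,T)$, the lift amounts to a decomposition $\mathcal{A}'_K\sim\prod_i\mathcal{A}_{i,K}$ compatible with the extra structure, equivalently a system of orthogonal idempotents $e_i\in\mathrm{End}(\mathcal{A}'_K)\otimes\Z_{(p)}$ summing to $1$. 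Because $\mathcal{A}'/R$ is an abelian scheme, homomorphisms of its generic fibre extend uniquely over $R$, so the $e_i$ come from idempotents of $\mathrm{End}(\mathcal{A}'/R)\otimes\Z_{(p)}$; setting $\mathcal{A}_i:=e_i\mathcal{A}'$ gives abelian subschemes with $\mathcal{A}'\sim\prod_i\mathcal{A}_i$, and $\oplus_i\lambda_i$, $\oplus_i\epsilon^p_i$ extend the data on $K$ — the degree of each $\lambda_i$ is locally constant, hence equal to $|\Lambda_i/\Lambda_i^*|$ throughout. This produces the required $R$-point, so $\iota$ is proper; in particular its image is closed.

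Finally I would show $\iota$ is a monomorphism, i.e.\ unramified and universally injective. For unramifiedness, given a square-zero extension $\widetilde R\twoheadrightarrow R$ and two $\widetilde R$-points of $\mathscr{S}_{\mathrm{H}}(H,T)$ with equal image in $\mathscr{S}_{\mathrm{H}}(H,T)(R)$ and equal image $(\widetilde{\mathcal{A}}',\dots)$ in $\mathscr{S}_{\mathrm{J}}(GSp(V'),S'^\pm)(\widetilde R)$, the two points give two systems of idempotents in $\mathrm{End}(\widetilde{\mathcal{A}}')\otimes\Z_{(p)}$ with the same image in $\mathrm{End}(\widetilde{\mathcal{A}}'\otimes R)\otimes\Z_{(p)}$ (using neatness of $\mathrm{J}^p$ to identify the relevant isomorphism with the identity); since $\mathrm{End}(\widetilde{\mathcal{A}}')\hookrightarrow\mathrm{End}(\widetilde{\mathcal{A}}'\otimes R)$ by rigidity, the systems coincide and the two points are equal. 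For universal injectivity, two $\overline{K}$-points of $\mathscr{S}_{\mathrm{H}}(H,T)$ with the same image in $\mathscr{S}_{\mathrm{J}}(GSp(V'),S'^\pm)(\overline{K})$ differ by a prime-to-$p$ quasi-isogeny $\Theta$ of $\mathcal{A}'$ which is a $\lambda'$-similitude with unit multiplier and which induces, through the level structures, an element of $\mathrm{J}^p$ on $\widehat{V}(\mathcal{A}')$; one then argues that the combination of these constraints with the requirement that $\Theta$ carry the prescribed type-$(|\Lambda_i/\Lambda_i^*|)_i$ decomposition of $(\mathcal{A}',\lambda')$ to another of the same type forces $\Theta$ to respect the block decomposition, hence to give an isomorphism of the two points already in $\mathscr{S}_{\mathrm{H}}(H,T)$. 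Combining, $\iota$ is a proper monomorphism, hence a closed immersion.

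I expect the last step — establishing universal injectivity, concretely that the quasi-isogeny $\Theta$ relating two preimages must be block-diagonal — to be the main obstacle. This is exactly where the choice of ``sufficiently small'' level is used: one needs $\mathrm{J}^p$ small enough, together with $\mathrm{H}^p=\mathrm{J}^p\cap H(\A_f^p)$, so that an automorphism of the prime-to-$p$ Tate module lying in $\mathrm{J}^p$ and compatible with the two product polarizations (and their invariant factors) cannot mix the factors; once this bookkeeping is carried out, uniqueness of the decomposition, and hence the closed immersion property, follows.
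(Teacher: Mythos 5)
Your definition of $\iota$ and your properness argument are essentially the paper's: the map is ``take the product'' on the moduli problems, and the valuative criterion is settled by extending the generic-fibre decomposition over the discrete valuation ring using the N\'eron property of abelian schemes. The gap is in the last step, and it is exactly the one you flag yourself: you never actually prove universal injectivity at finite level, i.e.\ that a prime-to-$p$ quasi-isogeny $\Theta$ of $\prod_i\mathcal{A}_i$ preserving the weak polarization and the level structure \emph{modulo} $\mathrm{J}^p$ must respect the block decomposition. This does not follow from neatness together with $\mathrm{H}^p=\mathrm{J}^p\cap H(\A_f^p)$: for any fixed $\mathrm{J}^p$ there are elements of $GSp(V'\otimes\A_f^p)$ lying in $\mathrm{J}^p$ which mix the factors (any element close to the identity does not preserve the decomposition), so ``an automorphism of the prime-to-$p$ Tate module lying in $\mathrm{J}^p$ \dots cannot mix the factors'' is false as stated, and the assertion that the remaining ``bookkeeping'' forces $\Theta$ to be block-diagonal is precisely the nontrivial content you would need to supply. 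Saying ``for $\mathrm{J}^p$ small enough this works'' is not a proof either, since how small is small enough depends on the pair of points being compared unless one argues more globally.

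The paper sidesteps this entirely by the standard reduction of Deligne and Kisin (\cite[2.1.2]{Ki1}, \cite[1.15]{De}): it suffices to show that the map of pro-schemes $\mathscr{S}_{\mathrm{H}_p}(H,T)\rightarrow\mathscr{S}_{\mathrm{J}_p}(GSp(V'),S'^{\pm})$, i.e.\ the limit over all prime-to-$p$ levels, is a closed immersion; the existence of a sufficiently small $\mathrm{J}^p$ at finite level is then the content of that cited lemma and need not be reproved. At infinite level the injectivity you are struggling with is immediate from the moduli interpretation: the level structure is an actual trivialization of $\widehat{V}$, so the quasi-isogeny relating two preimages induces the identity on $V'\otimes\A_f^p=\oplus_i V\otimes\A_f^p$, hence is automatically block-diagonal and an isomorphism of tuples. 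So to repair your argument you should either insert this reduction to infinite prime-to-$p$ level (after which your properness argument plus the trivial injectivity finishes the proof, as in the paper), or else genuinely reprove the Deligne--Kisin lemma, which your sketch does not do.
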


\begin{proof}Using the moduli interpretations, we may define $\iota$  by sending $(\mathcal{A}_i,\lambda_i,\epsilon_i^p)_{i=1,\dots,r}\in \mathscr{S}_{\mathrm{H}_p\mathrm{H}^p}(H,T)(S)$ to the product $\mathcal{A}_1\times\dots\times\mathcal{A}_r$, together with the product polarization and level structure. 

As in \cite[2.1.2]{Ki1}, see also \cite[1, 1.5]{De}, it suffices to show $$\iota:\mathscr{S}_{\mathrm{H}_p}(H,T)\rightarrow \mathscr{S}_{\mathrm{J}_p}(GSp(V'_{\Q_p}),S'^\pm)$$ is a closed immersion. We will show $\iota$ is proper and an injection on points, which implies the result.
The injectivity follows from the moduli interpretations of the integral models.

To check properness, we apply the valuative criterion. Let $R$ be a discrete valuation ring with fraction field $K$. We must show for any diagram

\[\xymatrix{\Spec K \ar[r] \ar[d] &\mathscr{S}_{\mathrm{H}_p}(H,T)\ar[d]\\
\Spec R \ar[r] & \mathscr{S}_{\mathrm{J}_p}(GSp(V'),S'^\pm)}\]
there exists a unique lift $\Spec R\rightarrow \mathscr{S}_{\mathrm{H}_p}(H,T)$. Rephrasing in terms of the moduli interpretation, we must show for a triple  $(\mathcal{A},\lambda,\epsilon^p)$ over $R$, such that over $K$ this data decomposes into a product coming from $(\mathcal{A}_i,\lambda_i,\epsilon_i)_{i=1,\dots,r}$, then the triple over $R$ decomposes. This follows by well known properties of Neron models.
\end{proof}
\subsection{}
Recall we have an embedding of buildings, $i:B(G,\Q_p)\rightarrow B(GSp(V_{\Q_p}),\Q_p)$. Let $\mathfrak{f}$ be a facet in $B(G,\Q_p)$ with associated connected parahoric group scheme $\G$. Let $i(\mathfrak{f})$ be contained in a facet $\mathfrak{g}$ of $B(GSp(V_{\Q_p}),\Q_p)$ corresponding to $\Lambda_1\supset \dots\supset \Lambda_r$. Let $(H,T)$ and $V'$ be as above, we obtain a new embedding of Shimura datum $$(G,X)\rightarrow (GSp(V',\psi'),S'^\pm)$$ which factorises as $$(G,X)\xrightarrow{\rho'} (H,T)\rightarrow (GSp(V',\psi'),S'^\pm)$$ Let $\mathrm{H}_p$ and $\mathrm{J}_p$ be as above and let $\mathrm{H}^p$ and $\mathrm{J}^p$ be as in Proposotion \ref{prop7}. We obtain maps of Shimura varieties

$$Sh_{\rmK_p\rmK^p}(G,X)\rightarrow Sh_{\mathrm{H}_p\mathrm{H}^p}(H,T)\rightarrow Sh_{\mathrm{J}_p\mathrm{J}'^p}(GSp(V'),S^{'\pm})_E$$
and each of these maps are closed immersions. Recall $\mathscr{S}^-_{\rmK_p\rmK^p}(G,X)$ was defined to be the closure of $
Sh_{\rmK_p\rmK^p}(G,X)$ in $\mathscr{S}_{\mathrm{J}_p\mathrm{J}^p}(GSp(V'),S'^\pm)_{\Ok_{E_{(v)}}}$.
\begin{cor}\label{cor4}
$\mathscr{S}^-_{\rmK_p}(G,X)$ is the closure of $Sh_{\rmK_p\rmK^p}(G,X)$ in $\mathscr{S}_{\mathrm{H}_p\mathrm{H}^p}(H,T)$.
\end{cor}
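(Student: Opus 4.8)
The plan is to deduce the statement formally from Proposition~\ref{prop7} together with the elementary fact that the scheme-theoretic image of a quasi-compact morphism is insensitive to enlarging the target along a closed immersion; the only genuine work is the bookkeeping with reflex fields and the passage to the limit over $\rmK^p$.

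First I would recall from the construction preceding the corollary that the symplectic embedding $(G,X)\hookrightarrow (GSp(V'),\psi')$ factors as $(G,X)\xrightarrow{\rho'}(H,T)\to (GSp(V'),S'^\pm)$. Since a morphism of Shimura data can only shrink the reflex field, one has $E(GSp(V'),S'^\pm)\subseteq E(H,T)\subseteq \mathbf{E}$, so over $\mathbf{E}$ this yields closed immersions
$$Sh_{\rmK_p\rmK^p}(G,X)\hookrightarrow Sh_{\mathrm{H}_p\mathrm{H}^p}(H,T)_{\mathbf{E}}\hookrightarrow Sh_{\mathrm{J}_p\mathrm{J}^p}(GSp(V'),S'^\pm)_{\mathbf{E}}.$$
By Proposition~\ref{prop7}, base changing the moduli integral models to $\Ok_{E_{(v)}}$ gives a closed immersion $\iota\colon \mathscr{S}_{\mathrm{H}_p\mathrm{H}^p}(H,T)_{\Ok_{E_{(v)}}}\hookrightarrow \mathscr{S}_{\mathrm{J}_p\mathrm{J}^p}(GSp(V'),S'^\pm)_{\Ok_{E_{(v)}}}$, and by construction of the moduli space the generic fibre of $\mathscr{S}_{\mathrm{H}_p\mathrm{H}^p}(H,T)$ over $E$ is $Sh_{\mathrm{H}_p\mathrm{H}^p}(H,T)_E$. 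Hence the morphism $f\colon Sh_{\rmK_p\rmK^p}(G,X)\to \mathscr{S}_{\mathrm{J}_p\mathrm{J}^p}(GSp(V'),S'^\pm)_{\Ok_{E_{(v)}}}$, whose scheme-theoretic image is $\mathscr{S}^-_{\rmK_p\rmK^p}(G,X)$ by definition, factors through $\iota$.

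The key step is the general fact that if $f\colon Y\to X$ is quasi-compact and quasi-separated and factors as $Y\xrightarrow{g}Z\xrightarrow{i}X$ with $i$ a closed immersion, then the scheme-theoretic image of $f$ is the image under $i$ of the scheme-theoretic image of $g$; indeed $f_*\mathcal{O}_Y=i_*(g_*\mathcal{O}_Y)$, so $\ker(\mathcal{O}_X\to f_*\mathcal{O}_Y)$ is the inverse image under the surjection $\mathcal{O}_X\to i_*\mathcal{O}_Z$ of $\ker(\mathcal{O}_Z\to g_*\mathcal{O}_Y)$. All the schemes occurring are quasi-compact and quasi-separated (the $Sh$'s are varieties over $\mathbf{E}$, the integral models are quasi-projective over $\Ok_{E_{(v)}}$), so taking $Z=\mathscr{S}_{\mathrm{H}_p\mathrm{H}^p}(H,T)_{\Ok_{E_{(v)}}}$ shows that $\mathscr{S}^-_{\rmK_p\rmK^p}(G,X)$, viewed inside $\mathscr{S}_{\mathrm{H}_p\mathrm{H}^p}(H,T)_{\Ok_{E_{(v)}}}$ via $\iota$, is exactly the scheme-theoretic closure of $Sh_{\rmK_p\rmK^p}(G,X)$ there.

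Finally I would pass to the limit over $\rmK^p$, shrinking $\mathrm{H}^p$ correspondingly. For $\rmK^p$ small enough the transition maps in the towers $\{\mathscr{S}_{\rmK_p\rmK^p}(G,X)\}_{\rmK^p}$ and $\{\mathscr{S}_{\mathrm{H}_p\mathrm{H}^p}(H,T)\}_{\mathrm{H}^p}$ are finite, so the limits $\mathscr{S}^-_{\rmK_p}(G,X)$ and $\mathscr{S}_{\mathrm{H}_p}(H,T):=\varprojlim_{\mathrm{H}^p}\mathscr{S}_{\mathrm{H}_p\mathrm{H}^p}(H,T)$ exist as schemes, the closed immersion $\iota$ and the identification of the previous paragraph are compatible with these affine transition maps, and the corollary follows. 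I expect the points requiring care --- none of them a real obstacle --- to be the identification of the generic fibre of $\mathscr{S}_{\mathrm{H}_p\mathrm{H}^p}(H,T)$ with $Sh_{\mathrm{H}_p\mathrm{H}^p}(H,T)_E$ (part of its construction in \cite{KP}) and the compatibility of the scheme-theoretic closure with the inverse limit over the prime-to-$p$ level.
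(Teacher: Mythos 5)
Your proposal is correct and is essentially the paper's argument: the paper's proof of this corollary is simply ``Immediate from Proposition \ref{prop7}'', i.e.\ since the map from $Sh_{\rmK_p\rmK^p}(G,X)$ into the Siegel integral model factors through the closed immersion $\iota$ of Proposition \ref{prop7}, the closure defining $\mathscr{S}^-$ may equally be taken inside $\mathscr{S}_{\mathrm{H}_p\mathrm{H}^p}(H,T)$. Your additional bookkeeping (scheme-theoretic image through a closed immersion, reflex fields, limit over $\rmK^p$) just makes explicit what the paper leaves implicit.
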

\begin{proof} Immediate from Proposition \ref{prop7}.
\end{proof}

Now suppose  $\mathfrak{f}'$ is a facet of $B(GSp(V_{\Q_p}),\Q_p)$ such that $\mathfrak{f}'$ lies in the closure of $\mathfrak{f}$. Then $\mathfrak{f}'$ corresponds to a lattice chain $\Lambda_{i_1}\subset\dots\subset\Lambda_{i_s}$, where $\{i_1,\dots,i_s\}\subset \{1,\dots,r\}$. Let $(H',T')$ be the Shimura datum obtained from the above construction applied to $\mathfrak{g}'$, i.e. $H'=\prod^{'s}_{j=1}GSp(\oplus_{j=1}^sV_{\Q_p})$, and $\mathrm{H}'_p$ the parahoric of $H'(\Q_p)$ stabilizing $\oplus_{j=1}^s\Lambda_{i_j}$. We obtain a morphism of Shimura data $(H,T)\rightarrow (H',T')$, hence choosing suitable levels $\mathrm{H}_p$ and $\mathrm{H}_p'$ away from $p$, we obtain a morphism of Shimura varieties $$\varpi_{H,H'}:Sh_{\mathrm{H}_p\mathrm{H}^p}(H,T)\rightarrow Sh_{\mathrm{H}'_p\mathrm{H}'^p}(H',T')$$

Using the moduli interpretation, this extends in the obvious way to a morphism of integral models
$$\varpi_{H,H'}:\mathscr{S}_{\mathrm{H}_p\mathrm{H}^p}(H,T)\rightarrow \mathscr{S}_{\mathrm{H}'_p\mathrm{H}'^p}(H',T')$$

\begin{proof}[Proof of Theorem \ref{thm3} i)]Recall $\mathfrak{f'}$ is a facet of $B(G,\Q_p)$ such that \label{key} $\mathfrak{f}$ lies in the closure of $\mathfrak{f}'$. Let $z\in \mathfrak{f}$ and let $z'\in\mathfrak{f}'$ be a point sufficiently close to $z$ such that if $\mathfrak{g}$ and $\mathfrak{g}'$ denotes the facets of $B(GSp(V_{\Q_p}), S',\Q_p)$ containing $i(z)$ and $i(z')$, we have $\mathfrak{g}$ lies in the closure of $\mathfrak{g}'$. Applying the above constructions we obtain a diagram:

\[\xymatrix{\mathscr{S}^-_{\mathrm{K}_p\mathrm{K}^p}(G,X) \ar[r] &\mathscr{S}_{\mathrm{H}_p\mathrm{H}^p}(H,T) \ar[d]^{\varpi_{H,H'}}\\
	\mathscr{S}^-_{\mathrm{K}'_p\mathrm{K}'^p}(G,X) \ar[r] &\mathscr{S}_{\mathrm{H}'_p\mathrm{H}'^p}(H',T')	}\]

On the generic fiber, this can be completed to a diagram

\[\xymatrix{Sh_{\mathrm{K}_p\mathrm{K}^p}(H,T)\ar[d] \ar[r] &Sh_{\mathrm{H}_p\mathrm{H}^p}(G,X)\ar[d]^{\varpi_{H,H'}}\\
	Sh_{\mathrm{K}'_p\mathrm{K}'^p}(G,X) \ar[r] &Sh_{\mathrm{H}'_p\mathrm{H}'^p}(H',T')	}\]
hence by Corollary \ref{cor4}, we obtain a map $\mathscr{S}^-_{\rmK_p\rmK^p}(G,X)\rightarrow \mathscr{S}^-_{\rmK'_p\rmK'^p}(G,X)$, and taking normalizations we obtain:

$$\pi_{\rmK_p,\rmK_p'}:\mathscr{S}_{\rmK_p\rmK^p}(G,X)\rightarrow \mathscr{S}_{\rmK'_p\rmK'^p}(G,X)$$

\end{proof}
The above maps then induce by passage to the limit, a map between the pro-varieties $$\pi_{\mathrm{K}_p,\mathrm{K}'_p}:\mathscr{S}_{\mathrm{K}_p}(G,X)\rightarrow \mathscr{S}_{\mathrm{K}'_p}(G,X)$$
\subsection{}Now we relate the isogeny classes on $\mathscr{S}_{\rmK_p}(G,X)_k$ and $\mathscr{S}_{\rmK'_p\mathrm{K}^p}(G,X)_k$. Let $\xbar\in\mathscr{S}_{\rmK_p\rmK^p}(G,X)(k)$ and ${\overline{y}}=\pi_{\rmK_p,\rmK'_p}(\overline{x})$. Let $\mathscr{I}_{\overline{x}}$ and $\mathscr{I}_{\overline{y}}$ denote the isogeny classes of $\overline{x}$ and ${\overline{y}}$. Then $\overline{x}$ corresponds to a collection $(\mathcal{A}_i,\lambda_i,\epsilon_i^p)_{i=1,\dots,r}$ and ${\overline{y}}$ corresponds to $(\mathcal{A}_{i_j},\lambda_{i_j},\epsilon_{i_j}^p)_{j=1,\dots,s}$ and we have inclusion and projection maps $$\iota_0:\oplus_{j=1}^s\D(\pdiv_{i_j})\rightarrow \oplus_{i=1}^r\D(\pdiv_{i})\ \ \ \ \ \ \ \ \pi_0:\oplus_{i=1}^r\D(\pdiv_{i})\rightarrow\oplus_{j=1}^s\D(\pdiv_{i_j})$$
and for $l\neq p$
$$\iota_l:\oplus_{j=1}^s(T_l\mathcal{A}_{i_j})^*\rightarrow \oplus_{i=1}^r(T_l\mathcal{A}_i)^*\ \ \ \ \ \ \ \ \pi_l:\oplus_{i=1}^r(T_l\mathcal{A}_i)^*\rightarrow\oplus_{j=1}^s(T_l\mathcal{A}_{i_j})^*$$

We write $G'_{\Z_{(p)}}$ and $G_{\Z_{(p)}}$ the groups over $\Z_{(p)}$ given by the Zariski closures of $G$ in $GL(\oplus_{j=1}^s\Lambda_{i_j,\Z_{(p)}})$ and $GL(\oplus_{i=1}^r\Lambda_{i,\Z_{(p)}})$. Here we write $\Lambda_{i,\Z_{(p)}}$ for the $\Z_{(p)}$ module $V\cap\Lambda_i$. Then $G'_{\Z_{(p)}}$ is the stabilizer of a collection of tensors $s_{\alpha}\in (\oplus_{j=1}^s\Lambda_{i_j,\Z_{(p)}})^\otimes$. We have the two maps $$\iota: \oplus_{j=1}^s\Lambda_{i_j,\Z_{(p)}}\rightarrow \oplus_{i=1}^r\Lambda_{i,\Z_{(p)}}\mbox{ and }\pi:\oplus_{i=1}^r\Lambda_{i,\Z_{(p)}}\rightarrow \oplus_{j=1}^s\Lambda_{i_j,\Z_{(p)}}$$ given by the inclusion and projection. These induce maps $$\iota^\otimes: (\oplus_{j=1}^s\Lambda_{i_j,\Z_{(p)}})^\otimes\rightarrow (\oplus_{i=1}^r\Lambda_{i,\Z_{(p)}})^\otimes\mbox{ and }\pi^\otimes:(\oplus_{i=1}^r\Lambda_{i,\Z_{(p)}})^\otimes\rightarrow (\oplus_{j=1}^s\Lambda_{i_j,\Z_{(p)}})^\otimes$$ such that $\pi^\otimes \circ\iota^\otimes$ is the identity. Note that since $(\oplus_{j=1}^s\Lambda_{i_j,\Z_{(p)}})^\otimes$ involves taking duals, one needs to use $p$ in the definition of the map $\iota^\otimes$. These maps exhibit $(\oplus_{j=1}^s\Lambda_{i_j,\Z_{(p)}})^\otimes$ as a direct summand of $(\oplus_{i=1}^r\Lambda_{i,\Z_{(p)}})^\otimes$

\begin{lemma}
The $\iota^\otimes(s_{\alpha})$ are fixed by $G_{\Z_{(p)}}$
\end{lemma}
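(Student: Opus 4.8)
The plan is to reduce the statement to the fact that the schematic closure of a subgroup is contained in any closed subscheme containing its generic fibre. Concretely, I would show that $\iota^\otimes$ intertwines the natural $G$-actions on the two tensor algebras, deduce that $\iota^\otimes(s_\alpha)$ is fixed by the generic fibre $G$ of $G_{\Z_{(p)}}$, and then conclude by taking schematic closures.

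First I would make the group actions explicit. The group $G$ acts on $\oplus_{i=1}^r V$ through the diagonal embedding $\rho':G\to H=\prod_{i=1}^{'r}GSp(V,\psi)$ followed by the factorwise action of $H$, i.e. $g\cdot(v_1,\dots,v_r)=(gv_1,\dots,gv_r)$; likewise $G$ acts diagonally on $\oplus_{j=1}^s V$ through $G\to H'$. With these descriptions the inclusion $\iota$ (which places the $j$-th summand into the $i_j$-th slot and zero elsewhere) and the projection $\pi$ are manifestly $G$-equivariant, both over $\Q$ and on the chosen $\Z_{(p)}$-lattices. Hence the induced maps $\iota^\otimes$ and $\pi^\otimes$ on the tensor constructions are $G$-equivariant as well; the only subtlety is that on the contravariant (dual) factors $\iota^\otimes$ is given by $\pi^\vee$ rather than a literal dual of $\iota$ — this is the bookkeeping that forces the factors of $p$ in the explicit formula for $\iota^\otimes$ — but since $\pi$ is itself $G$-equivariant this does not disturb equivariance.

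Next, by construction the $s_\alpha$ are tensors whose common stabiliser is $G'_{\Z_{(p)}}$, and $G'_{\Z_{(p)}}$ has generic fibre $G$, so each $s_\alpha$ is fixed by $G$. Equivariance of $\iota^\otimes$ then gives that $\iota^\otimes(s_\alpha)$ is fixed by $G$ inside $(\oplus_{i=1}^r\Lambda_{i,\Z_{(p)}})^\otimes\otimes_{\Z_{(p)}}\Q$. Since $\iota^\otimes$ was defined as a map of the integral tensor lattices, $\iota^\otimes(s_\alpha)$ actually lies in $(\oplus_{i=1}^r\Lambda_{i,\Z_{(p)}})^\otimes$, so its pointwise stabiliser in $GL(\oplus_{i=1}^r\Lambda_{i,\Z_{(p)}})$ is a closed subgroup scheme over $\Z_{(p)}$ whose generic fibre contains $G$; therefore it contains the schematic closure $G_{\Z_{(p)}}$ of $G$, which is the assertion. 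I do not anticipate a real obstacle here: the content is entirely in identifying the $G$-actions and the equivariance of $\iota$, $\pi$, and the generic-fibre-to-integral-model step is the standard schematic closure argument already used repeatedly in the paper.
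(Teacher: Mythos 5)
Your proposal is correct and follows essentially the same route as the paper: the paper also reduces to the generic fibre (where the reduction is exactly your schematic-closure observation, since $G_{\Z_{(p)}}$ is the Zariski closure of $G$) and then uses the $G$-equivariance of both $\iota$ and $\pi$ to deduce equivariance of $\iota^\otimes$ and hence that $\iota^\otimes(s_\alpha)$ is fixed. Your explicit remark that the dual factors of $\iota^\otimes$ are built from $\pi^\vee$ (the source of the factors of $p$) is just a more careful spelling-out of the same point.
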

\begin{proof} It suffices to check this after inverting $p$. Then $i$ and $p$ are both equivariant for the action of $G_{\Q_p}$, hence so is $\iota$. Thus $\iota^\otimes(s_\alpha)$ is preserved by $G_{\Q_p}$.
	\end{proof}
We may extend $\iota^\otimes(s_{\alpha})$ to a collection of tensors $t_\beta\in (\oplus_{i=1}^r\Lambda_{i,\Z_{(p)}})^\otimes$ whose stabilizer is $G_{\Z_{(p)}}$. We fix an isomorphism $$(\oplus_{i=1}^r\Lambda_{i,\Z_{(p)}})^*\otimes_{\Z_p}\Ok_L\cong \oplus_{i=1}^r\D(\pdiv_i)$$
taking $t_\beta$ to $t_{\beta,0}$.

\begin{lemma}\label{lemma4}
	Any isomorphism as above preserves the product decomposition on either side and induces an isomorphism $$(\oplus_{j=1}^s\Lambda_{i_j,\Z_{(p)}})^*\otimes_{\Z_p}\Ok_L\cong \oplus_{j=1}^s\D(\pdiv_{i_j})$$taking  $s_\alpha $ to $s_{\alpha,0}$.
	
\end{lemma}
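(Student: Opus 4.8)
The plan is to choose the auxiliary tensors $t_\beta$ so that, besides $\iota^\otimes(s_\alpha)$, they include the obvious idempotents detecting the two product decompositions, and then to identify the crystalline avatars of those idempotents with the corresponding idempotents on the Dieudonn\'e side by functoriality. First, since $(G,X)$ factors through $(H,T)$, the group $G_{\Z_{(p)}}$ lies in the Zariski closure of $H$ in $GL(\oplus_{i=1}^r\Lambda_{i,\Z_{(p)}})$, which preserves every summand $\Lambda_{i,\Z_{(p)}}$; hence the idempotent $e_i\in\End(\oplus_{i'=1}^r\Lambda_{i',\Z_{(p)}})\subset(\oplus_{i'}\Lambda_{i',\Z_{(p)}})^\otimes$ projecting onto the $i$-th summand is fixed by $G_{\Z_{(p)}}$ (and so is each $\iota^\otimes(s_\alpha)$, by the preceding Lemma). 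Enlarging the collection $t_\beta$ by the $e_i$ is harmless, as it does not change the stabilizer $G_{\Z_{(p)}}$, so we may assume the $e_i$ are among the $t_\beta$; thus the isomorphism $\beta\colon(\oplus_i\Lambda_{i,\Z_{(p)}})^*\otimes_{\Z_p}\Ok_L\xrightarrow{\sim}\oplus_i\D(\pdiv_i)$ of the statement carries each $e_i$ to the crystalline tensor $e_{i,0}\in(\oplus_i\D(\pdiv_i))^\otimes$ attached to it.

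Next I would identify $e_{i,0}$. The tensor $e_i$ is the endomorphism of $\oplus_{i'}T_p\pdiv_{i'}^\vee=H^1_{\et}(\prod_{i'}\mathcal{A}_{i'},\Z_p)$ induced, contravariantly, by the genuine idempotent correspondence $\mathrm{incl}_i\circ\mathrm{pr}_i$ on $\prod_{i'}\mathcal{A}_{i'}$. By the compatibility of the functor of \S4 (Proposition \ref{F-cryst}) and of the comparison isomorphisms of \cite[Theorem 3.3.2]{KP} with morphisms of $p$-divisible groups, the crystalline tensor attached to $e_i$ is the action of the same endomorphism on $\oplus_{i'}\D(\pdiv_{i'})$, that is, the idempotent projecting onto $\D(\pdiv_i)$. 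Since the $e_i$ form a complete orthogonal system of idempotents whose images exhaust the summands on both sides, $\beta$ respects the product decompositions and restricts to isomorphisms $\beta_i\colon\Lambda_{i,\Z_{(p)}}^*\otimes_{\Z_p}\Ok_L\xrightarrow{\sim}\D(\pdiv_i)$; in particular $\oplus_{j=1}^s\beta_{i_j}$ is an isomorphism $(\oplus_j\Lambda_{i_j,\Z_{(p)}})^*\otimes_{\Z_p}\Ok_L\xrightarrow{\sim}\oplus_j\D(\pdiv_{i_j})$.

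It remains to check that this last isomorphism carries $s_\alpha$ to $s_{\alpha,0}=s_{\alpha,0,\overline{y}}$. The maps $\iota,\pi$, hence $\iota^\otimes,\pi^\otimes$ (with the powers of $p$ inserted on the dual factors), are built from the inclusion $\prod_j\mathcal{A}_{i_j}\hookrightarrow\prod_i\mathcal{A}_i$, the projection the other way, and multiplication by $p$; so by the same functoriality, $\beta(\iota^\otimes(s_\alpha))=\iota_0^\otimes(s_{\alpha,0})$, where $\iota_0,\pi_0$ are the analogous maps between $\oplus_j\D(\pdiv_{i_j})$ and $\oplus_i\D(\pdiv_i)$ and $s_{\alpha,0}$ is by definition the crystalline tensor attached to the \'etale tensor $s_\alpha$, namely $s_{\alpha,0,\overline{y}}$. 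Since $\beta$ is block-diagonal by the previous step and $\pi_0^\otimes\circ\iota_0^\otimes$ is the identity, applying $\pi_0^\otimes$ gives
$$(\oplus_j\beta_{i_j})^\otimes(s_\alpha)=\pi_0^\otimes\big(\beta(\iota^\otimes(s_\alpha))\big)=\pi_0^\otimes\big(\iota_0^\otimes(s_{\alpha,0})\big)=s_{\alpha,0},$$
which is the assertion.

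The one step that is not purely formal is the identification in the second paragraph: that the crystalline counterpart of a tensor manufactured out of morphisms of abelian varieties (the idempotents $e_i$, and likewise $\iota^\otimes$) is the ``same'' expression formed on the Dieudonn\'e side. Conceptually this is immediate from the functoriality already recorded in \S4 and \cite{KP}, but one must keep track of the $p$-power normalizations hidden in $\iota^\otimes$ as opposed to $\iota_0^\otimes$ in order to see that they match.
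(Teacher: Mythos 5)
Your proposal is correct and follows essentially the same route as the paper: the paper also enlarges the collection $t_\beta$ by the tensors encoding the projections onto the summands (fixed by $G_{\Z_{(p)}}\subset\prod_{i=1}^rGL(\Lambda_{i,\Z_{(p)}})$), identifies their crystalline counterparts with the projections $\oplus_i\D(\pdiv_i)\rightarrow\D(\pdiv_k)$ by functoriality to get compatibility with the product decompositions, and then deduces the statement about $s_\alpha$ from $\pi^\otimes\circ\iota^\otimes=\mathrm{id}$. Your write-up just spells out the final step (and the $p$-power normalization in $\iota^\otimes$) a bit more explicitly than the paper does.
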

\begin{proof}
	We may assume that among the collection of tensors $t_\beta$ there are tensors $$t_{\beta_k}\in (\oplus_{i=1}^r\Lambda_{i,\Z_{(p)}})^*\otimes(\oplus_{i=1}^r\Lambda_{i,\Z_{(p)}})$$ corresponding to the projection $(\oplus_{i=1}^r\Lambda_{i,\Z_{(p)}})^*\rightarrow \Lambda_{k,\Z_{(p)}}^*$ for $k=1,\dots,r$. Indeed this follows since $$G_{\Z_{(p)}}\subset \prod_{i=1}^rGL(\Lambda_{i,\Z_{(p)}})$$and the latter group fixes these tensors. By the functoriality of the constructions, we have $$t_{\beta_k,0}\in (\oplus_{i=1}^r\D(\pdiv_i))\otimes (\oplus_{i=1}^r\D(\pdiv_i))^*$$ corresponds the to projection $\oplus_{i=1}^r\D(\pdiv_i)\rightarrow \D(\pdiv_k)$ and that the isomorphism takes $t_{\beta_k}$ to $t_{\beta_k,0}$ precisely says that the isomorphism is compatible with the product decompositions.
	
	That the induced isomorphism takes $s_\alpha$ to $s_{\alpha,0}$ follows from the fact that $\pi^\otimes\circ\iota^\otimes=\text{id}$.
\end{proof} 
\begin{prop}\label{maps}
The map $\pi_{\rmK_p,\rmK_p'}$ takes $\mathscr{I}_{\overline{x}}$ to $\mathscr{I}_{\overline{y}}$.
\end{prop}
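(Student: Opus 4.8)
The map $\pi_{\rmK_p,\rmK_p'}$ is, on $k$-points and under the moduli descriptions, the operation $(\mathcal{A}_i,\lambda_i,\epsilon_i^p)_{i=1,\dots,r}\mapsto(\mathcal{A}_{i_j},\lambda_{i_j},\epsilon_{i_j}^p)_{j=1,\dots,s}$ discarding the factors not indexed by $\{i_1,\dots,i_s\}$; since $\mathscr{S}_{\rmK_p}(G,X)\to\mathscr{S}^-_{\rmK_p}(G,X)$ is finite and the universal abelian scheme is pulled back from the Siegel model, the abelian variety attached to $\overline{x}$ (resp.\ $\overline{y}=\pi_{\rmK_p,\rmK_p'}(\overline{x})$) is $\prod_i\mathcal{A}_i$ (resp.\ $\prod_j\mathcal{A}_{i_j}$), with analogous statements for $\overline{x}'\in\mathscr{I}_{\overline{x}}$ and $\overline{y}'=\pi_{\rmK_p,\rmK_p'}(\overline{x}')$. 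By the construction recalled above the model $\mathscr{S}_{\rmK_p}(G,X)$ is built from the symplectic embedding into $GSp(V',\psi')$ with $V'_{\Z_{(p)}}=\oplus_i\Lambda_{i,\Z_{(p)}}$, so the tensors cutting out the isogeny class $\mathscr{I}_{\overline{x}}$ may be taken to be the $t_\beta$ introduced above; as in the proof of Lemma \ref{lemma4}, I would choose these to include the idempotents $t_{\beta_k}$ projecting $\oplus_i\Lambda_{i,\Z_{(p)}}$ onto its individual factors. Likewise $\mathscr{I}_{\overline{y}}$ is cut out by the $s_\alpha$ on $\oplus_j\Lambda_{i_j,\Z_{(p)}}$.

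Now let $\theta\colon\prod_i\mathcal{A}_i\to\prod_i\mathcal{A}'_i$ be a quasi-isogeny witnessing $\overline{x}'\in\mathscr{I}_{\overline{x}}$: it respects the weak polarizations, matches $t_{\beta,0,\overline{x}}$ with $t_{\beta,0,\overline{x}'}$ on $\D$, and matches $\{t_{\beta,l,\overline{x}}\}$ with $\{t_{\beta,l,\overline{x}'}\}$ on $\widehat{V}^p$. Because $\theta$ commutes on $\D$ with the idempotents $t_{\beta_k}$, and a quasi-isogeny over $k$ is determined by its effect on Dieudonn\'e modules, $\theta$ is block-diagonal, say $\theta=\prod_i\theta_i$ with $\theta_i\colon\mathcal{A}_i\to\mathcal{A}'_i$ a quasi-isogeny. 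Writing $\theta^*\lambda_{\overline{x}'}=c\,\lambda_{\overline{x}}$ up to $\Z_{(p)}^\times$ and comparing diagonal blocks of this identity of quasi-polarizations of the product---the polarizations $\lambda_{\overline{x}},\lambda_{\overline{x}'}$ being the products of the $\lambda_i,\lambda'_i$---yields $\theta_i^*\lambda'_i=c\,\lambda_i$ for every $i$ with the \emph{same} scalar $c$. Hence $\theta':=\prod_j\theta_{i_j}\colon\prod_j\mathcal{A}_{i_j}\to\prod_j\mathcal{A}'_{i_j}$ is a quasi-isogeny respecting the weak polarizations.

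It remains to check that $\theta'$ matches $s_{\alpha,0,\overline{y}}$ with $s_{\alpha,0,\overline{y}'}$ on $\D$ and the $\{s_{\alpha,l}\}$ on $\widehat{V}^p$; this is where the relation $\pi^\otimes\circ\iota^\otimes=\mathrm{id}$ enters. The tensor $\iota^\otimes(s_\alpha)$ lies in the direct summand $(\oplus_j\Lambda_{i_j,\Z_{(p)}})^\otimes$ of $(\oplus_i\Lambda_{i,\Z_{(p)}})^\otimes$ and is one of the $t_\beta$; under the compatible product decompositions of Lemma \ref{lemma4}, $\iota_0^\otimes$ (resp.\ $\iota_l^\otimes$) carries $s_{\alpha,0,\overline{y}}$ (resp.\ $s_{\alpha,l,\overline{y}}$) to the realization of $\iota^\otimes(s_\alpha)$ inside $\D(\pdiv_{\overline{x}})^\otimes$ (resp.\ $\widehat{V}^p(\mathcal{A}_{\overline{x}})^\otimes$), while $\pi_0^\otimes$ (resp.\ $\pi_l^\otimes$) recovers it. Since $\theta$ is a product it preserves these summands and matches the realizations of $\iota^\otimes(s_\alpha)$; applying $\pi^\otimes\circ(-)\circ\iota^\otimes$ shows that $\theta'=\pi\circ\theta\circ\iota$ matches $s_{\alpha,0,\overline{y}}$ with $s_{\alpha,0,\overline{y}'}$, and similarly $l$-adically. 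Therefore $\overline{y}'\in\mathscr{I}_{\overline{y}}$, proving the proposition. I expect the only genuinely non-formal point to be the first step---that $\mathscr{I}_{\overline{x}}$ is computed by the enlarged tensor system attached to the $GSp(\oplus_iV)$-embedding, so that the witnessing quasi-isogeny is forced to respect the product decomposition; everything afterwards is bookkeeping with $\iota^\otimes$, $\pi^\otimes$ and the product polarizations.
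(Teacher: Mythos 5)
Your proposal is correct and follows essentially the same route as the paper: extend the $s_\alpha$ to tensors $t_\beta$ including the projection tensors $t_{\beta_k}$ (Lemma \ref{lemma4}), deduce that the witnessing quasi-isogeny $\theta$ is a product $\prod_i\theta_i$, and then use $\pi^\otimes\circ\iota^\otimes=\mathrm{id}$ to see that $\prod_j\theta_{i_j}$ matches the $s_{\alpha,0}$ and $s_{\alpha,l}$ tensors, hence carries $\overline{y}'$ into $\mathscr{I}_{\overline{y}}$. The only addition is your explicit verification that the product quasi-isogeny respects the weak polarizations with a common similitude scalar, a routine point the paper leaves implicit.
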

\begin{proof}Suppose ${\overline{x}}'\in\mathscr{I}_{\overline{x}}$, then we have the triple $(\mathcal{A}_{{\overline{x}}'},\lambda_{{\overline{x}}'},\epsilon_{{\overline{x}}'}^p)$ corresponding to ${\overline{x}}'$ and there exists a quasi-isogeny $\theta:\mathcal{A}_{{\overline{x}}}\rightarrow \mathcal{A}_{{\overline{x}}'}$ taking $t_{\beta,l,{\overline{x}}}$ to $t_{\beta,l,{{\overline{x}}'}}$ for $l\neq p$ and  $t_{\beta,0,{\overline{x}}}$  to $t_{\beta,0,{{\overline{x}}'}}$. $\mathcal{A}_{\overline{x}}$ and $\mathcal{A}_{{\overline{x}}'}$ arise as  products $\prod_{i=1}^r\mathcal{A}_{{\overline{x}},,i}$ and $\prod_{i=1}^r\mathcal{A}_{{\overline{x}}',,i}$, and as in Lemma \ref{lemma4} we may assume that there exists tensors $t_{\beta_k}$ which correspond to the projections to the $k$ component. The tensors $t_{\beta_k,l,*}$ and $t_{\beta_k,0,*}$ then correspond to the projections on the Tate module and Dieudonn\'e modules and $\theta$ therefore respects these projections. It follows that $\theta$ decomposes decomposes as a product of quasi-isogenies $\theta_i:\mathcal{A}_{{\overline{x}},i}\rightarrow \mathcal{A}_{{\overline{x}}',i}$.

By construction $\pi_0^\otimes\circ \iota_0^\otimes (s_{\alpha,0,{\overline{x}}})=s_{\alpha,0,{\overline{x}}}$ and similarly for $s_{\alpha,0,{\overline{x}}'}.$ Thus $$\prod_{j=1}^s\theta_{i_j}:\prod_{j=1}^s\mathcal{A}_{{\overline{x}},i_j}\rightarrow \mathcal{A}_{{\overline{x}},i_j}$$ takes $s_{\alpha,0,{\overline{x}}}$ to $s_{\alpha,0,{\overline{x}}'}$. By a similar argument, it also takes $s_{\alpha,l,{\overline{x}}}$ to $s_{\alpha,l,{\overline{x}}'}$ for $l\neq p$, hence $\pi_{\mathrm{K}_p,\mathrm{K}'_p}({\overline{x}}')$ lies in $\mathscr{I}_{\overline{y}}$.
\end{proof}

\subsection{}We now use the description of the isogeny classes on the Iwahori level Shimura variety to deduce the description for arbitrary parahoric level. Thus suppose $\mathrm{K}_p$ is an Iwahori subgroup and $\mathrm{K}'_p$ is a parahoric whose corresponding facet lies in the closure of the alcove corresponding to $\mathrm{K}_p$. The projection map $\oplus_{i=1}^r\Lambda_{i}\rightarrow \oplus_{j=1}^s\Lambda_{i_j}$ induces a map $\mathrm{K}_p\rightarrow \mathrm{K}_p'$ which is the identity on the generic fiber. Thus if ${\overline{x}}\in \mathscr{S}_{\mathrm{K}_p}(G,X)$ and ${\overline{y}}=\pi_{\mathrm{K}_p,\mathrm{K}_p'}(\overline{x})$, we obtain an element $b\in G(L)$ giving the Frobenius on both $\D(\pdiv_{\overline{x}})$ and $\D(\pdiv_{\overline{y}})$.

Fix the  choice of maximal $L$-split torus $S$ which is compatible with the choice of parahorics. We assume $\rmK_p'$ corresponds to the subset $K'\subset \mathbb{S}$ of simple reflections. We have the affine Deligne-Lusztig varieties $X(\{\mu\},b)$ and $X(\{\mu\},b)_{K'}$ associated to the parahorics $\mathrm{K}_p$ and $\mathrm{K}_p'$ respectively. The natural projection $G(L)/\G(\Ok_L)\rightarrow G(L)/\G'(\Ok_L)$ induces a surjection $$X(\{\mu\},b)\rightarrow X(\{\mu\},b)_{K'}$$  by the main result of \cite{He3}. As in 6.13 we obtain a map $$i_{{\overline{x}}}':X(\{\mu\},b)\rightarrow \mathscr{S}_{\mathrm{K}_p}(GSp(V'),S^\pm)(k)$$
which is easily seen to factor through $\mathscr{S}_{\mathrm{H}_p}(H,T)(k)$. Similarly we obtain a map $$i'_{\overline{y}}:X(\{\mu\},b)_{K'}\rightarrow\mathscr{S}_{\mathrm{H}'_p}(H',T')(k)$$ which fits in a commutative diagram:

\[\xymatrix{X(\{\mu\},b)\ar[r]\ar[d] &\mathscr{S}_{\mathrm{H}_p}(H,T)(k)\ar[d]\\
X(\{\mu\},b)_{K'}\ar[r]&\mathscr{S}_{\mathrm{H}'_p}(H',T')(k)}\]

\begin{prop}The map $i_{{\overline{y}}}'$ lifts to a unique map $i_{\overline{y}}:X(\{\mu\},b)_{K'} \rightarrow \mathscr{S}_{\mathrm{K}_p}(G,{\overline{x}})(k)$ taking $s_{\alpha,0,i_{\overline{y}}(g)}=s_{\alpha,0,{\overline{y}}}$.
\end{prop}
\begin{proof}By Proposition \ref{prop1}, the map $i'_{\overline{x}}$ lifts to a map $i_{\overline{x}}:X(\{\mu\},b)\rightarrow\mathscr{S}_{\mathrm{K}_p}(G,X)(k)$ satisfying $t_{\beta,0,{\overline{x}}}=t_{\beta,0,i_{\overline{x}}(g)}$. By commutativity of the above diagram, we have $i_{{\overline{y}}}'$ factors through $\mathscr{S}_{\mathrm{K}'_p}(G,X)^-(k)$. Since $t_{\beta}$ extends $\iota^\otimes s_{\alpha}$, we have that $i_{\overline{y}}'$ lifts to $i_{\overline{y}}:X(\{\mu\},b)_{K'}\rightarrow \mathscr{S}_{\mathrm{K}_p'}(G,X)(k)$ with $$s_{\alpha,0,i_{\overline{y}}(g)}=s_{\alpha,0,\overline{y}}$$
\end{proof}
As before, $i_{\overline{y}}$ extends to a map $$i_{\overline{x}}:X(\{\mu\},b)_{K'}\times G(\A_f^p)\rightarrow \mathscr{S}_{\rmK_p'}(G,X)(k)$$The same proof as in Proposition \ref{isog} shows that $\mathscr{I}_{\overline{y}}$ can be identified with the image of $i_{\overline{y}}$, and this map is equivariant for the action of $\Phi\times Z_G(\Q_p)\times G(\A_f^p)$.

\section{He-Rapoport axioms}
\subsection{}In this section, we verify the axioms of He-Rapoport in \cite{HR}. We keep the assumptions of \S6, so that $(G,X)$ is Hodge type and $G_{\Q_p}$ splits over a tame extension. We have fixed a base alcove $\mathfrak{a}$ in  the apartment corresponding to some $\Q_p$ split $S$. We write $\mathcal{I}$ for the Iwahori group scheme and $\mathrm{I}_p=\mathcal{I}(\Z_p)$. For $K\subset \mathbb{S}$ a $\sigma$-invariant subset, we write $\mathcal{G}$ for the parahoric group scheme over $\Z_p$ and $\rmK_p=\G(\Z_p)$. As before all parahorics will be assumed connected.

\begin{thm}
i) Most of the five Axioms in \cite{HR} hold for $\mathscr{S}_{\rmK_p}(G,X)$

ii) When $G$ is residually split, all axioms in \cite{HR} hold.
\end{thm}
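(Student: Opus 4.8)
The plan is to work through the five axioms of \cite{HR} one at a time, establishing the first four without any splitness hypothesis (this is part~(i)) and the fifth when $G_{\Q_p}$ is residually split (part~(ii)). Axiom~1 --- the existence of a flat, $G(\A_f^p)$-equivariant integral model $\mathscr{S}_{\rmK_p}(G,X)$ sitting in a local model diagram, hence equipped with a Kottwitz--Rapoport stratification of its geometric special fibre indexed by $\Adm_K(\{\mu\})$ --- is exactly \cite[Theorem 4.2.2, Theorem 4.2.7]{KP}, recalled above. Axiom~2, compatibility with change of parahoric, is Theorem~\ref{thm3}: for $\rmK_p\subset\rmK_p'$ the morphism $\pi_{\rmK_p,\rmK_p'}$ is $G(\A_f^p)$-equivariant, restricts to the natural finite morphism of Shimura varieties on generic fibres (so is surjective on $\Fpbar$-points by properness and flatness), and is compatible with the Kottwitz--Rapoport stratifications via the natural morphism of local models $M^{loc}_{\G}\to M^{loc}_{\G'}$ and the projection $\Adm_K(\{\mu\})\to\Adm_{K'}(\{\mu\})$.

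For Axiom~3 I define the Newton stratification $\delta\colon\mathscr{S}_{\rmK_p}(G,X)(\Fpbar)\to B(G,\{\mu\})$ by sending $\overline{x}$ to the $\sigma$-conjugacy class $[b]$ of the $F$-isocrystal with $G$-structure attached to $(\D(\pdiv_{\overline{x}})[1/p],\varphi=b\sigma)$ and the crystalline tensors $s_{\alpha,0,\overline{x}}$. This is well posed: the $s_{\alpha,0,\overline{x}}$ do not depend on the lift of $\overline{x}$ by Proposition~\ref{prop12}, and changing the trivialisation of $\D(\pdiv_{\overline{x}})$ replaces $b$ by a $\sigma$-conjugate under $\G(\Ok_L)$, so $[b]\in B(G)$ is unambiguous; that $[b]$ lies in $B(G,\{\mu\})$ follows from the observation of \S5.4 that $1\in X(\sigma(\{\mu_y\}),b)_K$ --- so this affine Deligne--Lusztig variety is non-empty --- together with the non-emptiness pattern of \cite{He3}. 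Compatibility of $\delta$ with the maps $\pi_{\rmK_p,\rmK_p'}$ comes from Lemma~\ref{lemma4} and Proposition~\ref{maps}, which identify the relevant isocrystals and tensors up to the product decomposition, and compatibility with the Frobenius action $\Phi$ is the computation of \S6.8. Axiom~4, the compatibility of the two stratifications, asserts that a point $\overline{x}$ with Kottwitz--Rapoport invariant $w$ and $\delta(\overline{x})=[b]$ becomes, after trivialising $\D(\pdiv_{\overline{x}})$, a base point of $X_{K,\sigma(w)}(b)$; this is read off from the local model diagram just as in the proof of Proposition~\ref{KR}, using that the Hodge filtration on $\D(\pdiv_{\overline{x}})(\Fpbar)$ is the kernel of $b\sigma$, namely the lattice $\sigma^{-1}(b^{-1})p\D(\pdiv_{\overline{x}})$, and applying Proposition~\ref{prop4} to place it in the Schubert cell indexed by $w$. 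Since none of Axioms~1--4 uses residual splitness, this proves part~(i).

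Axiom~5 is the almost product structure (Rapoport--Zink uniformisation) of the Newton strata: for $[b]\in B(G,\{\mu\})$ there should be an isomorphism of $\delta^{-1}([b])$ with a disjoint union over $\phi$ lying over $[b]$ of quotients $I_\phi(\Q)\backslash(X(\{\mu\},b)_K\times G(\A_f^p))$. This is precisely the description of the isogeny classes supplied by Proposition~\ref{isog} and its extension to arbitrary parahoric level in \S7, which in turn rests on the lifting map $i_{\overline{x}}$ of Proposition~\ref{prop6.1}. Since Proposition~\ref{prop6.1} holds for all $b$ when $G_{\Q_p}$ is residually split, Axiom~5 holds in that case, which is part~(ii); in general it is available only for $b$ basic, so Axiom~5 is verified only along the basic Newton stratum. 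The main obstacle is thus Axiom~5 in general: it amounts to the Langlands--Rapoport description of the isogeny classes, whose proof via Proposition~\ref{prop6.1} requires lifting sufficiently many characteristic-$p$ isogenies to characteristic zero, and the bound on $\pi_0(X(\{\mu\},b))$ of \cite{HZ} is only strong enough for that argument in the residually split case.

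Even without Axiom~5, the non-emptiness of all Newton strata still follows from Axioms~1--4 together with one external input, and this does not require residual splitness: \cite{KMS} provides the non-emptiness of the basic locus, and combining a basic point $\overline{x}$ with the lifting map $i_{\overline{x}}$ (defined for basic $b$) and Proposition~\ref{KR} produces a point of the minimal Kottwitz--Rapoport stratum $\mathscr{S}_{\tau_{\{\mu\}}}$ at Iwahori level. Non-emptiness of every Newton stratum at Iwahori level is then a group-theoretic consequence in the manner of \cite{HR} --- using that $\Adm(\{\mu\})$ contains a $\sigma$-straight representative of each class in $B(G,\{\mu\})$ and that, by Theorem~\ref{thm2}, the Kottwitz--Rapoport stratum of such an element lies in a single Newton stratum --- and Axiom~2 then propagates it to all parahoric levels.
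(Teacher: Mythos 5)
Your verification goes astray at the level of what the five He--Rapoport axioms actually are, and this causes the decisive part of the theorem to be misplaced. In \cite{HR} the axioms are: (1) change of parahoric, (2) the local model diagram giving the Kottwitz--Rapoport stratification $\lambda$, (3) the Newton map $\delta$ coming from an $F$-isocrystal with $G$-structure, (4) the joint stratification $\Upsilon_{\rmK_p}:\mathscr{S}_{\rmK_p}(k)\rightarrow G(L)/\G(\Ok_L)_{.\sigma}$ with its three sub-conditions 4a) (compatibility $l\circ\Upsilon=\lambda$, $d\circ\Upsilon=\delta$), 4b) (every class in $l^{-1}(\Adm(\{\mu\}))$ is in the image of $\Upsilon$ at Iwahori level), and 4c) (behaviour of $\Upsilon$ under $\pi_{\rmK_p,\rmK_p'}$: finiteness of fibres and surjectivity onto $\Upsilon_{\rmK_p'}^{-1}(\delta')$), and (5) non-emptiness of the minimal Kottwitz--Rapoport stratum $\lambda_{\mathrm{I}_p}^{-1}(\tau_{\{\mu\}})$. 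There is no ``almost product structure''/uniformisation axiom; your Axiom~5 is not one of the axioms, and your Axiom~4 covers only 4a). Consequently you never verify 4b), which is one of the genuinely nontrivial steps: given $\delta\in l_{\mathrm{I}_p}^{-1}(\Adm(\{\mu\}))$ one must produce a point with that $\Upsilon$-invariant, and the paper does this by passing to a $\sigma$-straight element $w$ via Theorem~\ref{thm1}, normalising $b=\dot{w}$ with Theorem~\ref{thm2}, choosing $\overline{x}'\in\lambda_{\mathrm{I}_p}^{-1}(\sigma^{-1}(w))$ (using surjectivity of $\lambda_{\mathrm{I}_p}$), and applying the connected-component lifting of Proposition~\ref{prop8} --- an argument which does not require residual splitness and is not reducible to the items you list.

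The misidentification also reverses the logic of parts i) and ii). In the paper the \emph{only} statement that needs $G$ residually split is the surjectivity in 4c): given $\overline{x}'$ over level $\rmK_p'$ with prescribed $\Upsilon$-invariant one lifts it to level $\rmK_p$ using the map $i_{\overline{x}}$ of Proposition~\ref{prop6.1}, which for non-basic $b$ is available only in the residually split case. Axiom~5, by contrast, holds in general: it only needs the basic case of Proposition~\ref{prop6.1} together with the non-emptiness of the basic locus from \cite{KMS} and Proposition~\ref{KR} --- which is essentially the argument you relegate to your closing paragraph about Newton strata, but mislabel as lying outside the axioms. So your claim that ``Axioms 1--4 hold in general and Axiom 5 only when residually split'' does not match either the content of \cite{HR} or the division between i) and ii) in the statement; as written the proposal neither verifies 4b) nor identifies (let alone proves) the one surjectivity statement whose failure in general is the reason part i) says ``most'' rather than ``all''. (Minor additional point: for Axiom~1 you assert properness of $\pi_{\rmK_p,\rmK_p'}$ without argument; the paper obtains it by embedding into the Siegel moduli problem of $\mathcal{L}$-sets and quoting the properness result of \cite[\S 7]{HR}.)
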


The only axiom we are unable to verify in the most general case is the surjectivity in 4c) of loc. cit. To verify these would require a better understanding of the isogeny classes in $\mathscr{S}_{\mathrm{K}_p\mathrm{K}^p}(G,X)$. However for the main application to non-emptiness of Newton strata, this result is not necessary (see the remark in \cite{HR} after 3.7).

\textbf{Axiom 1}: (Compatibility with change in parahoric) The compatability with the change in parahorics follows from Theorem 7.1. To show the map $\pi_{\mathrm{K}_p,\mathrm{K}'_p}$ is proper, we may apply a similar argument to the one in Theorem \ref{thm3} to reduce to the case of $GSp(V)$ considered in \cite{HZ}. Indeed let $\mathfrak{f},\mathfrak{f}'$ denote the facets corresponding to $\mathrm{K}_p$ and $\rmK_p'$, $\mathfrak{g}'$ respectively and let $\mathfrak{g}$ and $\mathfrak{g}'$ the facets in $B(GSp(V),\Q_p)$ containing the images of $\mathfrak{f}$ and $\mathfrak{f}'$. Then $\mathfrak{g}$ and $\mathfrak{g}'$ corresponds to the lattice chains in $V_{\Q_p}$ $$\mathcal{L}:=\{\Lambda_{1}\subset\Lambda_{2}\subset\dots\subset\Lambda_r\}$$
$$\mathcal{L}':=\{\Lambda_{i_1}\subset\Lambda_{i_2}\subset\dots\subset\Lambda_{i_s}\}$$

We write $\mathrm{M}_p$ and $\mathrm{M}_p'$ for the stabilizer of these lattice chains in $GSp(V_{\Q_p})$ and fix a suffienciently small compact open $M^p\subset GSp(V\otimes \A_f^p)$. As in \cite{HR}, we may consider the moduli problem which associates to a $\Z_{(p)}$ scheme $S$ the triple:

i) An $\mathcal{L}$-set of abelian varieties

ii) A polarization of the $\mathcal{L}$-set of abelian varieties as in \cite[\S 7]{HR}.

iii) A prime to $p$ level structure on the common rational Tate module away from $p$ of the $\mathcal{L}$-set $$\epsilon^p\in \underline{Isom}(\widehat{V}(\mathcal{A}_i),V\otimes\A_f^p)/M^p$$ compatible with the Riemann form on $\widehat{V}(\mathcal{A}_i)$ and $\psi$ on $V\otimes\A_f^p$.

We refer to \cite[\S7]{HR} for the precise definitions of an $\mathcal{L}$-set and a polarization. This moduli functor is representable by a scheme $\mathscr{S}_{\mathrm{M}_p\mathrm{M}^p}(GSp(V),S^\pm)$ and we have a natural map $$\mathscr{S}_{\mathrm{M}_p\mathrm{M}^p}(GSp(V),S^\pm)\rightarrow \mathscr{S}_{\mathrm{H}_p\mathrm{H}^p}(H,T)$$ where $H$ is the group considered in \S 7. The same proof as in Theorem \ref{thm3} shows that for sufficently small $\mathrm{H}_p$, the above map is a closed immersion. It follows as in Corollary 7.3 that if we take a closed embedding $Sh_{\mathrm{K}_p\mathrm{K}^p}(G,X)\rightarrow Sh_{\mathrm{M}_p\mathrm{M}^p}(H,X)$, we have a closed immersion $$\mathscr{S}_{\rmK_p\rmK^p}(G,X)^-\rightarrow\mathscr{S}_{\mathrm{M}_p\mathrm{M}^p}(GSp(V),S^\pm)$$ and hence a finite map

$$\mathscr{S}_{\rmK_p\rmK^p}(G,X)\rightarrow\mathscr{S}_{\mathrm{M}_p\mathrm{M}^p}(GSp(V),S^\pm)$$

We may apply the same considerations to $\mathfrak{g}'$ and $\mathrm{M}'$; we obtain a commutative diagram:

\begin{equation}\label{proj}
\xymatrix{\mathscr{S}_{\rmK_p\rmK^p}(G,X)\ar[r]\ar[d]_{\pi_{\mathrm{K}_p,\mathrm{K}'_p}} & \mathscr{S}_{\mathrm{M}_p\mathrm{M}^p}(GSp(V),S^\pm)\ar[d] \\
	\mathscr{S}_{\rmK'_p\rmK^p}(G,X) \ar[r] & \mathscr{S}_{\mathrm{M}'_p\mathrm{M}^p}(GSp(V),S^\pm)}
\end{equation}

The horizontal maps are finite hence proper. The vertical map on the right is proper by \cite[\S 7]{HR}, hence $\pi_{\mathrm{K}_p,\mathrm{K}'_p}$ is proper. Since the map $\pi_{\mathrm{K}_p,\mathrm{K}'_p}$ is surjective on the generic fiber, $\pi_{\mathrm{K}_p,\mathrm{K}'_p}$ is also surjective by properness

\textbf{Axiom 2}: (Local Model diagram) As in \cite[Theorem 4.2.7]{KP} we have a diagram:
\[\xymatrix{&\tilde{\mathscr{S}}_{\mathrm{K}_p\mathrm{K}^p}(G,X)\ar[dl]^\pi\ar[dr]_q& \\
\mathscr{S}_{\mathrm{K}_p\mathrm{K}^p}(G,X) & & M^{loc}_\G}\]
where $\pi$ is a $\G$-torsor and $q$ is a smooth map equivariant for the action of $\G$. By \cite[Theorem 8.3]{PZ}, $M^{loc}_\G$ has a stratification by $\Adm(\{\mu\})_K$, and this diagram induces a stratification $$\lambda_{K_p}:\mathscr{S}_{\mathrm{K}_p\rmK^p}(k)\rightarrow \Adm(\{\mu\})_K$$

\textbf{Axiom 3}: (Newton Stratification) By 6.13, to every $\overline{x}\in\mathscr{S}_{\mathrm{K}_p\mathrm{K}^p}(G,X)$, we obtain a $b\in G(L)$ well defined up to $\sigma$-conjugation by $\G(\Ok_L)$.  We have $\sigma^{-1}(b)\in \bigcup_{w\in\Adm(\{\mu\})}\G(\Ok_L)\dot{w}\G(\Ok)$, hence $b\in B(G,\{\mu\})$. We thus obtain a  $$\delta:\mathscr{S}_{\mathrm{K}_p}(k)\rightarrow B(G,\{\mu\})$$
To show that $\delta$ induces a stratification on $\mathscr{S}_{\mathrm{K}_p}$, we must show this map arises from an isocrystal with $G$-structure. This follows from \cite[Corollary A7]{KMS}.

\textbf{Axiom 4}: (Joint stratification) a) Let $G(L)/\G(\Ok_L)_{.\sigma}$ denote the set of $\G(\Ok_L)$ conjugate classes of in $G(L)$. We have natural projection maps $$d_{\rmK_p}:G(L)/\G(\Ok_L)_{.\sigma}\rightarrow B(G)\mbox{ and } l_{\rmK_p}:G(L)/\G(\Ok_L)_{.\sigma}\rightarrow \G(\Ok_L)\backslash G(L)/\G(\Ok_L)$$

 Let $\overline{x}\in\mathscr{S}_{\rmK_p}(k)$, then we have an isomorphism $$V_{\Z_p}^*\otimes\Ok_L\cong \D(\pdiv_{\overline{x}})$$ taking $s_{\alpha}$ to $s_{\alpha,0,x}$. Then we obtain $b\in G(L)$ well-defined up to $\sigma$-conjugation by $\G(\Ok_L)$.
The map $\Upsilon_{\rmK_p}:\mathscr{S}_{\rmK_p}(k)\rightarrow G(L)/\G(\Ok_L)_{.\sigma}$ is defined by $\Upsilon_{\rmK_p}(\overline{x})=[\sigma^{-1}(b)]$. It is clear by definition that $d_{\rmK_p}\circ\Upsilon_{\rmK_p}=\delta_{\rmK_p}$. By definition of the local model diagram and Proposition 3.3, we have $l_{\rmK_p}\circ\Upsilon_{\rmK_p}=\lambda_{\rmK_p}$.

b) By \cite[Lemma 3.11]{HR}, it suffices to prove this in the case of Iwahori level. We need to show for all $\delta\in l_{\mathrm{I}_p}^{-1}(\Adm(\{\mu\}))$, there exists $\overline{x}\in\mathscr{S}_{\mathrm{I}_p}(G,X)(k)$ such that $\Upsilon_{\mathrm{I}_p}(\overline{x})=\delta$. We fix a lift $b\in G(L)$ of $\sigma(\delta)$, then by definition $1\in X(\sigma(\{\mu\}),b)$. By Theorem \ref{thm1}, there exists a $\sigma$-straight element $w\in \sigma(\Adm(\{\mu\}))$ and $h\in X_w(b)$ such that $h$ lies on the same connected component of $X(\sigma(\{\mu\}),b)$ as $1$. Since $w$ is fundamental (see \cite{Nie}) we may assume $h^{-1}b\sigma(h)=\dot{w}$. We pick $\overline{x}'\in\lambda_{\mathrm{I}_p}^{-1}(\sigma^{-1}(w))$, which exists since $\lambda_{\mathrm{I}_p}$ is surjective (see Theorem 8.2 below), and we may pick the isomorphism $V_{\Z_p}^*\otimes \Ok_L\cong \D(\pdiv_{\overline{x}})$ so that the Frobenius is given by $\dot{w}\sigma$. Using the isomorphism $X(\sigma(\{\mu\}),b)\cong X(\sigma(\{\mu\}),\dot{w})$ given by $g\mapsto h^{-1}g$, we have $h^{-1}\in X(\sigma(\{\mu\}),\dot{w})$ is connected to $1\in X_w(\dot{w})$. By Proposition \ref{prop8}, $i_{\overline{x}'}(h^{-1})\in \mathscr{S}_{\mathrm{I}_p}(G,X)(k)$ is well defined and we have $\Upsilon_{I_p}(\overline{x})=\delta.$

c) For $K_p\subset K_p'$, let $\delta\in\text{Im}(\Upsilon_{\rmK_p})$ and $\delta'$ its image in $G(L)/\G'(\Ok_L)_{.\sigma}$. The finiteness of the fibers of $\pi_{\rmK_p,\rmK_p'}|_{\Upsilon_{\rmK_p}^{-1}(\delta)}$ can be deduced from the case of $GSp(V)$. Indeed it follows from the diagram \ref{proj}, that a fiber of $\pi_{\rmK_p,\rmK_p'}|_{\Upsilon_{\rmK_p}^{-1}(\delta)}$ admits a finite map to a corresponding fiber for the integral model for $GSp$.

For the surjectivity, we assume $G_{\Q_p}$ residually split. Let $\overline{x}'\in \Upsilon_{K_p'}^{-1}(\delta')$, then by Axiom 1, there exist $\overline{x}\in \mathscr{S}_{\mathrm{K}_p}(G,X)(k)$ such that $\pi_{\mathrm{K}_p,\mathrm{K}'_p}(\overline{x})=\overline{x}'$. Let $\gamma=\Upsilon_{\rmK_p}(\overline{x})$, then by compatibility of the map $\Upsilon$ with $\pi_{\mathrm{K}_p,\mathrm{K}'_p}$, the image of $\gamma\in G(L)/\G'(\Ok_L)_{.\sigma}$ is equal to $\delta'$. Thus $\gamma$ and $\delta$ are sigma-conjugate by an element of $\G'(\Ok_L)$, and the same is true for $b:=\sigma(\gamma)$ and $\sigma(\delta)$. Let $g\in\G'(\Ok_L)$ such that $g^{-1}b\sigma(g)=\sigma(\delta)$, then $g\in X(\sigma(\{\mu\}),b)_K$ and its image in $X(\sigma(\{\mu\}),b)_{K'}$ is equal to $1$. Thus $i_{\overline{x}}(g)\in\mathscr{S}_{K_p}(k)$ satisfies $\Upsilon(i_{\overline{x}}(g))=\delta$, and $\pi_{\rmK_p,\rmK_p'}(x)=x'$.

\textbf{Axiom 5}: (Basic non-emptiness) Recall $\mathrm{I}_p$ was an Iwahori subgroup of $G(\Q_p)$. Let $\tau_{\{\mu\}}$ denote the unique minimal element of $\Adm(\{\mu\})$. We need to show $\lambda_{\mathrm{I}_p}^{-1}(\tau_{\{\mu\}})\neq\emptyset$.

By \cite{KMS}, there exists $\overline{x}\in\mathscr{S}_{I_p}(k)$ such that $\delta_{\mathrm{I}_p}=[b]_{basic}$. Let $g\in X_{\sigma(\tau_{\{\mu\}})}(b)\subset X(\sigma(\{\mu\}),b)$. By Prop \ref{prop1}, the map $i_x:X(\sigma(\{\mu\}),b)\rightarrow \mathscr{S}_{\mathrm{I}_p}$ is well defined.  Then by Proposition \ref{KR}, $i_x(g)\in\lambda_{\mathrm{I}_p}^{-1}(\tau_{\{\mu\}})$.

The main application of the above results is the non-emptiness of Newton strata. 

\begin{thm} $\lambda_{K_p}$ and $\delta_{K_p}$ are surjective.
\end{thm}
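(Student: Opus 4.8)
The plan is to reduce at once to the case where $\rmK_p$ is an Iwahori subgroup, and then to establish surjectivity of $\lambda$ and of $\delta$ separately, using for the first the local model diagram together with Axiom 5, and for the second the theory of $\sigma$-straight elements. For the reduction, since $\rmK_p$ is a connected parahoric I may choose a $\sigma$-invariant alcove $\mathfrak{a}$ whose closure contains the facet attached to $\rmK_p$, and let $\mathrm{I}_p$ be the associated Iwahori subgroup, so $\mathrm{I}_p\subset\rmK_p$. By Axiom 1 the transition map $\pi_{\mathrm{I}_p,\rmK_p}\colon\mathscr{S}_{\mathrm{I}_p\rmK^p}(G,X)\to\mathscr{S}_{\rmK_p\rmK^p}(G,X)$ is proper and surjective, hence surjective on $k$-points; moreover $\lambda$ is compatible with $\pi_{\mathrm{I}_p,\rmK_p}$ and with the natural surjection $\Adm(\{\mu\})\twoheadrightarrow\Adm_K(\{\mu\})$ (compatibility of the local model diagrams), while $\delta_{\rmK_p}\circ\pi_{\mathrm{I}_p,\rmK_p}=\delta_{\mathrm{I}_p}$ because the element $b\in G(L)$ attached to a point and to its image coincide (cf.\ \S7.5). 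So it suffices to prove that $\lambda_{\mathrm{I}_p}$ and $\delta_{\mathrm{I}_p}$ are surjective; surjectivity of $\delta_{\rmK_p}$ on the pro-variety then follows, the finite set $B(G,\{\mu\})$ being already exhausted at finite level and the transition maps being surjective.

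For surjectivity of $\lambda_{\mathrm{I}_p}$ I would use the local model diagram of Axiom 2, which provides a scheme $\widetilde{\mathscr{S}}_{\mathrm{I}_p\rmK^p}(G,X)$ that is a $\G$-torsor over $\mathscr{S}_{\mathrm{I}_p\rmK^p}(G,X)$ together with a smooth morphism to $M^{loc}_{\G}$ along which $\lambda_{\mathrm{I}_p}$ is pulled back from the Schubert stratification $M^{loc}_{\G}\otimes k=\bigcup_{w\in\Adm(\{\mu\})}C_w$. A smooth morphism is flat, hence open, so on special fibres the image of this morphism is an open subset of $M^{loc}_{\G}\otimes k$; by Axiom 5 we have $\lambda_{\mathrm{I}_p}^{-1}(\tau_{\{\mu\}})\neq\emptyset$, so this image contains a point $x'$ of the Schubert cell $C_{\tau_{\{\mu\}}}$ of the unique minimal element $\tau_{\{\mu\}}$ of $\Adm(\{\mu\})$. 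Since $\tau_{\{\mu\}}\leq w$ for every $w\in\Adm(\{\mu\})$, the cell $C_{\tau_{\{\mu\}}}$ is contained in the Schubert variety $S_w=\overline{C_w}$, so $x'$ lies in the closure of $C_w$; being open and containing $x'$, the image of the smooth morphism must meet $C_w$. Pulling back along the torsor then gives $\lambda_{\mathrm{I}_p}^{-1}(w)\neq\emptyset$ for every $w\in\Adm(\{\mu\})$. In short: the image of the smooth morphism is a non-empty open set meeting the bottom stratum, so — the admissible set being directed from below by $\tau_{\{\mu\}}$ — it meets every stratum.

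For surjectivity of $\delta_{\mathrm{I}_p}$, fix $[b]\in B(G,\{\mu\})$. By \cite{He3} the set $X(\{\mu\},b)$ is non-empty, and by Theorem \ref{thm1}(i) a connected component of it meets $X_w(b)$ for some $\sigma$-straight $w$; since the subsets $X_v(b)$, $v\in\Adm(\{\mu\})$, are pairwise disjoint, necessarily $w\in\Adm(\{\mu\})$, and by Theorem \ref{thm2} an element of $X_w(b)$ may be adjusted so that $g^{-1}b\sigma(g)=\dot w$, whence $[\dot w]=[b]$ in $B(G)$. Using the surjectivity of $\lambda_{\mathrm{I}_p}$ just established, choose $\overline{x}\in\mathscr{S}_{\mathrm{I}_p\rmK^p}(G,X)(k)$ with $\lambda_{\mathrm{I}_p}(\overline{x})=w$; the corresponding $b_{\overline{x}}\in G(L)$ then satisfies $\sigma^{-1}(b_{\overline{x}})\in\mathcal{I}(\Ok_L)\dot w\mathcal{I}(\Ok_L)$, and as $w$ is $\sigma$-straight Theorem \ref{thm2} provides $i\in\mathcal{I}(\Ok_L)$ with $i^{-1}\sigma^{-1}(b_{\overline{x}})\sigma(i)=\dot w$. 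Therefore $\Upsilon_{\mathrm{I}_p}(\overline{x})$ is the $\mathcal{I}(\Ok_L)$-$\sigma$-conjugacy class of $\dot w$, and $\delta_{\mathrm{I}_p}(\overline{x})=d_{\mathrm{I}_p}(\Upsilon_{\mathrm{I}_p}(\overline{x}))=[\dot w]=[b]$, so $\delta_{\mathrm{I}_p}$ is surjective.

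The hard part is Axiom 5, i.e.\ the non-emptiness of the minimal Kottwitz--Rapoport stratum at Iwahori level, which rests in turn on the non-emptiness of the basic locus established in \cite{KMS}; granting the He--Rapoport axioms from \S8, the two surjectivity statements follow essentially by bookkeeping, the only delicate points being the compatibility of $\lambda$ and $\delta$ with the transition maps $\pi_{\mathrm{I}_p,\rmK_p}$ and the identification — via Theorem \ref{thm2} — of the $\mathcal{I}(\Ok_L)$-$\sigma$-conjugacy data lying over a $\sigma$-straight element of $\Adm(\{\mu\})$.
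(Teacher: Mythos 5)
Your proposal is correct and follows the same overall strategy as the paper: reduce to Iwahori level via compatibility of $\lambda$ and $\delta$ with the maps $\pi_{\mathrm{I}_p,\rmK_p}$, get surjectivity of $\lambda_{\mathrm{I}_p}$ from the local model diagram, and get surjectivity of $\delta_{\mathrm{I}_p}$ from $\sigma$-straight elements. The differences are in how explicit the two key steps are. For $\lambda_{\mathrm{I}_p}$ the paper's proof is a one-liner ("the map to $M^{loc}_\G$ is smooth, hence open, thus $q\otimes k$ is surjective"); openness alone only shows that the set of $w\in\Adm(\{\mu\})$ with non-empty Kottwitz--Rapoport stratum is upward closed for the Bruhat order, so the argument implicitly needs exactly what you supply: Axiom 5 (non-emptiness of the $\tau_{\{\mu\}}$-stratum) together with the fact that $C_{\tau_{\{\mu\}}}\subset S_w$ for every $w\in\Adm(\{\mu\})$, plus a lift along the $\G$-torsor. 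Your write-up therefore completes a step the paper leaves implicit. For $\delta_{\mathrm{I}_p}$ the paper simply cites \cite{HR} for the existence, for each $[b]\in B(G,\{\mu\})$, of a $\sigma$-straight $w\in\Adm(\{\mu\})$ with $\lambda^{-1}(w)\subset\delta^{-1}([b])$; you reprove this input from results already quoted in the paper (non-emptiness of $X(\{\mu\},b)$ from \cite{He3}, Theorem \ref{thm1} i) from \cite{HZ}, and Theorem \ref{thm2}), and your bookkeeping with $\Upsilon_{\mathrm{I}_p}(\overline{x})=[\sigma^{-1}(b)]$, $d\circ\Upsilon=\delta$, $l\circ\Upsilon=\lambda$ matches the conventions of Axiom 4a. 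So the two arguments prove the same statements; yours is slightly more self-contained, at the cost of redoing the group-theoretic lemma that the paper imports from \cite{HR}.
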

\begin{proof}Since the $\pi_{\mathrm{K}_p,\mathrm{K}_p'}$ are compatbile with the maps $\delta$ and $\lambda$ it suffices to prove the result when $\mathrm{K}_p$ is Iwahori. Since $q$ is a smooth map, hence open, the image of $q$ is an open mapping. Thus $q\otimes k$ is surjective, in particular $\lambda_{I_p}$ is surjective.

It follows from \cite{HR} that $\delta$ is also surjective. Indeed it is proved in loc. cit. that for each $[b]\in B(G,\mu)$, there exists $w\in \Adm(\{\mu\})$ $\sigma$-straight such that $\lambda^{-1}(w)\subset \delta^{-1}([b])$.
\end{proof}
\section{Lifting to special points}

\subsection{}In this section we show that every isogeny class in $\mathscr{S}_{\mathrm{K}_p}(G,X)$ admits a lift to a special point of $Sh_{\mathrm{K}_p}(G,X)$. We assume the group $G_{\Q_p}$ is residually split, in particular it is quasi-split. The proof follows ideas from \cite[\S2]{Ki3}, the main new input for being a generalization of the so called Langlands-Rapoport lemma, see \cite[Lemma 2.2.2]{Ki3}. This allows us to associate a Kottwitz triple to each isogeny class, a key ingredient needed to enumerate the set of isogeny classes. A proof of this result has also been annouced in \cite{Ki4} using a different method. 

We first recall some notation from \cite{Ki3}.  As before $(G,X)$ is of Hodge type and $\rmK_p=\G(\Z_p)$ is a connected parahoric subgroup corresponding to $K\subset\mathbb{S}$, but now $k\subset \Fpbar$ will denote a finite extension of the residue field $k_E$ of $\mathcal{O}_{E_{(v)}}$. $\mathrm{K}^p\subset G(\A_f^p)$ is a sufficiently small compact open subgroup and $\mathrm{K}=\rmK_p\rmK^p$. We write $r$ for the degree of $k$ over $\mathbb{F}_p$, and write $W:=W(k)$, and $K_0=W(k)[\frac{1}{p}]$. 

\subsection{}For $x\in \mathscr{S}_{\mathrm{K}}(G,X)(k)$ we write $\overline{x}$ for the $\Fpbar$ point associated to $x$. Recall we have an associated abelian variety $\mathcal{A}_x$ together with Frobenius invariant  tensors $s_{\alpha,l,x}\in H^1_{\acute{e}t}(\mathcal{A}_{\overline{x}},\Q_l)^\otimes$ whose stablizer in $GL(H^1_{\acute{e}t}(\mathcal{A}_{\overline{x}},\Q_l))$ can be identified with $G_{\Q_l}$ via the level structure $\epsilon^p$. Since the $s_{\alpha,l,x}$ are invariant under the action of the geometric Frobenius $\gamma_l$ on $H^1_{\acute{e}t}(\mathcal{A}_{\overline{x}},\Q_l)$, we may consider  $\gamma_l$ as an element of $G(\Q_l)$. We let $I_{l/k}$ denote the centralizer of $\gamma_l$ in $G(\Q_l)$ and $I_l$ the centralizer of $\gamma_l^n$ for $n$ sufficiently large, cf. \cite[\S 2.1.2]{Ki3}.

We also fix an identification $$\D(\pdiv_x)\otimes_W K_0\cong V_{\Z_p}^*\otimes_{\Z_p}K_0$$ taking $s_{\alpha,0,x}$ to $s_{\alpha}$. The Frobenius on $\D(\pdiv_x)$ is of the form $\varphi=\delta\sigma$ for some $\delta\in G(K_0)$ and we $\gamma_p$ for the element $\delta\sigma(\delta)\dots\sigma^{r-1}(\delta)\in G(K_0)$.

Let $I_{p/k}$ denote the group over $\Q_p$ whose $R$ points are given by $$I_{p/k}(R)=\{g\in G(K_0\otimes_{\Q_p}R)|g^{-1}\delta\sigma(g)=\delta\}$$
Clearly  $I_{p/k}\subset J_\delta$. We have $\gamma_p\in I_{p/k}(\Q_p)$ and we have $I_{p/k}\otimes_{\Q_p}K_0$ is identified with the centralizer of $\gamma_p$ in $G_{K_0}$.

For $n\in \mathbb{N}$, we write $k_n$ for the degree $n$ extension of $k$, and $I_{p/k_n}$ the group over $\Q_p$ defined as above with  $K_0$ replaced with $W(k_n)[\frac{1}{p}]$. $I_{p}$ will then denote the $I_{p/k_n}$ for sufficiently large $n$.

Finally we let $\text{Aut}_{\Q}(\mathcal{A}_x)$ denote the group over $\Q$ defined by $$\text{Aut}_{\Q}(\mathcal{A}_x)(R)=(\text{End}_{\Q}(\mathcal{A}_x)\otimes_{\Z}R)^\times$$ where $\mbox{End}_{\Q}(\mathcal{A}_x)$ denotes the set of endomorphisms of $\mathcal{A}_x$  viewed as an abelian variety up to isogeny defined over $k$.
We write $I_{/k}\subset \text{Aut}_{\Q}(\mathcal{A}_x)$ for the subgroup of elements which preserve the tensors $s_{\alpha,l,x}$ for $l\neq p$ and $s_{\alpha,0,x}$. We obtain maps $I_{/k}\rightarrow I_{l/k}$ for all $l$ (including $l=p$).

Similarly we write $I\subset\mbox{Aut}_\Q(\mathcal{A}_x\otimes\Fpbar)$ for the subgroup which fixes $s_{\alpha,l}$ for all $l\neq p$ and  $s_{\alpha,0,x}$. Again we have maps $I\rightarrow I_l$ for all $l$.

\subsection{}By the argument in Proposition \ref{maps} the projection maps $\pi_{\rmK_p,\rmK_p'}$ are compatible with the construction of $\delta$ and $\gamma_l$. Thus the above definitions are independent of level structure, i.e. for $x\in \mathscr{S}_{\rmK_p}(G,X)(k)$ and $y=\pi_{\rmK_p,\rmK_p'}(x)$, the construction above give rise to the same groups $I_{l/k}, I_{p/k}$ and $I$.

The same proof as in \cite[2.1.3 and 2.1.5]{Ki3} gives us the following proposition:

\begin{prop}\label{Kisin}
i) The map $i_{\overline{x}}$ of Proposition \ref{prop6.1} induces an injective map 

$$i_{\overline{x}}: I(\Q)\backslash X(\{\mu\},\delta)_K\times G(\A_f^p)\rightarrow\mathscr{S}_{\mathrm{K}_p}(G,X)(\Fpbar)$$

ii) Let $H^p=\prod_{l\neq p}I_{l/k}(\Q_l)\cap K^p$ and $H_p=I_{p/k}\cap \G(W(k))$. The map in i) induces an injective map:

$$I_{/k}(\Q)\backslash\prod_lI_{l/k}(\Q_l)/H_p\times H^p\rightarrow\mathscr{S}_{\rmK}(G,X)(k)$$

iii) For some prime $l\neq p$, the connected component of $I_{\Q_l}=I\otimes_{\Q}\Q_l$ contains the connected component of the identity in $I_l$. In particular the ranks of $I$ and $G$ are equal.

\end{prop}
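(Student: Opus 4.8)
The plan is to carry over the arguments of \cite[2.1.3, 2.1.5]{Ki3} to the present setting, where the delicate local work at $p$ has already been done. Parts i) and ii) are then formal: the construction of $i_{\overline{x}}$, its equivariance for $\Phi\times Z_G(\Q_p)\times G(\A_f^p)$, the identification of its image with the isogeny class of $\overline{x}$ (Proposition \ref{isog} and its extension to arbitrary parahoric level in \S 7), and the fact that a point of a fixed isogeny class is pinned down by the tensors $s_{\alpha,0,\overline{x}}$ together with its prime-to-$p$ level datum (Corollary \ref{cor3}) are all in hand; and by \S 9.3 the groups $I_{l/k}$, $I_{p/k}$, $I$ do not depend on the parahoric level, so one may assume $\rmK_p$ is an Iwahori subgroup. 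With these inputs, i) follows by computing the fibres of $i_{\overline{x}}$ as $I(\Q)$-orbits, and ii) by identifying $X(\{\mu\},\delta)_K$ and $G(\A_f^p)$ with the relevant coset spaces for $I_{p/k}$ and the $I_{l/k}$ using Tate's theorem, exactly as in \emph{loc.\ cit.}

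For iii), I would first reduce the equality of ranks to the stated containment of identity components. The element $\gamma_l\in G(\Q_l)$ is the geometric Frobenius of $k$ on $H^1_{\et}(\mathcal{A}_{\overline{x}},\Q_l)$; being the Frobenius endomorphism of an abelian variety over a finite field it acts semisimply, so $\gamma_l^n$ is a semisimple element of $G_{\Q_l}$ for every $n$, and its centralizer $I_l$ contains a maximal torus of $G_{\Q_l}$. Hence $\operatorname{rank} I_l=\operatorname{rank} G$. On the other hand the $l$-adic realization embeds $I$ as a closed subgroup of $GL(H^1_{\et}(\mathcal{A}_{\overline{x}},\Q_l))$ landing inside $G_{\Q_l}$, so $\operatorname{rank} I\le\operatorname{rank} G$. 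Granting $(I_{\Q_l})^\circ\supseteq (I_l)^\circ$, we get $\operatorname{rank} I\ge\operatorname{rank} I_l=\operatorname{rank} G$, and hence equality.

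It remains to prove the containment, and the essential input is Tate's theorem on homomorphisms of abelian varieties over finite fields: for $l\neq p$ it identifies $\operatorname{End}(\mathcal{A}_{\overline{x}})\otimes_{\Z}\Q_l$ with the centralizer of $\gamma_l$ in $\operatorname{End}(H^1_{\et}(\mathcal{A}_{\overline{x}},\Q_l))$, and after imposing the tensor conditions this identifies the identity component of $(I_{/k})_{\Q_l}$ with that of the centralizer $I_{l/k}$ of $\gamma_l$ in $G_{\Q_l}$. Since endomorphism rings of abelian varieties are finitely generated over $\Z$, after replacing $k$ by a suitable finite extension $k_n$ one has $\operatorname{End}(\mathcal{A}_{\overline{x}}\otimes\Fpbar)=\operatorname{End}(\mathcal{A}_{\overline{x}}\otimes k_n)$, so $I=I_{/k_n}$; and for $n$ large $I_l=I_{l/k_n}$ as well, the Frobenius of $k_n$ being $\gamma_l^n$. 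Combining, $(I_{\Q_l})^\circ=(I_{l/k_n})^\circ=(I_l)^\circ$ for $n$ sufficiently large — slightly more than is claimed, and valid for every $l\neq p$ once $\overline{x}$ is defined over a large enough finite field.

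The point requiring the most care — rather than a genuine obstacle — is the compatibility, in the previous paragraph, of Tate's isomorphism with the tensors: as $l$ varies the $s_{\alpha,l}$, together with $s_{\alpha,0}$, must be matched under the \'etale and crystalline comparison isomorphisms so that preserving one collection is equivalent, at least up to identity components, to preserving the others. This is precisely what the constructions of \S 4 and \S 6 provide, all the tensors descending from the single collection $s_\alpha\in V_{\Z_{(p)}}^\otimes$; granted this, the argument coincides with \cite[2.1.5]{Ki3}.
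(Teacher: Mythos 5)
Parts i) and ii) of your proposal are fine and coincide with the paper's route: the paper itself simply observes that the arguments of \cite[2.1.3, 2.1.5]{Ki3} go through verbatim once one has $i_{\overline{x}}$, its $\Phi\times Z_G(\Q_p)\times G(\A_f^p)$-equivariance, Proposition \ref{isog} (extended to general parahoric level in \S 7), Corollary \ref{cor3}, and the level-independence of the groups $I_{l/k}$, $I_{p/k}$, $I$; your computation of the fibres of $i_{\overline{x}}$ as $I(\Q)$-orbits is exactly that argument.

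For iii), however, there is a genuine gap. Your key step is the assertion that Tate's theorem, ``after imposing the tensor conditions,'' identifies $(I_{/k})^{\circ}_{\Q_l}$ with $I_{l/k}^{\circ}$, justified by the fact that all the tensors descend from the single collection $s_\alpha\in V_{\Z_{(p)}}^{\otimes}$. Tate's theorem only gives $\mathrm{Aut}_{\Q}(\mathcal{A}_{\overline{x}})_{\Q_l}\cong Z_{GL(H^1_{\acute{e}t}(\mathcal{A}_{\overline{x}},\Q_l))}(\gamma_l)$; an element of $I_l=Z_{G_{\Q_l}}(\gamma_l)$ therefore corresponds to a quasi-endomorphism with $\Q_l$-coefficients that fixes the tensors $s_{\alpha,l}$ at that one prime, but nothing forces it to fix $s_{\alpha,l'}$ for $l'\neq l$ or the crystalline tensors $s_{\alpha,0,\overline{x}}$, which is what membership in $I_{\Q_l}$ requires. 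The common Betti origin of the tensors is a statement about the construction over the generic fibre; in characteristic $p$ there is no comparison between the different $\ell$-adic and crystalline realizations that would transport the tensor-fixing condition from one realization to the others, and establishing such independence-of-$\ell$ statements is exactly the hard point. Indeed, your ``slightly more than is claimed'' conclusion $(I_{\Q_l})^{\circ}=(I_l)^{\circ}$ for every $l\neq p$ is essentially Corollary \ref{cor5} i), which both here and in \cite{Ki3} is only obtained \emph{after} the Kottwitz triple and the CM-lifting theorem (Theorem \ref{CM-lift}), and those in turn use Proposition \ref{Kisin} iii) — so your argument proves too much too early and is circular in spirit. The intended proof, following \cite[2.1.5]{Ki3}, is global rather than local: one uses the injection in ii) into the \emph{finite} set $\mathscr{S}_{\rmK}(G,X)(k)$ (this is the real reason part ii) is stated over the finite field $k$) to conclude that the double coset space $I_{/k}(\Q)\backslash\prod_l I_{l/k}(\Q_l)/H_p\times H^p$ is finite, hence that $I(\Q_l)$ is, up to compact subsets, cocompact in $I_l(\Q_l)$; choosing $l$ suitably (e.g.\ so that the torus generated by a high power of Frobenius splits at $l$, making $I_l$ a connected Levi) one then deduces that $I_{\Q_l}$ must contain $I_l^{\circ}$. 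Your reduction of the rank statement to this containment (semisimplicity of Frobenius giving $\operatorname{rank}I_l=\operatorname{rank}G$, and $I_{\Q_l}\hookrightarrow G_{\Q_l}$ giving the reverse inequality) is correct, but the containment itself needs this global finiteness argument, not a direct appeal to Tate plus tensor compatibility.
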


\subsection{}The next lemma is the   key technical ingredient needed for the existence of CM lifts, for this we need to recall some group theoretic preliminaries. Recall $W$ is the Iwahori Weyl group of $G$ and $\mathbb{S}$ is set of simple reflections in $W$ corresponding to a choice of base alcove. We write $K\subset \mathbb{S}$ for the subset of reflections fixing the special vertex corresponding to a choice of special parahoric $\G$. Let $W_K$ be the group generated by the reflections in $K$, it is identified with the relative Weyl group $N(L)/T(L)$. By \cite{HR}, we have an identification 

$$\G(\Ok_L)\backslash G(L)/\G(\Ok_L)\cong W_K\backslash W/W_K$$ and this latter set can be identified with $X_*(T)_I/W_K$. The choice of alcove determines a chamber $V_+$ in $V:=X_*(T)_I\otimes\mathbb{R}$ and a Borel subgroup $B$ of $G$ defined over $L$. We now describe the relationship between $V_+$ and $B$ more explicitly.

Let $\langle\ ,\ \rangle: X_*(T)\times X^*(T)\rightarrow \Z$ be the natural pairing and we use the same symbol to denote the scalar extension to $\R$. We let $\Psi\subset X^*(T)$ denote the set of roots, then $B$ determines a system of positive roots $\Psi_+\subset\Psi$. Now for $K/L$ a finite Galois extension  over which $T$ splits, we have a norm map $$\mbox{Nm}:X_*(T)_I\rightarrow X_*(T)^I$$ given by $$\underline{\mu}\mapsto\sum_{\sigma\in\text{Gal}(K/L)}\sigma(\mu)$$ 
where $\underline{\mu}\in X_*(T)_I$ and $\mu\in X_*(T)$ is a lift. This extends linearly to a map $V\rightarrow X_*(T)^I\otimes\mathbb{R}$. Then $V_+$ can be identified with the subset of $V$ consisting of $x$ such that $\langle \mbox{Nm}(x),\alpha\rangle\geq 0$ for all $\alpha\in \Psi_+$. 

We write $X_*(T)_{I,+}$ for the subset of $X_*(T)_I$ which mapsto $V_+$, then $W_K\backslash W/W_K$ can be identified with $X_*(T)_{I,+}$.

Now recall we have the affine Weyl group $W_a\subset W$. By \cite{BT1}, there is a reduced root system  $\Sigma$ such that $$W_a\cong Q^\vee(\Sigma)\ltimes W(\Sigma)$$ where $Q^\vee(\Sigma)$ is the coroot lattice of $\Sigma$ and $W(\Sigma)$ is its Weyl group. The roots in $\Sigma$ are proportional to the roots in the relative root system of $G$ over $L$, however the root systems themselves may not be proportional. The choice of Borel $B$, then determines an ordering of the roots in $\Sigma$.

 We thus have identifications $X_*(T_{sc})_I\cong Q^\vee(\Sigma)$ and $W_K\cong W(\Sigma)$. The length function and Bruhat order on $W$ is determined by $W_a$ and hence by $\Sigma$.  For $\underline{\lambda},\underline{\mu}\in X_*(T) _{I,+}$, we write  $\underline{\lambda}\preccurlyeq\underline{\mu}$ if $\underline{\mu}-\underline{\lambda}$ is a positive linear combination of positive coroots in $Q^\vee(\Sigma)$ with integral coefficients. By \cite[\S 2]{Lu} applied to the root system $\Sigma$, we have $$t_{\underline{\lambda}}\leq t_{\underline{\mu}}\Leftrightarrow \underline{\lambda}'\preccurlyeq\underline{\mu}'$$ where $\underline{\lambda}'$ and $\underline{\mu}'$ are the dominant representatives of $\underline{\lambda}$ and $\underline{\mu}$ respectively.

\subsection{}The following is a generalization of \cite[2.2.2]{Ki3} to residually split (but possibly ramified) groups. Recall we have the Newton cocharacter $\nu_{\delta}:\D\rightarrow G$, which is central in $J_\delta$ and hence central in $I_p$.

\begin{lemma}\label{LR}Let $T_p\subset I_p$ be a maximal torus defined over $\Q_p$. Then there exists a cocharacter $\mu_{T_p}\in X_*(T_p)$ such that:

i) Considered as a $G$-valued cocharacter, $\mu_{T_p}$ is conjugate to $\mu$.

ii) $\overline{\mu}_{T_p}^{T_p}=\nu_\delta$

\end{lemma}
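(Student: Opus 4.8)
The strategy follows \cite[2.2.2]{Ki3} but must account for the fact that $G_{\Q_p}$ is only residually split, hence the affine Weyl group is governed by the possibly non-reduced \emph{\'echelonnage} root system $\Sigma$ rather than the absolute root system of $G$. First I would translate the problem into a combinatorial statement in the Iwahori Weyl group. Since $T_p\subset I_p$ is a maximal torus and $I_p\otimes_{\Q_p}K_0$ is the centralizer of $\gamma_p$ in $G_{K_0}$, any $\mu_{T_p}\in X_*(T_p)$ gives a well-defined $G$-conjugacy class of cocharacters, and condition (i) is the requirement that this class is $\{\mu\}$. Condition (ii) pins down the $I_p$-central (equivalently, $\sigma$-averaged) part of $\mu_{T_p}$ to be the Newton cocharacter $\nu_\delta$; note that $\nu_\delta$ lands in the relevant rational cocharacter space because $\delta\in G(K_0)$ has $[\delta]\in B(G,\{\mu\})$ (this is the content of the Newton stratification $\delta$ being well-defined with image in $B(G,\{\mu\})$, as in Axiom 3). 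The key point is that $\nu_\delta$, being the Newton point of an element in $B(G,\{\mu\})$, satisfies $\nu_\delta\leq\overline{\mu}$ in the dominance order.

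Next I would carry out the construction of $\mu_{T_p}$. Working in $X_*(T_p)_{\Q}$, one wants to write a dominant representative as a sum of a central part equal to $\nu_\delta$ and a ``correction'' lying in the rational coroot lattice, chosen so that the total class is $\{\mu\}$. The existence of an integral such $\mu_{T_p}$ is exactly where the analysis of $\Sigma$ and the order $\preccurlyeq$ from \S9.4 enters: one needs $\underline{\mu}$ and the translate of $\nu_\delta$ to be comparable in the Bruhat order on translation elements, which by the cited result of \cite{Lu} applied to $\Sigma$ is equivalent to comparability in $\preccurlyeq$, i.e.\ to Mazur-type inequalities. Since $[\delta]\in B(G,\{\mu\})$ we have the inequality $\nu_\delta\leq\overline{\mu}$ for the absolute root datum; the work is to upgrade this to an inequality with respect to the coroot lattice $Q^\vee(\Sigma)$ of the \'echelonnage system and to exhibit an honest integral lift $\mu_{T_p}\in X_*(T_p)$ (not merely a rational one). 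Here residual splitness is used crucially: it forces $\sigma$ to act trivially on $W$, so that $X_*(T)_I^\sigma = X_*(T)_I$ and the translation elements of $W$ are indexed by $X_*(T)_I$ itself, making the reduction to $\Sigma$ clean. One also uses $\mu$ minuscule to control the possible correction terms.

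Concretely, I expect the argument to run: (a) reduce to $G$ adjoint and then to $G$ simple, since $\nu_\delta$ and $\mu$ decompose compatibly; (b) choose a $\Q_p$-rational Borel of $I_p$ containing $T_p$ and use it together with the base alcove data of \S9.4 to identify $X_*(T_p)_{I,+}$ with $W_K\backslash W/W_K$; (c) set $\mu_{T_p}$ to be the unique dominant cocharacter in the $I_p$-conjugacy class determined by requiring its image in $X_*(T_p)_{I,\Q}^+$ to be $\nu_\delta$ up to coroot-lattice translation, then verify using the explicit relation $t_{\underline{\lambda}}\leq t_{\underline{\mu}}\Leftrightarrow \underline{\lambda}'\preccurlyeq\underline{\mu}'$ that such an integral lift in the conjugacy class $\{\mu\}$ exists; (d) check condition (ii), namely $\overline{\mu}_{T_p}^{T_p}=\nu_\delta$, which by construction amounts to the statement that averaging over the relevant Galois/Frobenius action recovers $\nu_\delta$ — this uses centrality of $\nu_\delta$ in $I_p$.

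\textbf{Main obstacle.} The delicate step is (c): producing an \emph{integral} cocharacter $\mu_{T_p}\in X_*(T_p)$ lying in the absolute conjugacy class $\{\mu\}$ whose $\sigma$-average is exactly $\nu_\delta$. The inequality $\nu_\delta\leq\overline{\mu}$ coming from $B(G,\{\mu\})$ is only a statement about rational cocharacters with respect to the absolute root system, whereas what one needs is an integrality/comparability statement phrased in terms of $Q^\vee(\Sigma)$; bridging these requires carefully comparing the absolute root datum of $G$, its relative root datum over $L$, and the \'echelonnage system $\Sigma$, and exploiting that $\mu$ is minuscule and $G_{\Q_p}$ residually split. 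I anticipate this is precisely the point where the residually split hypothesis cannot be removed with the present method, and where the bulk of the case-by-case or lattice-theoretic work lives.
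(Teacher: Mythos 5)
There is a genuine gap: your step (c), which you yourself flag as the delicate point, is exactly the content of the lemma, and the route you sketch for it would not close. Knowing $[\delta]\in B(G,\{\mu\})$ gives only the rational dominance inequality $\nu_\delta\leq\overline{\mu}$, and no amount of comparison between the absolute root datum and $Q^\vee(\Sigma)$ turns an inequality of averages into the \emph{exact} equality $\overline{\mu}_{T_p}^{T_p}=\nu_\delta$ required in (ii); a dominant cocharacter in the class $\{\mu\}$ chosen so that its average agrees with $\nu_\delta$ ``up to coroot-lattice translation'' is not what is asked for, and you give no mechanism forcing equality on the nose. You also do not address the transfer between the two $\Q_p$-structures (on $T_p\subset I_p$ versus its image in $G_{K_0}$, which differ by conjugation by $\delta$), which is needed even to compare $\overline{\mu}_{T_p}^{T_p}$ with $\nu_\delta$ inside a common group.

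The paper's proof supplies precisely the structural inputs your plan is missing. One first arranges (after modifying the trivialization of $\D(\pdiv_x)\otimes K_0$) that the maximal $\Q_p$-split subtorus $T'\subset T_p$ commutes with $\delta$, and passes to the Levi $M=Z_G(T')$, in which $\nu_\delta$ is \emph{central}; this is the move that converts (ii) from an equality of rational cocharacters into an equality in $\pi_1(M)_{\Q}$, which is accessible. Then, instead of direct lattice-theoretic work, one takes a point $g\in X(\{\mu\},\delta)_K$ (non-empty by He's theorem since $[\delta]\in B(G,\{\mu\})$), writes $g=mn$ via the Iwasawa decomposition for $P=MN$, and uses the double-coset compatibility \cite[Lemma 10.2.1]{HaR} to produce $\underline{\lambda}\in X_*(T)_I$ with $t_{\underline{\lambda}}\leq t_{\underline{\mu}}$ such that $m^{-1}\delta\sigma(m)$ lies in the $\mathcal{M}$-double coset of $\dot{t}_{\underline{\lambda}}$. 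The integral lift — your ``main obstacle'' — is then handled by the separate combinatorial Lemma \ref{cochar}, proved by induction with Stembridge's lemma \cite[Lemma 2.3]{Ra1} and the minuscularity of $\mu$ (not by case-by-case work, and not using residual splitness, only quasi-splitness): it yields $v_2\in X_*(T_2)$ lifting $\underline{\lambda}$ and $G$-conjugate to $\mu$. Finally $\delta\in B(M,\{v_2\}_M)$ forces $\overline{v}_2$ and $\nu_\delta$ to agree in $\pi_1(M)_{\Q}$, and taking $\mu_{T_p}$ to be $M$-conjugate to $v_2$ and using that $\nu_\delta$ factors through the central torus $T'\subset M$ upgrades this to the exact equality $\overline{\mu}_{T_p}^{T_p}=\nu_\delta$. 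Without the Levi reduction, the ADLV/Iwasawa/Haines--Rapoport step, and Lemma \ref{cochar}, your outline does not constitute a proof.
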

\begin{proof}Let $T'\subset T_p$ denote the maximal $\Q_p$ split subtorus. The same proof as \cite[Lemma 2.2.2]{Ki3} shows upon changing the isomorphism $\D(\pdiv_x)\otimes K_0\cong V_{\Z_p}^*\otimes K_0$, we may assume $T'$ commutes with $\delta$. Since $\Q_p$ structure on the image of $T'_{K_0}$ in $G_{K_0}$ differs by the one on $T'$ by conjugation by $\delta$, we may consider $T'$ as a subtorus of $G$. Let $M$ denote the centralizer of $T'$ in $G$, we have $\delta\in M(K_0)$.

 Let $T'\subset T'_2$ be a maximal $\Q_p$ split torus in $G$, and $T_2$ it's centralizer; it is a maximal torus since $G$ is quasi split. Then $T_2\subset M$. Let $P$ be a parabolic subgroup containing $M$ with unipotent radical $N$. Let $g\in X(\{\mu\},\delta)_K$ which exists since $\delta\in B(G,\{\mu\})$. Then there exists $\underline{\mu}_1\in X_*(T)_I$ with $t_{\underline{\mu}_1}\leq t_{\underline{\mu}}$ such that $g^{-1}b\sigma(g)\in \G(\Ok_L)\dot{t}_{\underline{\mu}_1}\G(\Ok_L)$. Note that when $G$ splits over an unramified extension, $\underline{\mu}$ is minuscule and hence $\underline{\mu}_1=\underline{\mu}$, this is not true in general. By the Iwasawa decomposition, we may assume $g=mn$ for $n\in N(L), m\in M(L)$. Let $\mathcal{M}(\Ok_L)=M(L)\cap\G(\Ok_L)$ a special parahoric subgroup of $M$ defined over $\Z_p$.
 
 Then we have $$m^{-1}\delta\sigma(m)\in \mathcal{M}(\Ok_L) \dot{t}_{\underline{\lambda}}\mathcal{M}(\Ok_L)$$
 for some $\underline{\lambda}\in X_*(T)_I$. Let $m^{-1}\delta\sigma(m)=m_1\dot{t}_{\underline{\lambda}}m_2$ in the decomposition above. Now $$g^{-1}\delta\sigma(g)=n^{-1}m^{-1}\delta\sigma(m)\sigma(n)=\tilde{n}m^{-1}\delta\sigma(m)$$for some $\tilde{n}\in N(L)$. Thus $$(m_1^{-1}\tilde{n}m_1)\dot{t}_{\underline{\lambda}}m_2\in N(L)\dot{t}_{\underline{\lambda}}\mathcal{M}(\Ok_L)\cap \G(\Ok_L)\dot{t}_{\underline{\mu}_1}\G(\Ok_L)$$ hence by \cite[Lemma 10.2.1]{HaR} we have $t_{\underline{\lambda}}\leq t_{\underline{\mu}_1}\leq t_{\underline{\mu}}$.
 
 By Lemma \ref{cochar}, below we have there exists a lift of $\underline{\lambda}$ to $v_2\in X_*(T_2)$ which is conjugate to $\mu$ in $G$. Then $\delta\in B(M,\{v_2\}_M)$ by \cite{He3}, hence $\overline{v}_2$ has the same image as  $ \nu_\delta$ in $\pi_1(M)_{\Q}$. 
 
 Now let $\mu_{T_p}\in X_*(T_p)$ be cocharacter conjugate to $v_2$ in $M$. Then since conjugate cocharacters have the same image in $\pi_1$, we have $\overline{\mu}_{T_p}^M=\nu_\delta\in \pi_1(M)_\Q$, where $\overline{\mu}^M_{T_p}$, denotes the Galois of average $\mu_{T_p}$ computed as a cocharacter of $M$. Then as before, since the  two $\Q_p$ structures on image on the image of $T_{K_0}$ differ by conjugation by $\delta$, we have $\overline{\mu}_{T_p}^{T_p}=\overline{\mu}_{T_p}^{M}\in\pi_1(M)_{\Q}$. 
 
 Now as $\overline{\mu}_{T_p}^{T_p}$ is defined over $\Q_p$, hence we may consider it as an element of $X_*(T')_{\Q}$. We have $\sigma(\nu_\delta)=\delta^{-1}\nu_\delta\delta$, hence $\nu_\delta$ is defined over $\Q_p$ and we have $\nu_\delta\in X_*(T')_{\Q}$. Since $T'\subset M$ is central, we have $\nu_\delta=\overline{\mu}_{T_p}^{T_p}$.

\end{proof}

\begin{lemma}\label{cochar}Let $\mu$ be a minuscule cocharacter of $G$ and $t_{\lambda}\in X_*(T)_I$ whose image in $W_K\backslash W/W_K$ lies in $\Adm(\{\mu\})_K$. Then there exists a cocharacter $v_2\in X_*(T)$ lifting $\lambda$ which is conjugate to $\mu$ in $G$.
\end{lemma}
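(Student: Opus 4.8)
This lemma is the ``convexity half'' of the Kottwitz--Rapoport comparison between admissible and permissible elements: it asserts that every translation double coset occurring in $\Adm_K(\{\mu\})$ is the image of a genuine cocharacter in the class $\{\mu\}$. Since the full equality $\Adm=\mathrm{Perm}$ is not available for ramified $G$, the plan is to argue directly, in three steps. First, some reductions. We may assume $\G$ is the special maximal parahoric of \S 9.4 (in the applications $\G$ is Iwahori or special, and the hypothesis for a finer parahoric implies it for a coarser one once $W_K\subseteq W_0$), so that $W_K=W_0=N(L)/T(L)$. Next, $W_0$ acts on $X_*(T)$ through conjugation by representatives in $N(L)$, compatibly with the projection $X_*(T)\twoheadrightarrow X_*(T)_I$, and $W_0\subset\mathbb{W}$, the absolute Weyl group, because $G_L$ is quasi-split; hence $W_0$-conjugate cocharacters of $T$ are $G$-conjugate. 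As both the hypothesis (which depends only on $W_0t_\lambda W_0$) and the conclusion are unchanged under replacing $\lambda$ by a $W_0$-translate, we may assume $\lambda\in X_*(T)_{I,+}$ is dominant.

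Second, I would extract from the hypothesis that $\lambda\preccurlyeq\underline\mu$. Under the identification $W_0\backslash W/W_0\cong X_*(T)_{I,+}$, admissibility gives $w'\in\Adm(\{\mu\})$, say $w'\le t_{x\underline\mu}$ with $x\in W_0$, in the double coset of $t_\lambda$. The standard convexity estimate for the $\mu$-admissible set (Kottwitz--Rapoport), which over the Bruhat--Tits root system $\Sigma$ of $G_L$ reduces by Lusztig's criterion on translation elements (recalled in \S 9.4, together with the analysis of $W_0$-double cosets as in the proof of Lemma \ref{LR}) to the dominant case, then forces $\underline\mu-\lambda\in Q^\vee(\Sigma)_{\ge 0}$, i.e.\ $\lambda\preccurlyeq\underline\mu$.

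Third, and this is the heart, I would lift such a $\Sigma$-dominant $\lambda$ to an element of $\{\mu\}$. Let $S\subseteq X_*(T)_I$ be the image of $\{\mu\}$; it is $W_0$-stable and contains $\underline\mu$. Since $\{\mu\}$ is a single $\mathbb{W}$-orbit and $\mathbb{W}$ is generated by the reflections $s_{\widetilde\beta}$ attached to roots $\widetilde\beta$ of $G$, and since minuscularity forces $\langle v,\widetilde\beta\rangle\in\{0,\pm 1\}$ for every $v\in\{\mu\}$, each such reflection sends $v$ to an element of $\{\mu\}$ differing by at most one coroot $\widetilde\beta^\vee$; correspondingly $S$ is connected under the moves $v\mapsto v\mp\overline{\widetilde\beta^\vee}$, where $\overline{\ \cdot\ }$ denotes the projection to $X_*(T)_I$. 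The claim is that $S$ contains the whole $\preccurlyeq$-interval below $\underline\mu$, which I would prove by descending induction on that interval, realising at each stage a descent by a positive coroot of $\Sigma$ as one of these moves; this rests on identifying $\Sigma^\vee\subset X_*(T)_I$ with (the primitive vectors in) the image of $\Phi^\vee(G)$, as provided by Bruhat--Tits theory.

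The main obstacle is precisely this last identification and the accompanying bookkeeping: when $G$ is ramified the root systems $\Sigma$ and $\Phi(G)$ need not be proportional, so a single $\Sigma$-coroot step can correspond to a more complicated motion in $X_*(T)$, and one must verify it can nonetheless be performed inside the finite orbit $\{\mu\}$ — presumably after reducing to the $\Q_p$-simple, residually split case and inspecting the possible affine diagrams. When $G$ is split this difficulty disappears, as then $\Sigma=\Phi(G)$ and $\underline\mu=\mu$ is itself minuscule, so the third step is the classical saturation of minuscule weights (this recovers \cite[Lemma 2.2.2]{Ki3}); alternatively, one may bypass the direct argument by invoking the construction of the cocharacters $\{\lambda_w\}_{M_{\nu_w}}$ in \cite{HZ}, which produces exactly such lifts of admissible translation elements.
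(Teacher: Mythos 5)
Your reductions (passing to the dominant representative $\underline{\lambda}$ via the $W_0$-action, and extracting $\underline{\lambda}\preccurlyeq\underline{\mu}$ in the \'echelonnage system $\Sigma$ from admissibility via Lusztig's criterion) agree with the paper and are fine. The problem is the third step, which is the actual content of the lemma: you reduce it to the claim that the image $S$ of the orbit $\{\mu\}$ in $X_*(T)_I$ contains the whole $\preccurlyeq$-interval below $\underline{\mu}$, propose a descending induction in which each descent by a coroot of $\Sigma$ is realized by a reflection move inside $\{\mu\}$, and then explicitly concede that making a $\Sigma$-coroot step correspond to an admissible move in $X_*(T)$ in the ramified case is ``the main obstacle,'' to be handled ``presumably'' by a case inspection you do not carry out. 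That is precisely the gap the paper's proof is designed to close, so as written the proposal does not prove the lemma. Two ingredients are missing. First, you need the intermediate elements of the descent to stay \emph{dominant}: the paper invokes Stembridge's lemma (\cite[Lemma 2.3]{Ra1}) to produce a chain $\underline{\mu}=\underline{\lambda}_0,\underline{\lambda}_1,\dots,\underline{\lambda}_n=\underline{\lambda}$ with each $\underline{\lambda}_{i+1}=\underline{\lambda}_i-\underline{\alpha}^\vee_{i+1}$ dominant; dominance of $\underline{\lambda}_{i+1}$ is what yields the strict inequality $\langle\underline{\lambda}_i,\underline{\alpha}_{i+1}\rangle>0$ at each stage, and your unconstrained ``descending induction on the interval'' has no substitute for it. Second, you need a mechanism converting that positivity into a single-coroot move inside the minuscule orbit: the paper lifts $\underline{\alpha}^\vee_{i+1}$ to an absolute positive coroot $\alpha^\vee_{i+1}\in X_*(T_{sc})$ and uses that the pairing in $X_*(T)_I$ is the Galois average $\frac{1}{n}\sum_\sigma\langle\sigma(\lambda_i),\alpha_{i+1}\rangle$, so some Galois conjugate $\sigma(\alpha_{i+1})$ pairs strictly positively with the current lift $\lambda_i$, hence (by minuscularity) pairs to exactly $1$, and the reflection $s_{\sigma(\alpha_{i+1})}$ subtracts an absolute coroot whose image in $X_*(T)_I$ is $\underline{\alpha}^\vee_{i+1}$. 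This averaging trick is exactly what disposes of your worry that ``$\Sigma$ and $\Phi(G)$ need not be proportional'': one never needs the two root systems to match, only that each coroot of $\Sigma$ is the image of some absolute coroot and that one Galois conjugate of it can be reflected off $\lambda_i$.

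Two smaller points. The proposed fallback of quoting the cocharacters $\{\lambda_w\}_{M_{\nu_w}}$ from \cite{HZ} is not a proof in this context: in the present paper it is Lemma \ref{cochar} that is used (in Step 1 of the proof of Proposition \ref{prop6.1}) to produce the lifts $\lambda_w$ accompanying \cite[Thm.\ 7.1]{HZ}, so invoking that construction here is unsubstantiated at best and circular at worst. Also, the passage from ``image of $t_\lambda$ lies in $\Adm_K(\{\mu\})$'' to $t_{\underline{\lambda}}\le t_{\underline{\mu}}$ deserves the same care in your write-up as in the paper (it is a statement about double cosets, handled by \cite[\S 2]{Lu} together with the $W_K$-analysis), but this is a presentational issue, not the gap.
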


\begin{proof} Let $\underline{\lambda}\in X_*(T)_{I,+}$ denote the dominant representative of $\lambda$ for our choice of Borel $B$. By \cite[\S 2]{Lu}, $t_\lambda\in \Adm(\{\mu\})_K$ implies $t_{\underline{\mu}}-t_{\underline{\lambda}}$ is a positive linear combination of coroots in $\Sigma$ (Recall $\underline{\mu}$ is the image of a dominant representative of $\{\mu\}$ in $X_*(T)$). Note that in general $\mu$ being minuscule in $G$ does not imply $\underline{\mu}$ is minuscule for the root system $\Sigma$, so that it is possible that $t_{\underline{\lambda}}\neq t_{\underline{\mu}}.$

Since $W_0=N(L)/T(L)$ is a subgroup of the absolute Weyl group it suffices to prove the result for $\underline{\lambda}$. By Stembridge's Lemma \cite[Lemma 2.3]{Ra1} there exists a sequence of positive coroots $\underline{\alpha}^\vee_1,\dots,\underline{\alpha}^\vee_n\in \Sigma^\vee$ such that $$\underline{\mu}-\underline{\alpha}^\vee_1-\dots\underline{\alpha}^\vee_i\in X_*(T)_I$$ is dominant for all $i$ and $\underline{\mu}-\underline{\alpha}^\vee_1-\dots.-\underline{\alpha}^\vee_n=\underline{\lambda}$. We prove by induction on $i$ that $$\underline{\lambda_i}:=\underline{\mu}-\underline{\alpha}^\vee_1-\dots-\underline{\alpha}^\vee_i\in X_*(T)_I$$ admits a lifting $\lambda_i\in X_*(T)$ which is conjugate to $\mu$. The case is $i=0$ in which case $\underline{\lambda}=\underline{\mu}$ and the result is obvious.

Suppose we have shown the existence of $\lambda_i$. Let $\alpha^\vee_{i+1}\in X_*(T_{sc})$ be a positive coroot lifting $\underline{\alpha}^\vee_i$. Then since $\underline{\lambda}_{i+1}$ is dominant, we have $\langle \underline{\lambda}_{i+1},\underline{\alpha}^\vee_{i+1}\rangle\geq 0$ and hence $\langle \underline{\lambda}_i,\underline{\alpha}_i\rangle=\langle \underline{\lambda}_{i+1}+\underline{\alpha}^\vee_{i+1},\underline{\alpha}_i\rangle>0$.

 Letting $K/L$ be a finite Galois extension over which $T$ splits, we have by  definition $$\langle \underline{\lambda_i},\underline{\alpha_{i+1}}\rangle=\frac{1}{n}\sum_{\sigma\in\text{Gal}(K/L)}\langle\sigma(\lambda_i),\alpha_{i+1}\rangle$$
 Since $\langle\sigma(\lambda_i),\alpha_{i+1}\rangle=\langle\lambda_i,\sigma(\alpha_{i+1})\rangle$, upon replacing $\alpha_{i+1}$ by $\sigma(\alpha_{i+1})$, we may assume $\langle\lambda_i,\alpha_{i+1}\rangle>0$. By assumption $\lambda_i$ is minuscule hence $\langle\lambda_i,\alpha_{i+1}\rangle=1$. We set $\lambda_{i+1}=s_{\alpha_{i+1}}(\lambda_i)=\lambda_i-\alpha_{i+1}$ where $s_{\alpha_{i+1}}$ is the simple reflection corresponding to $\alpha_{i+1}$. Then $\lambda_{i+1}$ is minuscule since it is the Weyl conjugate of a minuscule cocharacter, and $\lambda_{i+1}$ is a lift of $\underline{\lambda}_{i+1}.$
 \end{proof}
 
 \begin{thm}\label{CM-lift} Let $x\in\mathscr{S}_{\rmK}(G,X)(k)$. The isogeny class of $x$ contains a point which lifts to a special point on $Sh_{\rmK}(G,X)$.
 \end{thm}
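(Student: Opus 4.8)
The plan is to follow the strategy of Kisin \cite[\S 2]{Ki3}, the two essential new inputs being Proposition \ref{Kisin} --- which records both the description of the isogeny class of $x$ through $i_{\overline x}$ and the equality of ranks of $I$ and $G$ --- and the Langlands--Rapoport Lemma \ref{LR}, which replaces \cite[2.2.2]{Ki3} in the residually split (possibly ramified) case. Fix $x\in\mathscr{S}_{\rmK}(G,X)(k)$ with its associated data $\delta\in G(K_0)$, $\gamma_p\in I_{p/k}(\Q_p)$, $\gamma_l\in G(\Q_l)$ for $l\neq p$, and the group $I$ of self-quasi-isogenies of $\mathcal{A}_x$ preserving the $s_{\alpha,l,x}$ ($l\neq p$) and $s_{\alpha,0,x}$. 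The goal is to produce a special point of $Sh_{\rmK}(G,X)$ whose reduction lies in the image of $i_{\overline x}\colon I(\Q)\backslash X(\{\mu\},\delta)_K\times G(\A_f^p)\to \mathscr{S}_{\rmK_p}(G,X)(\Fpbar)$, hence in the isogeny class of $x$.

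First I would dispose of the local problem at $p$. Choose a maximal torus $T_p\subset I_p$ defined over $\Q_p$; by Lemma \ref{LR} there is $\mu_{T_p}\in X_*(T_p)$ which, regarded as a $G$-valued cocharacter, is conjugate to $\mu$ and satisfies $\overline{\mu}_{T_p}^{T_p}=\nu_\delta$. As in \cite[\S 2.2, \S 2.3]{Ki3}, this cocharacter together with the Newton-point equality yields an element $g_p\in X(\{\mu\},\delta)_K$ fixed by $T_p(\Q_p)$, and in particular by $\gamma_p$; this is the prospective $p$-component of the point in the isogeny class and it is compatible with the $p$-adic structure of $x$. At a finite prime $l\neq p$, $\gamma_l$ is semisimple (its eigenvalues are Weil numbers), so $I_{l/k}$ is reductive and contains a maximal torus carrying $\gamma_l$; at $\infty$ the group $I_\R$ is anisotropic modulo its centre --- the Rosati involution of the weak polarization on $\mathcal{A}_x$ furnishes a positive definite form preserved by $I$ --- so $I_\R$ has an elliptic maximal torus, through which some $h\in X$ factors after conjugation.

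Next I would globalize. Using Proposition \ref{Kisin}(iii) and the fact that $I_{\Q_v}$ is, for each place $v$, an inner form of a subgroup of $G_{\Q_v}$, one constructs --- exactly as in \cite[\S 2.2--2.3]{Ki3} --- a maximal torus $T\subset I$ over $\Q$ and an embedding $j\colon T\hookrightarrow G$ over $\Q$ such that: (i) $j_\R$ contains a point $h\in X$, so that $(T,\{h\})$ is a special sub-datum of $(G,X)$ with $\mu_h^{-1}$ conjugate to $\mu$; (ii) $j_{\Q_p}$ is $G(K_0)$-conjugate to the inclusion $T_p\subset I_p\subset G_{K_0}$ of the previous paragraph, compatibly with $\delta$; (iii) $j_{\Q_l}$ factors through $I_{l/k}$ and carries $\gamma_l$ into $T(\Q_l)$ for $l\neq p$. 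The existence of such a $(T,j)$ is the heart of the matter: the functor of embeddings of a maximal torus with the prescribed local behaviour is governed by a scheme which is a torsor under a group of multiplicative type, and one applies the Hasse principle and weak approximation as in loc. cit., the point being that the local data above are already coherent at all but finitely many places.

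Finally, the special sub-datum $(T,\{h\})$ together with an element $g_f\in G(\A_f)$ whose $p$-component recovers $g_p$ and whose prime-to-$p$ part matches the level structure $\epsilon^p$ of $x$ determines a special point $s=[h,g_f]\in Sh_{\rmK}(G,X)(\overline{\Q})$. The abelian variety $\mathcal{A}_s$ has complex multiplication by $T$, hence potentially good reduction, so $s$ extends to a point of $\mathscr{S}_{\rmK}(G,X)$ with values in the ring of integers of a finite extension of $E$; by the Shimura--Taniyama formula its reduction $\overline s$ has crystalline Frobenius $\sigma$-conjugate to $\delta$, governed by $\mu_{T_p}$, and the tensors $s_{\alpha,0,\overline s}$ and $s_{\alpha,l,\overline s}$ correspond to those of $x$ under the quasi-isogeny induced by $j$. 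Using the $\Phi$-equivariance of $i_{\overline x}$ from Proposition \ref{prop6.1} one then identifies $\overline s$ with $i_{\overline x}(g_p\cdot g^p)$ for the appropriate $g^p\in G(\A_f^p)$, so $\overline s$ lies in the isogeny class of $x$; since $s$ is a special point this will prove the theorem. The main obstacle is this globalization step: assembling a single torus $T\subset I$, defined over $\Q$ and embedding into $G$ over $\Q$, that simultaneously realizes the elliptic point at $\infty$, the torus $T_p$ of Lemma \ref{LR} at $p$, and the centralizers of the $\gamma_l$ elsewhere. The ramification of $G$ makes the $p$-adic bookkeeping --- carried out via Lemmas \ref{LR} and \ref{cochar}, where $\mu$ need not be minuscule for the root system $\Sigma$ of \cite{BT1} --- genuinely more delicate than in the hyperspecial case treated in \cite{Ki3}.
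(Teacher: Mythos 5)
Your route is genuinely different from the paper's, and its pivotal step is not justified. You propose to build, over $\Q$, a maximal torus $T\subset I$ together with an embedding $j\colon T\hookrightarrow G$ realizing prescribed local data simultaneously at $\infty$ (an $h\in X$ factoring through $T_\R$), at $p$ (the torus $T_p$ of Lemma \ref{LR}, compatibly with $\delta$), and at every $\ell\neq p$ (carrying $\gamma_\ell$ into $T(\Q_\ell)$), and you assert this follows "exactly as in \cite[\S 2.2--2.3]{Ki3}" via a torsor under a group of multiplicative type, the Hasse principle and weak approximation. But Kisin's \S 2.2--2.3 contains no such local--global construction; embedding a given torus into $G$ over $\Q$ with prescribed local conjugacy classes has genuine cohomological obstructions (this is essentially the hard, gerbe-theoretic part of the full Langlands--Rapoport conjecture, which this paper explicitly does not prove), and your list of conditions involves infinitely many places, beyond what weak approximation handles. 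In addition, the intermediate claims are not substantiated: the existence of $g_p\in X(\{\mu\},\delta)_K$ "fixed by $T_p(\Q_p)$", and the identification of the reduction $\overline{s}$ of your special point with a point of the isogeny class, which requires a quasi-isogeny matching the \emph{crystalline} tensors $s_{\alpha,0}$ --- the Shimura--Taniyama formula and $\Phi$-equivariance alone do not give this.

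The paper's proof inverts the logic so that the global embedding $T\hookrightarrow G$ over $\Q$ comes for free from geometry rather than from a local--global principle. One first chooses $T_p\subset I_p$ (by the rank equality of Proposition \ref{Kisin}(iii) one may assume it comes from a maximal torus $T\subset I$ over $\Q$), applies Lemma \ref{LR}, and then works purely at $p$: using \cite[Lemma 2.1]{Langlands}, Steinberg's theorem and Kottwitz one arranges $\delta\in T_p(L)$; the filtration induced by $j\circ\mu_{T_p}$ is admissible by \cite{RZ}, so \cite[2.2.6]{Ki1} produces a CM lift $\tilde{\pdiv}'$ of an isogenous $p$-divisible group; the $\mathfrak{S}$-module and local-model machinery (Propositions \ref{F-cryst}, \ref{prop3}, \ref{prop4}, \ref{prop12}) shows the corresponding $g$ lies in $X(\{\mu\},b)$, so after replacing $x$ by $i_x(g)$ one gets a lift $\tilde{x}\in\mathscr{S}_{\rmK}(G,X)(\Ok_K)$ whose Hodge filtration is induced by $\mu_{T_p}$. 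Because that filtration is $T$-stable, the action of $T\subset\mbox{Aut}_\Q(\mathcal{A}_x)$ lifts to $\mathcal{A}_{\tilde{x}}$, fixes $s_{\alpha,p,\tilde{x}}$ and hence $s_{\alpha,B,\tilde{x}}$, so $T$ sits inside $G$ over $\Q$ automatically and contains the Mumford--Tate group, making $\tilde{x}$ special. To repair your argument you would have to either supply the (nontrivial) global embedding theorem you invoke, or restructure along the paper's lines.
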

 \begin{proof}
 
 Let $T_p\subset I_p$ a maximal torus and $\mu_{T_p}$ the cocharacter constructed above. Recall we have fixed the ismorphism $\D(\pdiv_{x})\otimes K_0\cong V_{\Z_p}^*\otimes K_0$ such that $\delta$ commutes with the maximal $\Q_p$ split subtorus $T'$ of $T_p$ and hence $\nu_{\delta}$ is defined over $\Q_p$. Let $M_{\nu_{\delta}}$ denote the centralizer of $\nu_{\delta}$, then there is an inner twisting $M_{,\nu_{\delta}\overline{\Q}_p}\cong J_{b,\overline{\Q}_p}$. By \cite[Lemma 2.1]{Langlands}, there is an embedding $j:T_p\hookrightarrow M_{\nu_{\delta}}$ over $\Q_p$ which is $M_{\nu_{\delta}}$-conjugate to 
 \begin{equation}
 \label{eq3}
 T_{p,\overline{\Q}_p}\hookrightarrow I_{p,\overline{\Q}_p}\hookrightarrow J_{b,\overline{\Q}_p}\xrightarrow{\sim}M_{\nu_{\delta},\overline{\Q}_p}\end{equation}
By Steinberg's theorem, there is an element $m\in M_{\nu_{\delta}}(L)$ which conjugates $j$ to \ref{eq3}. Thus upon modifying the isomorphism $\D(\pdiv_{x})\otimes L\cong V_{\Z_p}^*\otimes L$ by $m$, we have $\delta$ commutes with $j(T')\subset G$. Let $M$ denote the centralizer of $j(T')$ in $G$, hence $T_p$ is an elliptic maximal torus in $M$ and $\delta\in M(L)$. By \cite{Ko1}, we may modify the isomorphism $\D(\pdiv_{x})\otimes L\cong V_{\Z_p}^*\otimes L$ by an element of $M(L)$ so that $\delta\in T_p(L)$. 

Let $K/L$ be a finite extension such that $\mu$ is defined over $K$, then by \cite{RZ}, the filtration induced by $j\circ\mu_{T_p}$ is admissible. As $j\circ\mu_{T_p}$ is conjugate to $\mu$, the filtration has weight $0,1$ hence by \cite[2.2.6]{Ki1}, there exists a $p$-divisible group $\tilde{\pdiv}'$ over $\Ok_K$ with special fiber $\pdiv'$, such that we have an identification $\D(\pdiv')\otimes L\cong \D(\pdiv_{x})\otimes L$. This induces a quasi-isogeny $\theta: \pdiv_{x}\rightarrow \pdiv'$.
 
  Let $\tilde{x}\in\mathscr{S}_{\mathrm{K}}(G,X)(\Ok_{K})$ be a point lifting ${x}$, $s_{\alpha,\acute{e}t,\tilde{x}}\in T_p\pdiv_{\tilde{x}}^{\vee,\otimes}$ and $s_{\alpha,0,x}\in \D(\pdiv_{x})^\otimes$ the corresponding crystalline tensors. Let $s_{\alpha,\et}'\in T_p\tilde{\pdiv}'^{\vee,\otimes}$ the tensors corresponding to the $s_{\alpha,0,{x}}$ under the $p$-adic comparison isomorphism. As in \cite[1.1.19]{Ki3}, there exists a $\Q_p$-linear  isomorphism $$T_p\pdiv_{\tilde{x}}^{\vee}\otimes \Q_p\cong T_p\tilde{\pdiv}'^\vee\otimes\Q_p$$
  taking $s_{\alpha,\acute{e}t,\tilde{x}}$ to $s_{\alpha,\acute{e}t}'$. Upon making a finite extension of $K$, we may assume the image of $T_p\pdiv_{\tilde{x}}^\vee$  in $T_p\tilde{\pdiv}'^\vee\otimes\Q_p$ is stable under the Galois action. Upon replacing $\tilde{\pdiv}'$ by an isogenous $p$-divisible group, we may assume there is an isomorphism $$T_p\pdiv_{\tilde{x}}\cong T_p\tilde{\pdiv}'$$ taking $s_{\alpha,\acute{e}t,\tilde{x}}$ to $s_{\alpha,\acute{e}t}'$. 
  
  By Proposition \ref{prop3}, we have $s_{\alpha,0,{x}}\in\D(\pdiv')^\otimes$ and we have a sequence of isomorphisms
  
  $$\D(\pdiv{x})\cong T_p\pdiv_{\tilde{x}}\otimes_{\Z_p}\Ok_L\cong T_p\tilde{\pdiv}'\otimes_{\Z_p}\Ok_L\cong \D(\pdiv')$$ which preserve $s_{\alpha,0,{x}}$. We may thus identify $\D(\pdiv')$ with $g\D(\pdiv_{x})$ for some $g\in G(L)$. As in Proposition 5.4, the filtration induced by $g^{-1}b\sigma(g)$ is the specialization of a filtration induced by a $G$-valued cocharacter conjugate to $\mu_y$. Hence the filtration corresponds to point of the local model $M^{loc}_{\G}(k)$ and we have $g^{-1}b\sigma(g)\in \Adm(\{\mu\})$, i.e. $g\in X(\{\mu\},b)$.
  
  Thus upon replacing ${x}$ by $i_{x}(g)\in \mathscr{S}_{\rmK}(G,X)(k)$, we may assume  there is a deformation $\tilde{\pdiv}$ of $\pdiv_{x}$ to $\Ok_K$, such that the coorresponding filtration on $\D(\pdiv)\otimes K$ is induced by $\mu_{T_p}$. Since $\mu_{T_p}$ is conjugate to $\mu_h^{-1}$, $\tilde{\pdiv}$ corresponds to a point $\tilde{x}\in\mathscr{S}_{\mathrm{K}}(G,X)(\Ok_K)$ by Proposition \ref{prop12}.
  
That $\tilde{x}$ is a special point of $Sh_{\mathrm{K}}(G,X)$ now follows from the same proof as \cite[2.2.3]{Ki3}. Indeed since $I$ and $I_p$ have the same rank we may assume $T_p$ comes from a maximal torus $T$ in $I$. Then $T\subset I \subset \text{Aut}_{\Q}(\mathcal{A}_x)$ is compatible with filtrations and hence lifts to the isogeny category. As $T$ fixes $s_{\alpha,0,{x}}$, it fixes $s_{\alpha,p,\tilde{x}}$ and hence also $s_{\alpha,B,\tilde{x}}$. Thus $T$ is naturally a subgroup of $G$ and is a maximal torus by Proposition \ref{Kisin}. The Mumford-Tate group is a subgroup of $G$ which commutes with $T$, hence is contained in $T$. Hence $\tilde{x}$ is a special point.
\end{proof}

As in \cite[\S 2.3]{Ki3}, we may use the above to associate an element $\gamma_0\in G(\Q)$ to each isogeny class such that:

i) For all $l\neq p$, $\gamma_0$ is $G$-conjugate to $\gamma_l$ in $G(\Q_l)$.\

ii) $\gamma_0$ is stably conjugate to $\gamma_p$ in $G(\overline{\Q}_p)$ 

iii) $\gamma_0$ is elliptic in $G(\R)$.
\\ i.e. $(\gamma_0,(\gamma_l)_{l\neq p},\delta)$ form a Kottwitz triple. Indeed using Theorem \ref{CM-lift}, we may assume that $x$ lifts to a special point $\tilde{x}\in\mathscr{S}_{\mathrm{K}}(G,X)(\Ok_K)$ such that the action of $T\subset \mbox{Aut}_{\Q}\mathcal{A}_x$ lifts to $\mbox{Aut}_{\Q}\mathcal{A}_{\tilde{x}}$.

Now $\gamma$ lifts to an element $\tilde{\gamma}\in T(\Q)\subset \mbox{Aut}_{\Q}\mathcal{A}_{\tilde{x}}$. If we let $\tilde{\gamma}$ act on the Betti cohomology of $\mathcal{A}_{\tilde{x}}$, then $\tilde{\gamma}$ fixes $s_{\alpha,B,\tilde{x}}$ since it fixes $s_{\alpha,\acute{e}t,\tilde{x}}$. We thus obtain an element $\gamma_0$ in $G(\Q)$ which is conjugate to $(\gamma_l)_{l\neq p}$ by the \'etale Betti comparison. Similarly $\gamma_0$ and $\gamma_p$ are conjugate over $G(\C)$ by the comparision isomorphisms between crystalline de Rham and Betti cohomology. 

By the positivity of the Rosatti involution, $T(\R)/w_h(\R^\times)$ is compact, and hence $\gamma_0$ is elliptic.

The following version of Tate's theorem, as well as the structural result on the group $I$, can be deduced in the same way as \cite[Cor. 2.3.2, 2.3.5]{Ki3}.

\begin{cor}\label{cor5}i) For every prime $l$ the natural maps $$I_{/k,\Q_l}\cong I_{/k}\otimes_{\Q}\Q_l\rightarrow I_{l/k}$$
	$$I_{\Q_l}=I\otimes_\Q\Q_l\rightarrow I_l$$ are isomorphisms.
	
	ii) Let $I_0$ denote the centralizer of $\gamma_0$. Then $I$ is the inner form of $I$ such that for each place $l$, $I_{\Q_l}\cong I_l$.
	\end{cor}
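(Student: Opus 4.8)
\textbf{Proof proposal for Corollary \ref{cor5}.}

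The plan is to follow the argument of \cite[Cor. 2.3.2, 2.3.5]{Ki3} essentially verbatim, using the group-theoretic inputs already established in this section in place of their hyperspecial-level counterparts. For part i), I will separate the primes $l\neq p$ from $l=p$. At a prime $l\neq p$: the natural map $I_{/k}\to I_{l/k}$ (respectively $I\to I_l$) is an inclusion of $\Q_l$-groups, and by Proposition \ref{Kisin} iii) the connected component of $I_{\Q_l}$ already contains the identity component of $I_l$; combined with Tate's theorem on homomorphisms of abelian varieties over finite fields (and its extension over $\Fpbar$), which controls the full endomorphism algebra, one gets that $I_{/k,\Q_l}\to I_{l/k}$ and $I_{\Q_l}\to I_l$ are isomorphisms on identity components, and then the component groups match because both sides are centralizers of the same semisimple element $\gamma_l$ (resp.\ its power). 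At $l=p$: here one uses the CM lift produced in Theorem \ref{CM-lift}, so that $x$ lifts to a special point $\tilde x$ carrying an action of a maximal torus $T_p\subset I_p$; the same comparison-isomorphism bookkeeping as in \cite[2.3.1]{Ki3} identifies $I_{/k,\Q_p}$ and $I_{\Q_p}$ with $I_{p/k}$ and $I_p$ respectively, using that $I_p\subset J_\delta$ and that the tensors $s_{\alpha,0,x}$ cut out $G$ inside $GL(\D(\pdiv_x))$.

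For part ii), the key point is that the Kottwitz triple $(\gamma_0,(\gamma_l)_{l\neq p},\delta)$ constructed above satisfies $\gamma_0$ conjugate to $\gamma_l$ in $G(\Q_l)$ for $l\neq p$, stably conjugate to $\gamma_p$ in $G(\overline{\Q}_p)$, and elliptic in $G(\R)$. Letting $I_0$ be the centralizer of $\gamma_0$ in $G$, part i) identifies $I_{\Q_l}$ with $I_l$, which in turn is an inner form of $I_{0,\Q_l}$ for every finite $l$ (for $l\neq p$ because $\gamma_0$ and $\gamma_l$ are actually conjugate, hence $I_l\cong I_{0,\Q_l}$; for $l=p$ by the stable-conjugacy statement and the description of $I_p$ via $J_\delta$), and over $\R$ the ellipticity of $\gamma_0$ together with the compactness of $T(\R)/w_h(\R^\times)$ (positivity of the Rosati involution) shows $I_\R$ is an inner form of $I_{0,\R}$. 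Since an inner form is determined by its localizations and $I$ and $I_0$ have the same rank by Proposition \ref{Kisin} iii), one concludes $I$ is the inner form of $I_0$ with $I_{\Q_l}\cong I_l$ at each place.

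The main obstacle, as in \cite{Ki3}, is the $p$-adic place: one must check that the parahoric-level refinements do not interfere with the comparison-isomorphism identifications, in particular that the construction of $\gamma_p$ and $\delta$ from $\D(\pdiv_x)$ is independent of the auxiliary choices (the isomorphism $\D(\pdiv_x)\otimes K_0\cong V_{\Z_p}^\ast\otimes K_0$ and the level structure), which follows from the compatibility of the maps $\pi_{\rmK_p,\rmK_p'}$ with these constructions noted at the start of \S9, and that the CM lift of Theorem \ref{CM-lift} genuinely produces a torus action on $\mbox{Aut}_\Q(\mathcal A_{\tilde x})$ reducing the problem to the case handled by Kottwitz \cite{Ko1}. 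Once these compatibilities are in hand the argument is formal.
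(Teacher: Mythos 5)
Your overall strategy --- deduce the corollary from the inputs of this section exactly as in \cite[Cor. 2.3.2, 2.3.5]{Ki3} --- is the same as the paper's, whose proof is essentially that citation. But the details you supply for part i) do not work as written, and they misplace where the difficulty lies. For $l\neq p$, Tate's theorem identifies $\text{Aut}_{\Q}(\mathcal{A}_x)\otimes_{\Q}\Q_l$ with the full centralizer of $\gamma_l$ in $GL(H^1_{\acute{e}t}(\mathcal{A}_{\overline{x}},\Q_l))$, so $I_{l/k}$ becomes the stabilizer in $\text{Aut}_{\Q}(\mathcal{A}_x)_{\Q_l}$ of the single family of tensors $s_{\alpha,l,x}$, whereas $I_{/k,\Q_l}$ is by definition the stabilizer of the tensors at \emph{all} places (including the crystalline ones). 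The whole content of i) is thus that an isogeny-category automorphism commuting with Frobenius and fixing the $l$-adic tensors automatically fixes the tensors everywhere else; this is not formal, and your two justifications do not give it: Proposition \ref{Kisin} iii) is asserted only for \emph{some} prime $l\neq p$, not all, and the claim that ``the component groups match because both sides are centralizers of the same semisimple element'' is circular, since $I_{/k}$ is not defined as a centralizer of $\gamma_l$ --- that it coincides with one is precisely what is to be proved. The route in \cite{Ki3} runs through Theorem \ref{CM-lift}: after replacing $x$ by an isogenous point with a special lift, $I$ contains a maximal torus $T$ of the same rank as $G$ and the Frobenius furnishes $\gamma_0\in T(\Q)$; it is the conjugacy statements of the Kottwitz triple (making $\dim I_l$ independent of $l$ and equal to $\dim I_0$), combined with $I_{\Q_l}\subset I_l$ and Proposition \ref{Kisin} iii) at a single prime, that force the identification at \emph{every} prime. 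Your sketch invokes the CM lift only at $p$, so the $l\neq p$ case is left without the ingredient that actually makes it work.

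Part ii) has a parallel gap: you verify place by place that $I_{\Q_l}\cong I_l$ is an inner form of $I_{0,\Q_l}$ and then conclude via ``an inner form is determined by its localizations.'' Even granting the Hasse principle for adjoint groups, that principle only distinguishes inner forms once one knows $I$ \emph{is} an inner form of $I_0$ over $\Q$, i.e.\ once one has an isomorphism $I_{\overline{\Q}}\cong I_{0,\overline{\Q}}$ whose Galois twisting cocycle is inner; isomorphisms (or inner twistings) at each completion do not produce this by themselves, and ``same rank'' certainly does not. In \cite[Cor. 2.3.5]{Ki3} the global inner twisting is constructed from the common maximal torus $T$ and the element $\gamma_0\in T(\Q)$, using part i); that construction has to be carried out here as well. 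So while your high-level plan coincides with the paper's, the two pivotal steps --- surjectivity in i) at an arbitrary prime, and the existence of the global inner twisting in ii) --- are exactly the ones your write-up either elides or justifies incorrectly.
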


\bibliographystyle{amsplain}
\bibliography{bibfile}

\end{document}